\documentclass[a4paper, 12pt]{amsart}
\usepackage[margin=2.5cm]{geometry}
\usepackage{tikz-cd}
\usepackage{amssymb,amsrefs, mathrsfs}
\usepackage[draft=false]{hyperref}
\usepackage[noabbrev,capitalise,nameinlink]{cleveref}
\usepackage{bookmark}
\hypersetup{                                                         
  pdfauthor={},
  pdftitle={},
  colorlinks=true, 
  linkcolor={},
    citecolor={},
    urlcolor={},
    linktocpage=true                                                      
}

\newtheorem{thm}{Theorem}[section]
\newtheorem*{thmA}{Theorem A}
\newtheorem*{thmB}{Theorem B}
\newtheorem*{thmC}{Theorem C}
\newtheorem*{thmD}{Theorem D}
\newtheorem*{thmE}{Theorem E}
\newtheorem*{thmF}{Theorem F}
\newtheorem*{thmG}{Theorem G}
\newtheorem*{thmH}{Theorem H}
\newtheorem*{thmI}{Theorem I}
\newtheorem*{thmJ}{Theorem J}
\newtheorem*{thmK}{Theorem K}

\newtheorem{cor}[thm]{Corollary}

\newtheorem{lem}[thm]{Lemma}
\newtheorem{prp}[thm]{Proposition}

\theoremstyle{definition}
\newtheorem{dfn}[thm]{Definition}

\theoremstyle{remark}
\newtheorem{rmk}[thm]{Remark}
\newtheorem{qtn}[thm]{Question}

\numberwithin{equation}{section}

\makeatletter
\@namedef{subjclassname@2020}{%
  \textup{2020} Mathematics Subject Classification}
\makeatother

\begin{document}

\date{\today}
\title[The singular ideal and compressibility]{Characterizations of zero singular ideal in \'etale groupoid C*-algebras via compressible maps}

\author[Hume]{Jeremy B. Hume}

\address{School of Mathematics and Statistics  \\
Carleton University\\ 4302 Herzberg Laboratories\\ Ottawa, ON\\ K1S 5B6, Canada}
\email{jeremybhume@gmail.com}

\begin{abstract}

We show the singular ideal in a non-Hausdorff \'etale groupoid C*-algebra is zero if and only if every unit is contained, at the level of group representation theory, in the collection of subgroups of the unit's isotropy group obtained as limit sets of nets in the ``Hausdorff part'' of the unit space. This is achieved through a study of the interplay between the Hausdorff cover and restriction maps on C*-algebras of groupoids to reductions by closed locally invariant subsets, which we show are compressible to *-homomorphisms and therefore have many of the same properties. We also prove a simpler algebraic characterization of zero singular ideal that holds whenever the isotropy group C*-algebras satisfy a certain ideal intersection property. We prove this property holds for all direct limits of virtually torsion free solvable groups.
\end{abstract}

\subjclass[2020]{22A22, 46L05 (Primary); 37A55, 22D25 (Secondary)}

\keywords{\'Etale groupoid, non-Hausdorff, C*-algebra, singular ideal,  compressibility}

\maketitle

\section{Introduction}\label{sec:intro}
\label{s:intro}
Many interesting C*-algebras arise from \'etale groupoids which are not necessarily Hausdorff. For instance, Nekrashevych algebras associated to self-similar group actions \cite{N09}, C*-algebras from semigroups \cite{SS20} and the C*-algebras from germs of discrete group actions (see \cite{E11}) all have natural locally compact \'etale groupoid models with basic examples failing to be Hausdorff (although the unit spaces are always Hausdorff).

While the relationship between properties of Hausdorff etale groupoids and their C*-algebras is well understood (\cite{DR00}, \cite{T99}, \cite{D97}, \cite{BCFS14}) there has been difficulties extending these links to non-Hausdorff groupoids. An interesting obstruction to characterizing simplicity and the ideal intersection property in this case is the potential existence of elements in the reduced groupoid C*-algebra (viewed as functions on the groupoid) with zero sets dense in $G$, but are nonetheless non-zero. This is in contrast to the Hausdorff case, where all functions in the reduced C*-algebra are continuous. 

The set of these ``singular'' functions is a closed two-sided ideal in the reduced groupoid C*-algebra which turns out to be the only obstruction to generalizing the important results for Hausdorff groupoids on simplicity and the ideal intersection property (see \cite{CEPSS19} and \cite{KKLRU21}), and therefore the quotient by this \emph{singular ideal}, known as the \emph{essential groupoid C*-algebra} is well understood in these matters and in others (e.g., see \cite{EP22}).

However, none of the three important example classes of C*-algebras above are naturally modeled by the essential groupoid C*-algebras except when the obstruction vanishes. Thus, understanding when the singular ideal is zero, ideally characterizing when in terms of a groupoid property, is of great importance to the theory of \'etale groupoid C*-algebras. 

This is evidenced by the number of recent research works considering this problem; see the characterizations \cite{SS20} for groupoids from certain inverse semigroups, \cite{SS24} and \cite{GNSV25} for groupoids from self-similar groups, \cite{NS23} for groupoids with torsion free isotropy and \cite{BGHL25} for groupoids satisfying a certain finiteness condition on the ``non-Hausdorffness''. See also \cite{CEPSS19} for a sufficient condition for vanishing and \cite{CN24}, \cite{ACJM25} for investigations into the structure of the singular ideal. 

In this paper, we characterize when the singular ideal of an arbitrary \'etale groupoid is zero in terms of a groupoid property which is both topological and geometric in nature. Our work extends and simplifies the characterizations for the classes in \cite{NS23} and \cite{BGHL25} (the other interesting characterizations mentioned are example specific). 

Our characterization is achieved through an extensive study of restriction maps on C*-algebras of \'etale groupoids to reductions by closed \emph{locally invariant} subsets of the unit space. This is a new concept that is a relaxation of usual notion of invariance. Although these restriction maps are not *-homomorphisms, we show that they are at least \emph{compressible to *-homomorphisms} and therefore enjoy many of the same properties. In contrast to closed invariant sets, closed locally invariant sets are abundant in any \'etale groupoid - every finite set of units is an example.

In particular, our results on these maps and their interplay with the \emph{Hausdorff cover} as in \cite{BGHL25} allow us to calculate the image of the singular ideal in the C*-algebras of the isotropy groups (which are reductions to locally invariant sets). An application of this calculation shows groupoids whose isotropy groups satisfy a certain C*-ideal intersection property with the group rings have a simpler algebraic characterization for when the singular ideal is zero. We show many groups have this intersection property, including all groups of polynomial growth and amenable matrix groups (over characteristic zero fields).

We explain our results below.

First, we only need to consider the case of \'etale groupoids which are covered by countably many open bisections. Every $\sigma$-compact \'etale groupoid satisfies this assumption and it is easy to see the singular ideal is non-zero if and only if it is non-zero for some open subgroupoid C*-algebra generated by countably many open bisections. Therefore, any characterization of the singular ideal vanishing we establish in this paper with this assumption extends naturally to all \'etale groupoids.

Let $G$ be an \'etale groupoid with Hausdorff unit space $G^{0}$ (with the above assumption). We say $x\in G^{0}$ is \emph{Hausdorff} if every net in $G^{0}$ converging to $x$ has no other limit points as a net in $G$, and denote the collection of Hausdorff units by $C$. Since $G$ is covered by countably many bisections, $C$ is dense in $G^{0}$ (\cite[Lemma~7.15]{KM21}). This is the only reason we need the above assumption.

If $(x_{\lambda})\subseteq C$ is a net converging in $G^{0}$ to $x$, then, since $x_{\lambda}x_{\lambda} = x^{-1}_{\lambda} = x_{\lambda}$, continuity of the groupoid operations implies the set of limit points of $(x_{\lambda})$ is a subgroup $X$ of the isotropy group $G_{x}^{x}$. We say $X\in \mathcal{X}(x)$ if $X$ is ``maximal'' in the sense that any subnet $(x_{\lambda_{\mu}})$ has limit set $X$. By Corollary \ref{cor:compactness}, any net $(x_{\lambda})\subseteq C$ converging in $G^{0}$ to $x$ has a subnet with maximal limit set, so $\mathcal{X}(x)\neq\emptyset$. As we will see, $\mathcal{X}(x)$ is invariant under conjugation by elements in $G^{x}_{x}$ and is a compact Hausdorff space when equipped with the subspace topology arising from $\{0,1\}^{G^{x}_{x}}$, viewing $X\in\mathcal{X}(x)$ as identified with its indicator function $(1_{X}:G^{x}_{x}\to \{0,1\})\in \{0,1\}^{G^{x}_{x}}$.

For each subgroup $X\in \mathcal{X}(x)$, denote by $\lambda_{G^{x}_{x}/X}$ the \emph{quasi-regular} unitary representation of $G^{x}_{x}$ on $\ell^{2}(G^{x}_{x}/X)$, defined for $g\in G^{x}_{x}$ as $g\cdot \delta_{hX} = \delta_{ghX}$, for all cosets $hX\in G^{x}_{x}/X$ of $X$. Let $\lambda_{G^{x}_{x}/\mathcal{X}(x)}$ denote the representation $\oplus_{X\in\mathcal{X}(x)}\lambda_{G^{x}_{x}/X}.$ 

We motivate our characterization by first stating a special case. Denote by $J$ the \emph{singular ideal}, which is the set of functions in $f\in C^{*}_{r}(G)$ with zero set $f^{-1}(0)$ dense in $G$ (this is a non-standard but equivalent definition; see \cite[Lemma~4.1]{BGHL25}).

\begin{thmA}[\ref{thm:vsingidealcharwkcont}]
\label{thmA}
    Let $G$ be an \'etale groupoid with amenable isotropy groups. Then, $J = \{0\}$ if and only if for every $x\in G^{0}$, the left regular representation $\lambda_{G_{x}^{x}}$ is weakly contained in $\lambda_{G^{x}_{x}/\mathcal{X}(x)}$. 
\end{thmA}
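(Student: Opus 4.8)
**The plan is to characterize $J = \{0\}$ through the image of $J$ under restriction maps to the isotropy groups, using the compressibility machinery developed earlier.** The key insight motivating Theorem A is that the singular ideal, being a concrete ideal of functions vanishing on a dense set, should be detectable one unit at a time by restricting to the isotropy groups $G_x^x$, which are reductions to the (locally invariant) closed sets $\{x\}$. So the first move is to establish that $J = \{0\}$ if and only if the restriction of $J$ to every isotropy group $C^*_r(G_x^x)$ is zero. The forward direction is automatic; the reverse direction is where the density of the Hausdorff part $C$ and the structure of $\mathcal{X}(x)$ enter, since an element of $J$ that is nonzero must witness its non-Hausdorffness along some net in $C$ converging to a unit $x$, and this net produces the subgroups in $\mathcal{X}(x)$.

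Let me sketch the steps. First I would invoke the compressibility of the restriction map $\pi_x \colon C^*_r(G) \to C^*_r(G_x^x)$ to a $*$-homomorphism, as developed in the main body, to make sense of and compute $\pi_x(J)$. Second, using the interplay with the Hausdorff cover, I expect to identify $\pi_x(J)$ explicitly as the kernel (inside $C^*_r(G_x^x)$, which by amenability equals $C^*(G_x^x)$) of the representation $\lambda_{G_x^x/\mathcal{X}(x)}$ assembled from the limit-set subgroups. The heuristic is that a function singular at $x$ restricts to an element of the group algebra that must vanish under every quasi-regular representation $\lambda_{G_x^x/X}$ for $X \in \mathcal{X}(x)$, because these $X$ record exactly how the Hausdorff units pile up over $x$; conversely any such element is realized by a genuinely singular function. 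Third, amenability collapses $C^*_r(G_x^x) = C^*(G_x^x)$ and identifies $\lambda_{G_x^x} = \lambda_{G_x^x}^{\mathrm{full}}$, so that the kernel of $\lambda_{G_x^x/\mathcal{X}(x)}$ is zero precisely when $\lambda_{G_x^x}$ factors through $\lambda_{G_x^x/\mathcal{X}(x)}$, which is the weak-containment condition. Chaining these equivalences over all $x$ yields the theorem.

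Concretely, the argument runs: $J = \{0\}$ $\iff$ $\pi_x(J) = \{0\}$ for all $x$ (Step 1) $\iff$ $\ker\bigl(\lambda_{G_x^x/\mathcal{X}(x)}\bigr) = \{0\}$ in $C^*(G_x^x)$ for all $x$ (Step 2, the computation of $\pi_x(J)$) $\iff$ $\lambda_{G_x^x} \preceq \lambda_{G_x^x/\mathcal{X}(x)}$ for all $x$ (Step 3, amenability plus the general fact that $\lambda \preceq \rho$ iff $\ker \rho \subseteq \ker \lambda = \ker C^*(\Gamma) \to C^*_r(\Gamma)$, which under amenability is zero). I would take care to record why $\mathcal{X}(x)$ is nonempty and conjugation-invariant, as stated in the introduction, so that $\lambda_{G_x^x/\mathcal{X}(x)}$ is a well-defined unitary representation of $G_x^x$.

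**The main obstacle I expect is Step 2, the precise computation of $\pi_x(J)$ as $\ker(\lambda_{G_x^x/\mathcal{X}(x)})$.** Everything else is either formal (the reduction in Step 1, once compressibility is in hand) or standard representation theory (Step 3). The difficulty in Step 2 is twofold: first, because $\pi_x$ is only compressible to a $*$-homomorphism rather than being one outright, one cannot naively push ideals forward, so the identification of $\pi_x(J)$ must be argued through the compressing projections and the Hausdorff cover rather than by abstract nonsense. Second, matching the limit sets $X \in \mathcal{X}(x)$ arising from nets in $C$ with the quasi-regular representations $\lambda_{G_x^x/X}$ requires showing that the way Hausdorff units accumulate over $x$ is faithfully encoded — both that every $X \in \mathcal{X}(x)$ contributes a constraint (so $\pi_x(J) \subseteq \ker \lambda_{G_x^x/\mathcal{X}(x)}$) and that these are the only constraints (so equality holds). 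This is exactly where the geometry of the Hausdorff cover and the ``maximality'' defining $\mathcal{X}(x)$ must be combined, and I anticipate this is the technical heart of the paper that the earlier sections on compressible maps are designed to support.
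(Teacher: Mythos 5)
Your proposal follows essentially the same route as the paper: reduce $J=\{0\}$ to the vanishing of the isotropy fibres $\eta_x(J)$, compute each fibre via the compressibility of the restriction maps and the embedding into the (essential) Hausdorff cover as the kernel of the map onto the coset-groupoid C*-algebra $C^*_r(G^x_x\cdot\pi^{-1}_{ess}(x))$ (this is Theorem \ref{thm:isofibre}, with amenability of the isotropy collapsing the exotic norms as in Corollary \ref{cor:isofibrewhenregular}), and then translate vanishing of that kernel into weak containment via the standard fact recorded in Lemma \ref{lem:groupidealchar}. You correctly identify the fibre computation as the technical heart; the only points you gloss over that the paper must handle are that the fibre lives in a possibly exotic completion $C^*_{r(x)}(G^x_x)$ (hence the projection $p$ onto $C^*_{\widehat{r(\mathcal{X})}}(G^x_x)$) and the reduction to extremely dangerous points, both of which become harmless under the amenability hypothesis.
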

More generally, the above theorem holds when the (\'etale) groupoid of cosets $G^{x}_{x}\cdot\mathcal{X}(x)$ of the subgroups $\mathcal{X}(x)$ (see Section \ref{ss:charvsing}) is amenable when $\{x\}\notin \mathcal{X}(x)$ (Theorem \ref{thm:vsingidealcharwkcont}).

Weak containment in the above case is equivalent to the property that for every $\epsilon > 0$ and finite set $F\subseteq G^{x}_{x}\setminus \{x\}$, there are vectors $\psi_{i}\in \ell^{2}(G^{x}_{x}/X_{i})$, where $X_{i}\in\mathcal{X}(x)$ for $i\leq n$ such that $$\sum^{n}_{i=1}\langle \psi_{i}, \psi_{i}\rangle =1\text{ and }|\sum^{n}_{i=1}\langle g\cdot\psi_{i}, \psi_{i}\rangle|\leq \epsilon,$$ for all $g\in F$. Essentially, it means $\{x\}\in\mathcal{X}(x)$ at the level of group representation theory of $G^{x}_{x}$.

In the general case, our characterization says $\{x\}\in\mathcal{X}(x)$ at the level of \emph{local groupoid representation theory} about $G^{x}_{x}$. We present it now.

If $g\in G^{x}_{x}$, $X\in\mathcal{X}(\tilde{x})$ and $U_{g}$ is an open bisection containing $g$, we can define a partial isometry on $\ell^{2}(G_{\tilde{x}}/X)$ by $U_{g}\cdot \delta_{hX} = 0$  when $r(h)\notin s(U_{g})$ and $U_{g}\cdot \delta_{hX} = \delta_{\tilde{g}hX}$ when $r(h)\in s(U_{g})$, where $\tilde{g}\in U_{g}$ is the unique element such that $s(\tilde{g}) = r(h)$.

Now, choose a bisection $U_{g}$ for every $g\in G^{x}_{x}$. We say $\lambda_{G^{x}_{x}}$ is \emph{$G$-weakly contained in $\lambda_{G^{x}_{x}/\mathcal{X}(x)}$} (Definition \ref{dfn:G-wkcomtain}) if for every $\epsilon > 0$, finite set $F\subseteq G^{x}_{x}\setminus \{x\}$ and open neighbourhood $U$ of $x$ with $U\subseteq \bigcap_{g\in F}r(U_{g})\cap s(U_{g})$, there are vectors $\psi_{i}\in \ell^{2}(G^{U}_{x_{i}}/X_{i})$, where $X_{i}\in\mathcal{X}(x_{i})$, $x_{i}\in U$ for $i\leq n$, such that 

    $$\sum^{n}_{i=1}\langle \psi_{i}, \psi_{i}\rangle = 1\text{ and }|\sum^{n}_{i=1}\langle U_{g}\cdot\psi_{i},\psi_{i}\rangle|\leq\epsilon,$$ for all $g\in F$.
This definition is independent of the bisections chosen and it
is dependent only on the germ isomorphism class of $G$ about $G^{x}_{x}$; see Definition \ref{dfn:hull}, \ref{dfn:germiso} and Proposition \ref{prp:dependsongerm}.

\begin{thmB}[\ref{thm:vsingidealcharlocwkcont_ess}]
    Let $G$ be an \'etale groupoid. Then, $J = \{0\}$ if and only if for every $x\in G^{0}$, $\lambda_{G^{x}_{x}}$ is $G$-weakly contained in $\lambda_{G^{x}_{x}/\mathcal{X}(x)}$.
\end{thmB}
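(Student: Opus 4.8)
After the reduction explained in the introduction to groupoids covered by countably many open bisections --- which is what makes the Hausdorff part $C$ dense in $G^{0}$ --- the plan is to recast $J=\{0\}$ as an equality of norms and then to compute those norms locally at each unit. Since $J$ is a closed two-sided ideal and the essential C*-algebra $C^{*}_{r}(G)/J$ carries the quotient norm, the statement $J=\{0\}$ is equivalent to saying that the quotient map $C^{*}_{r}(G)\to C^{*}_{r}(G)/J$ is injective, hence isometric; that is, the reduced norm and the essential norm coincide on $C^{*}_{r}(G)$. As the essential norm is always dominated by the reduced norm, the entire content is the reverse inequality, which I would establish one unit at a time.

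The engine for localizing is the family of restriction maps to reductions at closed locally invariant subsets of $G^{0}$. Each isotropy group $G^{x}_{x}$ is the reduction to the locally invariant singleton $\{x\}$; by the earlier results of the paper these restriction maps, although not *-homomorphisms, are compressible to *-homomorphisms, so they detect norms and two-sided ideals as faithfully as genuine *-homomorphisms do. Using this compressibility together with the Hausdorff cover of \cite{BGHL25}, I would show that near a unit $x$ the reduced norm retains the full left regular representation $\lambda_{G^{x}_{x}}$, whereas the essential norm is computed by the quasi-regular representations obtained as limits along $C$, namely by $\lambda_{G^{x}_{x}/\mathcal{X}(x)}$ together with its spreading over nearby units in a neighbourhood $U$ --- which is precisely what introduces the Hilbert spaces $\ell^{2}(G^{U}_{x_{i}}/X_{i})$ and the partial isometries $U_{g}$ into the criterion. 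Corollary \ref{cor:compactness}, together with the conjugation-invariance and compactness of $\mathcal{X}(x)$, guarantees that the maximal limit subgroups exhaust the directions of approach, so that the supremum defining the local essential norm is attained on exactly these data. This identification is the heart of the argument and the step I expect to be hardest: one must match the fibres of the Hausdorff cover over $x$, at the level of C*-norms, with exactly the subgroups in $\mathcal{X}(x)$, and confirm that the $U_{g}$ implement the local action correctly, so that no further contributions to the essential norm appear.

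Granting this local description, the equality of the two norms near $x$ becomes, essentially verbatim, the inequality $\|f\|_{\lambda_{G^{x}_{x}}}\le\|f\|_{\lambda_{G^{x}_{x}/\mathcal{X}(x)}}$ spread over a neighbourhood, which is exactly the statement that $\lambda_{G^{x}_{x}}$ is $G$-weakly contained in $\lambda_{G^{x}_{x}/\mathcal{X}(x)}$ in the sense of the definition preceding the theorem. The two directions then follow. For the backward implication, if $G$-weak containment holds at every $x$ then the local reduced norm of any $f$ is dominated by its local essential norm, and taking suprema over $x$ gives $\|f\|\le\|f\|_{\mathrm{ess}}$, whence $J=\{0\}$. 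For the forward implication I would argue contrapositively: a failure of $G$-weak containment at some $x$ supplies a finite set $F\subseteq G^{x}_{x}\setminus\{x\}$, a neighbourhood $U$, and an $\epsilon>0$ witnessing a genuine gap between $\lambda_{G^{x}_{x}}$ and $\lambda_{G^{x}_{x}/\mathcal{X}(x)}$; I would convert this gap into a self-adjoint $f\in C^{*}_{r}(G)$ supported near $x$ with $\|f\|>\|f\|_{\mathrm{ess}}$, that is, into a nonzero element of $J$. Finally, Proposition \ref{prp:dependsongerm} ensures the construction is insensitive to the choice of bisections $U_{g}$, so the criterion is well posed and the equivalence is complete.
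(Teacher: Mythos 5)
Your outline follows the paper's strategy: localize at each unit via the compressible restriction maps $\eta_{x}$, use the Hausdorff cover to identify the fibre of the essential quotient over $x$ with the coset groupoid $G^{x}_{x}\cdot\pi^{-1}_{ess}(x)$, and read off the $G$-weak containment condition. But as written it is an outline rather than a proof, and it is thin exactly where all the work lives. The ``identification'' you defer --- matching the fibres of the Hausdorff cover over $x$ with $\mathcal{X}(x)$ at the level of C*-norms --- is the content of Theorem \ref{thm:isofibre}: one must show that the singular ideal $J=\ker(\iota_{ess})$ actually surjects under $\eta_{x}$ onto $\ker(q_{x})$, which requires Theorem \ref{thm:kernelsurjects} (the kernel-surjection property for commuting squares of compressible maps, fed with the compatible approximate units $u_{\lambda}\mapsto u_{\lambda}\circ\pi_{ess}$), and then Lemma \ref{lem:groupidealcharlocwkcont} to convert $\ker(C^{*}_{\widehat{r(\mathcal{X})}}(G^{x}_{x})\to C^{*}_{r(\mathcal{X})}(G^{x}_{x}))=0$ into the statement about vectors $\psi_{i}\in\ell^{2}(G^{U}_{x_{i}}/X_{i})$. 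Naming this as the hardest step is not the same as carrying it out; without it the equivalence between $J_{x}=0$ and the $G$-weak containment condition is unestablished.

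Separately, your backward implication is misdescribed. You claim $G$-weak containment at every $x$ gives $\|f\|\le\|f\|_{\mathrm{ess}}$ by ``taking suprema over $x$'' of local norms. The restriction $\eta_{x}$ lands in the isotropy group algebra $C^{*}_{r(x)}(G^{x}_{x})$, and $\|\eta_{x}(f)\|$ does not dominate $\|\lambda_{x}(f)\|$: restriction to $G^{x}_{x}$ discards most of the source fibre $G_{x}$, so no supremum of the local isotropy norms recovers the reduced norm. The mechanism that actually works is an ideal/faithfulness argument: $G$-weak containment at $x$ gives $p(J_{x})=0$, hence $\lambda_{G^{x}_{x}}(\eta_{x}(J))=\lambda_{G^{x}_{x}}(p(J_{x}))=0$ for every $x$; if $J$ contained a nonzero element $f$, then $f^{*}f\in J$ would be a nonzero positive element with $f|_{G_{x}}\neq 0$ for some $x$, and Lemma \ref{lem:simplelem} would force $\lambda_{G^{x}_{x}}(\eta_{x}(f^{*}f))\neq 0$, a contradiction. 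This detects nonzero elements of the ideal without comparing norms of general elements, and it is the step your sketch needs in place of the supremum argument.
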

By an application of \cite[Lemma~1.9]{NS23}, it is easy to see  $\lambda_{G^{x}_{x}}$ is never $G$-weakly contained in $\lambda_{G^{x}_{x}/\mathcal{X}}$ if $G^{x}_{x}$ is torsion free and $\{x\}\notin \mathcal{X}(x)$ (see Remark \ref{rmk:torfree}). Moreover, $\{x\}\notin \mathcal{X}(x)$ if and only if $x$ is \emph{extremely dangerous} in the sense of \cite{NS23} (Corollary \ref{cor:edangptchar}). Thus, for groupoids with torsion free isotropy groups, $J=\{0\}$ if and only if $G$ has no extremely dangerous points, recovering the characterization in \cite{NS23}. Our simplification of the characterization in \cite{BGHL25} follows from Theorem H and I, as discussed later.

Now, we discuss the proof of Theorem A and B. By \cite{CN22}, there are c.p.c. maps $\eta_{x}:C^{*}_{r}(G)\to C^{*}_{e}(G^{x}_{x})$ obtained by restriction, where $``e"$ denotes a potentially exotic $C^{*}$-norm. This family of maps is faithful in the sense that a positive element $a\in C^{*}_{r}(G)$ is zero if and only if $\eta_{x}(a) = 0$ for all $x\in G^{0}$. Hence, $J = \{0\}$  if and only if $J_{x}:= \eta_{x}(J) = 0$ for all $x\in G^{0}$. Our characterization follows from 
the calculation of these isotropy fibres $J_{x}$, which are ideals by \cite{CN24}. To do this, we determine the commuting square 

$$\begin{tikzcd}
C^{*}_{r}(G) \arrow[r] \arrow[d, "\eta_{x}"] & {C^{*}_{r,ess}(G)} \arrow[d, dashed] \\
C^{*}_{e}(G^{x}_{x}) \arrow[r, dashed]       & ?                                 
\end{tikzcd}$$ where $C^{*}_{r,ess}(G):= C^{*}_{r}(G)/J$, and show the kernel of the top map, the singular ideal, surjects onto the kernel of the bottom.

As the quotient $C^{*}_{r,ess}(G):= C^{*}_{r}(G)/J$ is not spatially implemented, it is not clear what ? should be. However, following the philosophy developed in \cite{BGHL25} that a non-Hausdorff groupoid C*-algebra can be understood via its embedding into the C*-algebra of its \emph{Hausdorff cover} $\tilde{G}$, we show the above diagram can be embedded into the commuting square
$$
\begin{tikzcd}
C^{*}_{r}(\tilde{G}) \arrow[r] \arrow[d] & C^{*}_{r}(\tilde{G}_{ess}) \arrow[d]   \\
C^{*}_{e}(\tilde{G}|_{\pi^{-1}(x)}) \arrow[r]                & C^{*}_{e}(\tilde{G}|_{\pi^{-1}_{ess}(x)})
\end{tikzcd}$$ where all maps are restrictions, $\pi:\tilde{G}^{0}\to G^{0}$ is the natural projection $X\mapsto X\cap G^{0}$ (defined above \cite[Lemma~3.5]{BGHL25}) and $\pi_{ess} = \pi|_{\tilde{G}^{0}_{ess}}$. Again, we use the notation $``e"$ for a possibly exotic C*-norm. 

The groupoids on the bottom line have a very specific form; each element in $\pi^{-1}(x)$ is a subgroup of $G^{x}_{x}$, and $\tilde{G}|_{\pi^{-1}(x)}$ is the groupoid $G^{x}_{x}\cdot\pi^{-1}(x)$ of cosets of subgroups in $\pi^{-1}(x)$ (Proposition \ref{prp:rest=cosetgpd}). Moreover, $\pi^{-1}_{ess}(x) = \mathcal{X}(x)$. The embedding $C^{*}_{e}(G^{x}_{x})\subseteq C^{*}_{e}(\tilde{G}|_{\pi^{-1}(x)})$ is induced from $\mathbb{C}[G^{x}_{x}]\subseteq C_{c}(G^{x}_{x}\cdot\pi^{-1}(x))$ via $a\in\mathbb{C}[G]\mapsto a(gX) = \sum_{h\in gX}a(h)$, $g\in G^{x}_{x}$, $X\in \pi^{-1}(x)$. The embedding then determines $?$ and the maps into it.

Now, there are two problems remaining. It is not immediately clear that the restriction $C_{c}(\tilde{G})\to C_{c}(\tilde{G}|_{\pi^{-1}(x)})$ extends to C*-completions. An arbitrary restriction would not necessarily, as the reduction might not be \'etale! Moreover, we still need to know the kernel of the top map in the first diagram surjects onto the kernel of the bottom, which is not true for a general commuting square. This fact holds if all maps were *-homomorphisms and the left vertical map had an approximate unit for its kernel that maps to an approximate unit for the kernel of the right vertical map.

What we found in the process of solving these two problems is a class of c.p.c. maps between (pre-)C*-algebras that behave like *-homomorphisms in many respects, and are abundant amongst \'etale groupoid C*-algebras.

A linear map $\eta:\mathcal{A}\to \mathcal{B}$ between pre-C*-algebras is \emph{compressible to a *-homomorphism} $\eta:\mathcal{C}\to \mathcal{B}$ if $\mathcal{C}\subseteq \mathcal{A}$ is a *-sub-algebra and for every $a\in\mathcal{A}$ and $\epsilon > 0$ there is $c\in\mathcal{C}$ and $\phi\in \tilde{\mathcal{C}}$ (the unitization of $\mathcal{C}$) such that $\|\phi\|\leq 1$, $\eta(\phi) = 1$, $\eta(a) = \eta(c)$ and $\|\phi^{*}a\phi - c\|\leq \epsilon $.

We prove that compressible maps behave like *-homomorphisms; they are c.p.c maps (Corollary \ref{cor:cpc}), send ideals to ideals in the image (Corollary \ref{cor:imidealisideal}), and satisfy the important and useful norm equation below.

\begin{thmC}[\ref{thm:normeqn}]

If $\eta:\mathcal{A}\to\mathcal{B}$ is compressible to a bounded *-homomorphism $\eta|_{\mathcal{C}}$, then for any approximate unit $(u_{\lambda})$ for the kernel of the completion $\eta: C\to B$ and $a\in\mathcal{A}$, we have

\begin{equation*}
    \|\eta(a)\| = \lim_{\lambda}\|(1-u_{\lambda})a(1-u_{\lambda})\|.
\end{equation*}

\end{thmC}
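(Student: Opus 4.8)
The plan is to reduce the identity for a general $a \in \mathcal{A}$ to the classical case of an element of the completion $C$, using compressibility to replace $a$ by an element $c \in \mathcal{C}$ with the same $\eta$-value while keeping the two-sided cut-downs by $1 - u_\lambda$ under control. First I would establish the statement for $c \in C$: for an approximate unit $(u_\lambda)$ of the ideal $K := \ker(\eta \colon C \to B)$ (with $0 \le u_\lambda \le 1$, so $\|1-u_\lambda\|\le 1$), one has $\lim_\lambda \|(1-u_\lambda)c(1-u_\lambda)\| = \|\eta(c)\|$. This is standard: writing $q \colon C \to C/K \cong \eta(C)$ for the quotient map, its unital extension satisfies $q(u_\lambda) = 0$, so $q\big((1-u_\lambda)c(1-u_\lambda)\big) = q(c)$ and contractivity of $q$ gives $\|(1-u_\lambda)c(1-u_\lambda)\| \ge \|\eta(c)\|$ for every $\lambda$; the reverse bound follows from $\|(1-u_\lambda)c(1-u_\lambda)\| \le \|(1-u_\lambda)c\|$ together with the classical fact $\lim_\lambda \|(1-u_\lambda)c\| = \|c + K\| = \|\eta(c)\|$ for approximate units of ideals.

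Next, given $a \in \mathcal{A}$ and $\epsilon > 0$, I would invoke compressibility to produce $c \in \mathcal{C}$ and $\phi \in \tilde{\mathcal{C}}$ with $\|\phi\| \le 1$, $\eta(\phi) = 1$, $\eta(a) = \eta(c)$ and $\|\phi^* a \phi - c\| \le \epsilon$. The crucial observation is that $\eta(\phi) = 1$ forces $\phi = 1 + c_0$ with $c_0 \in \mathcal{C}$ and $\eta(c_0) = 0$, i.e.\ $\phi - 1 = c_0 \in K$ (comparing scalar and $\mathcal{B}$-parts in the unitization $\tilde{\mathcal{B}} = \mathcal{B}\oplus\CC 1$). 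Hence the defining property of the approximate unit yields $\|(\phi-1)(1-u_\lambda)\| \to 0$ and $\|(1-u_\lambda)(\phi-1)\| \to 0$. Writing $\phi^* a \phi - a = (\phi^*-1)a\phi + a(\phi-1)$ and cutting down by $1-u_\lambda$ on both sides, submultiplicativity together with $\|\phi\| \le 1$ and $\|1-u_\lambda\|\le 1$ then shows
$$\big\| (1-u_\lambda)\phi^* a \phi (1-u_\lambda) - (1-u_\lambda) a (1-u_\lambda)\big\| \longrightarrow 0.$$
On the other hand $\|(1-u_\lambda)(\phi^* a \phi - c)(1-u_\lambda)\| \le \|\phi^* a \phi - c\| \le \epsilon$ for every $\lambda$, so the triangle inequality combined with the first step gives $\limsup_\lambda \big|\,\|(1-u_\lambda)a(1-u_\lambda)\| - \|\eta(c)\|\,\big| \le \epsilon$. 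Since $\|\eta(c)\| = \|\eta(a)\|$ and $\epsilon > 0$ is arbitrary, the $\limsup$ and $\liminf$ coincide and the limit equals $\|\eta(a)\|$.

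I expect the main obstacle to be the bookkeeping in the second step, namely justifying that the cut-down of $\phi^* a \phi$ by $1 - u_\lambda$ converges to the cut-down of $a$ itself; this is exactly where compressibility is used, since $\phi$ differs from $1$ only by an element of $K$, and it is precisely the approximate-unit property that absorbs this difference asymptotically. A secondary point requiring care is that $a \in \mathcal{A}$ need not lie in $C$, so all products $(1-u_\lambda)a(1-u_\lambda)$ and $(1-u_\lambda)\phi^* a\phi(1-u_\lambda)$ must be interpreted in the ambient completion $A \supseteq C$ (with $u_\lambda, \phi \in \tilde{C} \subseteq \tilde{A}$), and one must check that the only facts invoked — submultiplicativity of the norm, $\|1-u_\lambda\|\le 1$, and $\|(\phi-1)(1-u_\lambda)\|, \|(1-u_\lambda)(\phi-1)\| \to 0$ — remain available at this level. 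Notably the argument uses only the generic properties of an approximate unit for $K$, so it applies to any such net and requires no quasicentrality.
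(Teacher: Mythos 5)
Your proposal is correct and follows essentially the same route as the paper's proof: both use the compressibility data to replace $a$ by $c$, observe that $1-\phi$ lies in $\ker(\eta\colon C\to B)$ so the approximate unit absorbs it and makes the cut-downs of $\phi^*a\phi$ and $a$ asymptotically agree, and then invoke the classical approximate-unit norm identity for the element $c\in C$ (which the paper asserts and you prove in detail). No gaps.
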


This result was inspired by the paper \cite{CN22}, where a similar norm equation is proven for the restriction map $C_{r}^{*}(G)\to C_{e}^{*}(G^{x}_{x})$ (see \cite[Equation~2.3]{CN22}). In fact, our definition of compressibility arose from a desire to understand \cite{CN22} using a representation-theoretic-free approach.

Most importantly for our discussion, in a commuting square, the kernel of the top map surjects to the kernel of the bottom with the same hypothesis as for *-homomorphisms.

\begin{thmD}[\ref{thm:kernelsurjects}]
    Suppose 

    $$
\begin{tikzcd}
A_{1} \arrow[r, "i"] \arrow[d, "\eta_{1}"] & A_{2} \arrow[d, "\eta_{2}"] \\
B_{1} \arrow[r, "j"]                       & B_{2}                      
\end{tikzcd}$$ is a commutative diagram of C*-algebras, where $i$ and $j$ are *-homomorphisms and $\eta_{1},\eta_{2}$ are compressible to *-homomorphisms $\eta_{1}:C_{1}\to B_{1}$, $\eta_{2}:C_{2}\to B_{2}$ with $\eta_{1}$ surjective. Assume that there is an approximate unit $(u_{\lambda})$ for $\ker(\eta_{1}:C_{1}\to B_{1})$ such that $(i(u_{\lambda}))$ is an approximate unit for $\ker(\eta_{2}:C_{2}\to B_{2})$. 

Then, $\eta_{1}(\text{ker}(i)) = \text{ker}(j)$. Additionally, if $i(C_{1})\subseteq C_{2}$, then $\eta_{1}(\text{ker}(i)\cap C_{1}) = \ker(j)$.
\end{thmD}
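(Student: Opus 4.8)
The plan is to establish the two inclusions of $\eta_1(\ker i)=\ker j$ separately and to extract the refined equality from the same argument. The inclusion $\eta_1(\ker i)\subseteq\ker j$ is immediate: if $a\in\ker i$ then commutativity gives $j(\eta_1(a))=\eta_2(i(a))=0$, so $\eta_1(a)\in\ker j$. For the reverse inclusion I would first record that $\eta_1|_{C_1}\colon C_1\to B_1$ is surjective. Indeed, compressibility forces every value of $\eta_1$ to be attained already on $C_1$, so $\eta_1(C_1)$ is dense in $\eta_1(A_1)=B_1$; being the image of a $*$-homomorphism it is also closed, whence $\eta_1(C_1)=B_1$.

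Next, fix $b\in\ker j$ and, using surjectivity, choose $c\in C_1$ with $\eta_1(c)=b$. Commutativity gives $\eta_2(i(c))=j(\eta_1(c))=j(b)=0$. The key step is to feed this into the norm equation of \cref{thm:normeqn} for $\eta_2$ relative to the approximate unit $(i(u_\lambda))$ of $\ker(\eta_2\colon C_2\to B_2)$:
\[
\lim_\lambda \bigl\|(1-i(u_\lambda))\,i(c)\,(1-i(u_\lambda))\bigr\| = \|\eta_2(i(c))\| = 0.
\]
Since $i$ is a $*$-homomorphism (extended to unitizations) and $u_\lambda,c\in C_1$, the compressed element equals $i(c_\lambda)$ with $c_\lambda:=(1-u_\lambda)\,c\,(1-u_\lambda)\in C_1$, so $\|i(c_\lambda)\|\to 0$. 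At the same time each $u_\lambda$ lies in $\ker(\eta_1\colon C_1\to B_1)$, hence $\eta_1(u_\lambda)=0$, and multiplicativity of $\eta_1|_{C_1}$ gives $\eta_1(c_\lambda)=(1-\eta_1(u_\lambda))\,\eta_1(c)\,(1-\eta_1(u_\lambda))=b$ for every $\lambda$. The point of routing through \cref{thm:normeqn} is that it never requires $i(c)$ to lie in $C_2$, which is what lets the argument run even without the hypothesis $i(C_1)\subseteq C_2$.

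To finish, I would convert the approximate vanishing of $i(c_\lambda)$ into an exact preimage in $\ker(i)\cap C_1$. The restriction $i|_{C_1}$ induces an isometric isomorphism $C_1/(\ker(i)\cap C_1)\cong i(C_1)$, so $\|i(c_\lambda)\|=\operatorname{dist}(c_\lambda,\ker(i)\cap C_1)\to 0$; choosing $k_\lambda\in\ker(i)\cap C_1$ with $\|c_\lambda-k_\lambda\|\to 0$ and using that $\eta_1$ is contractive (\cref{cor:cpc}) yields $\eta_1(k_\lambda)=b-\eta_1(c_\lambda-k_\lambda)\to b$. Finally $\eta_1(\ker(i)\cap C_1)$ is closed, being the image of the closed ideal $\ker(i)\cap C_1$ under the surjective $*$-homomorphism $\eta_1|_{C_1}$ (its preimage $(\ker(i)\cap C_1)+\ker(\eta_1|_{C_1})$ is a sum of closed ideals, hence closed), so $b\in\eta_1(\ker(i)\cap C_1)$. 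Chaining the inclusions, $\ker j\subseteq\eta_1(\ker(i)\cap C_1)\subseteq\eta_1(\ker i)\subseteq\ker j$, and all three sets coincide, giving both the main equality and the refinement.

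The step I expect to be the main obstacle is precisely the one where $i(c)$ need not belong to $C_2$ and $\eta_1$ fails to be multiplicative on $\ker(i)$, so that a lift of $b$ cannot be manipulated purely algebraically. The norm equation of \cref{thm:normeqn} is what circumvents this, transporting the vanishing of $\eta_2(i(c))$ into the genuine norm convergence $i(c_\lambda)\to 0$ while pinning $\eta_1(c_\lambda)$ at $b$; the closedness of $\eta_1(\ker(i)\cap C_1)$, which hinges on surjectivity of $\eta_1|_{C_1}$, then upgrades the approximate preimages $k_\lambda$ to an exact one. Under the extra hypothesis $i(C_1)\subseteq C_2$ one can replace the appeal to \cref{thm:normeqn} by the elementary observation that $i(c)$ then lies in $\ker(\eta_2\colon C_2\to B_2)$ and is absorbed directly by its approximate unit $(i(u_\lambda))$, which I would record as the conceptual reason the refinement holds.
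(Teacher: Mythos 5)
Your proof is correct, but it takes a genuinely different route from the paper's. The paper passes to the quotients $A_1/\ker(i)$ and $B_1/\eta_1(\ker(i))$ and proves the induced map $j'$ is isometric; this requires first knowing that $\eta_1(\ker(i))$ is a closed ideal (Corollary \ref{cor:imidealisideal}) and that compressibility and the approximate unit descend to the quotient (Corollary \ref{cor:comppasstoquotient}), and it then computes $\|j'(b')\|=\|b'\|$ by applying the norm equation twice, once to $\eta_2$ and once to the quotient map $\eta_1'$, using that the induced $i'$ is isometric. You instead work entirely upstairs: you lift $b\in\ker(j)$ to $c\in C_1$, compress by $(1-u_\lambda)$ so that $\eta_1(c_\lambda)=b$ is pinned while Theorem \ref{thm:normeqn} for $\eta_2$ (with the transported approximate unit $(i(u_\lambda))$) forces $\|i(c_\lambda)\|\to 0$, and then convert this into an exact preimage via the standard isometry $C_1/(\ker(i)\cap C_1)\cong i(C_1)$ and the closedness of the $*$-homomorphic image $\eta_1(\ker(i)\cap C_1)$. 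Both arguments pivot on the same key input --- the norm equation for $\eta_2$ relative to $(i(u_\lambda))$ --- but yours avoids the quotient machinery entirely and, notably, delivers the refined equality $\eta_1(\ker(i)\cap C_1)=\ker(j)$ \emph{without} the hypothesis $i(C_1)\subseteq C_2$, which the paper only obtains under that extra assumption by rerunning the theorem on the restricted square $C_1\to C_2$. This is a genuine (if mild) strengthening, and I see no gap in it: every step uses only contractivity of $\eta_1$ (Corollary \ref{cor:cpc}), multiplicativity of $\eta_1$ and $i$ on $C_1$ and its unitization, and the fact that injective $*$-homomorphisms of C*-algebras are isometric.
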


Moreover, compressibility is a natural notion. It is preserved by taking completions of pre-C*-algebras (Corollary \ref{cor:completion}), matrix amplifications (Corollary \ref{cor:cpc}), and quotients (Corollary \ref{cor:comppasstoquotient}).

Now, to determine the isotropy fibres, it suffices to show the restriction maps on the groupoid pre-C*-algebras $\mathscr{C}_{c}(G)$ and $C_{c}(\tilde{G})$ are compressible to *-homomorphisms. In fact, we show much more.

Say for an \'etale groupoid $G$, a closed set $X\subseteq G^{0}$ is \emph{locally invariant} (Definition \ref{dfn:locinvar}) if for every $g\in G$ with $r(g), s(g)\in X$, there is an open neighbourhood $U$ of $g$ such that, for all $\tilde{g}\in U$, we have $r(\tilde{g})\in X$ if and only if $s(\tilde{g})\in X$ (Definition \ref{dfn:locinvar}). 
It is easy to see that every finite subset $F\subseteq G^{0}$ is locally invariant and if $X$ is locally invariant, then the pre-images $\pi^{-1}(X)$ and $\pi_{ess}^{-1}(X)$ are locally invariant inside the Hausdorff and essential Hausdorff cover (Proposition \ref{prp:locinvar&cover}).

\begin{thmE}[\ref{thm:locinvar&compressdense}, \ref{thm:locinvar&compress} and \ref{cor:quotientnorm}]
    Let $G$ be an \'etale groupoid, $X\subseteq G^{0}$ locally invariant and $\rho$ a pre-C*-norm for $\mathscr{C}_{c}(G)$. Then, the quotient norm $\rho(X)$ on $\mathscr{C}_{c}(G|_{X})$ induced from $\mathscr{C}_{c}(G)\to \mathscr{C}_{c}(G|_{X})$ is a C*-norm. The restriction map $C^{*}_{\rho}(G)\to C^{*}_{\rho(X)}(G|_{X})$ is compressible to the *-homomorphism $C^{*}_{\rho}(H)\to C^{*}_{\rho(X)}(G|_{X})$, where $H$ is any open subgroupoid such that $H^{0} = G^{0}$, $H|_{X} = G|_{X}$ and $X$ is $H$-invariant (which exists).
\end{thmE}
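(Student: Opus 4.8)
The plan is to establish the three assertions in turn: the existence of the subgroupoid $H$, the compressibility of the restriction map, and the fact that $\rho(X)$ is a C*-norm. The compressibility is the heart of the matter, and the C*-norm statement will fall out of it, so I would organize the argument around the compression.

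\emph{Construction of $H$.} I would define $H$ to be the subgroupoid generated by the collection $\mathcal U$ of all open bisections $U\subseteq G$ with the property that $r(\tilde g)\in X \iff s(\tilde g)\in X$ for every $\tilde g\in U$. Since $\mathcal U$ is closed under inversion, and products of its members again lie in $\mathcal U$ (the condition ``$r\in X\iff s\in X$'' is preserved under composition because the intermediate unit is shared), the set $H=\bigcup\mathcal U$ is an open subgroupoid and $X$ is $H$-invariant by construction. As $G^0\in\mathcal U$ we get $H^0=G^0$, and \Cref{dfn:locinvar} says precisely that every $g$ with $r(g),s(g)\in X$ lies in some member of $\mathcal U$, giving $G|_X\subseteq H$ and hence $H|_X=G|_X$. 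Conversely every bisection avoiding $X$ lies in $\mathcal U$, so such $H$ exists.

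\emph{The $*$-homomorphism and the compression.} Because $X$ is $H$-invariant, restriction $r_X\colon \mathscr C_c(H)\to\mathscr C_c(H|_X)=\mathscr C_c(G|_X)$ is multiplicative: for $\gamma\in H|_X$ every factorization $\gamma=\alpha\beta$ in $H$ has its middle unit in $X$, so the convolution is computed inside $H|_X$. Thus $r_X$ is a $*$-homomorphism, and I take $\mathcal B:=C^*_{\rho_H(X)}(G|_X)$, where $\rho_H(X)$ is the (C*) quotient norm from $C^*_\rho(H)$. Given $a\in\mathscr C_c(G)$ with compact support $K$, set $L=r(K)\cup s(K)$ and cover the compact set $G|_X\cap K$ by finitely many members $W_1,\dots,W_m\in\mathcal U$; put $U=\bigcup_i W_i\subseteq H$. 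Choosing a decreasing neighbourhood basis $V_n\downarrow X\cap L$ with $\bigcap_n\overline{V_n}=X\cap L$ and cutoffs $u_n\in C_c(G^0)$, $0\le u_n\le 1$, equal to $1$ near $X\cap L$ and supported in $V_n$, I claim $S_n:=\{g\in K: r(g),s(g)\in\operatorname{supp}u_n\}\subseteq U$ for large $n$. Indeed, otherwise one finds $g_n\in K\setminus U$ with $r(g_n),s(g_n)\in V_n$; a convergent subnet limits to some $g_\infty\in K$ with $r(g_\infty),s(g_\infty)\in\bigcap_n\overline{V_n}=X$, whence $g_\infty\in G|_X\cap K\subseteq U$, contradicting $g_n\notin U$ since $U$ is open. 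For such $n$, the element $c:=u_nau_n$ satisfies $\operatorname{supp}c\subseteq S_n\subseteq U\subseteq H$, so $c\in\mathscr C_c(H)$, and $r_X(c)=a|_{G|_X}=r_X(a)$ because $u_n\equiv1$ on $X\cap L$. Taking $\phi=1-w$ with $w\in C_c(G^0)$, $0\le w\le 1$, $w\equiv 0$ on $X$ and $w\equiv 1-u_n$ on $L$, gives $\phi\in\widetilde{\mathscr C_c(H)}$ with $\|\phi\|\le 1$ (as $C_0(G^0)$ carries its sup-norm in any C*-completion), $r_X(\phi)=1$, and $\phi^*a\phi=u_nau_n=c$. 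This verifies compressibility of $r_X\colon\mathscr C_c(G)\to\mathcal B$ to $r_X\colon\mathscr C_c(H)\to\mathcal B$ exactly; the $\epsilon$ in the definition is not even needed.

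\emph{Identifying the norm and completing.} By \Cref{cor:cpc}, compressibility makes $r_X\colon\mathscr C_c(G)\to\mathcal B$ contractive, so for $b\in\mathscr C_c(G|_X)$ we have $\|b\|_{\mathcal B}\le\inf\{\rho(f):f|_{G|_X}=b\}=\rho(X)(b)$; conversely $\mathscr C_c(H)\subseteq\mathscr C_c(G)$ shows every $H$-lift is a $G$-lift, so $\rho(X)(b)\le\rho_H(X)(b)=\|b\|_{\mathcal B}$. Hence $\rho(X)=\rho_H(X)=\|\cdot\|_{\mathcal B}$ is a C*-norm and $\mathcal B=C^*_{\rho(X)}(G|_X)$. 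Finally, \Cref{cor:completion} lifts the verified algebraic compressibility to the completions, yielding that $C^*_\rho(G)\to C^*_{\rho(X)}(G|_X)$ is compressible to the $*$-homomorphism $C^*_\rho(H)\to C^*_{\rho(X)}(G|_X)$, as required. The main obstacle I anticipate is the membership step $c=u_nau_n\in\mathscr C_c(H)$: in the non-Hausdorff setting, where quasi-continuous functions are delicate, one must argue that a compactly supported element of $\mathscr C_c(G)$ whose support lies inside the open subgroupoid $H$ genuinely decomposes over bisections of $H$. The compactness argument above, which traps $\operatorname{supp}c$ inside the finite union $U$ of $H$-bisections for large $n$, is exactly what makes this rigorous, and it is the one place where local invariance is used in an essential way.
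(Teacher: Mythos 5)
Your overall architecture --- construct $H$ as in Proposition \ref{prp:germexists}, verify compressibility at the level of $\mathscr{C}_c$, identify the quotient norm, and pass to completions via Corollary \ref{cor:completion} --- matches the paper, but your compression step takes a genuinely different route. The paper proves $\limsup_\lambda\|(1-u_\lambda)(a-c')(1-u_\lambda)\|_\rho=0$ for an arbitrary approximate unit $(u_\lambda)\subseteq C_c(G^0\setminus X)$, by reducing via Lemma \ref{lem:PO1} to a function supported on a single bisection and running an $\epsilon$-estimate near $G|_X$; the compression is then only approximate. You instead choose one cutoff $u$ adapted to the support of $a$ so that $\phi^*a\phi=uau=c$ holds exactly, with $c$ trapped in a finite union of $H$-bisections. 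This is legitimate and even sharper ($\epsilon=0$); what it does not recover is the paper's additional clause that \emph{every} approximate unit of $C_c(G^0\setminus X)$ is an approximate unit for $\ker(r_X|_{\mathscr{C}_c(H)})$, which is needed later (e.g.\ for the applications of Theorem \ref{thm:kernelsurjects}) but is not part of the statement you were asked to prove.

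Two genuine gaps remain. First, $c=uau\in\mathscr{C}_c(H)$ does not follow from the compactness trap alone: knowing that $\{g:c(g)\neq 0\}$ lies in a compact subset of $\bigcup_iW_i$ with each $W_i$ a bisection of $H$, you still need to write $c=\sum_if_i$ with $f_i\in C_c(W_i)$, and in the non-Hausdorff setting this decomposition is a nontrivial lemma proved by induction with Tietze extensions (\cite[Theorem~3.13]{BGHL25}, reproved here as Lemma \ref{lem:PO1}). You correctly name this as the obstacle but then assert that the compactness argument ``makes it rigorous''; it only localizes the support, so you must still invoke or prove the decomposition. Second, you never show that $r_X:\mathscr{C}_c(H)\to\mathscr{C}_c(G|_X)$ is surjective; without this, $\rho_H(X)$ and $\rho(X)$ are only defined on the image of $r_X$, and the claim that $\rho(X)$ is a C*-norm on all of $\mathscr{C}_c(G|_X)$ is incomplete (the paper handles this with a partition of unity over $H$-bisections followed by Tietze extension off the closed set $G|_X$). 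A minor repair: a compact subset of a locally compact Hausdorff space need not admit a countable decreasing neighbourhood basis, so replace the sequence $V_n\downarrow X\cap L$ by the net of all open neighbourhoods of $X\cap L$, or extract a single $V$ with $\{g\in K:r(g),s(g)\in V\}\subseteq U$ directly from the same compactness argument.
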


We also show the norm $\rho(X)$ as above depends only on the ``germ'' of $G$ about $G|_{X}$ (Proposition \ref{prp:dependsongerm}).

This concludes our discussion on our determination of the isotropy fibres. Theorem A follows almost immediately, and it is a further application of our results on compressible *-homomorphisms to obtain the vanishing characterization in Theorem B, but we leave that for the proofs (see Section \ref{ss:charvsing}).

Since the restrictions $\mathscr{C}_{c}(G)\to \mathscr{C}_{c}(G|_{F})$ to the \emph{discrete} groupoids $G|_{F}$,  where $F\subseteq G^{0}$ is finite, seperate points and behave like *-homomorphisms, it is possible to study \'etale groupoids as ``residually discrete groupoids'', in the same way residually finite C*-algebras can be studied using their finite dimensional representations. Our characterization of when $J=0$ in terms of a property of the isotropy groups is a specific instance of this method, which we advance further in upcoming joint work with Julian Gonzales \cite{GH25}.

In \cite[Theorem~4.2]{BGHL25}, it was characterized when $J\cap\mathscr{C}_{c}(G) = \{0\}$ in terms of a groupoid property of $G$. We provide an alternate characterization which is in principal easier to check as it is more algebraic.

\begin{thmF}[\ref{cor:valgidealcharspan}]
    Let $G$ be an \'etale groupoid. Then, $J\cap\mathscr{C}_{c}(G) = \{0\}$ if and only if for every $x\in G^{0}$ the set of linear equations
    $$\sum_{h\in gX}a_{h} = 0,\text{ }g\in G^{x}_{x},\text{ }X\in\mathcal{X}(x)$$
    has no non-zero finitely supported integer solution $(a_{h})_{h\in G^{x}_{x}}$.
\end{thmF}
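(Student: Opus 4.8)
The plan is to localize the problem at each unit $x\in G^{0}$ through the isotropy restriction maps $\eta_{x}$ and to identify the ``algebraic isotropy fibre'' of $J$ with the solution space of the given linear system. Recall from \cite{CN22} that the family $(\eta_{x})_{x\in G^{0}}$ is faithful on positive elements, and that on a compactly supported $f\in\mathscr{C}_{c}(G)$ the map $\eta_{x}$ restricts to the honest algebraic restriction $f\mapsto f|_{G^{x}_{x}}\in\CC[G^{x}_{x}]$. Combining this with the isotropy-fibre computation carried out for Theorem~A (namely $J_{x}=\eta_{x}(J)=\ker(C^{*}_{e}(G^{x}_{x})\to C^{*}_{e}(\tilde{G}|_{\mathcal{X}(x)}))$, using $\pi^{-1}_{ess}(x)=\mathcal{X}(x)$ and the coset-sum embedding $\CC[G^{x}_{x}]\hookrightarrow C_{c}(\tilde{G}|_{\pi^{-1}(x)})$, $a\mapsto(gX\mapsto\sum_{h\in gX}a_{h})$), I would first record the key algebraic identity: for $a\in\CC[G^{x}_{x}]$,
\[
a\in J_{x}\cap\CC[G^{x}_{x}]\iff \sum_{h\in gX}a_{h}=0\ \text{ for all }g\in G^{x}_{x},\ X\in\mathcal{X}(x),
\]
since the image of $a$ under the embedding lands in $C_{c}(\tilde{G}|_{\mathcal{X}(x)})$ and vanishes there precisely when every essential coset sum vanishes.

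With this identity in hand the forward implication is immediate. If $J\cap\mathscr{C}_{c}(G)\neq\{0\}$, choose $0\neq f$ in it; then $a:=f^{*}f$ is a nonzero positive element of the ideal $J$ lying in $\mathscr{C}_{c}(G)$, so by faithfulness $\eta_{x}(a)\neq 0$ for some $x$. As $\eta_{x}(a)=a|_{G^{x}_{x}}\in\CC[G^{x}_{x}]$ and $\eta_{x}(a)\in\eta_{x}(J)=J_{x}$, the identity above produces a nonzero finitely supported complex solution of the system at $x$.

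The reverse implication is the crux, and I expect it to be the main obstacle: from a nonzero $a\in\CC[G^{x}_{x}]$ whose essential coset sums all vanish one must manufacture a nonzero singular function in $\mathscr{C}_{c}(G)$, i.e.\ lift the fibre-singular datum geometrically through the non-Hausdorff structure. Choosing for each $h\in\operatorname{supp}(a)$ a small open bisection $U_{h}\ni h$ with a common source- and range-neighbourhood of $x$, set $f:=\sum_{h}a_{h}1_{U_{h}}$. Since over $x$ a bisection $U_{h}$ contains the single isotropy arrow $h$, one has $f|_{G^{x}_{x}}=a\neq 0$. Vanishing of the coset sums forces $f$ to be singular: for a net $x_{\lambda}\to x$ in the dense Hausdorff part $C$ with maximal limit set $X\in\mathcal{X}(x)$ (which exists by Corollary~\ref{cor:compactness}), lifting through $U_{g}$ yields arrows $\tilde{g}_{\lambda}$ with $s(\tilde{g}_{\lambda})=x_{\lambda}$ whose limit set is exactly $gX$; because $x_{\lambda}$ is Hausdorff, $\tilde{g}_{\lambda}$ is, for large $\lambda$, the unique arrow over $x_{\lambda}$ lying simultaneously in every $U_{h}$ with $h\in gX$, so $f(\tilde{g}_{\lambda})=\sum_{h\in gX}a_{h}=0$. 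The technical heart is to choose the $U_{h}$, using maximality of the limit sets to suppress the contributions of $h'\notin gX$, so that these zeros accumulate densely and hence $f^{-1}(0)$ is dense in $G$, giving $f\in J$. Equivalently, this step can be phrased as surjectivity of $\eta_{x}$ from $J\cap\mathscr{C}_{c}(G)$ onto $J_{x}\cap\CC[G^{x}_{x}]$, which also follows from the compressibility statements of Theorem~E applied to the locally invariant set $\{x\}$.

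Finally, the passage between complex and integer solutions is elementary. The system $\sum_{h\in gX}a_{h}=0$ is homogeneous with $\{0,1\}$-coefficients, hence is defined over $\QQ$; its complex solution space is the complexification of its rational solution space, so a nonzero complex solution exists if and only if a nonzero rational one does, and clearing denominators produces a nonzero finitely supported integer solution, with the converse being trivial. Therefore $J\cap\mathscr{C}_{c}(G)=\{0\}$ if and only if the displayed system has no nonzero finitely supported integer solution at any $x\in G^{0}$, which is the claim.
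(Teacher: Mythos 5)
Your proposal is correct and follows essentially the same route as the paper: localize via the restriction maps $\eta_{x}$, identify the algebraic isotropy fibre of $J$ with the kernel of the coset-sum map (the paper's Proposition \ref{prp:algisofibre} and Lemma \ref{lem:alggroupidealchar}), build a singular element of $\mathscr{C}_{c}(G)$ from a nonzero solution by summing bump functions over carefully shrunken bisections (you correctly isolate the same technical heart the paper handles via the condition $x\in\overline{C_{I}}$ whenever $C_{I}\neq\emptyset$), and pass from complex to integer solutions by rationality of the homogeneous system. The only quibbles are cosmetic: $1_{U_{h}}$ should be a compactly supported bump $(\phi\circ s)|_{U_{h}}$, and the parenthetical that the algebraic surjectivity ``also follows'' from Theorem E is optimistic, since the compressibility machinery yields $\eta_{x}(J\cap C^{*}_{r}(H))=\ker(q_{x})$ rather than the $\mathscr{C}_{c}$-level statement, which is why the paper does the explicit construction.
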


For clarity, finitely supported means the integers $a_{h} = 0$ for all but finitely many $h\in G^{x}_{x}$. We get several other characterizations of $J\cap\mathscr{C}_{c}(G) = \{0\}$ by applying Lemma \ref{lem:alggroupidealchar} to Theorem \ref{thm:valgidealfibres}, but we do not present these. The proof (of the contrapositive) of the ``if'' direction provides a new way to construct elements in the singular ideal.

Following a similar proof to Corollary \ref{cor:valgidealcharspan} and applying \cite[Theorem~4.2]{BGHL25}, it is easy to see the \emph{Steinberg algebra} of a ring $R$ and ample \'etale groupoid $G$ (see \cite{SS20}) has zero singular ideal if and only if the above equations in Theorem F have no non-zero solution in the group ring $\mathbb{Z}_{t}[G^{x}_{x}]$ for every $x\in G^{0}$ and order $t\in\mathbb{N}\cup\{0\}$ of some non-zero element in $R$, where $\mathbb{Z}_{t} = \mathbb{Z}/t\mathbb{Z}$ for $t > 0$ and $\mathbb{Z}_{0} = \mathbb{Z}$.

The algebraic nature of the characterization of $J\cap \mathscr{C}_{c}(G) = \{0\}$ means that it is easier to check in practice than our characterization of when $J = \{0\}$, which motivates the following.

\begin{qtn}
\label{qtn1}
    Let $G$ be an \'etale groupoid. Does $J\cap\mathscr{C}_{c}(G) = \{0\}$ imply $J = \{0\}$?
\end{qtn}

This question was shown in \cite[Theorem~4.7]{BGHL25} to have a positive answer for \'etale groupoids $G$ with $|\overline{G^{0}}|_{x}| < \infty$, for all $x\in G^{0}$. This is equivalent to $\pi^{-1}(x)$ consisting of a finite set of finite subgroups for all $x\in G^{0}$.

We introduce a related question for discrete groups. Let $\Gamma$ be a discrete group and $\mathcal{X}$ a set of subgroups invariant under conjugation and closed in $\{0,1\}^{\Gamma}$. Let $J_{\Gamma,\mathcal{X}}= \text{ker}(\lambda_{\Gamma/\mathcal{X}})$ inside the group $C^{*}$-algebra of $\Gamma$ with norm determined by the left regular representation and the quasi-regular representations associated to $X\in\mathcal{X}$ (Definition \ref{dfn:gpideal}). The question is the following.

\begin{qtn}
\label{qtn2}
    Let $\Gamma$ be a discrete group and $\mathcal{X}$ a closed and invariant set of subgroups. Does $J_{\Gamma,\mathcal{X}} \neq \{0\}$ imply  $J_{\Gamma,\mathcal{X}}\cap\mathbb{C}[\Gamma] \neq \{0\}$?
\end{qtn}

See Lemma \ref{lem:alggroupidealchar} for different characterizations of this question. We show the above two questions always have the same answer.

\begin{thmG}[\ref{cor:equalqs}]
    A positive answer to Question \ref{qtn1} is equivalent to a positive answer to Question \ref{qtn2}.
\end{thmG}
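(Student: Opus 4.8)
The plan is to use the isotropy-fibre computations to reduce both questions to a single statement about the group ideals $J_{\Gamma,\mathcal{X}}$ of \cref{dfn:gpideal}, and then to prove the two implications separately. The bridge is the fibre identification of \cref{thm:valgidealfibres}, together with $\pi_{ess}^{-1}(x)=\mathcal{X}(x)$ and the coset-groupoid description $\tilde{G}|_{\pi^{-1}(x)}=G^{x}_{x}\cdot\pi^{-1}(x)$ from \cref{prp:rest=cosetgpd}: for every \'etale groupoid $G$ and $x\in G^{0}$, the isotropy fibre $J_{x}=\eta_{x}(J)$ inside $C^{*}_{e}(G^{x}_{x})$ is exactly $J_{G^{x}_{x},\mathcal{X}(x)}$, and likewise $J_{x}\cap\mathbb{C}[G^{x}_{x}]=J_{G^{x}_{x},\mathcal{X}(x)}\cap\mathbb{C}[G^{x}_{x}]$. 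Since the family $(\eta_{x})_{x\in G^{0}}$ is faithful, this yields $J=\{0\}\iff J_{G^{x}_{x},\mathcal{X}(x)}=\{0\}$ for all $x$, and $J\cap\mathscr{C}_{c}(G)=\{0\}\iff J_{G^{x}_{x},\mathcal{X}(x)}\cap\mathbb{C}[G^{x}_{x}]=\{0\}$ for all $x$; note each $\mathcal{X}(x)$ is a closed conjugation-invariant set of subgroups of $G^{x}_{x}$, so it is admissible data for \cref{qtn2}.

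With this dictionary the implication ``\cref{qtn2} positive $\Rightarrow$ \cref{qtn1} positive'' is immediate and pointwise. If $J\cap\mathscr{C}_{c}(G)=\{0\}$, then $J_{G^{x}_{x},\mathcal{X}(x)}\cap\mathbb{C}[G^{x}_{x}]=\{0\}$ for every $x$; applying \cref{qtn2} to each pair $(G^{x}_{x},\mathcal{X}(x))$ gives $J_{G^{x}_{x},\mathcal{X}(x)}=\{0\}$ for every $x$, i.e.\ $J=\{0\}$.

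The content lies in ``\cref{qtn1} positive $\Rightarrow$ \cref{qtn2} positive''. First I would reduce to countable data. Suppose $J_{\Gamma,\mathcal{X}}\neq\{0\}$ while $J_{\Gamma,\mathcal{X}}\cap\mathbb{C}[\Gamma]=\{0\}$, and pick $0\neq a\in J_{\Gamma,\mathcal{X}}$ with $a=\lim_{n}a_{n}$, $a_{n}\in\mathbb{C}[\Gamma]$. Let $\Gamma_{0}$ be the countable subgroup generated by the supports of the $a_{n}$ and put $\mathcal{X}_{0}=\{X\cap\Gamma_{0}:X\in\mathcal{X}\}$, a closed $\Gamma_{0}$-invariant subset of $\{0,1\}^{\Gamma_{0}}$. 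Because $\lambda_{\Gamma/X}|_{\Gamma_{0}}$ decomposes over $\Gamma_{0}\backslash\Gamma/X$ into the quasi-regular representations $\lambda_{\Gamma_{0}/(gXg^{-1}\cap\Gamma_{0})}$ with $gXg^{-1}\cap\Gamma_{0}\in\mathcal{X}_{0}$, and $\lambda_{\Gamma}|_{\Gamma_{0}}$ is a multiple of $\lambda_{\Gamma_{0}}$, the $C^{*}_{e}(\Gamma)$-norm restricted to $\mathbb{C}[\Gamma_{0}]$ is precisely the $C^{*}_{e}(\Gamma_{0})$-norm defining $J_{\Gamma_{0},\mathcal{X}_{0}}$; hence $a\in C^{*}_{e}(\Gamma_{0})$ with $a\in J_{\Gamma_{0},\mathcal{X}_{0}}\setminus\{0\}$, and the same block decomposition shows $J_{\Gamma_{0},\mathcal{X}_{0}}\cap\mathbb{C}[\Gamma_{0}]=\{0\}$. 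So it suffices to refute \cref{qtn2} for the countable pair $(\Gamma_{0},\mathcal{X}_{0})$, for which $\mathcal{X}_{0}$ is compact metrizable. For this, I would construct an \'etale groupoid $G$, covered by countably many open bisections, with a distinguished unit $x_{0}$ satisfying $G^{x_{0}}_{x_{0}}=\Gamma_{0}$ and $\mathcal{X}(x_{0})=\mathcal{X}_{0}$, and with every other unit Hausdorff (so that $\mathcal{X}(x)=\{\{x\}\}$ and $J_{G^{x}_{x},\mathcal{X}(x)}=\{0\}$ for $x\neq x_{0}$). Granting such a $G$, all algebraic fibres vanish, so $J\cap\mathscr{C}_{c}(G)=\{0\}$; a positive answer to \cref{qtn1} then forces $J=\{0\}$, whence $J_{\Gamma_{0},\mathcal{X}_{0}}=J_{x_{0}}=\{0\}$, contradicting $a\neq 0$. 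This proves \cref{qtn2} and establishes the equivalence.

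The realization step is the one I expect to be the main obstacle. The construction must arrange that the maximal limit sets of nets of Hausdorff units approaching $x_{0}$ are exactly the prescribed subgroups $\mathcal{X}_{0}$---with no spurious subgroups appearing and none of the prescribed ones missing---while the isotropy at $x_{0}$ remains all of $\Gamma_{0}$; equivalently, the Hausdorff-cover fibre over $x_{0}$ must be precisely the coset groupoid $\Gamma_{0}\cdot\mathcal{X}_{0}$ with essential fibre $\mathcal{X}_{0}$. Since closedness and conjugation-invariance of $\mathcal{X}_{0}$ are exactly the structural properties forced on any $\mathcal{X}(x)$, I expect such a $G$ to exist and to be buildable as a groupoid of germs degenerating $\Gamma_{0}$ along $\mathcal{X}_{0}$, generalizing the explicit models of \cite{NS23} and \cite{BGHL25}; verifying the maximality clause defining membership $X\in\mathcal{X}(x_{0})$ for each prescribed $X$, and ruling out extra limit subgroups, is the delicate point.
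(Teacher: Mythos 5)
Your overall architecture matches the paper's: one direction via the isotropy-fibre dictionary, the other via a realization construction. But as written the proposal has one genuine gap and one imprecision worth flagging.

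The gap is the realization step for ``\cref{qtn1} positive $\Rightarrow$ \cref{qtn2} positive.'' You correctly identify that one must build, for a given pair $(\Gamma,\mathcal{X})$ with $\{e\}\notin\mathcal{X}$, a groupoid with a single non-Hausdorff unit $x_{0}$ whose essential fibre is exactly $\mathcal{X}$ and whose singular-ideal fibre is $J_{\Gamma,\mathcal{X}}$, and you correctly identify the maximality/no-spurious-limits verification as the delicate point --- but you then only conjecture that such a groupoid exists. That construction is the entire content of this direction (Section \ref{s:nonHDconstruct} and \cref{thm:nhdgpdconst}). The paper's model is explicit and simpler than a germ groupoid: take $G=\Gamma\times\{\infty\}\sqcup(\Gamma\cdot\mathcal{X})\times\mathbb{N}$, where $\Gamma\cdot\mathcal{X}$ is the coset groupoid, topologized so that the basic neighbourhoods of $(\gamma,\infty)$ are $\{(\gamma,\infty)\}\sqcup\gamma\mathcal{X}\times\{k\geq n\}$. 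This is an HLS-type construction in which the approximating fibres are copies of the coset groupoid rather than quotient groups; the computation $\pi^{-1}_{ess}((e,\infty))=\mathcal{X}\times\{\infty\}$ and $J_{(e,\infty)}=J_{\Gamma,\mathcal{X}}$ then follows almost by inspection (the Hausdorff cover splits as $C^{*}_{r}(\Gamma)\oplus C^{*}_{r}(\Gamma\cdot\mathcal{X})\otimes C(\{\infty\}\cup\mathbb{N})$, so the exotic norms collapse to $r\hat{\mathcal{X}}$ and \cref{cor:isofibrewhenregular} applies). Without producing some such model, the implication is unproven. (Your preliminary reduction to countable $\Gamma_{0}$ via \cref{prp:resttosg}-type restriction is sound and is in fact needed if one insists on the standing countably-many-bisections assumption, but it does not substitute for the construction.)

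The imprecision is in the dictionary. The identification $J_{x}=J_{G^{x}_{x},\mathcal{X}(x)}$ is not valid in general: by \cref{thm:isofibre} the fibre lives in a possibly exotic completion $C^{*}_{r(x)}(G^{x}_{x})$, and $p(J_{x})$ equals the kernel of $C^{*}_{\widehat{r(\mathcal{X})}}(G^{x}_{x})\to C^{*}_{r(\mathcal{X})}(G^{x}_{x})$, which coincides with $J_{G^{x}_{x},\mathcal{X}(x)}$ only when $r(\mathcal{X})$ is the reduced norm on the coset groupoid (\cref{cor:isofibrewhenregular}). Your argument for ``\cref{qtn2} positive $\Rightarrow$ \cref{qtn1} positive'' survives because the two one-sided statements it actually needs do hold in general: $J\neq 0$ forces $J_{G^{x}_{x},\pi^{-1}_{ess}(x)}\neq 0$ for some $x$ (\cref{cor:nonzerofibre}), and the algebraic fibre identity $\eta_{x}(J\cap\mathscr{C}_{c}(G))=J_{G^{x}_{x},\pi^{-1}_{ess}(x)}\cap\mathbb{C}[G^{x}_{x}]$ is exact (\cref{prp:algisofibre}); this is precisely how \cref{thm:valgideal=videal} is proved. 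But the clean ``iff'' you assert at the fibre level should not be claimed without the norm hypothesis.
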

To prove that a positive answer to Question \ref{qtn1} implies a positive answer to Question \ref{qtn2} we build, for each pair of discrete group $\Gamma$ and closed invariant set of subgroups $\mathcal{X}$, a non-Hausdorff groupoid $G_{(\Gamma,\mathcal{X})}$ with exactly one non-Hausdorff point $\infty$ in the unit space, such that $J_{\infty} = J_{\Gamma,\mathcal{X}}$. One can interpret this construction as a generalization of Willet's HLS groupoid construction \cite{W15}  to the non-Hausdorff case (see Section \ref{s:nonHDconstruct}).

In the absence of a positive answer to Question \ref{qtn2}, we can speak of the \emph{class} $\mathcal{I}$ of discrete groups $\Gamma$ and closed invariant set of subgroups $\mathcal{X}$ such that the question holds true for $(\Gamma,\mathcal{X})$. So far we have shown that this class is quite large.

\begin{thmH}[Section \ref{s:groupswint}]
\label{thmH}
    $(\Gamma,\mathcal{X})\in \mathcal{I}$ if any of the following conditions hold.
    \begin{itemize}
        \item $\Gamma$ is a direct limit of virtually torsion free solvable groups,
        \item $\mathcal{X}$ is finite,
        \item every $X\in\mathcal{X}$ is finite,
        \item every $X\in\mathcal{X}$ is normal and torsion free.
    \end{itemize}
\end{thmH}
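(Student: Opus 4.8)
The plan is to prove in each case the reformulation of membership in $\mathcal{I}$ supplied by Lemma~\ref{lem:alggroupidealchar}: writing $\mu:=\lambda_{\Gamma/\mathcal{X}}$, the pair $(\Gamma,\mathcal{X})$ lies in $\mathcal{I}$ exactly when faithfulness of $\mu$ on $\CC[\Gamma]$ forces $\lambda_{\Gamma}\prec\mu$. The first move is to remove the trivial subgroup from the discussion. If $\{e\}\in\mathcal{X}$ then $\lambda_{\Gamma}$ is a subrepresentation of $\mu$, so $J_{\Gamma,\mathcal{X}}=\{0\}$ and membership is automatic; hence I may assume $\{e\}\notin\mathcal{X}$. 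Since $\mathcal{X}$ is closed in $\{0,1\}^{\Gamma}$ and conjugation invariant, compactness produces a finite set $F_{0}\subseteq\Gamma\setminus\{e\}$ meeting every $X\in\mathcal{X}$ (a basic clopen neighbourhood of the point $1_{\{e\}}$ disjoint from $\mathcal{X}$). In each case the goal is then either to exhibit a nonzero finitely supported solution of the system in Theorem~\ref{cor:valgidealcharspan}, i.e. to show $\mu$ is not faithful on $\CC[\Gamma]$ so that the hypothesis is vacuous, or, when $\mu$ is faithful, to establish the weak containment $\lambda_{\Gamma}\prec\mu$ directly.

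Next I would record two permanence properties of $\mathcal{I}$. For an increasing union $\Gamma=\varinjlim_{n}\Gamma_{n}$ put $\mathcal{X}_{n}:=\{X\cap\Gamma_{n}:X\in\mathcal{X}\}$; this is the image of the compact space $\mathcal{X}$ under the continuous coordinate-restriction map, hence closed, and it is $\Gamma_{n}$-conjugation invariant. A finitely supported element lives in some $\CC[\Gamma_{n}]$, and for $a\in\CC[\Gamma_{n}]$ one has $\|\lambda_{\Gamma}(a)\|=\|\lambda_{\Gamma_{n}}(a)\|$, while the restriction $\mu|_{\Gamma_{n}}$ decomposes over the double cosets $\Gamma_{n}\backslash\Gamma/X$ into quasi-regular representations of $\Gamma_{n}$ attached to the subgroups in $\mathcal{X}_{n}$. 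Consequently both faithfulness of $\mu$ and the containment $\lambda_{\Gamma}\prec\mu$ are detected stage by stage, so $(\Gamma_{n},\mathcal{X}_{n})\in\mathcal{I}$ for all $n$ gives $(\Gamma,\mathcal{X})\in\mathcal{I}$. This reduces the first bullet to virtually torsion free solvable groups, and a parallel but finite computation (restriction and induction being finite, so that weak containment and faithfulness transfer) passes from a finite-index torsion free subgroup to the whole group, leaving the torsion free solvable case.

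The torsion free solvable case I would attack by induction on the derived length, the engine being the abelian situation. For $\Gamma$ torsion free abelian every $X$ is normal, so for any $g\in X$ one has $\lambda_{\Gamma/X}(\delta_{e}-\delta_{g})=0$; taking the witnessing elements in $F_{0}$ and forming
\[ p=\prod_{g\in F_{0}}(\delta_{e}-\delta_{g}) \]
gives an element of $\bigcap_{X\in\mathcal{X}}\ker\lambda_{\Gamma/X}$, and $p\neq 0$ because the integral group ring of a torsion free abelian (indeed of a torsion free solvable) group is a domain. Thus $\mu$ is not faithful and membership holds. The inductive step takes a term $N$ of the derived series, projects $\mathcal{X}$ to the closed invariant family $\{\overline{XN/N}\}$ on $\Gamma/N$ and restricts to $\{X\cap N\}$ on $N$, and uses a Mackey analysis of $\mu$ relative to $N$ (dualising the $N$-action and decomposing over the $\widehat{N}$-orbit space) to reduce the faithful-versus-weakly-contained dichotomy over $\Gamma$ to the corresponding dichotomies over $\Gamma/N$ (smaller derived length) and over the abelian $N$, where the explicit element above and \cite[Lemma~1.9]{NS23} apply. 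The three remaining bullets are handled from compactness of $\mathcal{X}$ in the same spirit: when every $X$ is normal (and torsion free) the element $p$ is built directly, its non-vanishing seen in a torsion free quotient; when $\mathcal{X}$ is finite the system is a finite intersection of kernels, reduced via the cores of the $X_{i}$ to the normal case; and when every $X$ is finite one works inside $C^{*}_{r}(\Gamma)$ (using $\lambda_{\Gamma/X}\subseteq\lambda_{\Gamma}$ for finite $X$) and propagates faithfulness to weak containment through the fixed-vector projections $\tfrac{1}{|X|}\sum_{x\in X}\delta_{x}$.

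The main obstacle is the inductive step for solvable groups. Unlike the normal case, a general $X\in\mathcal{X}$ in a solvable group can have trivial core, so the naive product $p$ need not annihilate $\lambda_{\Gamma/X}$ and may itself vanish; everything hinges on the Mackey reduction along $N$ being compatible with the two operations $X\mapsto\overline{XN/N}$ and $X\mapsto X\cap N$ on the closed invariant family, in particular on these induced families staying closed and invariant, on the $\widehat{N}$-orbit decomposition being uniform enough to preserve both faithfulness and weak containment, and on the domain property keeping the constructed elements nonzero throughout the derived series and the final passage to the direct limit.
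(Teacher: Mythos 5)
Your overall scaffolding (reduce to increasing unions, then to finite index, then to an abelian/normal base case handled by products $\prod_i(\delta_{e}-\delta_{h_i})$ that annihilate every $\lambda_{\Gamma/X}$) matches the paper's architecture, but the proposal has a genuine gap exactly where you flag the "main obstacle": the inductive step for torsion free solvable groups. A Mackey-type decomposition of $\lambda_{\Gamma/\mathcal{X}}$ along $N$ is not carried out, and it is not clear it can be made to work: the families $\{X\cap N\}$ and $\{\overline{XN/N}\}$ do not separately control $\mathcal{X}$, because a subgroup $X$ with $X\cap N=\{e\}$ contributes nothing on the $N$-side and its image in $\Gamma/N$ forgets how $X$ sits inside $\Gamma$. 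The paper's Proposition \ref{prp:freeabelianextAI} does something quite different and essentially algebraic: after reducing to $\Gamma/N\cong\ZZ$ it splits $\mathcal{X}$ into $\{X:X\cap N\neq\{e\}\}$ (handled by Property $AI$ of $N$) and $\mathcal{X}'=\{X:X\cap N=\{e\}\}$, observes that each $X\in\mathcal{X}'$ is infinite cyclic, and builds a \emph{finite} conjugation-invariant family $\mathcal{F}=\{F(X)\}$ with $F(X)=\bigcap_{g}\langle gf(g^{-1}Xg)g^{-1}\rangle$ of uniformly bounded index in $\langle f(X)\rangle$; only then does Lemma \ref{lem:prodelementconstr} apply, and the two resulting annihilating elements are multiplied with a separate argument that the product is nonzero. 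None of this uniform-index/finite-invariant-family mechanism appears in your sketch, and without it the induction on derived length does not close.

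Two further points. First, your non-vanishing of $p=\prod_{g\in F_0}(\delta_e-\delta_g)$ is justified by the zero-divisor property of group rings of torsion free (solvable) groups, but in the bullets "every $X$ normal and torsion free" and "$\mathcal{X}$ finite" the ambient group $\Gamma$ need not be torsion free, so there is no "torsion free quotient" in which to see $p\neq 0$; the paper avoids this entirely by the elementary observation in Lemma \ref{lem:prodelementconstr} that for nonzero $c$ and infinite $Y_i$ one can \emph{choose} $h_i\in Y_i$ with $c(\delta_e-\delta_{h_i})\neq 0$ (a finite support cannot be invariant under an infinite subgroup). Second, for the bullet "every $X\in\mathcal{X}$ finite" your appeal to the projections $\tfrac{1}{|X|}\sum_{x\in X}\delta_x$ only shows $\lambda_{\Gamma/\mathcal{X}}\prec\lambda_\Gamma$, which does not by itself yield $J_{\Gamma,\mathcal{X}}\neq 0\Rightarrow J_{\Gamma,\mathcal{X}}\cap\CC[\Gamma]\neq 0$; the paper's route is the compactness stratification of $\mathcal{X}$ by cardinality producing a finite conjugation-invariant subfamily $\mathcal{F}$ with every $X$ containing a member of $\mathcal{F}$ (Proposition \ref{prp:finitesgs}), combined with the group-theoretic fact that a finite conjugation-invariant family of finite subgroups generates a finite normal subgroup (Lemma \ref{lem:finite}) and the finite case Theorem \ref{thm:finitenumberofsgs}. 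Similarly, "reduce via the cores of the $X_i$ to the normal case" fails when the cores are trivial; the paper instead works inside the common normalizer, which has finite index when $\mathcal{X}$ is finite.
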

To prove this, we show the class $\mathcal{I}$ satisfies a variety of permanence results. See Section \ref{s:groupswint} for more details. We can apply Theorem H to the below result to show specific classes of \'etale groupoids satisfy an algebraic characterization for $J = \{0\}$.

\begin{thmI}[\ref{thm:valgideal=videal} and \ref{thm:linearchar}]
\label{thmI}
    Let $G$ be an \'etale groupoid such that $(G^{x}_{x}, \mathcal{X}(x))\in\mathcal{I}$ for every $x\in G^{0}$. Then, $J = \{0\}$ if and only if $J\cap\mathscr{C}_{c}(G) = \{0\}$, if and only if for every $x\in G^{0}$, the set of linear equations

    $$\sum_{h\in gX}a_{h} = 0,\text{ }g\in G^{x}_{x},\text{ }X\in\mathcal{X}(x)$$ has no non-zero finitely supported integer solution $(a_{h})_{h\in G^{x}_{x}}$.
\end{thmI}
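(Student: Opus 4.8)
The plan is to prove the three conditions equivalent by closing the cycle (A) $\Rightarrow$ (B) $\Rightarrow$ (A), using the already-available equivalence (B) $\Leftrightarrow$ (C). Write (A) for $J=\{0\}$, (B) for $J\cap\mathscr{C}_c(G)=\{0\}$, and (C) for the non-existence of nonzero finitely supported integer solutions of the linear systems. The equivalence (B) $\Leftrightarrow$ (C) is exactly Theorem~F (\ref{cor:valgidealcharspan}), and (A) $\Rightarrow$ (B) is immediate from $J\cap\mathscr{C}_c(G)\subseteq J$. Thus the whole theorem reduces to the single implication (B) $\Rightarrow$ (A), and the hypothesis $(G^x_x,\mathcal{X}(x))\in\mathcal{I}$ will be invoked exactly once: to pass, at each unit, from an algebraic (group-ring) vanishing statement to an analytic (C*-)one.

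Assuming (B), the first step is to read it off, unit by unit, inside the group C*-algebras of the isotropy groups. By Theorem~F together with \ref{lem:alggroupidealchar}, condition (B) is equivalent to $J_{G^x_x,\mathcal{X}(x)}\cap\mathbb{C}[G^x_x]=\{0\}$ for every $x\in G^0$: a finitely supported $(a_h)\in\mathbb{C}[G^x_x]$ lies in $J_{G^x_x,\mathcal{X}(x)}=\ker(\lambda_{G^x_x/\mathcal{X}(x)})$ precisely when $\lambda_{G^x_x/X}(a)=0$ for all $X\in\mathcal{X}(x)$, which unwinds to the equations $\sum_{h\in gX}a_h=0$; and since this system has $\{0,1\}$-coefficients, its complex and integer solution spaces vanish together. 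I then invoke $(G^x_x,\mathcal{X}(x))\in\mathcal{I}$, i.e.\ a positive answer to Question~\ref{qtn2} for this pair: the contrapositive says $J_{G^x_x,\mathcal{X}(x)}\cap\mathbb{C}[G^x_x]=\{0\}$ forces $J_{G^x_x,\mathcal{X}(x)}=\{0\}$, equivalently that $\lambda_{G^x_x}$ is weakly contained in $\lambda_{G^x_x/\mathcal{X}(x)}$ as unitary representations of the \emph{group} $G^x_x$. So (B) yields group-level weak containment at every unit.

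It remains to upgrade group-level weak containment to the groupoid-level $G$-weak containment of Theorem~B and conclude (A). The one genuinely new step is the observation that group weak containment at $x$ always implies $G$-weak containment at $x$, because the latter is the more permissive condition — it allows test vectors over fibres $\ell^2(G^U_{x_i}/X_i)$ at nearby units $x_i\in U$, whereas group weak containment uses only the fibre over $x$. Concretely, given $\epsilon$, a finite $F\subseteq G^x_x\setminus\{x\}$ and a neighbourhood $U\subseteq\bigcap_{g\in F}r(U_g)\cap s(U_g)$, I take the vectors $\psi_i\in\ell^2(G^x_x/X_i)$, $X_i\in\mathcal{X}(x)$, supplied by group weak containment and view them in $\ell^2(G^U_x/X_i)$ through the isometric inclusion $G^x_x/X_i\hookrightarrow G^U_x/X_i$ (taking every $x_i=x$). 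For $g\in F$ the unique element of $U_g$ with source $x$ is $g$ itself, so on vectors supported over $G^x_x/X_i$ the partial isometry $U_g$ acts exactly as the quasi-regular operator $\lambda_{G^x_x/X_i}(g)$, whence $\langle U_g\cdot\psi_i,\psi_i\rangle=\langle g\cdot\psi_i,\psi_i\rangle$ and the group estimates transfer verbatim. Hence $\lambda_{G^x_x}$ is $G$-weakly contained in $\lambda_{G^x_x/\mathcal{X}(x)}$ for every $x$, and Theorem~B (\ref{thm:vsingidealcharlocwkcont_ess}) delivers $J=\{0\}$, i.e.\ (A). (Equivalently, this last paragraph is the statement that the isotropy fibre vanishes, $J_x=\{0\}$, whenever $J_{G^x_x,\mathcal{X}(x)}=\{0\}$.)

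The hard part is the final transfer and the care required in the per-unit translation of (B): one must check that nothing is lost in moving between the integer linear system, the group-ring ideal $J_{G^x_x,\mathcal{X}(x)}\cap\mathbb{C}[G^x_x]$, and the quasi-regular representations, and — crucially — that it is the implication group-weak-containment $\Rightarrow$ $G$-weak-containment, rather than its (false, in the non-amenable case) converse, that is being used. All the heavy analytic input — the determination of the isotropy fibres, the compressibility of the restriction maps, and the Hausdorff-cover commuting square — has already been spent in proving Theorems~B and~F, and is here used only as a black box; I expect the residual difficulty to be bookkeeping, namely keeping the standing countability assumption in force so that Theorem~B applies and using the identification $\pi_{ess}^{-1}(x)=\mathcal{X}(x)$ consistently.
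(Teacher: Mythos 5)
Your proposal is correct, and the reduction it performs — (A)$\Rightarrow$(B) trivially, (B)$\Leftrightarrow$(C) via Theorem~F and Lemma~\ref{lem:alggroupidealchar}, and then the single substantive implication (B)$\Rightarrow$(A) via the per-unit group ideals and the hypothesis $(G^{x}_{x},\mathcal{X}(x))\in\mathcal{I}$ — is exactly the skeleton of the paper's Theorems~\ref{thm:valgideal=videal} and~\ref{thm:linearchar}. Where you diverge is in the last leg. The paper argues the contrapositive entirely inside the isotropy-fibre machinery: if $J\neq\{0\}$, Corollary~\ref{cor:nonzerofibre} produces an $x\in D_{0}$ with $J_{G^{x}_{x},\pi^{-1}_{ess}(x)}\neq\{0\}$, the class $\mathcal{I}$ forces $J_{G^{x}_{x},\pi^{-1}_{ess}(x)}\cap\mathbb{C}[G^{x}_{x}]\neq\{0\}$, and Proposition~\ref{prp:algisofibre} converts this into a nonzero element of $\eta_{x}(J\cap\mathscr{C}_{c}(G))$; weak containment is never mentioned. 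You instead pass through Theorem~B, which requires the extra observation that group-level weak containment of $\lambda_{G^{x}_{x}}$ in $\lambda_{G^{x}_{x}/\mathcal{X}(x)}$ implies $G$-weak containment. That observation is correct as you state it: taking all $x_{i}=x$ and $\psi_{i}\in\ell^{2}(G^{x}_{x}/X_{i})\subseteq\ell^{2}(G^{U}_{x}/X_{i})$, the operator $1_{U_{g}}*(-)$ restricted to such vectors is exactly $\lambda_{G^{x}_{x}/X_{i}}(g)$ because $g$ is the unique element of the bisection $U_{g}$ with source $x$; so the test vectors from group weak containment verify Definition~\ref{dfn:G-wkcomtain} verbatim. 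In effect you are re-deriving (the contrapositive of) Corollary~\ref{cor:nonzerofibre} from Theorem~B; your upgrade lemma is the representation-theoretic face of the norm inequality $r\mathcal{X}\leq r(\mathcal{X})$, i.e.\ of the containment $\lambda_{G^{x}_{x}}(p(J_{x}))\subseteq\lambda_{G^{x}_{x}}(J_{G^{x}_{x},\pi^{-1}_{ess}(x)})$ that the paper uses inside that corollary. Both routes rest on the same heavy input (Theorem~\ref{thm:isofibre}); the paper's is the shorter assembly, while yours makes explicit a small but genuinely useful fact — that $G$-weak containment is strictly weaker than group weak containment — which the paper leaves implicit. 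The only point worth stating carefully in a write-up is the case $x\notin D_{0}$, where $\{x\}\in\mathcal{X}(x)$ makes both containments automatic, so that Theorem~B can indeed be invoked over all of $G^{0}$.
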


By Theorem H, the hypothesis for Theorem I is satisfied by the class of groupoids with the finiteness condition in \cite[Theorem~4.7]{BGHL25} and our characterization improves that in \cite{BGHL25} as it is more algebraic.

When a group $\Gamma$ satisfies $(\Gamma, \mathcal{X})\in\mathcal{I}$ for every closed and invariant set of subgroups $\mathcal{X}$, we say $\Gamma$ has \emph{Property $I$}, or the \emph{Intersection Property} (Definition \ref{dfn:I&AI}). The above hypothesis is therefore satisfied whenever a groupoid's isotropy consists of Property $I$ groups. By Theorem H, we know Property $I$ is satisfied for every group with polynomial growth and every matrix group over characteristic zero fields (see Theorem \ref{thm:propertyAI&I}).

Similarly, if $\{e\}\notin\mathcal{X}$ implies $J_{\Gamma,\mathcal{X}}\cap\mathbb{C}[\Gamma]\neq \{0\}$, we say $\Gamma$ has \emph{Property $AI$}, or the \emph{Automatic Intersection Property}.

\begin{thmJ}[\ref{thm:propertyAI&I}]
Every direct limit of torsion free virtually solvable groups has Property $AI$. In particular, every torsion free group with polynomial growth and amenable torsion free matrix group over a characteristic zero field satisfies Property $AI$.
\end{thmJ}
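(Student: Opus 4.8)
The plan is to derive Property $AI$ from Property $I$ using torsion-freeness, importing Property $I$ from Theorem~H, and then to reduce the two families in the ``in particular'' clause to the main class via the theorems of Gromov and Tits. So first suppose $\Gamma=\varinjlim_{i}\Gamma_{i}$ is a direct limit of torsion free virtually solvable groups. I would begin by observing that $\Gamma$ is itself torsion free: if $g\neq e$ satisfied $g^{n}=e$, then $g$ is the image of some $g_{i}\in\Gamma_{i}$ and $g_{i}^{n}$ maps to $e$, so its image already vanishes in some $\Gamma_{j}$; since $\Gamma_{j}$ is torsion free, the image of $g_{i}$ in $\Gamma_{j}$ is trivial, forcing $g=e$. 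Next, a torsion free virtually solvable group has a finite-index subgroup that is both solvable and torsion free (as a subgroup of a torsion free group), so each $\Gamma_{i}$ is virtually torsion free solvable. Hence $\Gamma$ is a direct limit of virtually torsion free solvable groups, and the first bullet of Theorem~H gives $(\Gamma,\mathcal{X})\in\mathcal{I}$ for every closed invariant set of subgroups $\mathcal{X}$; that is, $\Gamma$ has Property $I$.

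The heart of the argument is to upgrade Property $I$ to Property $AI$ using torsion-freeness. Fix a closed invariant $\mathcal{X}$ with $\{e\}\notin\mathcal{X}$. Since $\Gamma$ is torsion free, the group version of Remark~\ref{rmk:torfree} (an application of \cite[Lemma~1.9]{NS23}) shows $\lambda_{\Gamma}$ is not weakly contained in $\lambda_{\Gamma/\mathcal{X}}$, equivalently $J_{\Gamma,\mathcal{X}}\neq\{0\}$. As $(\Gamma,\mathcal{X})\in\mathcal{I}$, this yields $J_{\Gamma,\mathcal{X}}\cap\mathbb{C}[\Gamma]\neq\{0\}$. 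Because $\mathcal{X}$ was an arbitrary closed invariant set avoiding $\{e\}$, this is exactly Property $AI$, proving the first assertion.

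For the ``in particular'' statements it suffices, by the first part, to exhibit each group as a direct limit of torsion free virtually solvable groups, and I would use the directed union of its finitely generated subgroups. If $\Gamma$ is torsion free of polynomial growth, every finitely generated subgroup is torsion free and of polynomial growth, hence virtually nilpotent by Gromov's theorem, and so torsion free virtually solvable. If $\Gamma$ is an amenable torsion free matrix group over a field of characteristic zero, every finitely generated subgroup is amenable, torsion free and linear in characteristic zero, so the Tits alternative (no free subgroup inside an amenable group) forces it to be virtually solvable, again torsion free virtually solvable. In either case $\Gamma$ is a direct limit of such groups, so Property $AI$ follows from the first part.

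I expect the main obstacle to be the upgrade step: confirming that the torsion-free non-containment fact of \cite{NS23} holds at the level of bare pairs $(\Gamma,\mathcal{X})$ in the precise form $\{e\}\notin\mathcal{X}\Rightarrow J_{\Gamma,\mathcal{X}}\neq\{0\}$, so that Property $I$ may be invoked coset-equation by coset-equation. Should one wish to avoid relying on Theorem~H, the real work would instead shift to proving Property $AI$ directly for torsion free virtually solvable groups and verifying its stability under direct limits; but given the results already available, the route through Theorem~H together with torsion-freeness is the most economical.
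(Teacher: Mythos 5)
Your reduction of Property $AI$ to ``Property $I$ plus torsion-freeness'' is correct in itself: it is exactly the content of the first proposition of Section \ref{s:groupswint}, whose proof (via the groupoid of Section \ref{s:nonHDconstruct} and \cite[Proposition~1.8]{NS23}) shows that $\{e\}\notin\mathcal{X}$ forces $J_{\Gamma,\mathcal{X}}\neq\{0\}$ when $\Gamma$ is torsion free, so that membership in $\mathcal{I}$ yields $J_{\Gamma,\mathcal{X}}\cap\mathbb{C}[\Gamma]\neq\{0\}$. Your treatment of the ``in particular'' clause via Gromov's theorem and the Tits alternative also matches the paper, and your preliminary observations (a direct limit of torsion free groups is torsion free; a torsion free virtually solvable group is virtually torsion-free-solvable) are fine.

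The genuine gap is the input you feed into that reduction. The first bullet of Theorem~H --- Property $I$ for direct limits of virtually torsion free solvable groups --- is not an independently established result: in the paper it is proved only in Theorem \ref{thm:propertyAI&I}, the very theorem you are asked to prove, and its proof runs in the opposite direction from yours. One first establishes Property $AI$ for torsion free solvable groups by induction along the subnormal series with abelian quotients (base case Proposition \ref{prp:freeabelianAI} for torsion free abelian groups, inductive step Corollary \ref{cor:abelianAI}, both resting on the product-element construction of Lemma \ref{lem:prodelementconstr}), then passes to the virtually solvable case by Proposition \ref{prp:finiteindexext} and to direct limits by Corollary \ref{cor:pAI&ctbleunions}; Property $I$ for virtually torsion free solvable groups is then a \emph{consequence} of the Property $AI$ statement, not a precursor to it. As written, your argument is therefore circular, and all of the substantive work --- the solvable-series induction and the finite-index extension --- is absent. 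You anticipate this in your closing paragraph; the ``alternative route'' you describe there is in fact the paper's actual proof, and it is where the content lies.
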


\begin{thmK}[\ref{thm:abeliancharAI}]
    A countable abelian group $\Gamma$ satisfies Property $AI$ if and only if for every prime $p$, there is at most one element $g\in \Gamma$ with cyclic order $p$.
\end{thmK}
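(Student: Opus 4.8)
The plan is to translate Property $AI$ into a statement about ideals of the group ring $\mathbb{C}[\Gamma]$ and then read off the torsion condition. By the algebraic description already established (Lemma \ref{lem:alggroupidealchar}), for a closed invariant $\mathcal{X}$ the finitely supported part is $J_{\Gamma,\mathcal{X}}\cap\mathbb{C}[\Gamma]=\bigcap_{X\in\mathcal{X}}\ker\pi_{X}$, where $\pi_{X}\colon\mathbb{C}[\Gamma]\to\mathbb{C}[\Gamma/X]$ is the coset–sum map $a\mapsto\big(\sum_{h\in gX}a_{h}\big)_{gX}$. Using the Fourier transform $\mathbb{C}[\Gamma]\hookrightarrow C(\widehat{\Gamma})$, $a\mapsto\widehat{a}$ with $\widehat{a}(\chi)=\sum_{h}a_{h}\chi(h)$ (injective, since distinct group elements give linearly independent characters of $\widehat{\Gamma}$), one has $\ker\pi_{X}=\{a:\widehat{a}\equiv 0\text{ on }X^{\perp}\}$, where $X^{\perp}=\{\chi:\chi|_{X}=1\}$. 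Thus $\Gamma$ has Property $AI$ exactly when $\bigcap_{X\in\mathcal{X}}\ker\pi_{X}\neq\{0\}$ for every closed invariant $\mathcal{X}$ with $\{e\}\notin\mathcal{X}$, equivalently when $\bigcup_{X\in\mathcal{X}}X^{\perp}\neq\widehat{\Gamma}$ carries a nonzero trigonometric polynomial vanishing on it.

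The first step is a reduction using closedness. If $\mathcal{X}$ is closed and $\{e\}\notin\mathcal{X}$, then the complement of $\mathcal{X}$ contains a basic neighbourhood of the indicator $1_{\{e\}}$; since every $X\in\mathcal{X}$ is a subgroup (so $1_{X}(e)=1$), the only way to separate it from $1_{\{e\}}$ is at a nonidentity coordinate, which produces a finite set $F\subseteq\Gamma\setminus\{e\}$ meeting every $X\in\mathcal{X}$. I would then introduce the single witness $a_{0}=\prod_{f\in F}(f-e)\in\mathbb{C}[\Gamma]$ (the empty product $1$ if $\mathcal{X}=\emptyset$). For each $X\in\mathcal{X}$ some $f\in F$ lies in $X$, so $f-e\in\ker\pi_{X}$; as $\ker\pi_{X}$ is an ideal, $a_{0}\in\ker\pi_{X}$, whence $a_{0}\in\bigcap_{X\in\mathcal{X}}\ker\pi_{X}=J_{\Gamma,\mathcal{X}}\cap\mathbb{C}[\Gamma]$. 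Everything therefore comes down to deciding when $a_{0}\neq 0$.

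For the implication ``at most one subgroup of order $p$ for every $p$'' $\Rightarrow$ Property $AI$, assume every $p$-torsion subgroup $\Gamma[p]$ has rank at most one. The subgroup $V=\langle F\rangle$ is finitely generated and $V[p]\subseteq\Gamma[p]$ has rank $\le 1$ for every $p$, so its torsion is cyclic and $V\cong\mathbb{Z}^{r}\oplus\mathbb{Z}/m$. Decomposing $\mathbb{C}[V]\cong\bigoplus_{\zeta^{m}=1}\mathbb{C}[\mathbb{Z}^{r}]$ through the semisimple factor $\mathbb{C}[\mathbb{Z}/m]$ and projecting onto the component of a \emph{primitive} $m$-th root $\zeta$, each $f=(v_{f},t_{f})\neq e$ maps to the nonidentity monomial $\zeta^{t_{f}}t^{v_{f}}$ (it equals $1$ only if $v_{f}=0$ and $t_{f}=0$, i.e.\ $f=e$), so $f-e$ maps to a \emph{nonzero} element of the integral domain $\mathbb{C}[\mathbb{Z}^{r}]=\mathbb{C}[t_{1}^{\pm},\dots,t_{r}^{\pm}]$. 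Hence the image of $a_{0}$ is a product of nonzero elements of a domain and is nonzero, giving $a_{0}\neq 0$. Thus $J_{\Gamma,\mathcal{X}}\cap\mathbb{C}[\Gamma]\neq\{0\}$ for every admissible $\mathcal{X}$, which is Property $AI$.

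For the converse I argue contrapositively: if some $\Gamma[p]$ has rank $\ge 2$, fix $W\cong(\mathbb{Z}/p)^{2}\le\Gamma$ and let $\mathcal{X}$ be the finite (hence closed and, $\Gamma$ being abelian, automatically invariant) set of the $p+1$ subgroups of $W$ of order $p$, so that $\{e\}\notin\mathcal{X}$. The annihilators $X^{\perp_{W}}$ are the $p+1$ order-$p$ subgroups of $\widehat{W}\cong(\mathbb{Z}/p)^{2}$, whose union is all of $\widehat{W}$; hence every $\chi\in\widehat{\Gamma}$ restricts trivially to some $X$, giving $\bigcup_{X\in\mathcal{X}}X^{\perp}=\widehat{\Gamma}$, and injectivity of the Fourier transform forces $J_{\Gamma,\mathcal{X}}\cap\mathbb{C}[\Gamma]=\{0\}$, so Property $AI$ fails. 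Since ``at most one subgroup of order $p$ for every prime $p$'' is exactly the condition that each $\Gamma[p]$ be cyclic, this is the stated criterion. The main obstacle is precisely the middle equivalence—deciding nonvanishing of the explicit witness $a_{0}$, equivalently whether the annihilators $\langle f\rangle^{\perp}$ can cover $\widehat{\Gamma}$—and the cleanest route is the reduction to the finitely generated $V=\langle F\rangle$, where cyclic torsion is exactly what lets a single primitive-root (Laurent-domain) component separate $a_{0}$ from $0$, while rank-two $p$-torsion is exactly what lets finitely many such annihilators exhaust $\widehat{\Gamma}$.
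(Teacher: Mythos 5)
Your proof is correct, and it reaches the result by a genuinely different route from the paper's. The paper reduces to finitely generated groups, writes $\Gamma\simeq\mathbb{Z}^{m}\oplus G$ with $G$ finite, splits an admissible $\mathcal{X}$ into the subgroups meeting the torsion and those contained in $\mathbb{Z}^{m}$, produces a witness for each piece (Proposition \ref{prp:freeabelianAI}, via Lemma \ref{lem:prodelementconstr}, for the free part; an explicit product over the minimal torsion subgroups for the other), and multiplies them through $\mathbb{C}[\mathbb{Z}^{m}]\otimes\mathbb{C}[G]$; its converse passes through Propositions \ref{prp:videalredtonsg} and \ref{prp:finitechar} and a counting argument with affine lines over $\mathbb{F}_{p}$. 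You instead work on the Pontryagin dual: after identifying $J_{\Gamma,\mathcal{X}}\cap\mathbb{C}[\Gamma]=\bigcap_{X}\ker\pi_{X}$ with the trigonometric polynomials vanishing on $\bigcup_{X}X^{\perp}$ (legitimate, by Lemma \ref{lem:alggroupidealchar} and injectivity of the Fourier transform on $\mathbb{C}[\Gamma]$), the single witness $a_{0}=\prod_{f\in F}(\delta_{f}-\delta_{e})$ --- with $F$ the finite set given by the same compactness argument the paper uses in Corollary \ref{cor:pAI&ctbleunions} --- handles all cases at once, its nonvanishing being certified by evaluating the cyclic torsion of $\langle F\rangle\cong\mathbb{Z}^{r}\oplus\mathbb{Z}/m$ at a primitive $m$-th root of unity so as to land in the integral domain $\mathbb{C}[t_{1}^{\pm},\dots,t_{r}^{\pm}]$; this is exactly where cyclicity of each $p$-torsion subgroup enters, and it replaces both the paper's case split and Lemma \ref{lem:prodelementconstr}. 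Your converse is the Fourier dual of the paper's spanning computation: the $p+1$ order-$p$ annihilators cover the dual of $(\mathbb{Z}/p\mathbb{Z})^{2}$, so every character of $\Gamma$ annihilates some $X$ and the transform of anything in $\bigcap_{X}\ker\pi_{X}$ vanishes identically; this also absorbs the reduction to the subgroup for free, so Propositions \ref{prp:videalredtonsg} and \ref{prp:finitechar} are not needed. The trade-off is that your argument is shorter and essentially self-contained but leans entirely on commutativity (the coset-sum maps $\pi_{X}$ are ring homomorphisms, and duality is available, only because $\Gamma$ is abelian), whereas the paper's auxiliary lemmas are formulated so as to apply to general discrete groups.
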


Therefore, the class of Property $AI$ groups is quite large. The next result shows a groupoid containing an extremely dangerous point with a Property $AI$ group always has a non-zero ``algebraic'' singular ideal.

\begin{thmH}[\ref{thm:valgideal=videal}]
    Let $G$ be an \'etale groupoid such that $G^{x}_{x}$ has Property $AI$ and $\{x\}\notin\mathcal{X}(x)$ for some $x\in G^{0}$. Then, $J\cap\mathscr{C}_{c}(G)\neq \{0\}$.
\end{thmH}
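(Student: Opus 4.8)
The plan is to reduce everything to the algebraic characterization of \Cref{cor:valgidealcharspan} (Theorem F) and then feed in the hypothesis through the definition of Property $AI$. By the contrapositive of \Cref{cor:valgidealcharspan}, to conclude $J\cap\mathscr{C}_{c}(G)\neq\{0\}$ it is enough to produce, at the single given unit $x$, a non-zero finitely supported integer solution $(a_{h})_{h\in G^{x}_{x}}$ of the system
\[
\sum_{h\in gX}a_{h} = 0,\qquad g\in G^{x}_{x},\ X\in\mathcal{X}(x).
\]
So I would set $\Gamma = G^{x}_{x}$ and $\mathcal{X}=\mathcal{X}(x)$, and first check that this is admissible input for Property $AI$: as recorded in the introduction, $\mathcal{X}(x)$ is a set of subgroups of $\Gamma$ that is invariant under conjugation and closed in $\{0,1\}^{\Gamma}$, while the hypothesis $\{x\}\notin\mathcal{X}(x)$ says exactly that the trivial subgroup $\{e\}$ is not in $\mathcal{X}$.

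With this in place, Property $AI$ of $\Gamma$ immediately gives $J_{\Gamma,\mathcal{X}}\cap\mathbb{C}[\Gamma]\neq\{0\}$, and the remaining work is to convert a non-zero element here into a non-zero integer solution of the displayed system. For $a=\sum_{h}a_{h}h\in\mathbb{C}[\Gamma]$ one has $a\in J_{\Gamma,\mathcal{X}}$ precisely when $\lambda_{\Gamma/X}(a)=0$ for every $X\in\mathcal{X}$; computing matrix coefficients gives $\langle\lambda_{\Gamma/X}(a)\delta_{kX},\delta_{lX}\rangle=\sum_{h\in lXk^{-1}}a_{h}$, and since $lXk^{-1}$ is a left coset of the conjugate $kXk^{-1}\in\mathcal{X}$, this vanishing for all $X,k,l$ is equivalent to $\sum_{h\in gY}a_{h}=0$ for all $g\in\Gamma$ and $Y\in\mathcal{X}$ --- the complex version of the displayed equations. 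As the coefficients are $0$ or $1$, the solution space is defined over $\mathbb{Q}$, so a non-zero complex solution clears denominators to a non-zero integer one. This passage is exactly what \Cref{lem:alggroupidealchar} records, so in the write-up I would cite it rather than repeat the computation, and then apply \Cref{cor:valgidealcharspan} to finish.

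Essentially all the weight of the theorem sits in results already established: the dictionary between the algebraic group ideal $J_{\Gamma,\mathcal{X}}\cap\mathbb{C}[\Gamma]$ and the coset-sum equations (\Cref{lem:alggroupidealchar}), and the fact that these equations detect $J\cap\mathscr{C}_{c}(G)$ (\Cref{cor:valgidealcharspan}; equivalently, via the algebraic isotropy-fibre computation underlying these results in \Cref{thm:valgidealfibres}, one can argue directly through the fibre maps and bypass \Cref{cor:valgidealcharspan} altogether). The single new ingredient is Property $AI$, which is precisely what manufactures the non-trivial element. Consequently I do not expect a genuine obstacle; the only step deserving care is the verification that $\mathcal{X}(x)$ meets the standing hypotheses of Property $AI$ --- a closed, conjugation-invariant family of subgroups with $\{e\}\notin\mathcal{X}$ --- after which the conclusion is a short chain of citations.
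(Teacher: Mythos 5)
Your argument is correct and is essentially the paper's: both rest on applying Property $AI$ to the closed, conjugation-invariant family $\mathcal{X}(x)=\pi^{-1}_{ess}(x)$ (which omits the trivial subgroup precisely because $\{x\}\notin\mathcal{X}(x)$, equivalently $x\in D_{0}$ by Corollary \ref{cor:edangptchar}) to get $J_{G^{x}_{x},\pi^{-1}_{ess}(x)}\cap\mathbb{C}[G^{x}_{x}]\neq\{0\}$, and then pulling this back via the fibre identity $\eta_{x}(J\cap\mathscr{C}_{c}(G))=J_{G^{x}_{x},\pi^{-1}_{ess}(x)}\cap\mathbb{C}[G^{x}_{x}]$ of Proposition \ref{prp:algisofibre}. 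The paper cites that proposition directly, whereas you make a harmless round trip through the integer-solution formulation (Lemma \ref{lem:alggroupidealchar} and Corollary \ref{cor:valgidealcharspan}); since Corollary \ref{cor:valgidealcharspan} is itself just Proposition \ref{prp:algisofibre} combined with Lemma \ref{lem:alggroupidealchar}, the two routes coincide.
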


\section*{Acknowledgements}

Hume would like to thank the Isaac Newton Institute for Mathematical
Sciences for the support and hospitality during the programme ‘Topological
groupoids and their C*-algebras’ when work on this paper was undertaken.
This work was supported by EPSRC Grant Number EP/V521929/1 and by NSERC Discovery Grant RGPIN-2021-03834. Hume would also like to express thanks to Charles Starling and Julian Gonzales for helpful discussions about the ideas in this paper.

\section{Background}
\label{s:bckgrnd}
\subsection{\'Etale groupoids}
A \emph{groupoid} is a set $G$ equipped with the structure of the invertible morphisms of a category. For $g\in G$, its source object $s(g)$ and range object $r(g)$ can (and will) be identified with the identity morphisms $\text{id}_{s(g)} = g^{-1}g$ and $\text{id}_{r(g)} = gg^{-1}$, respectively. Then, composition becomes a map $G{}_{s}\times_{r}G\to G$, $(g,h)\mapsto gh$ which we call the \emph{product map}. The inverse of $g\in G$ is denoted as usual by $g^{-1}$. For $X, Y$ subsets of the object set, we let $G_{X} = s^{-1}(X)$, $G^{Y} = r^{-1}(Y)$ and $G^{Y}_{X} = G_{X}\cap G^{Y}$. If $Y =X$ we will sometimes denote $G^{X}_{X} = G|_{X}$, and this is also a groupoid (with structure inherited from $G$) known as the \emph{reduction of $G$ to $X$}. A subset $X\subseteq G^{0}$ is called \emph{invariant} if $G|_{X} = G^{X}$ (or equivalently $ G|_{X} = G_{X}$).

A \emph{topological groupoid} $G$ is equipped with a topology such that the product $G{}_{s}\times_{r}G\to G$ and inverse $^{-1}:G\to G$ are continuous. We will call the set of objects $G^{0}\subseteq G$, equipped with the relative topology, the \emph{unit space} and we will always assume $G^{0}$ is a locally compact Hausdorff space.

An \emph{\'etale groupoid} is a topological groupoid such that the range map $r:G\to G^{0}$ (or equivalently the source) is a local homeomorphism. By our assumption that $G^{0}$ is a locally compact Hausdorff space, $G$ must be locally compact and at least locally Hausdorff. We say $U\subseteq G$ is a \emph{bisection} if $r|_{U}$ and $s|_{U}$ are injections. Let $\mathcal{B}$ denote the collection of open bisections. Note the \'etale assumption implies the open bisections form a basis for the topology on $G$, and that each $U\in\mathcal{B}$ is a locally compact \emph{Hausdorff} space. Moreover, $UV\in\mathcal{B}$ and $U^{-1}\in \mathcal{B}$ if $U,V\in\mathcal{B}$.

\textbf{Standing assumption: we assume there are open bisections $\{U_{n}\}_{n\in\mathbb{N}}$ such that $\bigcup_{n}U_{n} = G$. In words, $G$ is covered by countably many open bisections.}

For a locally compact Hausdorff space $X$, denote by $C_{c}(X)$ the continuous and compactly supported functions into $\mathbb{C}$.

For $U\in\mathcal{B}$ and $f\in C_{c}(U)$, we view $f$ as a function on $G$ by extending $f$ to be zero off $U$. Note that, without assuming $G$ is Hausdorff, $f:G\to \mathbb{C}$ is not necessarily continuous. We define $\mathscr{C}_{c}(G) := \text{span}\{f:G\to \mathbb{C}: f\in C_{c}(U), U\in\mathcal{B}\}$. 

For $f_{1},f_{2}\in \mathscr{C}_{c}(G)$ we define their product $f_{1}*f_{2}:G\to \mathbb{C}$ and involution $f_{1}^{*}$ as 

\begin{equation*}
    f_{1}*f_{2}(g) = \sum_{h\in G_{s(g)}}f_{1}(gh^{-1})f_{2}(h) \text{ and }f_{1}^{*}(g) = \overline{f_{1}}(g^{-1}), \text{ for all }g\in G.
\end{equation*} Since $C_{c}(U)*C_{c}(V)\subseteq C_{c}(UV)$ and $C_{c}(U)^{*} = C_{c}(U^{-1})$, by linearity $\mathscr{C}_{c}(G)$ is closed under the product and involution. Moreover, it is straightforward to see (using the axioms of a groupoid) that these operations turn $\mathscr{C}_{c}(G)$ into a $*$-algebra.

Since $s:G\to G^{0}$ is a local homeomorphism, the set $G_{x}$, for any $x\in G^{0}$, is discrete in $G$. We view the Hilbert space $\ell^{2}(G_{x})$ as functions on $G$ via their extension to zero off $G_{x}$. Then, for each $f\in \mathscr{C}_{c}(G)$ and $\psi\in \ell^{2}(G_x)$ define $\lambda_{x}(f)(\psi) = f*\psi$. Then, $f*\psi\in \ell^{2}(G_{x})$ and moreover if $f\in C_{c}(U)$ for $U\in \mathcal{B}$, we have $\|f*\psi\|_{2}\leq \|f\|_{\infty}\|\psi\|_{2}$. Hence, every $f\in\mathscr{C}_{c}(G)$ defines a bounded operator $\lambda_{x}(f)$ and it is easy to see $\lambda_{x}:\mathscr{C}_{c}(G)\to B(\ell^{2}(G_{x}))$ is a *-algebra homomorphism. Define $\|f\|_{x} := \|\lambda_{x}(f)\|$ and $\|f\|_{r}:=\text{sup}_{x\in G^{0}}\|f\|_{x}$, which is a C*-norm for $\mathscr{C}_{c}(G)$ called the \emph{reduced norm}. The C*-completion of $\mathscr{C}_{c}(G)$ under $\|\cdot\|_{r}$ is called the \emph{reduced groupoid C*-algebra of $G$} and is denoted $C^{*}_{r}(G)$. Every element $f\in C^{*}_{r}(G)$ can be viewed as a function $f:G\to\mathbb{C}$ via the assignment $g\in G\mapsto f(g):= \langle \lambda_{s(g)}(f)*\delta_{s(g)}, \delta_{g}\rangle$. When $G$ is Hausdorff, every function in $C^{*}_{r}(G)$ is continuous and vanishes at $\infty$.

Following \cite{KM21}, we say a function $f\in C^{*}_{r}(G)$ is \emph{singular} if $s(\{g\in G: f(g)\neq 0\})$ is meagre (This is equivalent to a variety of other definitions, see \cite[Proposition~7.18]{KM21}). In \cite[Lemma~4.1]{BGHL25} it is shown to be equivalent to density of $f^{-1}(0)$ in $G$. The collection of all singular functions forms a closed two-sided ideal which we call the \emph{singular ideal}.

\subsection{Pre-C*-algebras}

We will have cause to consider other norm completions of $\mathscr{C}_{c}(G)$ in this paper. A \emph{pre-C*-algebra} $\mathcal{A}$ is a normed *-algebra satisfying all the axioms of a C*-algebra except norm-completeness \cite{Lance}. Of course, $\mathcal{A}$ sits inside its completion $A$ (a C*-algebra) as a dense *-sub-algebra. 

If $\mathcal{A}$ is a pre-C*-algebra, we will denote by $\tilde{\mathcal{A}} = \mathcal{A}\oplus\mathbb{C}$ its unitization, which is also a pre-C*-algebra. Moreover, we will extend a linear map $\eta:\mathcal{A}\to \mathcal{B}$ between pre-C*-algebras to the unitizations by setting $\eta(a + 1) = \eta(a) + 1$, for $a\in\mathcal{A}$. If $\mathcal{C}$ is a *-sub-algebra of $\mathcal{A}$, then we will view $\tilde{\mathcal{C}}$ as a *-sub-algebra of $\tilde{\mathcal{A}}$ by identifying it with its image under the unitization of the inclusion $\mathcal{C}\subseteq \mathcal{A}$.

If $\|\cdot\|_{\rho}$ is a pre-C*-norm for $\mathscr{C}_{c}(G)$, we will denote by $C^{*}_{\rho}(G)$ its C*-completion. If $\mathcal{A}$, $\mathcal{B}$, $\mathcal{C}$ are pre-C*-algebras, we will denote by $A$, $B$, $C$ their norm completions, respectively.

\subsection{Hausdorff cover of non-Hausdorff \'etale groupoid}
\label{ss:cover}

Given a locally compact (but not necessarily Hausdorff) space $\mathcal{X}$, denote its set of closed subsets by $\mathcal{C}(\mathcal{X})$. In \cite{F62}, Fell equips $\mathcal{C}(\mathcal{X})$ with a topology (the \emph{Fell topology}) whose basic open sets are of the form $\mathcal{U}(C,\mathcal{F}) = \{X\in\mathcal{C}(\mathcal{X}): X\cap C = \emptyset\text{ and } X\cap U\neq 0\text{ for all }U\in\mathcal{F}\}$, where $C$ is compact and $\mathcal{F}$ is a finite collection of open sets, and shows this is a compact Hausdorff topology. Moreover, if points are closed in $\mathcal{X}$, then each $x\in \mathcal{X}$ embeds into $\mathcal{C}(\mathcal{X})$ as the singleton $\{x\}$. Denote this map $\iota:\mathcal{X}\to\mathcal{C}(\mathcal{X})$. The \emph{Fell Hausdorffication} $\mathcal{H}(\mathcal{X})$ (see \cite{F62}) is defined as the closure of $\iota(\mathcal{X})$ in $\mathcal{C}(\mathcal{X})\setminus \{\emptyset\}$.

Let's describe the topology on $\mathcal{X}$ in terms of nets in a special case. First, for a net $(X_{\lambda})\subseteq \mathcal{C}(\mathcal{X})\setminus \{\emptyset\}$, define its \emph{limit set} as $$\lim(X_{\lambda}) = \{x\in X: \lim_{\lambda} x_{\lambda} = x\text{ for some net }x_{\lambda}\in X_{\lambda}\}$$ and its \emph{accumulation set}
$$\text{Acc}(X_{\lambda}) = \{x\in X: \lim_{\mu} x_{\lambda_{\mu}} = x\text{ for some subnet } x_{\lambda_{\mu}}\in X_{\lambda_{\mu}}\}.$$

Note that $\text{Acc}(X_{\lambda})$ is always a closed set and (by the axiom of choice) $x\in \lim(X_{\lambda})$ if and only if there is $\lambda_{0}$ and a net $x_{\lambda}\in X_{\lambda}$, for $\lambda\geq \lambda_{0}$, such that $\lim_{\lambda}x_{\lambda} = x$.

\begin{prp}
\label{prp:fell}
    Suppose $\mathcal{X}$ is a locally compact space that is locally Hausdorff. Then, a net $(X_{\lambda})\subseteq \mathcal{H}(\mathcal{X})$ converges to $X$ if and only if $\lim(X_{\lambda}) = \text{Acc}(X_{\lambda}) = X$.
\end{prp}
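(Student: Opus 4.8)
The plan is to test Fell-convergence against the two families of subbasic open sets and match each against one of the two set-theoretic limits. Write $\mathcal{N}_{x}$ for the filter of open neighbourhoods of a point $x\in\mathcal{X}$. Since each basic set $\mathcal{U}(C,\mathcal{F})$ is a finite intersection of the subbasic sets $\{Y:Y\cap C=\emptyset\}$ and $\{Y:Y\cap U\neq\emptyset\}$, a net $(X_{\lambda})$ converges to $X$ in $\mathcal{H}(\mathcal{X})$ if and only if both of the following hold: (A) for every compact $C\subseteq\mathcal{X}$ with $X\cap C=\emptyset$ one has $X_{\lambda}\cap C=\emptyset$ eventually; and (B) for every open $U$ with $X\cap U\neq\emptyset$ one has $X_{\lambda}\cap U\neq\emptyset$ eventually. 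I would first record the trivial inclusion $\lim(X_{\lambda})\subseteq\text{Acc}(X_{\lambda})$ (a net is a subnet of itself), so that the asserted equality $\lim(X_{\lambda})=\text{Acc}(X_{\lambda})=X$ reduces to the two containments $X\subseteq\lim(X_{\lambda})$ and $\text{Acc}(X_{\lambda})\subseteq X$. The proposition then follows once I show that (B) is equivalent to $X\subseteq\lim(X_{\lambda})$ and that (A) is equivalent to $\text{Acc}(X_{\lambda})\subseteq X$.

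The equivalence involving (A) is where local compactness and local Hausdorffness enter. For (A)$\Rightarrow\text{Acc}(X_{\lambda})\subseteq X$, given $y\in\text{Acc}(X_{\lambda})$ with a subnet $x_{\lambda_{\mu}}\in X_{\lambda_{\mu}}$, $x_{\lambda_{\mu}}\to y$, I assume $y\notin X$ and use that $X$ is closed together with local Hausdorffness to choose a Hausdorff open neighbourhood $W$ of $y$, inside which local compactness supplies a compact neighbourhood $C\subseteq W\setminus X$ of $y$; then (A) forces $X_{\lambda}\cap C=\emptyset$ eventually, contradicting $x_{\lambda_{\mu}}\in X_{\lambda_{\mu}}\cap C$ for large $\mu$. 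Conversely, if $\text{Acc}(X_{\lambda})\subseteq X$ and (A) failed for some compact $C$ disjoint from $X$, I pick $y_{\lambda}\in X_{\lambda}\cap C$ along a cofinal set and use compactness of $C$ to extract a cluster point $y\in C$, which lies in $\text{Acc}(X_{\lambda})\subseteq X$, contradicting $C\cap X=\emptyset$. The direction (B)$\Leftarrow X\subseteq\lim(X_{\lambda})$ is immediate: an approximating net $x_{\lambda}\to x$ for a chosen $x\in X\cap U$ eventually enters the open set $U$.

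The heart of the argument is (B)$\Rightarrow X\subseteq\lim(X_{\lambda})$, namely manufacturing, for a fixed $x\in X$, an honest net $x_{\lambda}\in X_{\lambda}$ with $x_{\lambda}\to x$. From (B), for each $U\in\mathcal{N}_{x}$ there is a threshold $\lambda_{U}$ with $X_{\lambda}\cap U\neq\emptyset$ for all $\lambda\geq\lambda_{U}$, and these may be taken monotone ($U\subseteq U'\Rightarrow\lambda_{U}\geq\lambda_{U'}$). I would then direct $E=\{(\lambda,U)\in\Lambda\times\mathcal{N}_{x}:\lambda\geq\lambda_{U}\}$ by the product order, choose (axiom of choice) a point $z_{(\lambda,U)}\in X_{\lambda}\cap U$ for each $(\lambda,U)\in E$, and verify that $z_{(\lambda,U)}\to x$ while the first-coordinate projection $E\to\Lambda$ is cofinal, so that this net has values in the $X_{\lambda}$ and witnesses $x\in\lim(X_{\lambda})$, after passing to the tail of $\Lambda$ as in the remark preceding the statement. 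I expect the main obstacle to be precisely this construction: because $x$ need not have a countable neighbourhood basis, one cannot build the approximating net by a naive diagonal sequence, and some care is needed to present the witnessing net over (a cofinal refinement of) the original directed set in the form demanded by the definition of $\lim(X_{\lambda})$. This bookkeeping, rather than any deep idea, is the delicate part, and I would model it on Fell's original treatment in \cite{F62}; everything else reduces to the two routine extraction arguments above, and combining the two equivalences with $\lim(X_{\lambda})\subseteq\text{Acc}(X_{\lambda})$ yields $X\subseteq\lim(X_{\lambda})\subseteq\text{Acc}(X_{\lambda})\subseteq X$ in the forward direction and conditions (A), (B) in the reverse.
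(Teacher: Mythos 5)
Your decomposition of Fell convergence into the two conditions (A) and (B), your treatment of the equivalence of (A) with $\text{Acc}(X_{\lambda})\subseteq X$, and the implication $X\subseteq\lim(X_{\lambda})\Rightarrow$ (B) all match the paper's proof and are correct. The gap is exactly where you anticipated it, and your proposed fix does not close it. The net $(z_{(\lambda,U)})_{(\lambda,U)\in E}$ is indexed by $E$, not by a tail of $\Lambda$; since the projection $E\to\Lambda$ is monotone and cofinal, what this net witnesses is that $x$ lies in the limit set of the \emph{subnet} $(X_{\lambda})_{(\lambda,U)\in E}$, and limit sets only grow under passage to subnets, so this does not give $x\in\lim(X_{\lambda})$ in the sense defined in the paper. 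There is no general way to ``pass to the tail of $\Lambda$'': for fixed $\lambda$ the points $z_{(\lambda,U)}$ vary with $U$, and a single choice $x_{\lambda}\in X_{\lambda}$ working for all neighbourhoods simultaneously need not exist. Indeed, the purely combinatorial implication ``every neighbourhood of $x$ eventually meets $X_{\lambda}$'' $\Rightarrow$ ``some selection $x_{\lambda}\in X_{\lambda}$ converges to $x$'' is false for general nets of closed sets: take $\Lambda=\mathbb{N}$ and $X_{n}=\{n\}\times\mathbb{N}$ in an Arens--Fort-type space, where every neighbourhood of the special point meets every $X_{n}$ but no selection converges. So the hypotheses on $\mathcal{X}$ and on the $X_{\lambda}$ must enter precisely at this step, and your argument never invokes them there.

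The device you are missing is the paper's preliminary claim: any $Y\in\mathcal{H}(\mathcal{X})$, being a Fell limit of singletons, meets each Hausdorff open subset of $\mathcal{X}$ in at most one point. Hence, for a Hausdorff open neighbourhood $U_{x}$ of $x\in X$, condition (B) gives $X_{\lambda}\cap U_{x}=\{x_{\lambda}\}$ eventually, a \emph{canonical} singleton, which produces the required net indexed by a tail of $\Lambda$; applying (B) to smaller neighbourhoods $V\subseteq U_{x}$ forces $x_{\lambda}\in V$ eventually, so $x_{\lambda}\to x$. This is the one place where both local Hausdorffness and the restriction to $\mathcal{H}(\mathcal{X})$ (rather than all of $\mathcal{C}(\mathcal{X})$) are used. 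The remaining three implications in your outline are sound and essentially identical to the paper's.
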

\begin{proof}
    Suppose $(X_{\lambda})$ is a net such that $\lim(X_{\lambda}) = \text{Acc}(X_{\lambda}) = X$, and let $C\subseteq X$ be compact and $\mathcal{F}$ a finite family of open sets in $X$ such that $X\in \mathcal{U}(C,\mathcal{F})$. If there is a subnet $(X_{\lambda_{\mu}})$ such that $(X_{\lambda_{\mu}})\cap C\neq \emptyset$ for all $\mu$, then by compactness, there is $c\in \text{Acc}(X_{\lambda}) = X$, a contradiction. Similarly, if there is $U\in\mathcal{F}$ and a subnet $(X_{\lambda_{\mu}})$ such that $X_{\lambda_{\mu}}\cap U=\emptyset$ for all $\mu$, then $U\cap X = U\cap \lim(X_{\lambda})\subseteq U\cap\text{Acc}(X_{\lambda_{\mu}}) = \emptyset$, a contradiction. Therefore, a net satisfying $\lim(X_{\lambda}) = \text{Acc}(X_{\lambda})$ converges to $X = \text{Acc}(X_{\lambda})$ in the Fell topology.

    To prove the converse, let $(X_{\lambda})\subseteq \mathcal{H}(X)$ be a net converging to $X$ in the Fell topology. If $U$ is an open and Hausdorff subset of $X$, then we claim that $|Y\cap U|\leq 1$ for any $Y\in \mathcal{H}(X)$. To prove, this, suppose that there is $Y$ such that $y_{1},y_{2}\in Y\cap U$ with $y_{1}\neq y_{2}$. Since $Y$ is the limit of a net $(\iota(y_{\lambda}))$, it follows that for any open neighbourhood $U_{1}\subseteq U$ of $y_{1}$ and $U_{2}\subseteq U$ of $y_{2}$, we have $y_{\lambda}\in U_{1}\cap U_{2}\subseteq U$ eventually. But this would imply $U$ is not Hausdorff, a contradiction. This proves the claim.

    For $x\in X$, let $U_{x}$ be an open and Hausdorff neighbourhood of $x$. Then, by the definition of the Fell topology, there is $\lambda_{0}$ such that $X_{\lambda}\cap U_{x}\neq \emptyset$ for all $\lambda\geq \lambda_{0}$. By the above claim, we have $(X_{\lambda}\cap U)_{\lambda\geq \lambda_{0}} = (\{x_{\lambda}\})_{\lambda\geq \lambda_{0}}$ for some $x_{\lambda}\in X_{\lambda}$. A further application of the Fell topology implies $(x_{\lambda})_{\lambda\geq \lambda_{0}}$ converges to $x$. Hence, $X\subseteq \lim(X_{\lambda})$.

    We now show $\text{Acc}(X_{\lambda})\subseteq X$. Suppose for the sake of contradiction that there is $x\in \mathcal{X}\setminus X$ such that $x = \lim x_{\lambda_{\mu}}$ for some subnet $x_{\lambda_{\mu}}\in X_{\lambda_{\mu}}$. Since $\mathcal{X}$ is locally Hausdorff and $X$ is closed, there is a Hausdorff neighbourhood $U$ of $x$ such that $U\cap X = \emptyset$. Since $U$ is locally compact and Hausdorff, there is a an open subset $V\subseteq U$ whose closure $C = \overline{V}^{U}$ relative to $U$ is compact. By construction, we have $X_{\lambda_{\mu}}\cap C\neq\emptyset$ eventually but $X\cap C = \emptyset$, which contradicts the convergence of $(X_{\lambda})$ in the Fell topology. Therefore, $\text{Acc}(X_{\lambda})\subseteq X$. We have proven $\text{Acc}(X_{\lambda})\subseteq X\subseteq \lim(X_{\lambda})$ and hence $\lim(X_{\lambda}) = \text{Acc}(X_{\lambda}) = X$.
\end{proof}
Note that this proof also shows every $X\in\mathcal{H}(\mathcal{X})$ is a discrete subset of $\mathcal{X}$ when the space is locally compact and locally Hausdorff.

\begin{cor}
\label{cor:compactness}
    Let $\mathcal{X}$ be a locally compact and locally Hausdorff space. If $(X_{\lambda})\subseteq \mathcal{H}(\mathcal{X})$ is a net with $x\in \text{Acc}(X_{\lambda})$, then there is subnet $(X_{\lambda_{\mu}})$ with $x\in  \text{Acc}(X_{\lambda_{\mu}}) = \lim(X_{\lambda_{\mu}})$.
\end{cor}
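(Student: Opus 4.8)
The plan is to combine the compactness of the Fell topology on $\mathcal{C}(\mathcal{X})$ with the characterization of Fell convergence just established in Proposition \ref{prp:fell}. The point is that $\mathcal{C}(\mathcal{X})$ is compact Hausdorff, so any net in it has a Fell-convergent subnet; the real work is to arrange that the limit of this subnet is a \emph{nonempty} closed set \emph{containing} $x$, so that it lands in $\mathcal{H}(\mathcal{X})$ and Proposition \ref{prp:fell} can be applied to it.

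First I would unwind the hypothesis $x\in\text{Acc}(X_{\lambda})$: by definition there is a subnet $(X_{\lambda_{\mu}})$ together with points $x_{\lambda_{\mu}}\in X_{\lambda_{\mu}}$ such that $x_{\lambda_{\mu}}\to x$. I replace $(X_{\lambda})$ by this subnet, so that from now on there is a genuine net of points, one in each set, converging to $x$. This is the step that prevents $x$ from being lost when passing to a further subnet, which is the reason a naive appeal to compactness alone would fail.

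Next, using that $\mathcal{C}(\mathcal{X})$ is compact in the Fell topology, I extract a further subnet $(X_{\lambda_{\mu_{\nu}}})$ that Fell-converges to some $Y\in\mathcal{C}(\mathcal{X})$. Along this subnet the chosen points still satisfy $x_{\lambda_{\mu_{\nu}}}\to x$, and I claim $x\in Y$. If not, then since $Y$ is closed and $\mathcal{X}$ is locally compact and locally Hausdorff, I can choose a compact neighbourhood $C$ of $x$ with $C\cap Y=\emptyset$ (take a Hausdorff open neighbourhood of $x$ missing $Y$ and a compact neighbourhood inside it); then $Y\in\mathcal{U}(C,\emptyset)$, so by Fell convergence eventually $X_{\lambda_{\mu_{\nu}}}\cap C=\emptyset$, contradicting $x_{\lambda_{\mu_{\nu}}}\in X_{\lambda_{\mu_{\nu}}}\cap C$ eventually. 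In particular $Y\neq\emptyset$; and since $\mathcal{H}(\mathcal{X})$ is by definition closed in $\mathcal{C}(\mathcal{X})\setminus\{\emptyset\}$ and the net $(X_{\lambda_{\mu_{\nu}}})$ lies in $\mathcal{H}(\mathcal{X})$, its limit $Y\in\mathcal{C}(\mathcal{X})\setminus\{\emptyset\}$ belongs to $\mathcal{H}(\mathcal{X})$.

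Finally, I apply Proposition \ref{prp:fell} to the net $(X_{\lambda_{\mu_{\nu}}})\subseteq\mathcal{H}(\mathcal{X})$ converging to $Y\in\mathcal{H}(\mathcal{X})$, obtaining $\lim(X_{\lambda_{\mu_{\nu}}})=\text{Acc}(X_{\lambda_{\mu_{\nu}}})=Y$. Since $x\in Y$, this gives $x\in\text{Acc}(X_{\lambda_{\mu_{\nu}}})=\lim(X_{\lambda_{\mu_{\nu}}})$, so $(X_{\lambda_{\mu_{\nu}}})$ is the desired subnet. I expect the one genuinely delicate point to be the verification that $x\in Y$ — that is, that $x$ survives as a limit of the chosen points and cannot be separated from the Fell limit by a compact neighbourhood — which is exactly where the local compactness and local Hausdorffness of $\mathcal{X}$ enter; the rest is routine bookkeeping with subnets.
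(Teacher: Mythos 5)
Your proof is correct and follows essentially the same route as the paper's: pass to the subnet witnessing $x\in\text{Acc}(X_{\lambda})$, use compactness of $\mathcal{C}(\mathcal{X})$ to get a Fell-convergent further subnet, note $x$ lies in the limit so the limit is a nonempty element of the closed set $\mathcal{H}(\mathcal{X})$, and finish with Proposition \ref{prp:fell}. The only difference is that you spell out the separation argument for $x\in Y$ (via a compact neighbourhood disjoint from $Y$), which the paper compresses into ``by the definition of the Fell topology''; this is a correct and welcome elaboration.
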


\begin{proof}
    Let $x_{\lambda_{\gamma}}\in X_{\lambda_{\gamma}}$ be a subnet with $\lim x_{\lambda_{\gamma}} = x$. By compactness of $\mathcal{C}(\mathcal{X})$, there is a subnet $(X_{\lambda_{\mu}})$ of $(X_{\lambda_{\gamma}})$ (and hence of $(X_{\lambda})$) converging to $X$ in $\mathcal{C}(\mathcal{X})$. By the definition of the Fell topology, we have $x\in X$, and hence $X\in\mathcal{C}(\mathcal{X})\setminus\{\emptyset\}$. Since $\mathcal{H}(\mathcal{X})$ is closed in $\mathcal{C}(X)\setminus\{\emptyset\}$, it follows that $X\in\mathcal{H}(\mathcal{X})$. By Proposition \ref{prp:fell}, we have $x\in X = \text{Acc}(X_{\lambda_{\mu}}) = \lim(X_{\lambda_{\mu}})$
\end{proof}

For an \'etale groupoid $G$, its \emph{Hausdorff cover} $\tilde{G}$ is defined to be $\mathcal{H}(G)$ equipped with the subspace topology arising from $\mathcal{C}(G)\setminus \{\emptyset\}$. Since $G$ is locally Hausdorff, the topology of $\tilde{G}$ can be described as in Proposition \ref{prp:fell}. The Hausdorff cover was first introduced by Timmermann in \cite{T11} and was rediscovered and studied further in \cite{BGHL25} We now describe some of the structure of $\tilde{G}$. More details can be found in \cite{BGHL25}. 

Define $\tilde{G}^{0}$ to be the closure of $\iota(G^{0})$ in the Fell topology. Since $G^{0}$ is Hausdorff, for every $X\in \tilde{G}^{0}$, $X\cap G^{0}$ is a singleton, and we denote the point as $\pi(X)$. The map $\pi:\tilde{G}^{0}\to G^{0}$ is easily seen to be continuous. Moreover, $X$ is a subgroup of the group $G^{x}_{x}$, $x = \pi(X)$; if $(x_{\lambda})\subseteq G^{0}$ is a net converging to $X$ in the Fell topology, then $x_{\lambda}^{-1} = x_{\lambda} =  x_{\lambda}x_{\lambda}$ for all $\lambda$ and continuity of $r,s:G\to G^{0}$ imply $r(X) = s(X) = \{x\}$ and $X^{-1} = X = XX$.

 For any $X, Y\in \tilde{G}^{0}$ and $g\in G$ such that $\pi(X) = s(g)$ and $\pi(Y) = r(g)$, we have $gX, Yg\in \tilde{G}.$ To see this, choose nets $(x_{\lambda})\subseteq G^{0}$ and $(y_{\lambda'})\subseteq G^{0}$ that converge to $X$ and $Y$ in the Fell topology. Since $r:G\to G^{0}$ and $s:G\to G^{0}$ are open maps, there are nets $(g_{\lambda})_{\lambda\geq \lambda_{0}}$ and $(g_{\lambda'})_{\lambda'\geq \lambda'_{0}}$ converging to $g$ such that $s(g_{\lambda}) = x_{\lambda}$ for $\lambda\geq \lambda_{0}$ and $y_{\lambda'} = r(g_{\lambda'})$ for $\lambda'\geq \lambda'_{0}$. As we know $X$ and $Y$ are groups, it is easy to verify that $(g_{\lambda})$ converges to $gX$ and $(g_{\lambda'})$ converges to $Yg$ in the Fell topology.

Conversely, for $\underline{g}\in \tilde{G}$, there are $Y,X\in \tilde{G}^{0}$ such that, for any $g\in \underline{g}$, we have 

$$\underline{g} = gX = Y g.$$ $X$ and $Y$ are explicitly defined as the limits in the Fell topology of $(x_{\lambda} = g^{-1}_{\lambda}g_{\lambda})$ and $(y_{\lambda} = g_{\lambda}g^{-1}_{\lambda})$ for any net $(g_{\lambda})\subseteq G$ converging in the Fell topology to $\underline{g}$. To define $r,s:\tilde{G}\to \tilde{G}^{0}$, we set $r(\underline{g}) = Y$ and $s(\underline{g}) = X$.

Now, for $\underline{g},\underline{h}\in \tilde{G}$ such that $s(\underline{g})= X = r(\underline{h})$, the pointwise product $\underline{g}\cdot \underline{h}$ makes sense, and for $g\in\underline{g}$, $h\in\underline{h}$, we have

$$\underline{g}\cdot \underline{h} = gX\cdot Xh  = gXh = ghZ\in \tilde{G},$$ where $Z = s(\underline{h})$. Inversion $\underline{g}\mapsto \underline{g}^{-1}$ is also defined pointwise. With these operations, $\tilde{G}$ is an \'etale groupoid.

There is a $*$-homomorphism $\iota:\mathscr{C}_{c}(G)\to C_{c}(\tilde{G})$ defined for $f\in \mathscr{C}_{c}(G)$ as 

$$\iota(f)(\underline{g}) = \sum_{g\in \underline{g}}f(g), \text{ for all }\underline{g}\in \tilde{G}.$$ Since $\lambda_{\{s(g)\}}$ restricted to $\iota(\mathscr{C}_{c}(G))$ is unitarily equivalent to $\lambda_{s(g)}$ and $\|\cdot\|_{x}$ for a dense set $X\subseteq \tilde{G}^{0}$ determines $\|\cdot\|_{r}$ (because $\tilde{G}$ is Hausdorff), it follows that $\iota$ extends to a *-homomorphism $\iota:C^{*}_{r}(G)\to C^{*}_{r}(\tilde{G})$. Moreover, for all $f\in C^{*}_{r}(G)$ and $g\in G$, we have $\iota(f)(\{g\}) = f(g)$, so that $\iota$ is injective. See \cite[Lemma~3.8]{BGHL25} for more details.

An element $g\in G$ is \emph{Hausdorff} if for every $h\in G$, $h\neq g$, there is a neighbourhood $U$ of $g$ and $V$ of $h$ such that $U\cap V = \emptyset$. This is obviously equivalent to every net converging to $g$ has a distinct limit point. Therefore, continuity of the groupoid operations imply $g$ is Hausdorff if and only if $s(g)$ (or $r(g)$) is Hausdorff, showing that the Hausdorff points $C := \{x\in G^{0}:x \text{ is Hausdorff}\}$ are an invariant set, both in $G$ and in $\tilde{G}$ (embedded as singletons). We use the notation $C$ here since $x\in G^{0}$ is Hausdorff if and only if the embedding $\iota:G^{0}\to \tilde{G}^{0}$ is continuous at $x$. This follows from the fact that $\iota$ is a section for $\pi$ with dense image and Proposition \cite[Proposition~3.20]{BHL24}. Note that this is equivalent to $x\notin r(\partial G^{0})$, and since  $r:G\to G^{0}$ maps closed sets with empty interior to empty interior sets (by our standing assumption), $C$ is a dense set.

We denote the closure of $\iota(C)$ in the Fell topology by $\tilde{G}^{0}_{ess}$, which is again an invariant set of $\tilde{G}$ (this follows from the fact that $r,s:\tilde{G}\to \tilde{G}^{0}$ are open mappings). The reduction $\tilde{G}|_{\tilde{G}^{0}_{ess}} =:\tilde{G}_{ess}$ is called the \emph{essential Hausdorff cover}. If we let $\tilde{J} = \{f\in C^{*}_{r}(G): f|_{\tilde{G}_{ess}} = 0\}$, then we have a short exact sequence 

$$\begin{tikzcd}
0 \arrow[r] & \tilde{J} \arrow[r] & C^{*}_{r}(\tilde{G}) \arrow[r, "q"] & C^{*}_{r}(\tilde{G}_{ess}) \arrow[r] & 0,
\end{tikzcd}$$ where $q(f) = f|_{\tilde{G}_{ess}}$, for $f\in C^{*}_{r}(\tilde{G}_{ess})$ (\cite[Definition~4.14]{BGHL25}).

Note that $\iota^{-1}(\tilde{J}) = \{f\in C^{*}_{r}(G): f|_{(G|_{C})} = 0\}$. Since $C$ is dense in $G^{0}$, we have $\iota^{-1}(\tilde{J}) = J$ (\cite[Proposition~4.15]{BGHL25}) and $\pi_{ess} = \pi:G^{0}_{ess}\to G^{0}$ is surjective.

\section{Compressibility of maps to *-homomorphisms}
\label{s:compress}
In this section we introduce a new type of map between (pre-)C*-algebras which are seemingly abundant (at least for groupoid C*-algebras) and are useful in determining the isotropy fibres of the singular ideal. The results in this section (except Theorem \ref{thm:kernelsurjects}) are inspired by those in \cite{CN22} and \cite{CN24} for the restriction of groupoid C*-algebras to isotropy group C*-algebras.

\begin{dfn}
\label{dfn:comp}
    Let $\eta:\mathcal{A}\to\mathcal{B}$ be a linear map between pre-C*-algebras and $\mathcal{C}$ a *-sub-algebra of $\mathcal{A}$ such that $\eta:\mathcal{C}\to\mathcal{B}$ is a *-homomorphism. We say $\eta$ is \emph{compressible to $\mathcal{C}$} if for every $a\in\mathcal{A}$ and $\epsilon > 0$, there is $\phi\in\widetilde{\mathcal{C}}$ and  $c\in\mathcal{C}$ such that $\|\phi\|\leq 1$, $\eta(\phi) = 1$, $\eta(a) = \eta(c)$, and $\|\phi^{*} a\phi  - c\|\leq \epsilon$.
\end{dfn}

We now show the norm of an element $a\in\mathcal{A}$ under a compressible map $\eta:\mathcal{A}\to\mathcal{B}$ satisfies a formula which is immediate in the case when $\eta$ is a bounded *-homomorphism. This result is inspired from \cite[Theorem~2.4]{CN22}.
\begin{thm}
\label{thm:normeqn}
If $\eta$ is compressible to $\mathcal{C}$ and $\eta|_{\mathcal{C}}$ is bounded, then for any approximate unit $(u_{\lambda})$ for the kernel of the completion $\eta: C\to B$ and $a\in\mathcal{A}$, we have

\begin{equation}
\label{eq:norm1}
    \|\eta(a)\| = \lim_{\lambda}\|(1-u_{\lambda})a(1-u_{\lambda})\|\text{ and }
\end{equation}

\end{thm}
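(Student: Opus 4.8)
The plan is to prove the norm equation by exploiting the defining compressibility property to sandwich $\|\eta(a)\|$ between the two sides of a limit, using the fact that $\eta|_{\mathcal{C}}$ is a genuine (bounded) $*$-homomorphism whose extension $\eta\colon C\to B$ therefore has a well-understood kernel with approximate units. The key structural fact I would rely on is the standard C*-algebra identity: if $K=\ker(\eta\colon C\to B)$ with approximate unit $(u_\lambda)$, then for any $c\in C$,
\begin{equation*}
    \|\eta(c)\| = \lim_\lambda \|(1-u_\lambda)c(1-u_\lambda)\|,
\end{equation*}
since $\eta$ factors through $C/K$ and the quotient norm of $c+K$ equals $\lim_\lambda\|(1-u_\lambda)c(1-u_\lambda)\|$ (this is the Arveson-type formula for quotient norms via approximate units of the ideal). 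So the heart of the matter is transferring this from elements $c\in\mathcal{C}$ to arbitrary $a\in\mathcal{A}$, which is exactly what compressibility is designed to do.

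**The two inequalities.**
Fix $a\in\mathcal{A}$ and $\epsilon>0$, and choose $\phi\in\widetilde{\mathcal{C}}$, $c\in\mathcal{C}$ as in \Cref{dfn:comp}, so $\|\phi\|\le 1$, $\eta(\phi)=1$, $\eta(a)=\eta(c)$, and $\|\phi^*a\phi-c\|\le\epsilon$. For the first inequality I would compute, for each $\lambda$, that
\begin{equation*}
    \|(1-u_\lambda)c(1-u_\lambda)\| \le \|(1-u_\lambda)\phi^*a\phi(1-u_\lambda)\| + \epsilon \le \|\phi(1-u_\lambda)\|^2\,\|a\| + \epsilon,
\end{equation*}
but this crude bound is not yet what I want; instead the right move is to note $\|(1-u_\lambda)\phi^* = ((1-u_\lambda)\phi^*)$ acts as a contraction up to $\eta$-data. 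The cleaner route: since $\eta(c)=\eta(a)$, applying the quotient formula to $c$ gives $\|\eta(a)\|=\|\eta(c)\|=\lim_\lambda\|(1-u_\lambda)c(1-u_\lambda)\|$, and then $\|(1-u_\lambda)c(1-u_\lambda)\|\le\|(1-u_\lambda)\phi^*a\phi(1-u_\lambda)\|+\epsilon$. Writing $v_\lambda=\phi(1-u_\lambda)$, I bound $\|(1-u_\lambda)\phi^*a\phi(1-u_\lambda)\|=\|v_\lambda^*av_\lambda\|\le\|v_\lambda\|^2\|a\|$; the point is that $\|v_\lambda\|\to 1$, which follows because $\eta(\phi)=1$ forces $\|\phi(1-u_\lambda)\|$ down to the quotient norm of $\phi$, namely $\|\eta(\phi)\|=1$. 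Taking $\limsup_\lambda$ and letting $\epsilon\to0$ yields $\|\eta(a)\|\le\liminf_\lambda\|(1-u_\lambda)a(1-u_\lambda)\|\cdot\limsup\|v_\lambda\|^2$; I must be careful to arrange the estimate so the spurious $\|a\|$ factor is replaced by $\|(1-u_\lambda)a(1-u_\lambda)\|$ itself.

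**Reversing the roles and the main obstacle.**
For the reverse inequality $\limsup_\lambda\|(1-u_\lambda)a(1-u_\lambda)\|\le\|\eta(a)\|$, I would use that $\eta$ is contractive on its range in the appropriate sense: since $\eta(\phi)=1$ and $\|\phi\|\le1$, the element $\phi^*a\phi$ is close to $c\in\mathcal C$ with $\eta(c)=\eta(a)$, so $(1-u_\lambda)\phi^*a\phi(1-u_\lambda)$ is within $\epsilon$ of $(1-u_\lambda)c(1-u_\lambda)$, whose norms converge to $\|\eta(c)\|=\|\eta(a)\|$; then I transfer back to $(1-u_\lambda)a(1-u_\lambda)$ using that $\phi(1-u_\lambda)$ and $(1-u_\lambda)$ agree up to the kernel. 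The main obstacle I anticipate is precisely this last transfer: $a\in\mathcal A$ need not lie in $\mathcal C$, so $(1-u_\lambda)a(1-u_\lambda)$ and $(1-u_\lambda)\phi^*a\phi(1-u_\lambda)$ are a priori unrelated, and I must leverage $\eta(\phi)=1$ together with the multiplicativity of $\eta$ on $\mathcal C$ to show the difference $\phi^*a\phi-a$, though not small in norm, becomes negligible after compressing by $(1-u_\lambda)$ on both sides — i.e. that $(1-u_\lambda)\phi^*$ and $(1-u_\lambda)$ are asymptotically interchangeable as multipliers because $\phi-1\in\ker\eta$ in a suitable completed sense. Making this rigorous, likely by replacing $\phi$ with $\phi u_\lambda$-corrections and using $\|(1-u_\lambda)(\phi-1)\|\to0$ (a consequence of $\eta(\phi-1)=0$ and the approximate-unit property), is where the real work lies; once that interchangeability is established both inequalities close and, letting $\epsilon\to0$, the two-sided bound gives the stated equation.
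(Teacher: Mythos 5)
Your proposal is correct and follows essentially the same route as the paper: the quotient-norm formula $\lim_{\lambda}\|(1-u_{\lambda})c(1-u_{\lambda})\| = \|\eta(c)\| = \|\eta(a)\|$ for the compressed element $c\in\mathcal{C}$, combined with the observation that $\phi-1\in\ker(\eta\colon C\to B)$ forces $\|(1-\phi)(1-u_{\lambda})\|\to 0$ and hence makes $(1-u_{\lambda})a(1-u_{\lambda})$ and $(1-u_{\lambda})\phi^{*}a\phi(1-u_{\lambda})$ asymptotically equal in norm, which is exactly the paper's two displayed estimates. The ``main obstacle'' you flag at the end is in fact immediate from the approximate-unit property, and the detour through the crude bound $\|v_{\lambda}\|^{2}\|a\|$ in your first-inequality discussion is unnecessary once that interchangeability is in hand.
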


\begin{proof}
Let $\epsilon > 0$. For $a\in \mathcal{A}$, let $\phi\in\tilde{\mathcal{C}}$ and $c\in C$ be such that $\|\phi\|\leq 1$, $\eta(\phi) = 1$, $\eta(a) = \eta(c)$, and $\|\phi^{*} a\phi  - c\|\leq \frac{\epsilon}{3}$. Since $\ker(\eta:C\to B) = \ker(\eta: \tilde{C}\to\tilde{B})$ and $\eta:\tilde{C}\to\tilde{B}$ is a *-homomorphism, there is $\lambda_{0}$ such that, for all $\lambda\geq \lambda_{0}$, we have $$\|(1-\phi)(1-u_{\lambda})\| \leq \frac{\epsilon}{6\|a\|}\text{ and }\big| \|(1-u_{\lambda})c(1-u_{\lambda}))\| - \|\eta(c)\|\big|\leq \frac{\epsilon}{3}.$$

From the first inequality, we have $\|(1-u_{\lambda})a(1-u_{\lambda}) - (1-u_{\lambda})\phi^{*} a \phi (1-u_{\lambda})\|\leq\frac{\epsilon}{3}$ for all $\lambda\geq \lambda_{0}$. The second inequality, along with $\eta(a) = \eta(c)$ and $\|\phi^{*}a\phi - c\|\leq \frac{\epsilon}{3}$ implies $\big| \|(1-u_{\lambda})\phi^{*}a\phi(1-u_{\lambda}))\| - \|\eta(a)\|\big|\leq \frac{2\epsilon}{3}$ for all $\lambda\geq \lambda_{0}$. Therefore,$$\big| \|(1-u_{\lambda})a(1-u_{\lambda}))\| - \|\eta(a)\|\big|\leq \epsilon\text{ for all }\lambda\geq \lambda_{0}.$$ As $\epsilon > 0$ and $a\in\mathcal{A}$ are arbitrary, this completes the proof of the equality in Equation \ref{eq:norm1}.
\end{proof}

We now embark on proving the many corollaries of these equations. The first shows a compressible map is c.p.c. and it is moreover compressible to its multiplicative domain.

For a linear map $\eta:\mathcal{A}\to\mathcal{B}$ between pre-C*-algebras $\mathcal{A}$, $\mathcal{B}$, its \emph{multiplicative domain} is $$\mathcal{M} = \{m\in \mathcal{A}: \eta(ma) = \eta(m)\eta(a)\text{ and } \eta(am) = \eta(a)\eta(m) \text{ }\forall a\in\mathcal{A}\}.$$
\begin{cor}
\label{cor:cpc}
    If $\eta$ is compressible to a bounded *-homomorphism $\eta:\mathcal{C}\to \mathcal{B}$, then $\eta$ is a completely positive and completely contracting map. Consequently, $\mathcal{C}$ is contained in the multiplicative domain $\mathcal{M}$ and $\eta$ is compressible to $\eta:\mathcal{M}\to\mathcal{B}$.
\end{cor}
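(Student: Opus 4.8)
The plan is to extract everything from the norm equation of Theorem \ref{thm:normeqn}, applied both to $\eta$ itself and to its matrix amplifications. First I would fix an approximate unit $(u_\lambda)$ for $\ker(\eta\colon C\to B)$ consisting of positive contractions (every closed ideal in a C*-algebra has one). Since $\|1-u_\lambda\|\le 1$, the equation $\|\eta(a)\|=\lim_\lambda\|(1-u_\lambda)a(1-u_\lambda)\|$ gives $\|\eta(a)\|\le\|a\|$ at once, so $\eta$ is contractive and extends to a bounded $\bar\eta\colon A\to B$. The key technical input I would isolate is the identity $\eta(\phi^*a\phi)=\eta(a)$ for every $a\in\mathcal{A}$ and every $\phi\in\widetilde{\mathcal{C}}$ with $\|\phi\|\le1$ and $\eta(\phi)=1$. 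To prove it I would write $\phi^*a\phi-a=\phi^*a(\phi-1)+(\phi^*-1)a$, apply the norm equation to $\phi^*a\phi-a$, and use the fact already established inside the proof of Theorem \ref{thm:normeqn} that $\|(1-\phi)(1-u_\lambda)\|\to 0$ because $1-\phi\in\ker(\eta\colon C\to B)$; each of the two resulting terms is then bounded by $\|a\|\cdot\|(1-\phi)(1-u_\lambda)\|\to0$.

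Next I would upgrade single-element compressibility to \emph{simultaneous} compressibility: for finitely many $a_1,\dots,a_m\in\mathcal{A}$ and $\epsilon>0$ there is one $\phi\in\widetilde{\mathcal{C}}$ with $\|\phi\|\le1$, $\eta(\phi)=1$, together with $c_1,\dots,c_m\in\mathcal{C}$ satisfying $\eta(c_k)=\eta(a_k)$ and $\|\phi^*a_k\phi-c_k\|\le\epsilon$. I would build $\phi=\phi_1\cdots\phi_m$ inductively, letting $\phi_k$ compress $\phi_{k-1}^*\cdots\phi_1^*a_k\phi_1\cdots\phi_{k-1}$; the condition $\eta(c_k)=\eta(a_k)$ is exactly where the identity $\eta(\phi^*a\phi)=\eta(a)$ (applied to $\phi_1\cdots\phi_{k-1}$) is needed, while $\|\phi\|\le1$ and $\eta(\phi)=1$ are immediate since the $\phi_j$ are contractions sent to $1$. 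Taking the diagonal $\Phi=\mathrm{diag}(\phi,\dots,\phi)$ then shows the amplification $\eta^{(n)}\colon M_n(\mathcal{A})\to M_n(\mathcal{B})$ is itself compressible to the *-homomorphism $\eta^{(n)}|_{M_n(\mathcal{C})}$. Applying the contractivity step to $\eta^{(n)}$, with the approximate unit $\mathrm{diag}(u_\lambda,\dots,u_\lambda)$ for $M_n(\ker\eta)$, gives complete contractivity. For complete positivity I would run, for every $n$, the following: given $b\in M_n(\mathcal{A})$ and $\epsilon>0$, choose $\phi,c$ compressing $b^*b$; then $c$ lies within $\epsilon$ of the positive element $(b\phi)^*(b\phi)$, so its self-adjoint part satisfies $\mathrm{Re}(c)\ge-\epsilon$ and its skew part has norm $\le\epsilon$, whence $\eta^{(n)}(b^*b)=\eta^{(n)}(c)$ lies within $O(\epsilon)$ of the positive cone (using that $\eta^{(n)}|_{M_n(\mathcal{C})}$ is a positive contraction). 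Letting $\epsilon\to0$ and invoking closedness of the cone gives $\eta^{(n)}(b^*b)\ge0$, so $\bar\eta$ is completely positive.

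With $\eta$ now completely positive and completely contractive, the containment $\mathcal{C}\subseteq\mathcal{M}$ I would deduce from the standard multiplicative-domain theorem: extend $\bar\eta$ to a unital completely positive map on unitizations, and note that for $m\in\mathcal{C}$ the *-homomorphism property of $\eta|_{\mathcal{C}}$ yields $\eta(m^*m)=\eta(m)^*\eta(m)$ and $\eta(mm^*)=\eta(m)\eta(m)^*$, the equality case of Choi's inequality, so $m$ lies in the multiplicative domain and $\eta(ma)=\eta(m)\eta(a)$, $\eta(am)=\eta(a)\eta(m)$ for all $a$. Finally, compressibility to $\mathcal{M}$ is essentially free: $\eta|_{\mathcal{M}}$ is a *-homomorphism ($\mathcal{M}$ is a *-subalgebra, $\eta$ is multiplicative on it by definition and self-adjoint since it is positive), and the same $\phi\in\widetilde{\mathcal{C}}\subseteq\widetilde{\mathcal{M}}$ and $c\in\mathcal{C}\subseteq\mathcal{M}$ witnessing compressibility to $\mathcal{C}$ also witness compressibility to $\mathcal{M}$.

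The main obstacle is the passage to matrix amplifications: completeness genuinely requires knowing $\eta^{(n)}$ is compressible, and the only way I see to produce a single diagonal compressing element is through simultaneous compressibility. That reduction is not formal, since combining individual compressions into a product $\phi_1\cdots\phi_m$ forces me to know that conjugating an arbitrary $a\in\mathcal{A}$ by such $\phi$ leaves $\eta(a)$ unchanged, that is, the identity $\eta(\phi^*a\phi)=\eta(a)$. Establishing this cleanly, rather than circularly through the multiplicative domain (which is not yet available at that stage), is the crux, and it is precisely what the norm equation is strong enough to supply. A secondary, purely bookkeeping difficulty is keeping the scalar parts of elements of $\widetilde{\mathcal{C}}$ and the unital/non-unital distinctions consistent when identifying $\ker(\eta\colon C\to B)$ with $\ker(\eta\colon\widetilde{C}\to\widetilde{B})$.
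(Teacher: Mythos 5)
Your argument is correct, but it reaches the one genuinely nontrivial step --- compressibility of the matrix amplification $\eta^{(n)}$ --- by a different route than the paper. You first prove the invariance identity $\eta(\phi^{*}a\phi)=\eta(a)$ from the norm equation, bootstrap it into simultaneous compressibility of finitely many elements via a product $\phi_{1}\cdots\phi_{m}$, and then compress a matrix with $\mathrm{diag}(\phi,\dots,\phi)$. The paper short-circuits all of this: since $\eta(\mathcal{A})=\eta(\mathcal{C})$ (hence $\eta(A)=\eta(C)$), one picks $c_{ij}\in C$ with $\eta(c_{ij})=\eta(a_{ij})$, and the norm equation applied entrywise to $a_{ij}-c_{ij}\in\ker\eta$ shows that a single $\phi=(1-u_{\lambda_{0}})1_{n}$ together with $c=(1-u_{\lambda_{0}})(c_{ij})(1-u_{\lambda_{0}})$ already witnesses compressibility of $\eta^{(n)}$ to $M_{n}(C)$; the approximate unit is itself a universal compressing element, so no product construction or simultaneous-compression lemma is needed (this refutes your remark that the diagonal element can \emph{only} be produced via simultaneous compressibility, though your construction is a valid alternative). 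The two proofs handle positivity in essentially the same way --- your real/imaginary decomposition of $c$ near the positive element $(b\phi)^{*}(b\phi)$ versus the paper's spectral-distance argument, with the paper inserting an explicit self-adjointness step first --- and the multiplicative-domain conclusion via Choi's theorem and the passage to unitizations is identical. What your route buys is the reusable identity $\eta(\phi^{*}a\phi)=\eta(a)$, a clean and independently useful consequence of the norm equation; what the paper's buys is brevity, since the amplification witnesses come for free from the $n=1$ case.
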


\begin{proof}
    Let $(u_{\lambda})$ be an approximate unit for $\ker(\eta:C\to B)$. Then, by Theorem \ref{thm:normeqn}, for every $\epsilon > 0$, and $a\in\mathcal{A}$ there is $\lambda_{0}$ such that $\|\eta(a)\| - \epsilon \leq \|(1-u_{\lambda_{0}})a(1-u_{\lambda_{0}})\|\leq \|a\|$. Therefore, $\eta:\mathcal{A}\to\mathcal{B}$ is norm contracting.

    We first show that $\eta:\mathcal{A}\to\mathcal{B}$ is self-adjoint, i.e. $\overline{\eta}(a) := \eta(a^{*})^{*} = \eta(a)$ for every $a\in \mathcal{A}$. Since $\eta:\mathcal{C}\to \mathcal{B}$ is a *-homomorphism, it is self-adjoint. If $d\in\mathcal{C}$ and $\phi\in \tilde{\mathcal{C}}$ is such that $\|\phi\|\leq 1$, $\eta(\phi) = 1$, $\eta(a^{*}) = \eta(d)$ and $\|\phi^{*} a^{*}\phi - d\|\leq \epsilon$, then $d^{*}$ is such that $\overline{\eta}(a) = \eta(d^{*}) = \overline{\eta}(d^{*})$ and $\|\phi^{*} a\phi - d^{*}\|\leq \epsilon.$ Therefore, $\overline{\eta}$ is compressible to $\eta:\mathcal{C}\to\mathcal{B}$.

    Now, for $a\in\mathcal{A}$ self-adjoint, let $c\in\mathcal{C}$ be such that $\eta(a) = \eta(c)$. Then, $\overline{\eta}(a) = \eta(a)^{*} = \eta(c^{*})$. By Theorem \ref{thm:normeqn}, for every $\epsilon > 0$ there is $\lambda_{0}$ such that for all $\lambda\geq \lambda_{0}$, we have $\|(1- u_{\lambda}) (a - c)(1-u_{\lambda})\|\leq \epsilon$ and $\|(1- u_{\lambda}) (a - c^{*})(1-u_{\lambda})\|\leq \frac{\epsilon}{2}$. Therefore, $\|(1- u_{\lambda} (c - c^{*})(1-u_{\lambda})\|\leq \frac{\epsilon}{2}$. Since $\eta:\mathcal{C}\to\mathcal{B}$ is a *-homomorphism and norm contraction, it follows that $\|\eta(c - c^{*})\|\leq \epsilon$. Since $\epsilon > 0$ was arbitrary, it follows that $\eta(a) = \eta(c) = \eta(c)^{*} = \overline{\eta}(a)$. Therefore, $\eta$ is self-adjoint.

    Now, we show $\eta$ is positive. Since $\eta$ is self-adjoint, for $a\in\mathcal{A}$ positive, there is a self-adjoint $c\in\mathcal{C}$ such that $\eta(a) = \eta(c)$. By Theorem \ref{thm:normeqn}, for every $\epsilon > 0$, there is $\lambda_{0}$ such that $\|(1-u_{\lambda})a(1- u_{\lambda}) - (1-u_{\lambda})c(1- u_{\lambda})\|\leq \epsilon$. Therefore, $(1-u_{\lambda})c(1- u_{\lambda})$ is a self-adjoint element with spectrum distance $\epsilon > 0$ away from $[0,\infty)$. Since, $\eta:\mathcal{C}\to\mathcal{B}$ is a bounded *-homomorphism, it follows that the spectrum of  $\eta(a) = \eta(c) = \eta((1-u_{\lambda})c(1- u_{\lambda}))$ is distance $\epsilon > 0$ from $[0,\infty)$. Since $\epsilon > 0$ was arbitrary, it follows that $\eta(a)$ is a self-adjoint element with spectrum contained in $[0,\infty)$ and is therefore positive.

    Let $\eta:A\to B$ denote the extension of $\eta$ to the respective completions, which is a norm contraction by what we have shown above. To show $\eta$ is completely positive and contracting, it suffice to show that the matrix amplification $\eta^{(n)}:M_{n}(A)\to M_{n}(B)$ is compressible to $\eta^{(n)}:M_{n}(C)\to M_{n}(A)$. Since $\eta:A\to B$ is norm-contracting, Equation \ref{eq:norm1} extends to all $a\in A$.

    Since $\eta:C\to \mathcal{B}$ is a *-homomorphism and $\eta(\mathcal{A}) = \eta(\mathcal{C})$, we have $\eta(A)\subseteq \overline{\eta(\mathcal{A})} = \eta(C).$ Therefore, $\eta(A) = \eta(C)$.
    
    So, for $(a_{i,j})\in M_{n}(A)$, let $(c_{i,j})\in M_{n}(C)$ be such that $\eta(a_{i,j}) = \eta(c_{i,j})$ for all $i,j\leq n$. Using Equation \ref{eq:norm1}, there is $\lambda_{0}$ such that $\phi = (1-u_{\lambda_{0}})1_{n}\in \widetilde{M_{n}(C)}$ and $c = (1-u_{\lambda_{0}})((c_{i,j}))(1-u_{\lambda_{0}})\in M_{n}(C)$ such that $\|\phi\|\leq 1$, $\eta^{(n)}(\phi) = 1$, $\eta^{(n)}((a_{i,j})) = \eta^{(n)}(c)$ and $\|\phi^{*}(a_{i,j})\phi - c\| < \epsilon$. This proves $\eta^{(n)}$ is compressible to $\eta^{(n)}:M_{n}(C)\to M_{n}(B)$.

    Since $\eta:\mathcal{A}\to \mathcal{B}$ is completely positive and contracting (and extends to the C*-completions as thus), it extends to a unital completely positive and contracting map between the unitizations $\eta:\tilde{\mathcal{A}}\to \tilde{\mathcal{B}}$ by (\cite[Proposition~2.2.1]{BO08}. Therefore,  by  \cite[Theorem~3.18]{P03}, the multiplicative domain $\mathcal{M} = \{a\in \mathcal{A}: \eta(a^{*}a) = \eta(a)^{*}\eta(a)\text{ and }\eta(aa^{*}) = \eta(a)\eta(a)^{*}\}$. Therefore, $\mathcal{C}\subseteq \mathcal{M}$, which proves $\eta$ is compressible to $\eta:\mathcal{M}\to \mathcal{B}$.
\end{proof}

Being compressible to a larger domain is a weaker property, as we have less control over the approximate unit in the norm equation (\ref{eq:norm1}). This is why compressibility was not formulated in terms of the multiplicative domain.

\begin{cor}
\label{cor:quotientnorm}
    If $\eta:\mathcal{A}\to\mathcal{B}$ is compressible to the bounded *-homomorphism $\eta:\mathcal{C}\to\mathcal{B}$ and $\ker(\eta:\mathcal{C}\to\mathcal{B})$ contains an approximate unit for $\ker(\eta:C\to B)$, then for any $a\in\mathcal{A}$, we have
\begin{equation}
\label{eq:norm2}
    \|\eta(a)\| = \inf\{\|b\|:b\in\mathcal{A}\text{ and }\eta(a) = \eta(b)\}.
\end{equation}
\end{cor}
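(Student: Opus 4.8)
The plan is to prove the two inequalities in Equation \ref{eq:norm2} separately, and the easy direction comes for free. Since $\eta$ is contracting (Corollary \ref{cor:cpc}), any competitor $b\in\mathcal{A}$ with $\eta(b)=\eta(a)$ satisfies $\|\eta(a)\|=\|\eta(b)\|\leq\|b\|$; taking the infimum over all such $b$ yields $\|\eta(a)\|\leq\inf\{\|b\|:b\in\mathcal{A},\ \eta(a)=\eta(b)\}$.

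For the reverse inequality, the idea is to exhibit, for each $\epsilon>0$, an explicit competitor realizing the value $\|\eta(a)\|+\epsilon$. The natural candidate is the compression $b_{\lambda}:=(1-u_{\lambda})a(1-u_{\lambda})$, where $(u_{\lambda})$ is the approximate unit for $\ker(\eta:C\to B)$ which, by hypothesis, can be chosen inside $\ker(\eta:\mathcal{C}\to\mathcal{B})$. Expanding $b_{\lambda}=a-u_{\lambda}a-au_{\lambda}+u_{\lambda}au_{\lambda}$ shows $b_{\lambda}\in\mathcal{A}$, since the unit cancels and every term is a product of elements of $\mathcal{A}$. I then need two facts: that $\eta(b_{\lambda})=\eta(a)$ for all $\lambda$, and that $\lim_{\lambda}\|b_{\lambda}\|=\|\eta(a)\|$. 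For the first, recall from Corollary \ref{cor:cpc} that $\mathcal{C}$ lies in the multiplicative domain of $\eta$; since $u_{\lambda}\in\mathcal{C}$ and $\eta(u_{\lambda})=0$, applying the multiplicative-domain identities to the expansion above kills the three correction terms, leaving $\eta(b_{\lambda})=\eta(a)$. For the second, this is exactly the norm equation of Theorem \ref{thm:normeqn}. Combining the two, for $\lambda$ large enough $b_{\lambda}$ is a competitor in the infimum with $\|b_{\lambda}\|\leq\|\eta(a)\|+\epsilon$, so $\inf\{\|b\|:\eta(b)=\eta(a)\}\leq\|\eta(a)\|+\epsilon$; letting $\epsilon\to 0$ finishes the proof.

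The one genuinely delicate point, which I would flag as the main obstacle, is ensuring that $b_{\lambda}$ both lies in the pre-C*-algebra $\mathcal{A}$ (rather than only in its completion $A$) and satisfies $\eta(b_{\lambda})=\eta(a)$. Both hinge on the standing hypothesis that the approximate unit for $\ker(\eta:C\to B)$ is drawn from $\ker(\eta:\mathcal{C}\to\mathcal{B})$: this is precisely what is needed over Theorem \ref{thm:normeqn} (which only used an approximate unit at the level of the completion) and it is why the corollary is stated with this extra assumption. Once that is in place, the remainder is a routine application of Theorem \ref{thm:normeqn} together with the multiplicative-domain structure already recorded in Corollary \ref{cor:cpc}.
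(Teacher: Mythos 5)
Your proof is correct and follows essentially the same route as the paper's: the easy inequality from contractivity (Corollary \ref{cor:cpc}), and the reverse inequality by exhibiting the compressions $(1-u_{\lambda})a(1-u_{\lambda})$ as competitors, using Theorem \ref{thm:normeqn} for the norm limit and the multiplicative-domain property of $u_{\lambda}$ to see that $\eta$ is unchanged. Your explicit remark that the hypothesis $(u_{\lambda})\subseteq\ker(\eta:\mathcal{C}\to\mathcal{B})$ is what keeps the competitor inside $\mathcal{A}$ rather than only in $A$ is a point the paper leaves implicit, and is worth making.
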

\begin{proof}
    Let $(u_{\lambda})$ be an approximate unit for $\ker(\eta:C\to B)$. For $b\in \mathcal{A}$ such that $\eta(a) = \eta(b)$, since $\eta$ is a norm contraction (Corollary \ref{cor:cpc}) we have $\|\eta(a)\|\leq \|b\|$. Hence, $\|\eta(a)\|\leq \inf\{\|b\|:\eta(a) = \eta(b)\}$. For every $\epsilon > 0$, there is $\lambda_{0}$ such that $\|(1-u_{\lambda_{0}})b(1-u_{\lambda_{0}})\|\leq \|\eta(a)\| + \epsilon $. Since $u_{\lambda}$ is in the multiplicative domain $\eta$ (Corollary \ref{cor:cpc}), we have $\eta(a) = \eta((1-u_{\lambda_{0}})b(1-u_{\lambda_{0}}))$ and hence $\inf\{\|b\|:\eta(a) = \eta(b)\}\leq \|(1-u_{\lambda_{0}})b(1-u_{\lambda_{0}})\|\leq \|\eta(a)\| + \epsilon$. Since $\epsilon $ is arbitrary, we have $\inf\{\|b\|:\eta(a) = \eta(b)\}\leq \|\eta(a)\|$, proving the corollary.
\end{proof}

It is important to know that compressibility is preserved under taking completions of pre-C*-algebras.

\begin{cor}
\label{cor:completion}
    If $\eta:\mathcal{A}\to\mathcal{B}$ is compressible to the bounded *-homomorphism $\eta:\mathcal{C}\to \mathcal{B}$, then $\eta:A\to B$ is compressible to $\eta:C\to B$.
\end{cor}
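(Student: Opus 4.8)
The plan is to verify the defining conditions of compressibility directly for the completion, leveraging the facts already established in the proof of Corollary \ref{cor:cpc}: namely that $\eta$ extends to a norm-contracting c.p.c.\ map $\eta:A\to B$, that $\eta(A)=\eta(C)$, and that the norm equation \eqref{eq:norm1} holds for all elements of $A$. Note first that $C=\overline{\mathcal{C}}$ is a *-sub-algebra of $A=\overline{\mathcal{A}}$ and that the extension $\eta:C\to B$ of the bounded *-homomorphism $\eta:\mathcal{C}\to\mathcal{B}$ is again a *-homomorphism, so the target statement is at least meaningful. Since $\eta:C\to B$ is a *-homomorphism of C*-algebras, its image $\eta(C)$ is a C*-subalgebra of $B$ and the induced map $C/\ker(\eta)\to\eta(C)$ is isometric. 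In particular, every $y\in\eta(C)$ admits, for each $\delta>0$, a lift $d\in C$ with $\eta(d)=y$ and $\|d\|\le\|y\|+\delta$; this liftability is the one nontrivial input, and is where I expect the main (though standard) work to sit.

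Now fix $a\in A$ and $\epsilon>0$. First I would approximate, choosing $a'\in\mathcal{A}$ with $\|a-a'\|\le\epsilon/4$ and applying compressibility of $\eta:\mathcal{A}\to\mathcal{B}$ to $a'$ to obtain $\phi\in\widetilde{\mathcal{C}}$ and $c'\in\mathcal{C}$ with $\|\phi\|\le1$, $\eta(\phi)=1$, $\eta(a')=\eta(c')$ and $\|\phi^* a'\phi-c'\|\le\epsilon/4$. Since $\widetilde{\mathcal{C}}\subseteq\widetilde{C}$ and $\mathcal{C}\subseteq C$, the elements $\phi$ and $c'$ already live in the right algebras and retain the norm bound $\|\phi\|\le1$ and the identity $\eta(\phi)=1$. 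What remains is to repair the exact-value condition, since $c'$ only satisfies $\eta(c')=\eta(a')$ rather than $\eta(c)=\eta(a)$.

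To repair it, I would set $y:=\eta(a)-\eta(a')=\eta(a-a')\in\eta(C)$, note $\|y\|\le\|a-a'\|\le\epsilon/4$ by contractivity of $\eta$, lift $y$ to $d\in C$ with $\|d\|\le\|y\|+\epsilon/4\le\epsilon/2$, and put $c:=c'+d\in C$. Then
\[
\eta(c)=\eta(c')+y=\eta(a')+\bigl(\eta(a)-\eta(a')\bigr)=\eta(a),
\]
as required. Finally the norm estimate follows from the triangle inequality together with $\|\phi\|\le1$:
\[
\|\phi^* a\phi-c\|\le\|\phi^*(a-a')\phi\|+\|\phi^* a'\phi-c'\|+\|d\|\le\tfrac{\epsilon}{4}+\tfrac{\epsilon}{4}+\tfrac{\epsilon}{2}=\epsilon.
\]
This exhibits the compression data $(\phi,c)$ for the arbitrary element $a\in A$ and completes the verification. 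The only place requiring care beyond bookkeeping is the isometric lifting of $y$ through $\eta:C\to B$, for which one invokes that a *-homomorphism of C*-algebras has closed image realizing the quotient norm; everything else is a routine approximation-and-correction argument.
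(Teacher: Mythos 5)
Your proof is correct, but it follows a genuinely different route from the paper's. The paper proves this corollary inside the proof of Corollary \ref{cor:cpc}: there one picks $c'\in C$ with $\eta(c')=\eta(a)$ (using $\eta(A)=\eta(C)$) and applies the extension of Equation \eqref{eq:norm1} to $a-c'$, so that for a suitable $\lambda_{0}$ the pair $\phi=1-u_{\lambda_{0}}$, $c=(1-u_{\lambda_{0}})c'(1-u_{\lambda_{0}})$ witnesses compressibility; the compressing element is thus manufactured directly from an approximate unit for $\ker(\eta:C\to B)$. You instead run an approximation-and-correction argument: compress a nearby $a'\in\mathcal{A}$ using the hypothesis on the dense subalgebra, keep the resulting $\phi\in\widetilde{\mathcal{C}}\subseteq\widetilde{C}$, and repair the exact condition $\eta(c)=\eta(a)$ by adding a near-isometric lift $d\in C$ of $\eta(a-a')$, which exists because $C/\ker(\eta)\to\eta(C)$ is an isometric *-isomorphism. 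Both arguments rest on the same nontrivial input, namely $\eta(A)\subseteq\eta(C)$ (established earlier in the proof of Corollary \ref{cor:cpc} from $\eta(\mathcal{A})=\eta(\mathcal{C})$ and closedness of $\eta(C)$), and your bookkeeping $\epsilon/4+\epsilon/4+\epsilon/2$ is right. What the paper's version buys is extra structure that is used downstream — the compression witnesses are of the form $1-u_{\lambda_{0}}$ for an approximate unit of the kernel, which matters in Corollary \ref{cor:quotientnorm} and Theorem \ref{thm:kernelsurjects} — whereas your version is more elementary here, needing only contractivity and standard C*-quotient theory rather than the extension of the norm equation to all of $A$.
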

\begin{proof}
    The proof is contained in the three paragraphs before the last in the proof of Corollary \ref{cor:cpc}.
\end{proof}

\begin{qtn}
    When is a c.p.c. map $\eta:A\to B$ between C*-algebras compressible to its multiplicative domain?
\end{qtn}

The proof of the next result follows \cite[Corollary~2.7]{CN22} closely.

\begin{cor}
\label{cor:prekerdense}
    If $\eta$ is compressible to a bounded *-homomorphism $\eta:\mathcal{C}\to\mathcal{B}$ such that $\ker(\eta:\mathcal{C}\to\mathcal{B})$ contains an approximate unit for $\ker(\eta:C\to B)$, then $\ker(\eta:\mathcal{A}\to\mathcal{B})$ is dense in $\ker(\eta:A\to B)$
\end{cor}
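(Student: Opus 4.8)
The plan is to deduce the statement directly from the quotient-norm formula of Corollary~\ref{cor:quotientnorm}. The hypotheses assumed here -- compressibility of $\eta$ to the bounded *-homomorphism $\eta\colon\mathcal{C}\to\mathcal{B}$ together with the existence of an approximate unit for $\ker(\eta\colon C\to B)$ inside $\ker(\eta\colon\mathcal{C}\to\mathcal{B})$ -- are precisely those of that corollary. So the first step is simply to record that for every $a\in\mathcal{A}$ we have
\[
\|\eta(a)\| = \inf\{\|b\|:b\in\mathcal{A}\text{ and }\eta(a)=\eta(b)\}.
\]
I would also note that, by Corollary~\ref{cor:cpc}, $\eta$ extends to a norm-contracting map $\eta\colon A\to B$, so that $\|\eta(x)\|\leq\|x\|$ for every $x\in A$; this is the only property of the completed map that the argument uses.

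Next I would fix $a\in\ker(\eta\colon A\to B)$ and $\epsilon>0$, with the goal of producing an element of $\ker(\eta\colon\mathcal{A}\to\mathcal{B})$ within $2\epsilon$ of $a$. Using density of $\mathcal{A}$ in $A$, choose $a'\in\mathcal{A}$ with $\|a-a'\|<\epsilon$. Since $\eta(a)=0$ and $\eta$ is contractive, this yields $\|\eta(a')\|=\|\eta(a'-a)\|\leq\|a'-a\|<\epsilon$, i.e.\ $a'$ is an element of the dense subalgebra whose image under $\eta$ is small.

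The key step is then to apply the quotient-norm formula to $a'$: since $\inf\{\|b\|:b\in\mathcal{A},\ \eta(b)=\eta(a')\}=\|\eta(a')\|<\epsilon$, the definition of the infimum provides $b\in\mathcal{A}$ with $\eta(b)=\eta(a')$ and $\|b\|<\epsilon$. Setting $c:=a'-b\in\mathcal{A}$ gives $\eta(c)=\eta(a')-\eta(b)=0$, so $c\in\ker(\eta\colon\mathcal{A}\to\mathcal{B})$, while $\|a-c\|\leq\|a-a'\|+\|b\|<2\epsilon$. As $a$ and $\epsilon$ are arbitrary, this is exactly the claimed density.

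I do not expect any substantial obstacle here: the genuine content has already been established in the quotient-norm equation~(\ref{eq:norm2}), and the role of this corollary is just to translate that norm identity into an approximation statement. The only points requiring care are that contractivity of the completed map $\eta\colon A\to B$ makes $\|\eta(a')\|$ small, and that the quotient-norm formula delivers a correcting element $b$ living in $\mathcal{A}$ (so that $a'-b$ remains in the dense subalgebra rather than merely in $A$). Everything else is a routine $\epsilon$-argument, and the approximate-unit hypothesis is used solely through Corollary~\ref{cor:quotientnorm}.
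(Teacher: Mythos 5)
Your proof is correct and follows essentially the same route as the paper: approximate $a$ by $a_0\in\mathcal{A}$, use contractivity to see $\eta(a_0)$ is small, produce a small element of $\mathcal{A}$ with the same image, and subtract. The only cosmetic difference is that you obtain the small corrector from the quotient-norm formula of Corollary \ref{cor:quotientnorm}, whereas the paper invokes Theorem \ref{thm:normeqn} directly and takes the corrector to be $(1-u_{\lambda_0})a_0(1-u_{\lambda_0})$ explicitly — the same element that underlies the proof of Corollary \ref{cor:quotientnorm}.
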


\begin{proof}
    Let $(u_{\lambda})\subseteq \ker(\eta:\mathcal{C}\to\mathcal{B})$ be an approximate unit. For every $a\in \ker(\eta:A\to B)$ and $\epsilon > 0$, we can choose $a_{0}\in\mathcal{A}$ such that $\|a - a_{0}\| < \frac{\epsilon}{2}$. Since $\eta$ is norm contracting (Corollary \ref{cor:cpc}), we have $\|\eta(a_{0})\| = \|\eta(a - a_{0})\| < \frac{\epsilon}{2}$. Now, by Theorem \ref{thm:normeqn}, there is $\lambda_{0}$ such that $a_{1}:= (1-u_{\lambda_{0}})a_{0}(1-u_{\lambda_{0}})$ satisfies $\|a_{1}\| < \frac{\epsilon}{2}$. By Corollary \ref{cor:cpc}, $u_{\lambda_{0}}\in \mathcal{M}$ and hence $\eta(a_{1}) = \eta(a_{0})$. So, $a_{2}:= a_{0} - a_{1}\in \ker(\eta: \mathcal{A}\to \mathcal{B})$ and $\|a - a_{2}\|\leq \|a - a_{0}\| + \|a_{1}\| < \epsilon$, proving the corollary.
\end{proof}

Maps compressible to *-homomorphisms send ideals to ideals in the image (compare with \cite[Lemma~2.1]{CN24}.

\begin{cor}
\label{cor:imidealisideal}
    If $\eta$ is compressible to a bounded *-homomorphism $\eta:\mathcal{C}\to\mathcal{B}$, then $\eta(A) = \eta(C)$ is a C*-sub-algebra of $B$. Moreover, if $J$ is a closed two-sided ideal of $A$, then $\eta(J)$ is a closed two-sided ideal of $\eta(A)$.
\end{cor}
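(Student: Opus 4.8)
The plan is to establish the two claims in sequence. First I would show $\eta(A) = \eta(C)$ is a C*-subalgebra of $B$. By Corollary~\ref{cor:cpc} the completion $\eta\colon A\to B$ is a c.p.c.\ map and satisfies $\eta(A) = \eta(C)$ (this equality was already derived inside the proof of Corollary~\ref{cor:cpc}). Since $\eta\colon C\to B$ is a $*$-homomorphism between C*-algebras, its image $\eta(C)$ is automatically a C*-subalgebra of $B$ (the image of a $*$-homomorphism of C*-algebras is closed). Thus the first claim is essentially immediate from what has already been proven.

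The substance is the second claim: if $J\subseteq A$ is a closed two-sided ideal, then $\eta(J)$ is a closed two-sided ideal of $\eta(A)$. The natural strategy is to exploit compressibility to \emph{lift} the ideal structure through the multiplicative domain. By Corollary~\ref{cor:cpc}, $C$ lies in the multiplicative domain $\Mm$ of $\eta$, so for $c\in C$ and any $a\in A$ we have $\eta(ca) = \eta(c)\eta(a)$ and $\eta(ac) = \eta(a)\eta(c)$. Hence, to check $\eta(J)$ absorbs multiplication by $\eta(A) = \eta(C)$, it suffices to show that for $j\in J$ and $c\in C$, both $\eta(c)\eta(j)$ and $\eta(j)\eta(c)$ lie in $\eta(J)$. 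Using the multiplicative domain property, $\eta(c)\eta(j) = \eta(cj)$ and $cj\in J$ since $J$ is an ideal of $A$; symmetrically $\eta(j)\eta(c) = \eta(jc)\in\eta(J)$. This gives the algebraic ideal property of $\eta(J)$ inside $\eta(C)$.

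The main obstacle is \textbf{closedness} of $\eta(J)$. One cannot argue this naively since $\eta$ is merely c.p.c., not a $*$-homomorphism, so it need not map closed sets to closed sets. Here I would invoke the norm equation of Theorem~\ref{thm:normeqn}, following the approach of \cite[Lemma~2.1]{CN24}. The key idea is that $\eta(J)$ should coincide with $\eta(J\cap C)$ (or with the image of an ideal in the multiplicative domain), reducing the problem to the image of an ideal under the genuine $*$-homomorphism $\eta|_C$. Concretely, given $j\in J$, compressibility produces, for each $\epsilon>0$, an element $c\in C$ with $\eta(j)=\eta(c)$; one then uses an approximate unit $(u_\lambda)$ for $\ker(\eta\colon C\to B)$ together with Theorem~\ref{thm:normeqn} to replace $j$ by compressions $(1-u_\lambda)j(1-u_\lambda)$, which remain in $J$ (as $J$ is an ideal and $u_\lambda\in C\subseteq A$), while controlling their norm by $\|\eta(j)\|+\epsilon$. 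This produces a bounded approximating sequence inside $J$ whose $\eta$-images converge to any prescribed element of $\overline{\eta(J)}$, and since $\eta|_C$ is a $*$-homomorphism its image of the closed ideal $\overline{J\cap C}$ is closed.

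To finish, I would assemble these observations: the compression argument shows every element of the closure $\overline{\eta(J)}$ is already of the form $\eta(j')$ for some $j'\in J$, giving $\eta(J) = \overline{\eta(J)}$. Combined with the algebraic ideal property established above, $\eta(J)$ is a closed two-sided ideal of $\eta(A)=\eta(C)$. I expect the closedness step to require the most care, specifically verifying that the norm-controlled compressions $(1-u_\lambda)j(1-u_\lambda)$ both stay in $J$ and have $\eta$-image equal to $\eta(j)$ (the latter using $u_\lambda\in\Mm$), so that the infimum in the norm equation of Corollary~\ref{cor:quotientnorm}-type reasoning is attained within $J$ itself.
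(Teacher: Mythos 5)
Your first claim and the algebraic part of the second are exactly the paper's argument: $\eta(A)=\eta(C)$ comes from Corollary \ref{cor:completion}/\ref{cor:cpc}, and the two-sided ideal property follows from $C$ lying in the multiplicative domain, so $\eta(c)\eta(j)=\eta(cj)$ and $\eta(j)\eta(c)=\eta(jc)$ with $cj,jc\in J$. You also correctly isolate the key ingredient for closedness: for $j\in J$ and an approximate unit $(u_\lambda)$ for $\ker(\eta\colon C\to B)$, Theorem \ref{thm:normeqn} gives $j'=(1-u_\lambda)j(1-u_\lambda)\in J$ with $\eta(j')=\eta(j)$ (since $u_\lambda$ lies in the multiplicative domain) and $\|j'\|\leq\|\eta(j)\|+\epsilon$.

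The gap is in how you assemble this into closedness. Producing a \emph{bounded} sequence $(j'_n)\subseteq J$ with $\eta(j'_n)\to b$ does not show $b\in\eta(J)$: the $j'_n$ need not converge in $A$, and boundedness gives no convergent subsequence in norm. Likewise, your fallback that $\eta(J)$ ``should coincide with $\eta(J\cap C)$'' is unjustified --- compressibility gives, for $j\in J$, some $c\in C$ with $\eta(c)=\eta(j)$, but there is no reason for $c$ to lie in $J$, so you cannot reduce to the image of a closed ideal under the $*$-homomorphism $\eta|_C$. The paper closes this with the standard absolutely-convergent-series criterion: given $(\eta(j_n))\subseteq\eta(J)$ with $\sum_n\|\eta(j_n)\|<\infty$, choose $\lambda_n$ so that $j'_n=(1-u_{\lambda_n})j_n(1-u_{\lambda_n})$ satisfies $\eta(j'_n)=\eta(j_n)$ and $\|j'_n\|\leq\|\eta(j_n)\|+2^{-n}$; then $\sum_n j'_n$ converges in the closed subspace $J$, and continuity of $\eta$ gives $\sum_n\eta(j_n)=\eta(\sum_n j'_n)\in\eta(J)$. (To see this suffices, telescope an approximating sequence for a point of $\overline{\eta(J)}$ into a series with summable norms.) Your norm control is exactly what makes this work, but the summability step, not mere boundedness, is the missing idea.
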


\begin{proof}
    We have $\eta(A) = \eta(C)$ from the fact that compressibility extends to completions of pre-C*-algebras Corollary \ref{cor:completion}.

    By Corollary \ref{cor:cpc}, $C$ is contained in the multiplicative domain of $\eta$. So, for $j\in J$ and $b\in \eta(A)$, choose $c\in C$ with $\eta(c) = b$. We have $b\eta(j) = \eta(cj)$ and $\eta(j)b = \eta(jc)$, proving that $\eta(J)$ is a two-sided ideal of $\eta(A)$. To finish the proof, it suffices to show $\eta(J)$ is complete. Let $(\eta(j_{n}))\subseteq \eta(J)$ be a sequence such that $\sum^{\infty}_{n=1}\|\eta(j_{n})\| < \infty$. Let $(u_{\lambda})$ be an approximate unit for $\ker(\eta:C\to B)$. By Theorem \ref{thm:normeqn}, for every $n\in\mathbb{N}$, there is $\lambda_{n}$ such that $j'_{n} = (1-u_{\lambda_{n}})j_{n}(1-u_{\lambda_{n}})$ satisfies $\eta(j'_{n}) = \eta(j_{n})$ and $\|j'_{n}\|\leq \|\eta(j_{n})\| + \frac{1}{2^{n}}$. Therefore, $\sum^{\infty}_{n=1} \|j'_{n}\| < \infty$, so that $\sum^{\infty}_{n=1}j'_{n}\in J$. By continuity of $\eta$, it follows that $\sum_{n=1}^{\infty}\eta(j_{n}) = \eta(\sum^{\infty}_{n=1}j'_{n})\in \eta(J)$.
\end{proof}

\begin{rmk}
    The proof (in Corollary \ref{cor:imidealisideal}) that $\eta(J)$ is closed only required that $J$ is a closed vector space such that $u_{\lambda} J\subseteq J$ and $Ju_{\lambda}\subseteq J$ for all $\lambda$, for some approximate unit $(u_{\lambda})$ for $\ker(\eta:C\to B)$.
\end{rmk}

We show compressibility is preserved under quotients.

\begin{cor}
\label{cor:comppasstoquotient}
    If $\eta:\mathcal{A}\to \mathcal{B}$ is compressible to a surjective bounded *-homomorphism $\eta:\mathcal{C}\to \mathcal{B}$ and $J$ is a closed two-sided ideal of $A$, then $\eta:\mathcal{A}/J\to \mathcal{B}/\eta(J)$ is compressible to $\eta:\mathcal{C}/J\to \mathcal{B}/\eta(J)$. If $(u_{\lambda})$ is an approximate unit for $\ker(\eta:C\to B)$, then  $(u_{\lambda} + J)$ is an approximate unit for $\ker(\eta: C/J\to B/\eta(J))$.
\end{cor}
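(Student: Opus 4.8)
The plan is to treat the two assertions separately: obtain compressibility of the induced map by transporting the compressibility witnesses through the quotient $*$-homomorphism, and then pin down the kernel downstairs exactly so that the standard behaviour of approximate units under quotients applies. Throughout, write $q\colon A\to A/J$ and $q_{B}\colon B\to B/\eta(J)$ for the quotient maps, set $q_{C}:=q|_{C}$, and recall that $\eta(J)$ is a closed two-sided ideal of $B=\eta(C)$ by Corollary~\ref{cor:imidealisideal} together with surjectivity, so that $B/\eta(J)$ really is a quotient C*-algebra. I regard $\mathcal{A}/J$, $\mathcal{C}/J$, $\mathcal{B}/\eta(J)$ as the images of $\mathcal{A},\mathcal{C},\mathcal{B}$ in $A/J$, $A/J$, $B/\eta(J)$.

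For compressibility of the induced map $\eta\colon\mathcal{A}/J\to\mathcal{B}/\eta(J)$, $q(a)\mapsto q_{B}(\eta(a))$, I would first note it is well defined because $a-a'\in J$ forces $\eta(a-a')\in\eta(J)$, and that its restriction to $\mathcal{C}/J$ is a $*$-homomorphism since $\eta|_{\mathcal{C}}$ is. Given $q(a)\in\mathcal{A}/J$ and $\epsilon>0$, apply compressibility of $\eta\colon\mathcal{A}\to\mathcal{B}$ to $a$ to get $\phi_{0}\in\widetilde{\mathcal{C}}$ and $c_{0}\in\mathcal{C}$ with $\|\phi_{0}\|\le 1$, $\eta(\phi_{0})=1$, $\eta(a)=\eta(c_{0})$ and $\|\phi_{0}^{*}a\phi_{0}-c_{0}\|\le\epsilon$, and simply push them through: $\phi:=q(\phi_{0})\in\widetilde{\mathcal{C}/J}$ and $c:=q(c_{0})$. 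As $q$ and its unitization are contractive $*$-homomorphisms, $\|\phi\|\le 1$, $\eta(\phi)=q_{B}(1)=1$, $\eta(q(a))=\eta(c)$ and $\|\phi^{*}q(a)\phi-c\|=\|q(\phi_{0}^{*}a\phi_{0}-c_{0})\|\le\epsilon$, which is exactly what compressibility demands.

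For the approximate-unit claim, observe that since $(u_{\lambda})$ is an approximate unit for the closed ideal $K:=\ker(\eta\colon C\to B)$ and $q_{C}\colon C\to C/J$ is a $*$-homomorphism, the net $(u_{\lambda}+J)=(q_{C}(u_{\lambda}))$ is \emph{automatically} an approximate unit for the closed ideal $q_{C}(K)$ of $C/J$: every element of $q_{C}(K)$ is $q_{C}(k)$ with $k\in K$, and $q_{C}(u_{\lambda})q_{C}(k)=q_{C}(u_{\lambda}k)\to q_{C}(k)$. So the whole statement reduces to identifying $q_{C}(K)=\ker(\eta\colon C/J\to B/\eta(J))$. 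Computing the kernel directly gives $\ker(\eta\colon C/J\to B/\eta(J))=q_{C}(L)$, where $L:=\{c\in C:\eta(c)\in\eta(J)\}$ is a closed ideal of $C$ containing $K$. Thus it suffices to prove $\eta(C\cap J)=\eta(J)$: granting this, any $c\in L$ has $\eta(c)\in\eta(J)=\eta(C\cap J)$, so $\eta(c)=\eta(c')$ for some $c'\in C\cap J$, whence $c-c'\in K$, $c\in K+(C\cap J)$, and $q_{C}(c)\in q_{C}(K)$; therefore $q_{C}(L)=q_{C}(K)$.

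The identity $\eta(C\cap J)=\eta(J)$ is the main obstacle and the one place where compressibility is genuinely used (the inclusion $\subseteq$ being trivial). Both sides are closed ideals of $B$ -- the right-hand one because $C\cap J$ is a closed ideal of $C$ and $\eta|_{C}\colon C\to B$ is a surjective $*$-homomorphism, so its image is closed -- hence it is enough to approximate. Fix $j\in J$ and $\epsilon>0$. By Corollary~\ref{cor:completion}, compressibility extends to the completions, so there are $\phi\in\widetilde{C}$ and $c\in C$ with $\eta(\phi)=1$, $\eta(j)=\eta(c)$ and $\|\phi^{*}j\phi-c\|\le\epsilon$. Since $J$ is an ideal of $A$, the element $p:=\phi^{*}j\phi$ lies in $J$, so $\mathrm{dist}_{A}(c,J)\le\|c-p\|\le\epsilon$. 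Invoking the standard isometric identification $C/(C\cap J)\cong(C+J)/J\subseteq A/J$ gives $\mathrm{dist}_{C}(c,C\cap J)=\mathrm{dist}_{A}(c,J)\le\epsilon$, so I may choose $c'\in C\cap J$ with $\|c-c'\|\le 2\epsilon$. Then $\|\eta(j)-\eta(c')\|=\|\eta(c-c')\|\le\|c-c'\|\le 2\epsilon$, using that $\eta|_{C}$ is contractive. Letting $\epsilon\to0$ and using closedness of $\eta(C\cap J)$ yields $\eta(j)\in\eta(C\cap J)$, completing the equality. The only points needing careful verification are that $p=\phi^{*}j\phi\in J$ and that the two distance computations agree through the isometric embedding; everything else is a routine transport through the quotient maps.
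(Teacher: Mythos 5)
Your proof is correct. The compressibility half is essentially the paper's argument verbatim: push the witnesses $\phi_{0},c_{0}$ through the (unitized) quotient maps and use contractivity. The approximate-unit half, however, takes a genuinely different route. The paper argues pointwise: for $c+J$ in the kernel downstairs it picks $j\in J$ with $\eta(cc^{*})=\eta(j)$ (using that $c$ lies in the multiplicative domain and $J$ is an ideal) and applies the norm equation of Theorem \ref{thm:normeqn} to $cc^{*}-j$ to get $\limsup_{\lambda}\|(1-u_{\lambda})c+J\|^{2}\leq\limsup_{\lambda}\|(1-u_{\lambda})(cc^{*}-j)(1-u_{\lambda})\|=0$ directly. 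You instead establish the structural identity $\eta(C\cap J)=\eta(J)$ -- using the compressibility witnesses for $j\in J$, the observation that $\phi^{*}j\phi\in J$, and the isometric identification $C/(C\cap J)\cong(C+J)/J$ -- and deduce that $\ker(\eta\colon C/J\to B/\eta(J))=q_{C}(\ker(\eta\colon C\to B))$, after which the approximate-unit claim is automatic because images of approximate units under surjective $*$-homomorphisms are approximate units for the image ideal. Both arguments are sound; yours is longer but isolates the reusable fact $\eta(J)=\eta(C\cap J)$ (a relative of the ``additionally'' clause in Theorem \ref{thm:kernelsurjects}), while the paper's stays entirely within the norm-equation toolkit and avoids the second isomorphism theorem. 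All the delicate points you flag -- that $\phi^{*}j\phi\in J$ for $\phi$ in the unitization, and that the two distance computations agree -- do check out.
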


\begin{proof}
    Let $(u_{\lambda})$ be an approximate unit for $\ker(\eta:C\to B)$. For $c\in C$ such that $\eta(c)\in \eta(J)$, let $j\in J$ be such that $\eta(cc^{*}) = \eta(j)$. Then, by Theorem \ref{thm:normeqn}, we have $\limsup_{\lambda}\| (1 - u_{\lambda})c + J\|^{2}\leq  \limsup_{\lambda}\|(1-u_{\lambda})(cc^{*} - j)(1-u_{\lambda})\| = 0$, proving that $(u_{\lambda} + J)$ is an approximate unit for $\ker(\eta: C/J\to B/\eta(J))$.

    Now, if $a\in\mathcal{A}$ and $\epsilon > 0$, choose $\phi\in\widetilde{\mathcal{C}}$ and  $c\in\mathcal{C}$ such that $\|\phi\|\leq 1$, $\eta(\phi) = 1$, $\eta(a) = \eta(c)$, and $\|\phi^{*} a\phi  - c\|\leq \epsilon$. Then, $\phi + J$ and $c + J$ satisfy $\eta(\phi + J) = 1$, $\eta(a + J) = \eta (c + J)$ and $\|\phi^{*}a\phi - c + J\|\leq \|\phi^{*} a\phi  - c\|\leq \epsilon,$ proving that $\eta:\mathcal{A}/J\to \mathcal{B}/\eta(J)$ is compressible to $\eta:\mathcal{C}/J\to \mathcal{B}/\eta(J)$.
\end{proof}

The following result will play a major role in determining the isotropy fibres of the singular ideal.

\begin{thm}
\label{thm:kernelsurjects}
    Suppose 

    $$
\begin{tikzcd}
A_{1} \arrow[r, "i"] \arrow[d, "\eta_{1}"] & A_{2} \arrow[d, "\eta_{2}"] \\
B_{1} \arrow[r, "j"]                       & B_{2}                      
\end{tikzcd}$$ is a commutative diagram of C*-algebras, where $i$ and $j$ are *-homomorphisms and $\eta_{1},\eta_{2}$ are compressible to *-homomorphisms $\eta_{1}:C_{1}\to B_{1}$, $\eta_{2}:C_{2}\to B_{2}$ with $\eta_{1}$ surjective.

Assume that there is an approximate unit $(u_{\lambda})$ for $\ker(\eta_{1}:C_{1}\to B_{1})$ such that $(i(u_{\lambda}))$ is an approximate unit for $\ker(\eta_{2}:C_{2}\to B_{2})$. Then, $\eta_{1}(\text{ker}(i)) = \text{ker}(j)$. Additionally, if $i(C_{1})\subseteq C_{2}$, then $\eta_{1}(\ker(i)\cap C_{1}) = \ker(j)$.
\end{thm}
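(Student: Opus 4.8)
The plan is to prove the two inclusions $\eta_1(\ker(i)) \subseteq \ker(j)$ and $\ker(j) \subseteq \eta_1(\ker(i))$ separately, the first being immediate and the second carrying all the work. For the first inclusion I would simply chase the diagram: if $a \in \ker(i)$ then commutativity gives $j(\eta_1(a)) = \eta_2(i(a)) = 0$, so $\eta_1(a) \in \ker(j)$. Crucially, before attacking the reverse inclusion I would record that $\eta_1(\ker(i))$ is already \emph{closed}: since $i$ is a $*$-homomorphism, $\ker(i)$ is a closed two-sided ideal of $A_1$, and as $\eta_1$ is compressible to the bounded $*$-homomorphism $\eta_1 \colon C_1 \to B_1$, Corollary \ref{cor:imidealisideal} shows $\eta_1(\ker(i))$ is a closed two-sided ideal of $\eta_1(A_1) = B_1$. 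Consequently, for the reverse inclusion it suffices to prove that every $b \in \ker(j)$ lies in the \emph{closure} of $\eta_1(\ker(i))$.

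So fix $b \in \ker(j)$. Using surjectivity of $\eta_1$, I would choose $a \in A_1$ with $\eta_1(a) = b$; then commutativity gives $\eta_2(i(a)) = j(b) = 0$, i.e.\ $i(a) \in \ker(\eta_2 \colon A_2 \to B_2)$. The key move is to form the compressions $a_\lambda := (1 - u_\lambda)\, a\, (1 - u_\lambda)$. Because $u_\lambda \in C_1$ lies in the multiplicative domain of $\eta_1$ (Corollary \ref{cor:cpc}) and $\eta_1(u_\lambda) = 0$, one checks $\eta_1(a_\lambda) = \eta_1(a) = b$ for every $\lambda$. On the other hand, since $i$ is a $*$-homomorphism, $i(a_\lambda) = (1 - i(u_\lambda))\, i(a)\, (1 - i(u_\lambda))$, so applying the norm equation of Theorem \ref{thm:normeqn} to the compressible map $\eta_2 \colon A_2 \to B_2$ and the approximate unit $(i(u_\lambda))$ for $\ker(\eta_2 \colon C_2 \to B_2)$ yields
\[
\lim_\lambda \| i(a_\lambda) \| = \lim_\lambda \big\| (1 - i(u_\lambda))\, i(a)\, (1 - i(u_\lambda)) \big\| = \| \eta_2(i(a)) \| = 0 .
\]
Thus I have produced a net $(a_\lambda)$ with fixed image $\eta_1(a_\lambda) = b$ whose image under $i$ tends to zero.

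To finish I would pass from ``$i(a_\lambda)$ small'' to ``$a_\lambda$ close to $\ker(i)$''. Since $i$ is a $*$-homomorphism, the induced map $A_1/\ker(i) \to i(A_1) \subseteq A_2$ is an isometric $*$-isomorphism, so $\operatorname{dist}(a_\lambda, \ker(i)) = \| i(a_\lambda) \| \to 0$; choosing $k_\lambda \in \ker(i)$ with $\| a_\lambda - k_\lambda \| \to 0$ and using that $\eta_1$ is contractive (Corollary \ref{cor:cpc}) gives $\| \eta_1(k_\lambda) - b \| = \| \eta_1(k_\lambda - a_\lambda) \| \le \| k_\lambda - a_\lambda \| \to 0$. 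Hence $b = \lim_\lambda \eta_1(k_\lambda)$ lies in the closure of $\eta_1(\ker(i))$, which equals $\eta_1(\ker(i))$ by the first paragraph, establishing $\eta_1(\ker(i)) = \ker(j)$. For the refined statement under the hypothesis $i(C_1) \subseteq C_2$, I would run the same argument but keep everything inside $C_1$: here $\eta_1 \colon C_1 \to B_1$ is a surjective $*$-homomorphism, so I may pick a preimage $c \in C_1$ of $b$, and since $i(c) \in \ker(\eta_2 \colon C_2 \to B_2)$ the net $c_\lambda := (1 - u_\lambda)\, c$ satisfies $\eta_1(c_\lambda) = b$ and $i(c_\lambda) = i(c) - i(u_\lambda) i(c) \to 0$ (now directly, because $(i(u_\lambda))$ is an approximate unit for the ideal containing $i(c)$, with no appeal to Theorem \ref{thm:normeqn}); correcting via the isometry $C_1/(\ker(i) \cap C_1) \cong i(C_1)$ places $b$ in the closed ideal $\eta_1(\ker(i) \cap C_1)$.

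I expect the main obstacle to be conceptual rather than computational: because $\eta_1$ and $\eta_2$ are merely c.p.c.\ maps, their kernels on $A_1$ and $A_2$ need not be ideals and cannot be manipulated algebraically, so the naive strategy of directly lifting $b$ to $\ker(i)$ fails. The resolution is precisely the interplay between the multiplicative domain (which pins $\eta_1(a_\lambda)$ at $b$) and Theorem \ref{thm:normeqn} (which converts the \emph{exact} vanishing $\eta_2(i(a)) = 0$ into the \emph{asymptotic} smallness of $i(a_\lambda)$); combined with the isometric identification $A_1/\ker(i) \cong i(A_1)$ and the closedness of $\eta_1(\ker(i))$, this turns an approximate correction into an exact membership.
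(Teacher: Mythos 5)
Your proof is correct. It rests on the same key fact as the paper's — Theorem \ref{thm:normeqn} applied to $\eta_{2}$ with the approximate unit $(i(u_{\lambda}))$, together with the multiplicative-domain identity $\eta_{1}((1-u_{\lambda})a(1-u_{\lambda}))=\eta_{1}(a)$ from Corollary \ref{cor:cpc} and the closedness of $\eta_{1}(\ker(i))$ from Corollary \ref{cor:imidealisideal} — but it is packaged differently. The paper descends to the quotients $A_{1}/\ker(i)$ and $B_{1}/\eta_{1}(\ker(i))$ (invoking the quotient-compressibility result, Corollary \ref{cor:comppasstoquotient}, to carry the compressible structure and the approximate unit downstairs) and then shows the induced map $j'\colon B_{1}/\eta_{1}(\ker(i))\to B_{2}$ is isometric via $\|j'(b')\|=\lim_{\lambda}\|i'((1-u'_{\lambda})a'(1-u'_{\lambda}))\|=\lim_{\lambda}\|(1-u'_{\lambda})a'(1-u'_{\lambda})\|=\|b'\|$, injectivity of $j'$ being precisely the desired equality of kernels. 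You stay upstairs and instead convert $\|\eta_{2}(i(a))\|=0$ into $\|i(a_{\lambda})\|\to 0$, use the standard isometry $A_{1}/\ker(i)\cong i(A_{1})$ to produce approximants $k_{\lambda}\in\ker(i)$, and conclude by contractivity of $\eta_{1}$ plus closedness of the image ideal. The trade-off: your route avoids Corollary \ref{cor:comppasstoquotient} entirely and is somewhat more hands-on, while the paper's route gives the marginally stronger conclusion that $j'$ is isometric, i.e.\ $\|j(b)\|=\operatorname{dist}(b,\eta_{1}(\ker(i)))$ for every $b\in B_{1}$. Your handling of the refined claim (rerunning the argument inside $C_{1}$, where a one-sided compression and the surjective $*$-homomorphism $\eta_{1}|_{C_{1}}$ suffice) is also valid; the paper simply re-applies the theorem with $A_{k}$ replaced by $C_{k}$.
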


\begin{proof}
Let $A'_{1} = A_{1}/\ker(i)$ and $B'_{1} = B_{1}/\eta_{1}(\ker(i))$, which is a C*-algebra by Corollary \ref{cor:imidealisideal}. By commutativity of the diagram, we have $\eta_{1}(\ker(i))\subseteq \ker(j)$, and so $i$ and $j$ pass to well defined *-homomorphisms $i' = i:A'_{1}\to A_{2}$ and $j' = j:B'_{1}\to B_{2}$, making the diagram 

$$
\begin{tikzcd}
A'_{1} \arrow[r, "i'"] \arrow[d, "\eta'_{1}"] & A_{2} \arrow[d, "\eta_{2}"] \\
B'_{1} \arrow[r, "j'"]                        & B_{2}                      
\end{tikzcd}$$ commute, where $\eta'_{1} = \eta_{1}:A'_{1}\to B'_{1}$. By Corollary \ref{cor:imidealisideal}, $\eta'_{1}$ is compressible to $\eta:C_{1}/\ker(i)\to B_{1}/\eta_{1}(\ker(i))$ and $(u'_{\lambda} = u_{\lambda} + \ker(i))$ is an approximate unit for $\ker(\eta'_{1}:C_{1}/\ker(i)\to B_{1}/\eta_{1}(\ker(i)))$. Moreover, $(i'(u'_{\lambda}) = i(u_{\lambda}))$ is an approximate unit for $\ker(\eta_{2}:C_{2}\to B_{2})$ by the hypothesis.

For $b'\in B'_{1}$, let $a'\in A'_{1}$ be such that $\eta'_{1}(a') = b'$. Then, by commutativity of the above diagram, $i'(a')$ satisfies $\eta_{2}(i'(a')) = j'(b')$. By Theorem \ref{thm:normeqn}, it follows that $\|j'(b')\| = \lim_{\lambda}\|i'(1-u'_{\lambda})i'(a')i'(1-u'_{\lambda})\|$. Since $i'$ is an injective *-homomorphism, we have $$\lim_{\lambda}\|i'(1-u'_{\lambda})i'(a')i'(1-u'_{\lambda})\| = \lim_{\lambda}\|(1-u'_{\lambda})a'(1-u'_{\lambda})\| = \|\eta'_{1}(a')\| = \|b'\|.$$ Therefore, $\|j'(b')\| = \|b'\|$. We have proven $j'$ is injective, so that $\eta_{1}(\ker(i)) = \ker(j)$.

Now, if $i(C_{1})\subseteq C_{2}$, we can apply the theorem as proven to the case where $A_{1} = C_{1}$ and $A_{2} = C_{2}$ to obtain $\eta_{1}(\ker(i)\cap C_{1}) = \ker(j)$.
\end{proof}

\begin{section}{Compressibility of restrictions to locally invariant subgroupoids of \'etale groupoid C*-algebras}

We introduce the notion of a locally invariant set $X$ of units in an \'etale groupoid, and show every the restriction $\mathscr{C}_{c}(G)\to \mathscr{C}_{c}(G|_{X})$ is compressible to a *-homomorphism. Every \'etale groupoid admits a rich and interesting collection of locally invariant sets (containing every finite set). As a result, these restriction maps provide a new tool to study \'etale groupoid C*-algebras. In this paper, we will use them to determine the isotropy fibres of the singular ideal.

\begin{dfn}
\label{dfn:locinvar}
    Let $G$ be an \'etale groupoid. We say a closed set $X\subseteq G^{0}$ is \emph{locally invariant} if for every $g\in G$ with $r(g),s(g)\in X$, there is an open neighbourhood $U$ of $g$ such that, for $\tilde{g}\in U$, $s(\tilde{g})\in X$ if and only if $r(\tilde{g})\in X$.
\end{dfn}

Any finite set $F\subseteq G$ is locally invariant: for $g\in G|_{F}$, let $U$ be an open bisection containing $g$ such that $r(U)\cap F = \{r(g)\}$ and $s(U)\cap F = \{s(g)\}$. It is also easy to see a finite intersection or arbitrary union of locally invariant sets is locally invariant. We will now prove an alternative characterization of this property.

\begin{prp}
\label{prp:germexists}
    Let $G$ be an \'etale groupoid. Then, a set $X$ is locally invariant if and only if there is an open subgroupoid $H\subseteq G$ such that $G|_{X} = H|_{X}$ and $X$ is an invariant subset of $H$. Moreover, $H$ can be chosen such that $H^{0} = G^{0}$.
\end{prp}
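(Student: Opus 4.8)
The plan is to prove the two directions separately, with the construction in the converse direction being the substantive part.

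For the easy direction, suppose such an open subgroupoid $H$ exists. Given $g\in G$ with $r(g),s(g)\in X$, we have $g\in G|_{X}=H|_{X}\subseteq H$, and since $H$ is open we may pick an open neighbourhood $U$ of $g$ with $U\subseteq H$. Because $X$ is $H$-invariant, the condition $H_{X}=H^{X}$ says precisely that an arrow of $H$ has its source in $X$ exactly when its range is in $X$; hence every $\tilde{g}\in U\subseteq H$ satisfies $s(\tilde{g})\in X$ if and only if $r(\tilde{g})\in X$. Thus $U$ witnesses local invariance of $X$, and this direction needs no more than unwinding the definitions.

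For the converse, the idea is to isolate the right class of bisections. I would call an open bisection $V$ \emph{$X$-preserving} if $s(\tilde{g})\in X\iff r(\tilde{g})\in X$ for every $\tilde{g}\in V$, and then define $H$ to be the union of all $X$-preserving open bisections. The verification then splits into four checks: (i) $H$ is open, being a union of open sets; (ii) $G^{0}\subseteq H$, since (the unit space being open) every $x\in G^{0}$ has an open bisection neighbourhood contained in $G^{0}$, which is vacuously $X$-preserving, and this simultaneously forces $H^{0}=H\cap G^{0}=G^{0}$; (iii) $H$ is closed under inversion, because $V^{-1}$ is $X$-preserving whenever $V$ is (swapping the roles of $r$ and $s$); and (iv) $H$ is closed under multiplication. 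Given these, $H$ is an open subgroupoid with $H^{0}=G^{0}$, and the $X$-preserving property applied to a witnessing bisection of any $h\in H$ gives $s(h)\in X\iff r(h)\in X$, i.e. $H_{X}=H^{X}$, so $X$ is $H$-invariant. The inclusion $G|_{X}\subseteq H|_{X}$ then follows from local invariance: for $g\in G|_{X}$ the definition supplies an $X$-preserving neighbourhood of $g$, which can be shrunk to an open bisection since these form a basis; the reverse inclusion is automatic, so $G|_{X}=H|_{X}$.

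The heart of the argument, and the step I expect to be the main obstacle, is (iv): that the product of two $X$-preserving bisections is again $X$-preserving. For composable $h_{1},h_{2}\in H$ with $X$-preserving witnesses $V_{1},V_{2}$, the product $V_{1}V_{2}$ is an open bisection containing $h_{1}h_{2}$, and for $\tilde{g}=v_{1}v_{2}\in V_{1}V_{2}$ (the factorization being unique since $V_1,V_2$ are bisections) one chains $s(\tilde{g})=s(v_{2})\in X\iff r(v_{2})=s(v_{1})\in X\iff r(v_{1})=r(\tilde{g})\in X$, invoking the $X$-preserving property of $V_{2}$ and then of $V_{1}$. This shows $h_{1}h_{2}\in H$ and completes the proof that $H$ is a subgroupoid; the same chaining is exactly what delivers $H$-invariance of $X$ above. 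All remaining verifications are routine once this multiplicativity of the $X$-preserving condition is in hand.
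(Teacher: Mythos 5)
Your proof is correct and follows essentially the same route as the paper: both build $H$ from open ``$X$-preserving'' sets and verify stability under the groupoid operations by chaining the equivalence $s(\tilde g)\in X\iff r(\tilde g)\in X$ across factors. The only cosmetic differences are that you take the union of \emph{all} $X$-preserving open bisections (rather than generating a subgroupoid from one chosen neighbourhood per element of $G|_{X}$ and inducting on word length) and that you obtain $H^{0}=G^{0}$ by observing $G^{0}$ is itself vacuously $X$-preserving rather than adjoining it by hand.
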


\begin{proof}
    The `if'' direction is trivial, so we prove the ``only if'' direction. Suppose $X$ is locally invariant. For every $g\in G|_{X}$, let $U_{g}$ be an open set such that, for $\tilde{g}\in U_{g}$, $s(\tilde{g})\in X$ if and only if $r(\tilde{g})\in X$. Let $H$ be the union of all finite products of the open sets $U_{g}$, $U_{g}^{-1}$. Since the product and inverse maps are open, $H$ is open. By construction, $H$ is closed under products and inverses, so $H$ is an open subgroupoid. Write $V = U_{1}\cdot...\cdot U_{n}$, where $U_{k}\in \{U_{g},U^{-1}_{g}\}_{g\in G|_{X}}$ for $k\leq n$. By induction on $n\in\mathbb{N}$, we see that for $\tilde{g} = \tilde{g}_{1}\cdot...\cdot\tilde{g}_{n}\in V$, $r(\tilde{g})\in X$ if and only if $s(\tilde{g})\in X$. Hence, $X$ is $H$-invariant and by construction $H|_{X} = G|_{X}$. To construct $H$ with $H^{0} = G^{0}$, let $U_{x} = G^{0}$ for $x\in X$.
\end{proof}

We will be using open subgroupoids of the above form quite a bit in this paper, so they deserve a name.

\begin{dfn}
\label{dfn:hull}
    Let $G$ be an \'etale groupoid and $X\subseteq G^{0}$ a locally invariant set. An open subgroupoid $H$ of $G$ for which $X$ is $H$-invariant and $H|_{X} = G|_{X}$ will be called a \emph{local groupoid} about $G|_{X}$.
\end{dfn}

Let's establish a relationship between locally invariant sets of $G$ and of its Hausdorff cover $\tilde{G}$.

\begin{prp}
\label{prp:locinvar&cover}
    Let $G$ be an \'etale groupoid. If $X\subseteq G^{0}$ is locally invariant, then $\pi^{-1}(X)$, $\pi_{ess}^{-1}(X)$ are locally invariant in the Hausdorff cover $\tilde{G}$ and essential Hausdorff cover $\tilde{G}_{ess}$, respectively. Moreover, if $H$ is a local groupoid about $G|_{X}$, then $\hat{H} := H\cdot \tilde{G}^{0}$ and $\hat{H}_{ess} := H\cdot \tilde{G}^{0}_{ess}$ are local groupoids about $\tilde{G}|_{\pi^{-1}(X)}$ and  $\tilde{G}_{ess}|_{\pi_{ess}^{-1}(X)}$.
\end{prp}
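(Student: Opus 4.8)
The plan is to prove the ``Moreover'' clause first and deduce the two local-invariance assertions for free via Proposition~\ref{prp:germexists}. Indeed, once we know that $\hat H$ is a local groupoid about $\tilde G|_{\pi^{-1}(X)}$, the ``if'' direction of Proposition~\ref{prp:germexists}, applied to the \'etale groupoid $\tilde G$, gives at once that $\pi^{-1}(X)$ is locally invariant, and the analogous statement inside $\tilde G_{ess}$ handles $\pi_{ess}^{-1}(X)$. Here $\pi^{-1}(X)$ is closed since $X$ is closed and $\pi$ is continuous, and $\pi_{ess}^{-1}(X)=\pi^{-1}(X)\cap\tilde G^{0}_{ess}$ is then closed in $\tilde G^{0}_{ess}$, so both are legitimate candidates. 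Thus the whole proposition reduces to showing, for an arbitrary local groupoid $H$ about $G|_{X}$, that $\hat H=H\cdot\tilde G^{0}$ is an open subgroupoid of $\tilde G$, that $\pi^{-1}(X)$ is $\hat H$-invariant, and that $\hat H|_{\pi^{-1}(X)}=\tilde G|_{\pi^{-1}(X)}$.

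The groupoid axioms for $\hat H$ are a direct coset computation with the multiplication on $\tilde G$ recalled in Section~\ref{ss:cover}. Writing elements of $\hat H$ as $hY$ with $h\in H$, $Y\in\tilde G^{0}$ and $s(h)=\pi(Y)$, the product of composable elements is $(hY)(h'Y')=hh'Y'$ with $hh'\in H$, and $(hY)^{-1}=h^{-1}(hYh^{-1})$ with $h^{-1}\in H$ and $hYh^{-1}\in\tilde G^{0}$; one also checks $\hat H^{0}=\pi^{-1}(H^{0})$, which contains $\pi^{-1}(X)$ because $H$-invariance of $X$ forces $X\subseteq H^{0}$. The one genuinely topological point, and the step I expect to be the main obstacle, is openness of $\hat H$. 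For this I would introduce, for each open bisection $U\subseteq G$, the lift $\tilde U:=\{\underline g\in\tilde G:\underline g\cap U\neq\emptyset\}$. Every $\underline g\in\tilde G$ is a single coset all of whose members share one source and one range, so the bisection $U$ meets it at most once; using that the identity of $X$ lies in $X$ one identifies $\tilde U=U\cdot\tilde G^{0}=\{g_{0}X:g_{0}\in U,\ X\in\tilde G^{0},\ \pi(X)=s(g_{0})\}$. Openness of $\tilde U$ then follows from the net description of the Fell topology in Proposition~\ref{prp:fell}: if $\underline g_{\lambda}\to\underline g$ with $g_{0}\in\underline g\cap U$, then $g_{0}\in\lim(\underline g_{\lambda})$ supplies $k_{\lambda}\in\underline g_{\lambda}$ with $k_{\lambda}\to g_{0}$, so $k_{\lambda}\in U$ eventually and hence $\underline g_{\lambda}\in\tilde U$ eventually. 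Covering $H$ by open bisections $U\subseteq H$ yields $\hat H=\bigcup_{U}\tilde U$, so $\hat H$ is open.

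The remaining two properties I would argue directly. For invariance: if $hY\in\hat H$ has $s(hY)=Y\in\pi^{-1}(X)$, then $s(h)=\pi(Y)\in X$, so $H$-invariance of $X$ gives $r(h)\in X$; since $r(hY)=hYh^{-1}\in\tilde G^{0}$ with $\pi(hYh^{-1})=r(h)$, we get $r(hY)\in\pi^{-1}(X)$. For the matching of reductions: given $\underline g\in\tilde G$ with $r(\underline g),s(\underline g)\in\pi^{-1}(X)$, choose $g\in\underline g$; then $s(g)=\pi(s(\underline g))\in X$ and $r(g)=\pi(r(\underline g))\in X$, so $g\in G|_{X}=H|_{X}\subseteq H$ and $\underline g=g\,s(\underline g)\in H\cdot\tilde G^{0}=\hat H$. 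Together with the trivial inclusion $\hat H|_{\pi^{-1}(X)}\subseteq\tilde G|_{\pi^{-1}(X)}$ this yields equality, completing the proof that $\hat H$ is a local groupoid about $\tilde G|_{\pi^{-1}(X)}$.

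Finally, for the essential cover I would avoid repeating the bisection argument by using that $\tilde G^{0}_{ess}$ is invariant in $\tilde G$, which gives $\hat H_{ess}=H\cdot\tilde G^{0}_{ess}=\hat H\cap\tilde G_{ess}$: openness of $\hat H$ in $\tilde G$ then makes $\hat H_{ess}$ open in the subspace $\tilde G_{ess}$, and the $\hat H_{ess}$-invariance of $\pi_{ess}^{-1}(X)$ together with $\hat H_{ess}|_{\pi_{ess}^{-1}(X)}=\tilde G_{ess}|_{\pi_{ess}^{-1}(X)}$ follow by intersecting the corresponding statements already proved for $\hat H$ with $\tilde G_{ess}$ and invoking invariance of $\tilde G^{0}_{ess}$. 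Proposition~\ref{prp:germexists} applied inside $\tilde G_{ess}$ then delivers local invariance of $\pi_{ess}^{-1}(X)$, finishing the argument.
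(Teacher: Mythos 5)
Your proof is correct and follows essentially the same route as the paper: establish that $\hat H$ is an open subgroupoid of $\tilde G$ with $\pi^{-1}(X)$ invariant and $\hat H|_{\pi^{-1}(X)}=\tilde G|_{\pi^{-1}(X)}$, with openness coming from the net description of the Fell topology in Proposition~\ref{prp:fell}, and then pass to the essential cover by intersecting with the invariant set $\tilde G^{0}_{ess}$. The only cosmetic differences are that you organize the openness argument through lifted bisections $\tilde U=U\cdot\tilde G^{0}$ rather than a direct net argument on $\hat H$, and that you spell out the verification of $\hat H|_{\pi^{-1}(X)}=\tilde G|_{\pi^{-1}(X)}$ and the reduction to Proposition~\ref{prp:germexists}, which the paper leaves implicit.
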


\begin{proof}
    For $\underline{x}\in \tilde{G}^{0}$ and $h\in H$ such that $\pi(\underline{x}) = r(h)$, we have $\underline{x}h = h(h^{-1}\underline{x}h)$, proving that $\tilde{G}^{0}\cdot H = H\cdot \tilde{G}^{0}$. Consequently $\hat{H}$ is a groupoid. 
    
    If $h\underline{x}\in \hat{H}$ and $s(h\underline{x}) = \underline{x}\in \pi^{-1}(X)$, then $s(h) = \pi(\underline{x})\in X$. By $H$-invariance of $X$ we have $\pi(r(h\underline{x})) = r(h)\in X$ and therefore $r(h\underline{x})\in \pi^{-1}(X)$, proving that $\pi^{-1}(X)$ is $\hat{H}$-invariant.

    Now, we show $\hat{H}$ is an open subgroupoid of $\tilde{G}$. Suppose $(\underline{h}_{\lambda})$ is a net in $\tilde{G}$ converging to $\underline{h} = h\underline{x}\in \hat{H}$. By Proposition \ref{prp:fell}, there is a net $h_{\lambda}\in \underline{h}_{\lambda}$ converging to $h\in H$. Since $H$ is open in $G$, $h_{\lambda}\in H$ eventually. We can write $(\underline{h}_{\lambda} = h_{\lambda}\underline{x}_{\lambda})$, where $\underline{x}_{\lambda} = h^{-1}_{\lambda}\underline{h}_{\lambda}\in \tilde{G}^{0}$ and hence $\underline{h}_{\lambda}\in \hat{H}$ eventually, proving that $\hat{H}$ is open in $\tilde{G}$.

    We have proven $\hat{H}$ is a local groupoid about $\tilde{G}|_{\pi^{-1}(X)}$. Since $\hat{H}\cap \tilde{G}_{ess} = \hat{H}_{ess}$ and $\pi^{-1}(X)\cap \tilde{G}^{0}_{ess} = \pi^{-1}_{ess}(X)$, it follows that $\hat{H}_{ess}$ is a local groupoid about $\tilde{G}_{ess}|_{\pi_{ess}^{-1}(X)}$
\end{proof}
We begin our investigations of restrictions to locally invariant sets by proving a slight improvement of \cite[Theorem~3.13]{BGHL25} that will streamline some of the arguments in this section.
\begin{lem}
\label{lem:PO1}
Let $G$ be an \'etale groupoid. Suppose $Y$ is a closed set of $G$ and $f\in\mathscr{C}_{c}(G)$ is such that $f|_{Y} = 0$. Then, for any compact set $K\subseteq G$ and bisections $\{U_{i}\}^{n}_{i=1}$ such that $\{g\in G:f(g)\neq 0\}\subseteq K\subseteq \bigcup^{n}_{i=1}U_{i}$, there are functions $f_{i}\in C_{c}(U_{i})$ such that $f_{i}|_{Y} = 0$ for all $1\leq i\leq n$ and $f = \sum^{n}_{i=1}f_{i}$.
\end{lem}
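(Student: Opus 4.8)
The plan is to induct on the number $n$ of bisections, using \cite[Theorem~3.13]{BGHL25} to produce a first, unconstrained decomposition and then peeling off one bisection at a time while arranging the vanishing on $Y$. In the base case $n=1$ the hypothesis gives $\{g\in G:f(g)\neq0\}\subseteq K\subseteq U_{1}$, so \cite[Theorem~3.13]{BGHL25} applied to the trivial cover $\{U_{1}\}$ yields $f\in C_{c}(U_{1})$; since $f|_{Y}=0$, I would simply take $f_{1}=f$. For the inductive step I would first apply \cite[Theorem~3.13]{BGHL25} to the cover $\{U_{i}\}_{i=1}^{n}$ to obtain $g_{i}\in C_{c}(U_{i})$ with $f=\sum_{i=1}^{n}g_{i}$; these need not vanish on $Y$, and the whole point is to repair that.

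The observation driving the repair is that the vanishing of $f$ on $Y$ forces the boundary values of $g_{n}$ into the overlap region. Indeed, if $y\in Y\cap U_{n}$ lies in no $U_{i}$ with $i<n$, then $g_{i}(y)=0$ for all such $i$ (as $g_{i}$ is supported in $U_{i}$), whence $g_{n}(y)=f(y)=0$. Thus the set where $g_{n}|_{Y}$ is nonzero is contained in $W:=U_{n}\cap\bigcup_{i<n}U_{i}$. In an ideal world I would now split $g_{n}=a+b$ with $a,b\in C_{c}(U_{n})$, where $b$ vanishes on $Y$ and $a$ carries all of the data $g_{n}|_{Y\cap U_{n}}$ with $\operatorname{supp}(a)\subseteq\bigcup_{i<n}U_{i}$. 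Setting $f_{n}:=b$, the remainder $f':=f-f_{n}=\sum_{i<n}g_{i}+a$ would satisfy $f'|_{Y}=0$ with $\{f'\neq0\}$ inside a compact subset of $\bigcup_{i<n}U_{i}$, so the inductive hypothesis applied to $f'$ and the cover $\{U_{i}\}_{i<n}$ would deliver pieces $f_{i}\in C_{c}(U_{i})$ ($i<n$), each vanishing on $Y$, with $\sum_{i<n}f_{i}=f'$; summing gives $f=\sum_{i=1}^{n}f_{i}$.

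The main obstacle is precisely this splitting of $g_{n}$. On the Hausdorff bisection $U_{n}$ one can cover $\{g_{n}\neq0\}$ by the open sets $U_{n}\setminus(Y\cap U_{n})$ and $\{U_{i}\cap U_{n}\}_{i<n}$ (using the observation above, which shows $g_{n}$ vanishes on the part of $Y\cap U_{n}$ missed by this overlap cover), and multiply $g_{n}$ by a subordinate partition of unity to separate the ``off-$Y$'' part $b$ from the ``overlap'' part $a$. The difficulty is that $\{g_{n}\neq0\}$ need not be compact: at a non-Hausdorff point $p\in Y\cap U_{n}$ that lies only in $U_{n}$ but in the closure of $\{g_{n}|_{Y}\neq0\}$, one has $g_{n}(p)=0$ while nearby points of $Y$ sit in $W$ with nonzero values, so no partition of unity supported in the overlaps can equal $1$ throughout $\{g_{n}\neq0\}$. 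Consequently the clean splitting above is \emph{not} exactly achievable, and I expect this to be where the real work lies. The way I would handle it, following the technique in the proof of \cite[Theorem~3.13]{BGHL25}, is to perform the splitting only on the compact sublevel sets $\{|g_{n}|\geq\varepsilon\}$ — which \emph{are} contained in the overlap cover — obtaining an exact decomposition there and a defect supported in $\{|g_{n}|<\varepsilon\}$, and then to iterate over a sequence $\varepsilon_{k}\downarrow0$, summing the resulting telescoping corrections into norm-convergent series that define $a$ and $b$. Controlling the supports of these series so that $a$ stays inside $\bigcup_{i<n}U_{i}$ and $b$ stays off $Y$, and verifying convergence in $\mathscr{C}_{c}(G)$, is the one place where the local Hausdorffness of $G$ and the geometry of its non-Hausdorff locus genuinely enter.
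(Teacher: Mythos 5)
Your overall strategy (induction on $n$, peeling off the last bisection) matches the paper's, but your inductive step takes a different and, as written, incomplete route. The paper does not decompose $f=\sum_i g_i$ first and then repair. Instead it shrinks each $U_k$ to an open bisection $V_k$ with $K_k\subseteq V_k$ and $\overline{V_k}^{U_k}$ compact, observes that $f$ is continuous on the relatively closed set $\check{V}_n=V_n\setminus\bigcup_{k<n}V_k$ with $\{g\in\check{V}_n:f(g)\neq0\}$ contained in a compact set, and that $f$ vanishes on $Y\cap\check{V}_n$; a single Tietze extension of the function equal to $f$ on $\check{V}_n$ and to $0$ on $Y\cap V_n$ then produces $f_n\in C_c(V_n)$ in one step, and $f'=f-f_n$ automatically has $\{f'\neq 0\}$ inside the compact set $\bigcup_{k<n}\overline{V_k}^{U_k}\subseteq\bigcup_{k<n}U_k$, so the induction closes. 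Your observation that $g_n$ vanishes on $(Y\cap U_n)\setminus\bigcup_{i<n}U_i$ is correct and is the same phenomenon the paper exploits, but the splitting $g_n=a+b$ that your route requires is precisely the step you leave open.

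That gap is genuine, in two respects. First, you explicitly defer the telescoping construction ("I expect this to be where the real work lies"), so what you have is a plan rather than a proof. Second, even granting that the $\varepsilon$-sublevel iteration converges -- each defect has sup-norm less than $\varepsilon_k$ and all terms live in the fixed compact set $\operatorname{supp}_{U_n}(g_n)$, so one does get $a,b\in C_c(U_n)$ with $b|_Y=0$ and $\{a\neq 0\}\subseteq W:=U_n\cap\bigcup_{i<n}U_i$ -- the inductive hypothesis demands more than $\{f'\neq 0\}\subseteq\bigcup_{i<n}U_i$: it demands a \emph{compact} $K'\subseteq\bigcup_{i<n}U_i$ containing $\{f'\neq 0\}$. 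Your construction only gives $\{a\neq 0\}\subseteq W\cap\operatorname{supp}_{U_n}(g_n)$, and a subset of a compact set that happens to lie in an open set $O$ need not be contained in any compact subset of $O$ (compare $(0,1]\subseteq[0,1]$ inside $O=(0,2)$): the supports of the partition-of-unity pieces subordinate to $W$ can escape to the boundary of $W$ as $k\to\infty$. Fixing this forces you to pre-shrink the cover exactly as the paper does, at which point the decompose-then-repair detour buys nothing over the paper's direct restriction-plus-Tietze construction.
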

\begin{proof}

We prove the lemma by induction on $n$. The case $n = 1$ is settled because $f\in C_{c}(U_{1})$ (see the proof of \cite[Theorem~3.13(i)]{BGHL25}). Now suppose the lemma is true for $n -1 \geq 1$ and let's settle it for $n$.

For each $k\leq n$, choose open bisections $V_{k}$ and compact sets $K_{k}$ with $K_{k}\subseteq V_{k}\subseteq U_{k}$ such that $K\subseteq \bigcup^{n}_{k=1}K_{k}$ and the closure $\overline{V_{k}}^{U_{k}}$  of $V_{k}$ relative to $U_{k}$ is compact in $U_{k}$, for all $k\leq n$. As in the proof of \cite[Theorem~3.13(i)]{BGHL25} the function $f$ is continuous on $\check{V}_{n}:= V_{n}\setminus \bigcup_{k < n} V_{k}$ with $\{g\in \check{V}_{n}: f(g)\neq 0\}$ contained in the compact set $\check{K}_{n}:= K_{n}\setminus \bigcup_{k < n} V_{k}$. Moreover, $f|_{Y\cap \check{V}_{n}} = 0$. By the Tietze extension theorem, there is $f_{n}\in C_{c}(V_{n})\subseteq C_{c}(U_{n})$ such that $f_{n}|_{\check{V}_{n}} = f|_{\check{V}_{n}}$ and $f_{n}|_{Y\cap V_{n}} = 0$. Then, $f':= f - f_{n}$ has $f'|_{Y} = 0$ and $\{g\in G: f'(g)\neq 0\}\subseteq K' \subseteq \bigcup^{n-1}_{k=1} U_{k}$, with $K':= \bigcup^{n-1}_{k=1} \overline{V_{k}}^{U_{k}}$ and we may invoke the induction hypothesis to $f'$ to get $f' = \sum_{k=1}^{n-1}f_{k}$ for some $f_{k}\in C_{c}(U_{k})$ with $f_{k}|_{Y} = 0$. Therefore the lemma is proved for $f = \sum^{n}_{k=1}f_{k}$, thus proving the lemma by induction.
\end{proof}

\begin{thm}
\label{thm:locinvar&compressdense}
    Let $G$ be an \'etale groupoid and $X$ a locally invariant subset of $G^{0}$. Equip $\mathscr{C}_{c}(G)$ and $\mathscr{C}_{c}(G|_{X})$ with any pre-C*-norms and let $H$ be a local groupoid about $G|_{X}$ with $H^{0} = G^{0}$. Then, the restriction map $r_{X}:\mathscr{C}_{c}(G)\to \mathscr{C}_{c}(G|_{X})$ is compressible to the surjective *-homomorphism $r_{X}:\mathscr{C}_{c}(H)\to \mathscr{C}_{c}(G|_{X})$. Moreover, any approximate unit $(u_{\lambda})\subseteq C_{c}(G^{0}\setminus X)$ is an approximate unit for $\ker(r_{X}:\mathscr{C}_{c}(H)\to \mathscr{C}_{c}(G|_{X}))$.
\end{thm}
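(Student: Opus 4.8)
The plan is to produce the data in Definition \ref{dfn:comp} essentially by hand, using cutoff functions supported off $X$, and to collapse the whole argument onto a single compactness step. Before that I would record the two structural facts. Since $X$ is $H$-invariant, $H|_X = H^X = H_X$, so an arrow of $H$ has its range in $X$ iff it has its source in $X$; the usual computation of a convolution at a point $\gamma \in H|_X$ then only sees arrows already in $H|_X$, which makes the (manifestly linear and $*$-preserving) map $r_X$ multiplicative, hence a $*$-homomorphism. Surjectivity uses that $H$ is open with $H|_X = G|_X$: each generator $\psi \in C_c(W)$ on an open bisection $W$ of $G|_X$ is, after a partition of unity, a sum of functions on bisections of the form $V \cap G|_X$ with $V \subseteq H$ an open bisection of $G$, and each such summand extends by Tietze (using that $G|_X$ is closed) to an element of $C_c(V) \subseteq \mathscr{C}_c(H)$ restricting back to it.

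For compressibility I would always take $\phi = 1 - u$ with $u \in C_c(G^0 \setminus X)$ and $0 \le u \le 1$. Then $u$ is a positive contraction in the commutative subalgebra $C_c(G^0)$, so $\|\phi\| = \|1-u\| \le 1$ in any pre-C*-norm, and $r_X(u) = u|_X = 0$ gives $r_X(\phi) = 1$. A direct convolution computation yields, for every $g \in G$,
\[
(\phi^* a \phi)(g) = \big(1 - u(r(g))\big)\big(1 - u(s(g))\big)\, a(g),
\]
so on $G|_X$, where $u$ vanishes, $\phi^* a \phi$ agrees with $a$; hence $r_X(\phi^* a\phi) = r_X(a)$ for \emph{every} admissible $u$. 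Therefore the whole content of compressibility reduces to choosing $u$ so that $c := \phi^* a \phi$ is supported in $H$: then $c \in \mathscr{C}_c(H)$ (decompose it over open bisections of $H$ via Lemma \ref{lem:PO1} with $Y = \emptyset$), $r_X(c) = r_X(a)$, and $\|\phi^* a \phi - c\| = 0$, so $\epsilon$ never enters.

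The crux, which I expect to be the main obstacle, is manufacturing this $u$. Let $K$ be a compact set containing $\{a \ne 0\}$ and put $W := K \setminus H$, compact because $H$ is open. The key point is that no $g \in W$ can have both endpoints in $X$: if $r(g), s(g) \in X$ then $g \in G|_X = H|_X \subseteq H$, contradicting $g \notin H$. Thus every $g \in W$ has an endpoint in the open set $G^0 \setminus X$. For each $g \in W$ I would fix such an endpoint, an open bisection around $g$, and a relatively compact open neighbourhood $O_g \subseteq G^0 \setminus X$ of that endpoint, cut down to an open $N_g \ni g$ on which the chosen endpoint map lands in $O_g$. Compactness of $W$ gives a finite subcover $N_{g_1}, \dots, N_{g_m}$, and then $F := \bigcup_i \overline{O_{g_i}}$ is a compact subset of $G^0 \setminus X$ meeting $\{r(g), s(g)\}$ for every $g \in W$. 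A Urysohn function $u \in C_c(G^0 \setminus X)$ with $0 \le u \le 1$ and $u \equiv 1$ on $F$ then kills the scalar coefficient above at each $g \in W$, hence on all of $G \setminus H$ (elsewhere $a$ vanishes), so $\phi^* a \phi$ is supported in $H$, as required.

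For the ``moreover'' statement, take $f \in \ker(r_X : \mathscr{C}_c(H) \to \mathscr{C}_c(G|_X))$. By $H$-invariance an arrow of $H$ has one endpoint in $X$ iff it has both, so $f|_{H|_X} = 0$ forces $r(\operatorname{supp} f)$ and $s(\operatorname{supp} f)$ to be compact subsets of $G^0 \setminus X$. Fix $\chi \in C_c(G^0 \setminus X)$ with $0 \le \chi \le 1$ and $\chi \equiv 1$ on both of these compacta, so that $\chi * f = f = f * \chi$. Then $u_\lambda * f - f = \big((u_\lambda - 1)\chi\big) * f$, and since $(u_\lambda - 1)\chi$ lies in $C_0(G^0)$ its norm equals $\|(u_\lambda - 1)\chi\|_\infty = \sup_{\operatorname{supp}\chi} |1 - u_\lambda|$, which tends to $0$ because $(u_\lambda)$ converges to $1$ uniformly on the compact set $\operatorname{supp}\chi \subseteq G^0 \setminus X$. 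Hence $\|u_\lambda * f - f\| \to 0$, and symmetrically $\|f * u_\lambda - f\| \to 0$, so $(u_\lambda)$ is an approximate unit for $\ker(r_X : \mathscr{C}_c(H) \to \mathscr{C}_c(G|_X))$.
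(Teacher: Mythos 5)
Your $*$-homomorphism, surjectivity and compressibility steps are essentially sound, and the compressibility step takes a genuinely different route from the paper: the paper only achieves $\|\phi^{*}a\phi-c\|\leq\epsilon$ via an approximate-unit estimate, whereas you get exact equality $c=\phi^{*}a\phi$ by forcing $u=1$ on a compact set of endpoints, so $\epsilon$ never enters. One point you must make explicit there: ``$\phi^{*}a\phi$ vanishes off $H$'' is strictly weaker than ``$\phi^{*}a\phi\in\mathscr{C}_{c}(H)$'' (compare $f(t)=t$ on $[0,1]$ with $H=(0,1]$: it vanishes off $H$ but is not in $C_{c}(H)$, since its closed support is not a compact subset of $H$). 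Your construction does supply what is needed, because $u\equiv 1$ on the \emph{open} sets $O_{g_{i}}$, so the scalar factor $(1-u\circ r)(1-u\circ s)$ vanishes on the open neighbourhood $\bigcup_{i}N_{g_{i}}$ of $W$; hence $K\setminus\bigcup_{i}N_{g_{i}}$ is a compact subset of $H$ containing $\{\phi^{*}a\phi\neq 0\}$, and only then does Lemma \ref{lem:PO1} apply with bisections of $H$ covering that compact set. As written, the inference ``kills the coefficient at each $g\in W$, so $\phi^{*}a\phi$ is supported in $H$'' elides exactly this closure issue.

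The ``moreover'' part has a genuine gap, and it is the same closure confusion in a place where it cannot be repaired. From $f|_{H|_{X}}=0$ you conclude that $r(\operatorname{supp}f)$ and $s(\operatorname{supp}f)$ are compact subsets of $G^{0}\setminus X$; this is false: vanishing on the closed set $H|_{X}$ does not make the closed support of $f$ disjoint from it. Already for $G=H=G^{0}=[0,1]$, $X=\{0\}$, $f(t)=t$ one has $f\in\ker(r_{X})$ but $0\in r(\operatorname{supp}f)$, and there is no $\chi\in C_{c}(G^{0}\setminus X)$ with $\chi*f=f$, so the identity $u_{\lambda}*f-f=((u_{\lambda}-1)\chi)*f$ has no starting point. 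The statement is still true, but it needs the quantitative estimate that the paper runs (and that your exact construction allowed you to skip in the compressibility step): write $a=f^{*}f$, decompose over bisections using Lemma \ref{lem:PO1} with $Y=G|_{X}$, use that $|a|<\epsilon$ on a neighbourhood of $\operatorname{supp}(a)\cap G|_{X}$ by continuity and vanishing on $G|_{X}$, that eventually $|1-u_{\lambda}|\leq\epsilon$ on the remaining compact part of the endpoint sets, and that the $\rho$-norm of a function supported on a single bisection is its sup-norm; this gives $\lim_{\lambda}\|(1-u_{\lambda})f^{*}f(1-u_{\lambda})\|_{\rho}=0$ and hence $\|f(1-u_{\lambda})\|_{\rho}\to 0$.
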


\begin{proof}
    We first show $r_{X}:\mathscr{C}_{c}(H)\to \mathscr{C}_{c}(G|_{X})$ is surjective. Let $b\in C_{c}(V)$, where $V$ is an open bisection of $G|_{X}$. Let $\{U_{i}\}^{n}_{i=1}$ be bisections of $H$ that cover a compact set $K\subseteq V$ such that $\{g\in G|_{X}: b(g)\neq 0\}\subseteq K$. Choose functions $\phi_{i}\in C_{c}(U_{i}\cap V)$ such that $\sum^{n}_{i=1}\phi_{i}|_{K} = 1$ and let $b_{i} = b\phi_{i}\in C_{c}(U_{i}\cap V)$. Since $C_{c}(U_{i}\cap V)\subseteq C_{c}(U_{i}\cap G|_{X})$ and $G|_{X}$ is closed, by the Tietze extension theorem, there is $a_{i}\in C_{c}(U_{i})$ such that $a_{i}|_{G|_{X}} = b_{i}$. Hence, $a = \sum_{i=1}^{n}a_{i}\in \mathscr{C}_{c}(H)$ satisfies $r_{X}(a) = b$.

    Now, let $(u_{\lambda})$ be an approximate unit for $C_{c}(G^{0}\setminus X)$. We first show for any $a\in \mathscr{C}_{c}(G)$ with $a|_{G|_{X}} = 0$, we have $\limsup_{\lambda}\|(1-u_{\lambda})a(1-u_{\lambda})\|_{\rho} = 0$. Since $G|_{X}$ is closed, by Lemma \ref{lem:PO1} and the triangle inequality, it suffices to show this for $a\in C_{c}(U)$ for some open bisection and with $\|a\|_{\infty}\leq 1$, where $\|\cdot\|_{\infty}$ denotes the sup-norm. 
    
    Let $K\subseteq U$ be a compact set such that $\{g\in G: a(g)\neq 0\}\subseteq K$. Let $ 1 > \epsilon > 0$ and $U'\subseteq U$ an open set such that  $K\cap G|_{X}\subseteq U'$ and $|a(g)| < \epsilon$ for all $g\in U'$. There is an open set $W\subseteq G^{0}$ such that $(r(K)\cup s(K))\cap X\subseteq W$ and $r^{-1}(W)\cap s^{-1}(W)\cap K \subseteq U'$; for if not, then we can extract a net $(g_{\lambda})\subseteq K\setminus U'$ converging to $g\in K\setminus U'\subseteq K\setminus G|_{X}$ with $r(g_{\lambda})$ and $s(g_{\lambda})$ converging to elements in $X$, a contradiction. Set $L = r(K)\cup s(K)$. By applying the approximate unit to $\phi\in C_{c}(G^{0}\setminus X)$ satisfying $\phi|_{L\setminus W} = 1$, we see that there is $\lambda_{0}$ such that $|(1-u_{\lambda})(x)|\leq \epsilon$ for all $\lambda\geq \lambda_{0}$ and $x\in L\setminus W$.

    For $g\in U'$, we have $|(1-u_{\lambda})(r(g))a(g)(1-u_{\lambda})(s(g))|\leq |a(g)| < \epsilon$ and for $g\notin r^{-1}(W)\cap s^{-1}(W)\cap K\subseteq U'$, we have $|(1-u_{\lambda})(r(g))a(g)(1-u_{\lambda})(s(g))|\leq \epsilon^{2}|a(g)|\leq \epsilon^{2} < \epsilon $ for all $\lambda \geq \lambda_{0}$.

    Since $f_{\lambda} = (1-u_{\lambda})a(1-u_{\lambda})$ is supported on the open bisection $U$, we have $\|f_{\lambda}\|_{\rho} = \|f_{\lambda}\|_{\infty}  < \epsilon $ for all $\lambda\geq \lambda_{0}$. This norm equality follows from the C*-identity $\|f\|_{\rho} = \sqrt{\|f^{*}f\|_{\rho}}$,  $|f|^{2} = (f^{*}f)\circ s$ and that elements $f^{*}f\in C_{c}(s(U))$ have a unique C*-norm provided by the sup-norm. We have proven $\limsup_{\lambda}\|(1-u_{\lambda})a(1-u_{\lambda})\|_{\rho} = 0$. Now, suppose $f\in $

    Note that this shows $(u_{\lambda})$ is an approximate unit for $\ker(r_{X}:\mathscr{C}_{c}(H)\to \mathscr{C}_{c}(G|_{X}))$ (apply the above to $a = c^{*}c$ where $c\in \ker(r_{X}:\mathscr{C}_{c}(H)\to \mathscr{C}_{c}(H|_{X}))$).

    Now, we show compressibility. Let $a\in \mathscr{C}_{c}(G)$ and choose $c'\in \mathscr{C}_{c}(H)$ such that $r_{X}(a - c') = 0$. From above, for every $\epsilon > 0$, there is $\lambda_{0}$ such that $\phi = 1-u_{\lambda_{0}}$ and $c = (1-u_{\lambda_{0}})c'(1- u_{\lambda_{0}})\in\mathscr{C}_{c}(H)$ satisfy $\|\phi^{*}a\phi - c\| < \epsilon$, along with $\eta(\phi) = 1$, $\eta(a) = \eta(c)$. Therefore, $r_{X}$ is compressible to $r_{X}:\mathscr{C}_{c}(H)\to \mathscr{C}_{c}(G|_{X})$.
\end{proof}

If $\|\cdot\|_{\rho}$ is a pre-C*-norm on $\mathscr{C}_{c}(G)$, then denote by  $\|\cdot\|_{\rho(X)}$ the quotient norm on $\mathscr{C}_{c}(G|_{X})$ induced from $\mathscr{C}_{c}(H)$, where $H$ is a local groupoid about $G|_{X}$ - that is, for $b\in \mathscr{C}_{c}(G|_{X})$, let $\|b\|_{\rho(X)} := \text{inf}\{\|a\|_{\rho}:a\in\mathscr{C}_{c}(H), \text{ }r_{X}(a) = b\}$. Since $r_{X}:\mathscr{C}_{c}(H)\to \mathscr{C}_{c}(G|_{X})$ is a *-homomorphism, we have that the closure $C^{*}_{\rho}(H\setminus H|_{X})$ of $\ker(\mathscr{C}_{c}(H)\to \mathscr{C}_{c}(G|_{X}))$ is an ideal in $C^{*}_{\rho}(H)$ and for any $a\in \mathscr{C}_{c}(H)$, we have $$\|\eta(a)\|_{\rho(X)} = \text{inf}\{ \|a + j\|:j\in C^{*}_{\rho}(H\setminus H|_{X})\}.$$ Therefore, $\|\cdot\|_{\rho(X)}$ is the quotient norm by the ideal $C^{*}_{\rho}(H\setminus H|_{X})$, and is hence a pre-C*-norm. Before we show that this norm is independent of $H$, let's extend the results in Theorem \ref{thm:locinvar&compressdense} to the completions under $\rho$ and $\rho(X)$.

\begin{thm}
    \label{thm:locinvar&compress}
     Let $G$ be an \'etale groupoid, $X\subseteq G^{0}$ a locally invariant set and $\rho$ a pre-C*-norm for $\mathscr{C}_{c}(G)$. For any local groupoid $H$ about $G|_{X}$ with $H^{0} = G^{0}$, the $*$-homomorphism $\eta_{X}:\mathscr{C}_{c}(H)\to \mathscr{C}_{c}(G|_{X})$ is bounded relative to the norms $\rho$ and $\rho(X)$. Consequently, the restriction map $\eta_{X}:\mathscr{C}_{c}(G)\to \mathscr{C}_{c}(G|_{X})$ extends to $\eta_{X}:C^{*}_{\rho}(G)\to C^{*}_{\rho(X)}(G|_{X})$ and is compressible to $C^{*}_{\rho}(H)\subseteq C^{*}_{\rho}(G)$. Furthermore, any approximate unit $(u_{\lambda})$ for $C_{c}(G^{0}\setminus X)$ is an approximate unit for $\ker(C^{*}_{\rho}(H)\to C^{*}_{\rho(X)}(G|_{X}))$.
\end{thm}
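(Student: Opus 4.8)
The plan is to obtain all three assertions from Theorem \ref{thm:locinvar&compressdense} together with the general results of Section \ref{s:compress}, since the analytic heart of the matter has already been handled at the pre-C*-level.

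First I would record boundedness of $\eta_X$ on $\mathscr{C}_c(H)$. By the construction of $\rho(X)$ recalled just before the statement, $\rho(X)$ is precisely the quotient norm induced on $\mathscr{C}_c(G|_X)$ by the surjective *-homomorphism $\eta_X:\mathscr{C}_c(H)\to\mathscr{C}_c(G|_X)$, i.e. the quotient of $\rho$ by the ideal $C^*_\rho(H\setminus H|_X)$. Hence $\|\eta_X(a)\|_{\rho(X)}=\inf\{\|a+j\|_\rho:j\in C^*_\rho(H\setminus H|_X)\}\le\|a\|_\rho$ for every $a\in\mathscr{C}_c(H)$, so $\eta_X|_{\mathscr{C}_c(H)}$ is contractive, in particular bounded.

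For the extension and compressibility, recall from Theorem \ref{thm:locinvar&compressdense} that $\eta_X:\mathscr{C}_c(G)\to\mathscr{C}_c(G|_X)$ is compressible to the *-homomorphism $\eta_X:\mathscr{C}_c(H)\to\mathscr{C}_c(G|_X)$, which the previous paragraph shows is bounded. Corollary \ref{cor:cpc} then makes $\eta_X:\mathscr{C}_c(G)\to\mathscr{C}_c(G|_X)$ c.p.c., in particular norm contracting, so it extends to $\eta_X:C^*_\rho(G)\to C^*_{\rho(X)}(G|_X)$; and Corollary \ref{cor:completion} promotes compressibility to the completions, yielding that this extension is compressible to the *-homomorphism $\eta_X:C^*_\rho(H)\to C^*_{\rho(X)}(G|_X)$, where $C^*_\rho(H)$ is the closure of $\mathscr{C}_c(H)$ inside $C^*_\rho(G)$ (a C*-subalgebra, as $H$ is open).

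Finally, for the approximate unit, I would first identify $\ker(\eta_X:C^*_\rho(H)\to C^*_{\rho(X)}(G|_X))$ with $C^*_\rho(H\setminus H|_X)$, the $\rho$-closure of the pre-C*-kernel $\ker(\eta_X:\mathscr{C}_c(H)\to\mathscr{C}_c(G|_X))$; this is immediate from the description of $\rho(X)$ as the quotient norm by that ideal. By Theorem \ref{thm:locinvar&compressdense}, any approximate unit $(u_\lambda)\subseteq C_c(G^0\setminus X)$ is an approximate unit for the pre-C*-kernel, and since $(u_\lambda)$ is uniformly bounded (indeed $\|u_\lambda\|_\rho=\|u_\lambda\|_\infty\le 1$, these functions being supported on the unit space), a routine $\epsilon/3$ estimate upgrades it to an approximate unit for the closure. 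I expect the only point needing minor care to be this last step, namely that approximate-unit behaviour on a dense *-subalgebra passes to its C*-closure; but this is a standard consequence of the uniform bound, so no genuine obstacle arises beyond bookkeeping.
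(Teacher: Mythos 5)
Your proposal is correct and follows essentially the same route as the paper: boundedness from the description of $\rho(X)$ as the quotient norm by $C^{*}_{\rho}(H\setminus H|_{X})$, extension and compressibility via Theorem \ref{thm:locinvar&compressdense} together with Corollaries \ref{cor:cpc} and \ref{cor:completion}, and the approximate-unit claim by passing from the dense pre-C*-kernel to its closure. The only difference is that you spell out the uniform-bound/$\epsilon/3$ step that the paper leaves implicit, which is fine.
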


\begin{proof}
    By Theorem \ref{thm:locinvar&compressdense}, $\eta_{X}:\mathscr{C}_{c}(G)\to \mathscr{C}_{c}(G|_{X})$ is compressible to $\mathscr{C}_{c}(H)$, and $\eta_{X}:\mathscr{C}_{c}(H)\to \mathscr{C}(G|_{X})$ is norm contracting with $\mathscr{C}(G|_{X})$ equipped with the quotient norm relative to $\rho$ and this *-homomorphism. Therefore, Corollary \ref{cor:completion} implies $\eta_{X}$ extends to the completion $\eta_{X}:C^{*}_{\rho}(G)\to C^{*}_{\rho}(G|_{X})$ and is compressible to $\eta_{X}:C^{*}_{\rho}(H)\to C^{*}_{\rho(X)}(G|_{X})$.

    By Theorem \ref{thm:locinvar&compress}, $(u_{\lambda})$ is an approximate unit for $\ker(r_{X}:\mathscr{C}_{c}(H)\to \mathscr{C}_{c}(G|_{X}))$. Since $\ker(r_{X}:\mathscr{C}_{c}(H)\to \mathscr{C}_{c}(G|_{X}))$ is dense in $\ker(r_{X}:C^{*}_{\rho}(H)\to C^{*}_{\rho(X)}(G|_{X}))$ by construction, we have that $(u_{\lambda})$ is also an approximate unit for $\ker(r_{X}:C^{*}_{\rho}(H)\to C^{*}_{\rho(X)}(G|_{X}))$.
\end{proof}

The below corollary to Theorem \ref{thm:locinvar&compress} amounts to saying this norm is independent of the local groupoid $H$ about $G|_{X}$ inside $G$ chosen.

\begin{cor}
\label{cor:restrictnorm}
Let $G$ be an \'etale groupoid and $X$ a closed locally invariant set. Let $H$ and $K$ be local groupoids about $G|_{X}$. Equip $\mathscr{C}_{c}(G)$ with any pre-C*-norm and let $\mathscr{C}_{c}(H)$ and $\mathscr{C}_{c}(K)$ inherit this norm as *-sub-algebras.
 
The quotient norm on $\mathscr{C}_{c}(G|_{X})$ induced from $\mathscr{C}_{c}(H)$ is equal to that induced from $\mathscr{C}_{c}(K)$.
\end{cor}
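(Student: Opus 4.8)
The plan is to apply Corollary \ref{cor:quotientnorm} to each of $H$ and $K$ and then observe that the resulting formula for the quotient norm makes no reference to the local groupoid chosen. Fix a pre-C*-norm $\rho$ on $\mathscr{C}_{c}(G)$ and write $\|\cdot\|_{\rho(X)}^{H}$ and $\|\cdot\|_{\rho(X)}^{K}$ for the quotient norms on $\mathscr{C}_{c}(G|_{X})$ induced from $\mathscr{C}_{c}(H)$ and $\mathscr{C}_{c}(K)$, respectively. Equip $\mathscr{C}_{c}(G|_{X})$ with $\|\cdot\|_{\rho(X)}^{H}$ and regard the restriction $r_{X}:\mathscr{C}_{c}(G)\to\mathscr{C}_{c}(G|_{X})$ as the map $\eta$ of Corollary \ref{cor:quotientnorm}, with distinguished *-sub-algebra $\mathcal{C}=\mathscr{C}_{c}(H)$.

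First I would verify the hypotheses of Corollary \ref{cor:quotientnorm}. By Theorem \ref{thm:locinvar&compressdense}, $\eta$ is compressible to the *-homomorphism $r_{X}:\mathscr{C}_{c}(H)\to\mathscr{C}_{c}(G|_{X})$, which is contractive from $\rho$ to $\|\cdot\|_{\rho(X)}^{H}$ by the very definition of the quotient norm. Moreover, Theorem \ref{thm:locinvar&compress} furnishes an approximate unit $(u_{\lambda})\subseteq C_{c}(G^{0}\setminus X)\subseteq\ker(r_{X}:\mathscr{C}_{c}(H)\to\mathscr{C}_{c}(G|_{X}))$ for $\ker(r_{X}:C^{*}_{\rho}(H)\to C^{*}_{\rho(X)}(G|_{X}))$, so the approximate-unit hypothesis of Corollary \ref{cor:quotientnorm} is met. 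Corollary \ref{cor:quotientnorm} then gives, for every $a\in\mathscr{C}_{c}(G)$,
\[
\|r_{X}(a)\|_{\rho(X)}^{H} = \inf\{\|b\|_{\rho} : b\in\mathscr{C}_{c}(G),\ r_{X}(b)=r_{X}(a)\}.
\]
The crucial feature is that the right-hand side is an infimum over the \emph{ambient} algebra $\mathscr{C}_{c}(G)$, with no dependence on $H$. Running the identical argument with $K$ in place of $H$ produces the same right-hand side, whence $\|r_{X}(a)\|_{\rho(X)}^{H}=\|r_{X}(a)\|_{\rho(X)}^{K}$ for all $a\in\mathscr{C}_{c}(G)$. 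Since $r_{X}:\mathscr{C}_{c}(H)\to\mathscr{C}_{c}(G|_{X})$ is surjective (Theorem \ref{thm:locinvar&compressdense}) and $\mathscr{C}_{c}(H)\subseteq\mathscr{C}_{c}(G)$, the map $r_{X}$ is surjective from $\mathscr{C}_{c}(G)$, so the two norms agree on all of $\mathscr{C}_{c}(G|_{X})$, as required.

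I do not expect a serious computational obstacle here; the work is conceptual. The quotient norm induced from $H$ is defined \emph{a priori} as an infimum over the strictly smaller algebra $\mathscr{C}_{c}(H)$, and the whole point is that compressibility to $\mathscr{C}_{c}(H)$ forces this to coincide with the infimum over $\mathscr{C}_{c}(G)$: one inequality is immediate from $\mathscr{C}_{c}(H)\subseteq\mathscr{C}_{c}(G)$, while the reverse follows because $r_{X}$ is contractive into $(\mathscr{C}_{c}(G|_{X}),\|\cdot\|_{\rho(X)}^{H})$ by Corollary \ref{cor:cpc}. Recognizing that this ambient-algebra formulation is manifestly independent of the local groupoid is the only insight needed; everything else is bookkeeping with the hypotheses already assembled in Theorems \ref{thm:locinvar&compressdense} and \ref{thm:locinvar&compress}.
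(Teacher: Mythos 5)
Your argument is correct, and is essentially the paper's own argument, \emph{in the special case} $H^{0}=K^{0}=G^{0}$: there the ambient-infimum formula of Corollary \ref{cor:quotientnorm} (equivalently, the $H$-independent limit in Equation \ref{eq:norm1} with $(u_{\lambda})\subseteq C_{c}(G^{0}\setminus X)$) immediately forces the two quotient norms to agree. But there is a genuine gap for the general case: Definition \ref{dfn:hull} does \emph{not} require a local groupoid about $G|_{X}$ to have full unit space, whereas both Theorem \ref{thm:locinvar&compressdense} and Theorem \ref{thm:locinvar&compress} --- the results you invoke to verify the hypotheses of Corollary \ref{cor:quotientnorm} --- are stated and proved only for local groupoids $H$ with $H^{0}=G^{0}$. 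When $H^{0}\subsetneq G^{0}$, the approximate unit $(u_{\lambda})\subseteq C_{c}(G^{0}\setminus X)$ is not contained in $\mathscr{C}_{c}(H)$, so the hypothesis of Corollary \ref{cor:quotientnorm} that $\ker(r_{X}:\mathscr{C}_{c}(H)\to\mathscr{C}_{c}(G|_{X}))$ contains an approximate unit for the completed kernel is not supplied by the cited theorems, and indeed the compressibility of $r_{X}:\mathscr{C}_{c}(G)\to\mathscr{C}_{c}(G|_{X})$ to $\mathscr{C}_{c}(H)$ itself is not established in the paper for such $H$ (the compressing elements $\phi=1-u_{\lambda}$ used in the proof of Theorem \ref{thm:locinvar&compressdense} live in $\widetilde{C_{c}(G^{0})}$, not in $\widetilde{\mathscr{C}_{c}(H)}$).

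The paper closes exactly this gap with a second step you are missing: it sets $H'=H\cup G^{0}$ (which is a local groupoid with full unit space), and compares the quotient norm induced from $\mathscr{C}_{c}(H)$ with that induced from $\mathscr{C}_{c}(H')$ directly, using that $(1-u_{\lambda})a(1-u_{\lambda})\in\mathscr{C}_{c}(H)$ for $a\in\mathscr{C}_{c}(H)$ (because $C^{*}_{r}(H)$ is an ideal in $C^{*}_{r}(H')$) together with Equations \ref{eq:norm1} and \ref{eq:norm2} applied to $H'$. To repair your proof you should either add this reduction to $H\cup G^{0}$, or restrict your application of Corollary \ref{cor:quotientnorm} to local groupoids with full unit space and then separately prove that enlarging $H$ to $H\cup G^{0}$ does not change the induced quotient norm.
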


\begin{proof}
    When $H^{0} = K^{0} = G^{0}$ this follows from Theorem \ref{thm:locinvar&compress} and Equation \ref{eq:norm1}, using an approximate unit $(u_{\lambda})\subseteq C_{c}(G^{0}\setminus X)$.

    In general, let $H' = H\cup G^{0}$, and denote by $\|\cdot\|'$ the quotient norm on $\mathscr{C}_{c}(G|_{X})$ induced from $\mathscr{C}_{c}(H')$. For $b\in \mathscr{C}_{c}(G|_{X})$ choose $a\in \mathscr{C}_{c}(H)$ such that $r_{X}(a) = b$. For an approximate unit $(u_{\lambda})\subseteq C_{c}(G^{0}\setminus X)$, we have $(1-u_{\lambda})a(1-u_{\lambda})\in \mathscr{C}_{c}(H)$ for all $\lambda$ (since $C^{*}_{r}(H)$ is an ideal in $C^{*}_{r}(H')$). Therefore, from Equation \ref{eq:norm1}, we have $$\inf\{\|c\|: c\in C^{*}_{r}(H)\text{ and } r_{X}(c) = b\}\leq \lim_{\lambda}\|(1-u_{\lambda})a(1-u_{\lambda})\| = \|b\|'.$$ By Equation \ref{eq:norm2} we have the reverse inequality $$\|b\|' = \inf\{\|c\|: c\in C^{*}_{r}(H')\text{ and } r_{X}(c) = b\}\leq \inf\{\|c\|: c\in C^{*}_{r}(H)\text{ and } r_{X}(c) = b\}.$$ Hence, $\|b\|' = \inf\{\|c\|: c\in C^{*}_{r}(H)\text{ and } r_{X}(c) = b\}$. The right hand side is the quotient norm induced from $\mathscr{C}_{c}(H)$, thus proving the corollary.
\end{proof}

We record, for convenience, the norm equation we have proved.

\begin{cor}
\label{cor:normeqngpd}
       Let $G$ be an \'etale groupoid, $X\subseteq G^{0}$ a locally invariant set and $\rho$ a pre-C*-norm for $\mathscr{C}_{c}(G)$.
       
       Then, for any approximate unit $(u_{\lambda})$ for $C_{0}(G^{0}\setminus X)$ and any $a\in C^{*}_{\rho}(G)$, we have 
       
       \begin{equation}
       \label{eq:norm3}
           \lim_{\lambda}\|(1-u_{\lambda})a(1-u_{\lambda})\|_{\rho} = \|\eta_{X}(a)\|_{\rho(X)}.
       \end{equation}
\end{cor}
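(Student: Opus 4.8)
The plan is to obtain Equation~\ref{eq:norm3} as an immediate instance of the norm equation in Theorem~\ref{thm:normeqn}, applied to the compressible restriction map that Theorem~\ref{thm:locinvar&compress} already supplies at the level of completed $C^{*}$-algebras. Concretely, I would first fix, via Proposition~\ref{prp:germexists}, a local groupoid $H$ about $G|_{X}$ with $H^{0} = G^{0}$. By Theorem~\ref{thm:locinvar&compress} the restriction $\eta_{X}\colon C^{*}_{\rho}(G)\to C^{*}_{\rho(X)}(G|_{X})$ is compressible to the bounded $*$-homomorphism $\eta_{X}\colon C^{*}_{\rho}(H)\to C^{*}_{\rho(X)}(G|_{X})$, boundedness being automatic since the latter is a $*$-homomorphism of $C^{*}$-algebras. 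Taking $\mathcal{A} = C^{*}_{\rho}(G)$ and $\mathcal{C} = C^{*}_{\rho}(H)$ (both already complete) in Theorem~\ref{thm:normeqn}, the conclusion $\|\eta_{X}(a)\|_{\rho(X)} = \lim_{\lambda}\|(1-u_{\lambda})a(1-u_{\lambda})\|_{\rho}$ then holds for every $a\in C^{*}_{\rho}(G)$ and every approximate unit $(u_{\lambda})$ of $\ker(\eta_{X}\colon C^{*}_{\rho}(H)\to C^{*}_{\rho(X)}(G|_{X}))$, with no separate passage to a completion required.

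The only genuine step is to check that an arbitrary approximate unit $(u_{\lambda})$ for $C_{0}(G^{0}\setminus X)$ qualifies as an approximate unit for that kernel. Theorem~\ref{thm:locinvar&compress} verifies this for nets drawn from $C_{c}(G^{0}\setminus X)$; hence $C_{0}(G^{0}\setminus X)$, the $\rho$-closure of $C_{c}(G^{0}\setminus X)$ inside $C^{*}_{\rho}(H)$, is a $C^{*}$-subalgebra of $K:=\ker(\eta_{X}\colon C^{*}_{\rho}(H)\to C^{*}_{\rho(X)}(G|_{X}))$ that already contains an approximate unit for $K$. I would then invoke the standard cofinality of approximate units: if a $C^{*}$-subalgebra $D$ of a $C^{*}$-algebra $K$ contains one approximate unit of $K$, then every approximate unit of $D$ is an approximate unit of $K$. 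Indeed, given $k\in K$ and $\epsilon>0$, pick $v$ in the known approximate unit of $K$ with $\|vk-k\|<\epsilon/3$; since $v\in D$, the net $(u_{\lambda})$ eventually satisfies $\|u_{\lambda}v-v\|<\epsilon/(3(1+\|k\|))$, and then $\|u_{\lambda}k-k\|\leq \|u_{\lambda}\|\,\|k-vk\| + \|u_{\lambda}v-v\|\,\|k\| + \|vk-k\|<\epsilon$, using $\|u_{\lambda}\|\leq 1$ (the right-hand estimate $\|ku_{\lambda}-k\|\to 0$ is symmetric). Applying this with $D = C_{0}(G^{0}\setminus X)$ and $K = \ker(\eta_{X})$ shows any such $(u_{\lambda})$ is an approximate unit for the kernel.

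Combining these observations, Theorem~\ref{thm:normeqn} applies verbatim with the given $(u_{\lambda})$ and yields Equation~\ref{eq:norm3} for all $a\in C^{*}_{\rho}(G)$. I do not expect any substantial obstacle: the corollary is essentially a bookkeeping consequence of Theorems~\ref{thm:normeqn} and~\ref{thm:locinvar&compress}, and the cofinality remark is the sole point where a short argument, rather than a direct citation, is needed in order to free the approximate unit from lying in $C_{c}(G^{0}\setminus X)$.
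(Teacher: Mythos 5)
Your proof is correct and follows exactly the paper's route: the paper's own proof of Corollary \ref{cor:normeqngpd} is the one-line citation of Theorem \ref{thm:locinvar&compress} together with Equation \ref{eq:norm1} of Theorem \ref{thm:normeqn}. The only difference is that you explicitly supply the cofinality argument upgrading the approximate-unit statement from nets in $C_{c}(G^{0}\setminus X)$ (which is what Theorem \ref{thm:locinvar&compress} covers) to arbitrary approximate units of $C_{0}(G^{0}\setminus X)$; this is a small gap the paper silently elides, and your treatment of it is accurate.
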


\begin{proof}
    This is immediate from Theorem \ref{thm:locinvar&compress} and Equation \ref{eq:norm1} in Theorem \ref{thm:normeqn}.
\end{proof}

\begin{rmk}
    In the special case that $X = \{x\}$, $x\in G^{0}$, the norm $\rho(x)$ is equal to the exotic norm on $\mathbb{C}[G^{x}_{x}]$ defined by Christensen and Neshveyev in \cite{CN22}. Equation \ref{eq:norm3} and \cite[Equation~2.3]{CN22} are the same.
\end{rmk}

Corollary \ref{cor:restrictnorm} says the norm on a reduction $G|_{X}$ depends only on the ``germ'' of local groupoids. Let's make this precise.

\begin{dfn}
\label{dfn:germiso}
    Let $G_{1}$ and $G_{2}$ be \'etale groupoids, $X_{1}, X_{2}$ locally invariant sets in $G_{1}$, $G_{2}$, respectively. We say $G_{1}$ about $G_{1}|_{X_{1}}$ is \emph{germ isomorphic} to $G_{2}$ about $G_{2}|_{X_{2}}$ if there are local groupoids $H_{1}$ about $G_{1}|_{X_{1}}$ in $G_{1}$, $H_{2}$ about $G_{2}|_{X_{2}}$ in $G_{2}$ and a groupoid isomorphism $\alpha:H_{1}\to H_{2}$ which restricts to a homeomorphism $\alpha:H^{0}_{1}\to H^{0}_{2}$ such that $\alpha(X_{1}) = X_{2}$. We write $(G_{1},X_{1})\simeq_{\alpha}(G_{2}, X_{2})$.
\end{dfn}

\begin{rmk}
    Note that if $H_{1}$ and $H_{2}$ are open groupoids in $G$ containing $G|_{X}$, then $H_{1}$ about $G|_{X}$ is germ isomorphic to $H_{2}$ about $G|_{X}$; consider the identity map on a local groupoid contained in $H_{1}\cap H_{2}$. Similarly, if $(G_{1},X_{1})\simeq_{\alpha}(G_{2}, X_{2})$ and $(G_{2}, X_{2})\simeq_{\beta}(G_{3}, X_{3})$, then $(G_{1},X_{1})\simeq_{\alpha\circ\beta}(G_{3}, X_{3})$, with $\alpha\circ\beta$ defined on the suitable intersections of local groupoids used in the definitions of $\alpha,\beta$.
\end{rmk}

Now, the following corollary to Corollary \ref{cor:restrictnorm} is immediate.

\begin{cor}
\label{cor:germnormiso}
    Let $G_{1}$, $G_{2}$ be \'etale groupoids and $X_{1}, X_{2}$ locally invariant sets in $G_{1}, G_{2}$, respectively. If $(G_{1}, X_{1})\simeq_{\alpha}(G_{2}, X_{2})$, then $\alpha:G_{1}|_{X_{1}}\to G_{2}|_{X_{2}}$ induces a $C^{*}$-algebra isomorphism $\alpha^{*}:C_{r(X_{2})}^{*}(G_{2}|_{X_{2}})\to C^{*}_{r(X_{1})}(G_{1}|_{X_{1}})$ defined as $\alpha^{*}(f) = f\circ\alpha$ for $f\in C_{c}(G_{2}|_{X_{2}})$.
\end{cor}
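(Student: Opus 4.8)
The plan is to unwind the germ isomorphism into a genuine topological groupoid isomorphism of local groupoids and then transport the defining quotient norms along its induced pullback. By Definition~\ref{dfn:germiso}, $(G_1,X_1)\simeq_\alpha (G_2,X_2)$ supplies local groupoids $H_1$ about $G_1|_{X_1}$ and $H_2$ about $G_2|_{X_2}$ together with a topological groupoid isomorphism $\alpha:H_1\to H_2$ (in particular a homeomorphism) with $\alpha(X_1)=X_2$. Since $\alpha$ carries $r,s$ to $r,s$ via the homeomorphism $\alpha:H_1^{0}\to H_2^{0}$ and $H_i|_{X_i}=G_i|_{X_i}$, it restricts to a topological groupoid isomorphism $\alpha:G_1|_{X_1}\to G_2|_{X_2}$. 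Hence the pullback $\alpha^{*}(f)=f\circ\alpha$ is a $*$-isomorphism of the convolution $*$-algebras $\mathscr{C}_{c}(H_2)\to\mathscr{C}_{c}(H_1)$ that restricts to a $*$-isomorphism $\mathscr{C}_{c}(G_2|_{X_2})\to\mathscr{C}_{c}(G_1|_{X_1})$.

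First I would record two facts. (i) The map $\alpha^{*}:\mathscr{C}_{c}(H_2)\to\mathscr{C}_{c}(H_1)$ is isometric for the reduced norms: a topological groupoid isomorphism identifies the regular representations, intertwining $\lambda^{H_2}_{\alpha(x)}$ with $\lambda^{H_1}_{x}$ for $x\in H_1^{0}$, so it extends to an isomorphism $C^{*}_{r}(H_2)\cong C^{*}_{r}(H_1)$. (ii) Because $\rho=r$ is the reduced norm and $H_i$ is an open subgroupoid of $G_i$, the norm that $\mathscr{C}_{c}(H_i)$ inherits from $C^{*}_{r}(G_i)$ is exactly the intrinsic reduced norm of $H_i$: for $f\in\mathscr{C}_{c}(H_i)$ and $x\in G_i^{0}$, the operator $\lambda^{G_i}_{x}(f)$ annihilates the $\delta_h$ with $r(h)\notin H_i^{0}$, leaves invariant the subspaces spanned by the left $H_i$-cosets in $(G_i)_x$, and acts on each as a copy of a regular representation of $H_i$, whence $\sup_x\|\lambda^{G_i}_{x}(f)\|=\|f\|_{r,H_i}$. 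Thus $\alpha^{*}$ is isometric for the very norms that define the quotient norms $r(X_i)$.

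The second ingredient is that $\alpha^{*}$ intertwines the restriction maps: since $\alpha(G_1|_{X_1})=G_2|_{X_2}$, for $a\in\mathscr{C}_{c}(H_2)$ we have $r_{X_1}(\alpha^{*}a)=\alpha^{*}(r_{X_2}(a))$, where on the right $\alpha^{*}$ denotes pullback along $\alpha:G_1|_{X_1}\to G_2|_{X_2}$. Consequently $a\mapsto\alpha^{*}a$ is a reduced-norm isometry of $\mathscr{C}_{c}(H_2)$ onto $\mathscr{C}_{c}(H_1)$ carrying $\{a:r_{X_2}(a)=b\}$ bijectively onto $\{a':r_{X_1}(a')=\alpha^{*}b\}$ for each $b\in\mathscr{C}_{c}(G_2|_{X_2})$. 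Taking infima of norms over these two sets, which by definition compute $\|b\|_{r(X_2)}$ and $\|\alpha^{*}b\|_{r(X_1)}$ via the local groupoids $H_2,H_1$, gives $\|\alpha^{*}b\|_{r(X_1)}=\|b\|_{r(X_2)}$. Hence $\alpha^{*}$ is an isometric $*$-isomorphism for the norms $r(X_2),r(X_1)$ and extends to the desired C*-isomorphism of completions; Corollary~\ref{cor:restrictnorm} guarantees these quotient norms do not depend on the chosen local groupoids, so the conclusion is independent of the representatives $H_1,H_2$ appearing in the germ isomorphism.

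The hard part will be fact (ii): since $\alpha$ is only an isomorphism of $H_1$ with $H_2$ and not of the ambient $G_1,G_2$, one must first know that the inherited reduced norm on $\mathscr{C}_{c}(H_i)$ is intrinsic to $H_i$ before $\alpha^{*}$ can be seen to preserve it (for a general pre-C*-norm $\rho$ this would fail, which is why the statement is phrased for $\rho=r$). Everything else is routine functoriality of the pullback under a topological groupoid isomorphism together with Corollary~\ref{cor:restrictnorm}.
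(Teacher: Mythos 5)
Your proof is correct and follows the same route the paper intends: the paper gives no argument beyond declaring the corollary immediate from Corollary \ref{cor:restrictnorm}, and your write-up supplies exactly the details that make that "immediate" legitimate. In particular, you rightly isolate the one non-obvious ingredient the paper leaves implicit --- that for an open subgroupoid $H\subseteq G$ the norm $\mathscr{C}_{c}(H)$ inherits from $C^{*}_{r}(G)$ coincides with the intrinsic reduced norm of $H$, which is what allows $\alpha^{*}$ to be isometric even though $\alpha$ only identifies the local groupoids and not the ambient ones, and which is also why the statement is confined to $\rho=r$.
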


This corollary is the main motivation to introduce germ isomorphisms, with the aim of aiding in calculating the norms $r(X)$ in examples. We will also show later in Section \ref{ss:charvsing} that a property central to our characterization of vanishing of the singular ideal is invariant under germ isomorphism. For this we will need to know how germ isomorphism behaves with the Hausdorff cover construction.

\begin{lem}
\label{lem:germ&cover}
    Let $G$ be an \'etale groupoid, $X$ a locally invariant set and $H$ a local groupoid about $G|_{X}$. Then, $\tilde{G}|_{\pi_{G}^{-1}(X)} = \tilde{H}|_{\pi_{H}^{-1}(X)}$ (identified as sets of discrete subsets of $G$) with equal topologies. Moreover, $\pi_{G,ess}^{-1}(X) = \pi^{-1}_{H,ess}(X)$.
\end{lem}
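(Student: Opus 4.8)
The plan is to deduce everything from two structural observations: that the isotropy over $X$ already lives in $H$, and that $H^{0}$ is open in $G^{0}$. Since $H|_{X}=G|_{X}$, for every $x\in X$ we have $H^{x}_{x}=G^{x}_{x}$, so the \emph{entire} isotropy group $G^{x}_{x}$ is contained in the open subgroupoid $H$. First I would use this to show $\pi_{G}^{-1}(X)=\pi_{H}^{-1}(X)$ as sets of subgroups. Indeed, if $(y_{\lambda})\subseteq G^{0}$ is a net with $y_{\lambda}\to x\in X$, then every limit point of $(y_{\lambda})$ in $G$ has source and range $x$, hence lies in $G^{x}_{x}\subseteq H$; since $H$ is open, these are exactly the limit points of $(y_{\lambda})$ computed in $H$. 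Combined with the fact that $H^{0}=H\cap G^{0}$ is open in $G^{0}$ (so any net in $G^{0}$ converging into $X\subseteq H^{0}$ is eventually in $H^{0}$), Proposition \ref{prp:fell} gives that the Fell limit set of $(y_{\lambda})$ is the same whether computed in $G$ or in $H$. Thus $\pi_{G}^{-1}(x)=\pi_{H}^{-1}(x)$ for each $x\in X$. An element of $\tilde{G}|_{\pi_{G}^{-1}(X)}$ has the form $gY$ with $g\in G|_{X}=H|_{X}$ and $Y\in\pi_{G}^{-1}(X)$; as $g\in H$ and $Y\subseteq G^{s(g)}_{s(g)}\subseteq H$, the subset $gY$ of $G$ already lies in $H$, so the two reductions coincide as sets of discrete subsets of $G$.

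For the equality of topologies I would again invoke the net description of the Fell topology from Proposition \ref{prp:fell}. Let $(\underline{g}_{\lambda})$ be a net in the common reduction. Every element of each $\underline{g}_{\lambda}$ has source and range in $X$, so any accumulation point $c$ of a net $g_{\lambda}\in\underline{g}_{\lambda}$ satisfies $r(c),s(c)\in\overline{X}=X$ by continuity of $r,s$, whence $c\in G|_{X}=H|_{X}\subseteq H$. Since $H$ is open, a net in $G$ converging to a point of $H$ is eventually in $H$ and converges there as well; therefore the limit set and accumulation set of $(\underline{g}_{\lambda})$ are identical whether computed in $G$ or in $H$. By Proposition \ref{prp:fell}, $(\underline{g}_{\lambda})$ converges in $\tilde{G}$ if and only if it converges in $\tilde{H}$, and to the same limit. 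This shows the subspace topologies agree, completing the proof that $\tilde{G}|_{\pi_{G}^{-1}(X)}=\tilde{H}|_{\pi_{H}^{-1}(X)}$.

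Finally, for $\pi_{G,ess}^{-1}(X)=\pi_{H,ess}^{-1}(X)$, recall $\pi_{G,ess}^{-1}(X)=\tilde{G}^{0}_{ess}\cap\pi_{G}^{-1}(X)$ with $\tilde{G}^{0}_{ess}=\overline{\iota(C)}$, and similarly for $H$ with its Hausdorff units $C_{H}$; since the underlying sets already agree, it remains to show that a fixed $Y$ in this set lies in $\overline{\iota(C)}$ iff it lies in $\overline{\iota(C_{H})}$. Write $x=\pi(Y)\in X$. For the forward direction, if $Y=\lim\{c_{\lambda}\}$ with $c_{\lambda}\in C$, then $c_{\lambda}\to x$, so $c_{\lambda}\in H^{0}$ eventually (as $H^{0}$ is open and $x\in X\subseteq H^{0}$); since every net in $H^{0}$ is also a net in $G^{0}$, a $G$-Hausdorff point lying in $H^{0}$ is $H$-Hausdorff, so $c_{\lambda}\in C_{H}$ eventually, and by the limit-set computation above the Fell limit in $H$ is again $Y$, giving $Y\in\tilde{H}^{0}_{ess}$. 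The reverse direction is the main obstacle: starting from $c_{\lambda}\in C_{H}$ with $H$-Fell limit $Y$, the same net has $G$-accumulation set $Y$ (its limit points lie in $G^{x}_{x}\subseteq H$), but the $c_{\lambda}$ need not be Hausdorff in $G$. To repair this I would replace them by genuinely $G$-Hausdorff units via a density/diagonal argument: using that $C$ is dense in $G^{0}$, for each $\lambda$ and each open $G$-neighbourhood $W\ni c_{\lambda}$ choose $d_{(\lambda,W)}\in C\cap W$ (possible since $W\cap G^{0}$ is a nonempty open subset of $G^{0}$), and direct the pairs $(\lambda,W)$ by refining $\lambda$ and shrinking $W$. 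Shrinking $W$ forces $d_{(\lambda,W)}$ and $c_{\lambda}$ to accumulate at the same points, so the net $(d_{(\lambda,W)})\subseteq C$ has $\text{Acc}(d_{(\lambda,W)})=Y$ in $G$; hence $Y\in\overline{\iota(C)}=\tilde{G}^{0}_{ess}$. The point needing care is the bookkeeping of this diagonal net so that both inclusions of $\text{Acc}(d_{(\lambda,W)})=Y$ hold, which is exactly where the shrinking-neighbourhood ordering is used.
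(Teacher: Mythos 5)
Your treatment of the set equality and of the equality of topologies is correct and essentially the paper's argument: in both cases the engine is that any $G$-accumulation point of the relevant nets has range and source in $X$, hence lies in $G|_{X}=H|_{X}\subseteq H$, and openness of $H$ then identifies limit and accumulation sets computed in $G$ with those computed in $H$, so Proposition \ref{prp:fell} transfers Fell convergence between the two covers. Your forward inclusion $\pi^{-1}_{G,ess}(X)\subseteq\pi^{-1}_{H,ess}(X)$, via $C\cap H^{0}\subseteq C_{H}$, is also the paper's.

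The gap is in the reverse inclusion, exactly at the step you flag. The assertion that ``shrinking $W$ forces $d_{(\lambda,W)}$ and $c_{\lambda}$ to accumulate at the same points'' is false if accumulation is computed in $G$: for fixed $\lambda$ the inner net $(d_{(\lambda,W)})_{W}$ converges to $c_{\lambda}$ only in $G^{0}$, and since $c_{\lambda}$ need not be a \emph{$G$}-Hausdorff point, that net can accumulate in $G$ at points of $G^{c_{\lambda}}_{c_{\lambda}}$ other than $c_{\lambda}$. In other words, closeness in $G^{0}$ does not give closeness in $\tilde{G}^{0}$ -- the failure of continuity of $\iota$ at non-Hausdorff units is precisely the phenomenon at stake -- so no amount of bookkeeping of the diagonal directed set alone yields $\mathrm{Acc}(d_{(\lambda,W)})=Y$ in $G$. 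What saves the argument, and what you never invoke, is that $c_{\lambda}\in C_{H}$: the map $\iota$ \emph{is} continuous at each $c_{\lambda}$ as a map into $\tilde{H}^{0}$, so $\iota(d_{(\lambda,W)})\to\iota(c_{\lambda})$ in the Fell topology of $H$, and the iterated-limit theorem (or, as the paper puts it, density of $C\cap H^{0}$ in $C_{H}$ together with continuity of $\iota:C_{H}\to\tilde{H}^{0}_{ess}$ with dense image) gives $Y\in\overline{\iota(C\cap H^{0})}$ in $\tilde{H}^{0}$. One then transfers back to $G$ by your own earlier observation that all $G$-accumulation points of a net of units converging into $X$ lie in $G^{x}_{x}\subseteq H$. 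So the strategy is repairable, but the proof as written omits the one ingredient ($H$-Hausdorffness of the $c_{\lambda}$, used inside $\tilde{H}^{0}$ rather than $\tilde{G}^{0}$) that makes the approximation step legitimate.
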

\begin{proof}
    The map $\kappa:\tilde{G}\to \mathcal{C}(H)$ given by $Y\mapsto Y\cap H$ is continuous in the Fell topology, and this is easy to see from its definition in terms of basic open sets. For the remainder of the proof, we use the description of the Fell topology as in Proposition \ref{prp:fell}.

    Suppose $Y\in \tilde{G}|_{\pi^{-1}(X)}$. Choose a net $(y_{\lambda})\subseteq G$ such that $(\{y_{\lambda}\})$ converges to $Y$ in the Fell topology of $G$. Since $Y\subseteq G|_{X}\subseteq H$ and $H$ is open, we have $y_{\lambda}\in H$ eventually. Therefore, $Y\cap H = Y$ and $Y\in \tilde{H}$ and hence $\kappa(\tilde{G}|_{\pi_{G}^{-1}(X)})\subseteq \tilde{H}|_{\pi_{H}^{-1}(X)}$. Moreover, suppose $(\{y_{\lambda}\})$ is a net in $H$ converging in the Fell topology of $H$ to $Y\in \tilde{H}|_{\pi^{-1}_{H}(X)}$, then every accumulation point $y$ of $(\{y_{\lambda}\})$ considered as a net in $G$ satisfies $r(y),s(y)\in X$. Therefore, $Y$ is equal to the accumulation set of $(\{y_{\lambda}\})$ considered as a net in $G$, proving that $(\{y_{\lambda}\})$ converges in the Fell topology of $G$ to $Y$. Hence, $Y\in \tilde{G}|_{\pi^{-1}(X)}$, $\kappa(\tilde{G}|_{\pi_{G}^{-1}(X)}) = \tilde{H}|_{\pi_{H}^{-1}(X)}$ and therefore $\tilde{G}|_{\pi_{G}^{-1}(X)} = \tilde{H}|_{\pi_{H}^{-1}(X)}$ (by $Y\cap H = Y$, $Y\in \tilde{G}|_{\pi_{G}^{-1}(X)}$).
    
    Similarly, if $(Y_{\lambda})$ is a net in $\tilde{H}|_{\pi_{H}^{-1}(X)}$ converging to $Y$ in the Fell topology of $H$, then every accumulation point $y$ of $(Y_{\lambda})$ in $G$ satisfies $r(y),s(y)\in X$ and therefore is an accumulation point of $(Y_{\lambda})$ in $H$. Therefore, $(Y_{\lambda})$ converges (by Proposition \ref{prp:fell}) to $Y$ as a net in $\tilde{G}|_{\pi_{G}^{-1}(X)}$. It follows that $\text{id} = \kappa:\tilde{G}|_{\pi_{G}^{-1}(X)} \to \tilde{H}|_{\pi_{H}^{-1}(X)}$ is open. We have proven the first part of the proposition.

    Now, we prove $\pi_{G,ess}^{-1}(X) = \pi^{-1}_{H,ess}(X)$. Let $C_{H}$, $C_{G}$ denote the Hausdorff units of the respective groupoids. Since $H\subseteq G$ we have $C_{G}\cap H^{0}\subseteq C_{H}$. The map $\iota:C_{H}\to \tilde{H}^{0}_{ess}$ is continuous and has dense image. Since $C_{G}\cap H^{0}$ is dense in $C_{H}$ it follows from continuity that $\iota:C_{G}\cap H^{0}\to \tilde{H}^{0}_{ess}$ also has dense image. Therefore, every $\underline{x}\in \pi^{-1}_{ess, H}(X)$ is the limit (in the Fell topology of $H$) of a net $(\{u_{\lambda}\})\subseteq C_{G}\cap H^{0}$. As we have seen, if $(u_{\lambda})\subseteq H^{0}$ is a net converging to some $x\in X$, then $\{u_{\lambda}\}$ converges in the Fell topology of $H$ to $\underline{x}$ if and only if it converges in the Fell topology of $G$ to $\underline{x}$. Hence, $\underline{x}\in \pi^{-1}_{ess, G}(X)$. The other containment $\pi^{-1}_{ess, G}(X)\subseteq \pi^{-1}_{ess, H}(X)$ follows immediately from $C_{G}\cap H^{0}\subseteq C_{H}$ and $X\subseteq H^{0}$.
\end{proof}

\begin{prp}
\label{prp:germ&cover}
    Let $G_{1}, G_{2}$ be \'etale groupoids and $X_{1}$, $X_{2}$ locally invariant sets such that $(G_{1}, X_{1})\simeq_{\alpha} (G_{2}, X_{2})$. Then, $\tilde{\alpha}:\tilde{G}_{1}|_{\pi^{-1}(X_{1})}\to \tilde{G}_{2}|_{\pi^{-1}(X_{2})}$ is a groupoid isomorphism and homeomorphism, where $\tilde{\alpha}(Y) = \alpha(Y)$ (viewing $Y$ as a closed set in $G_{1}|_{X_{1}}$). Moreover, $\alpha(\pi^{-1}_{ess}(X_{1})) = \pi^{-1}_{ess}(X_{2})$.
\end{prp}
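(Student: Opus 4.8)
The plan is to reduce to the local groupoids $H_1,H_2$ furnished by Definition \ref{dfn:germiso} and then lift the isomorphism $\alpha$ to the Hausdorff covers by functoriality of the Fell topology. By hypothesis there are local groupoids $H_i$ about $G_i|_{X_i}$ and an isomorphism of \'etale groupoids $\alpha\colon H_1\to H_2$ with $\alpha(X_1)=X_2$; in particular $\alpha$ is a homeomorphism intertwining $r,s$, and since $H_i|_{X_i}=G_i|_{X_i}$ it satisfies $\alpha(G_1|_{X_1})=G_2|_{X_2}$. By Lemma \ref{lem:germ&cover} we have $\tilde{G}_i|_{\pi^{-1}(X_i)}=\tilde{H}_i|_{\pi_{H_i}^{-1}(X_i)}$ (as topological groupoids of discrete subsets) and $\pi_{G_i,ess}^{-1}(X_i)=\pi_{H_i,ess}^{-1}(X_i)$. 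Thus it suffices to build the required isomorphism between the reductions of $\tilde{H}_1$ and $\tilde{H}_2$, and here the only available input is $\alpha$ itself.

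Since $\alpha$ is a homeomorphism $H_1\to H_2$, the assignment $Y\mapsto\alpha(Y)$ defines a homeomorphism $\alpha_*\colon\mathcal{C}(H_1)\to\mathcal{C}(H_2)$ of Fell hyperspaces; this is the standard functoriality of the Fell topology, which one verifies directly on the subbasic sets $\mathcal{U}(C,\mathcal{F})$ or, more conveniently here, through the limit-set description of Proposition \ref{prp:fell} (note $\alpha_*$ commutes with $\lim$ and $\mathrm{Acc}$ because $\alpha$ is a homeomorphism). As $\alpha_*$ sends singletons to singletons it carries $\iota(H_1)$ homeomorphically onto $\iota(H_2)$, hence restricts to a homeomorphism $\tilde{\alpha}\colon\tilde{H}_1\to\tilde{H}_2$ of the closures. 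Writing a typical $\underline{g}\in\tilde{H}_1$ as $\underline{g}=gX$ with $X=s(\underline{g})$, the fact that $\alpha$ is an algebraic groupoid homomorphism gives $\tilde{\alpha}(gX)=\alpha(g)\alpha(X)$ and $\tilde{\alpha}(\underline{g}\cdot\underline{h})=\tilde{\alpha}(\underline{g})\cdot\tilde{\alpha}(\underline{h})$, so $\tilde{\alpha}$ is also a groupoid isomorphism.

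Next I track the projections. For $\underline{x}\in\tilde{H}_1^{0}$ one has $\tilde{\alpha}(\underline{x})\cap H_2^{0}=\alpha(\underline{x}\cap H_1^{0})=\{\alpha(\pi_{H_1}(\underline{x}))\}$, i.e. $\pi_{H_2}\circ\tilde{\alpha}=\alpha\circ\pi_{H_1}$. Together with $\alpha(X_1)=X_2$ this gives $\tilde{\alpha}(\pi_{H_1}^{-1}(X_1))=\pi_{H_2}^{-1}(X_2)$, so $\tilde{\alpha}$ restricts to a groupoid isomorphism and homeomorphism $\tilde{H}_1|_{\pi_{H_1}^{-1}(X_1)}\to\tilde{H}_2|_{\pi_{H_2}^{-1}(X_2)}$, and under the identifications of Lemma \ref{lem:germ&cover} this is exactly the asserted map $\tilde{\alpha}\colon\tilde{G}_1|_{\pi^{-1}(X_1)}\to\tilde{G}_2|_{\pi^{-1}(X_2)}$, $Y\mapsto\alpha(Y)$. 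For the essential statement, $\alpha$ being a homeomorphism carries the Hausdorff points $C_{H_1}$ bijectively onto $C_{H_2}$, whence $\tilde{\alpha}$ maps $(\tilde{H}_1)^{0}_{ess}=\overline{\iota(C_{H_1})}$ onto $(\tilde{H}_2)^{0}_{ess}$; since $\pi_{H_i,ess}^{-1}(X_i)=\pi_{H_i}^{-1}(X_i)\cap(\tilde{H}_i)^{0}_{ess}$, the previous identity yields $\tilde{\alpha}(\pi_{H_1,ess}^{-1}(X_1))=\pi_{H_2,ess}^{-1}(X_2)$, which is $\alpha(\pi_{ess}^{-1}(X_1))=\pi_{ess}^{-1}(X_2)$ after invoking the last clause of Lemma \ref{lem:germ&cover}.

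The real content is packaged in Lemma \ref{lem:germ&cover}: it is what lets us replace the two ambient covers $\tilde{G}_i$ by the covers $\tilde{H}_i$ of the local groupoids on which $\alpha$ is genuinely defined, so that ``restriction to $\pi^{-1}(X)$'' sees only the germ. Once that reduction is in hand, the remaining work is the routine verification that a homeomorphism of groupoids is functorial for the Fell hyperspace and respects the structure maps $r,s,\pi$ and the Hausdorff-point locus; I expect the one point needing a little care to be checking that $\alpha_*$ is continuous in both directions on the Fell topology (equivalently, preserves the conditions $\lim(X_\lambda)=\mathrm{Acc}(X_\lambda)=X$ of Proposition \ref{prp:fell}), which is immediate because $\alpha$ and $\alpha^{-1}$ are both continuous.
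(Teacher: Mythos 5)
Your proposal is correct and follows essentially the same route as the paper: reduce to the local groupoids $H_1,H_2$ via Lemma \ref{lem:germ&cover}, lift $\alpha$ to an isomorphism of the Hausdorff covers $\tilde H_1\to\tilde H_2$ using the Fell-topology description, and then track the projections and the essential loci. You simply spell out the steps the paper dismisses as ``easy to see.''
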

\begin{proof}
    let $H_{1}$, $H_{2}$ be local groupoids about $G_{1}|_{X_{1}}, G_{2}|_{X_{2}}$, respectively, such that $\alpha:H_{1}\to H_{2}$ is a groupoid isomorphism (and homeomorphism). Then, it is easy to see $\tilde{\alpha}:\tilde{H}_{1}\to \tilde{H}_{2}$ is a groupoid isomorphism (and homeorphism), where $\tilde{H}_{1},\tilde{H}_{2}$ are the Hausdorff covers of $H_{1}, H_{2}$ and $\tilde{\alpha}$ is $\alpha$ extended to closed sets of $H_{1}$, $H_{2}$. Moreover, $\tilde{\alpha}(\tilde{H}_{1,ess}) = \tilde{H}_{2,ess}$ and $\alpha(\pi_{H_{1}}^{-1}(X_{1})) = \pi^{-1}_{H_{2}}(X_{2})$. By Lemma \ref{lem:germ&cover}, we have $\tilde{G_{i}}|_{\pi_{G_{i}}^{-1}(X_{i})} = \tilde{H}_{i}|_{\pi_{H_{i}}^{-1}(X_{i})}$ with equal topologies and $\pi_{G_{i},ess}^{-1}(X_{i}) = \pi^{-1}_{H_{i},ess}(X_{i})$ for $i=1,2$, proving the proposition.
\end{proof}

Now, we consider the functorial properties of locally invariant sets.

\begin{prp}
    Suppose $X$ and $Y$ are closed locally invariant sets such that $X\subseteq Y$, then $X$ is locally invariant in $G|_{Y}$, the norm $\rho(Y)(X) = \rho(X)$ and the diagram

    $$
\begin{tikzcd}
C^{*}_{\rho}(G) \arrow[rd, "\eta_{X}"] \arrow[r, "\eta_{Y}"] & C^{*}_{\rho(Y)}(G|_{Y}) \arrow[d, "\eta_{X}"] \\
                                                             & C^{*}_{\rho(X)}(G|_{X})                      
\end{tikzcd}$$ commutes.
\end{prp}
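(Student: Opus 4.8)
The three assertions are best proved in the order stated, and the key preliminary observation is that on the dense $*$-subalgebra $\mathscr{C}_c(G)$ each of the relevant maps is literally the restriction of functions. I would first record the set identity $(G|_Y)|_X = G|_X$, valid because $X\subseteq Y$, and the local invariance of $X$ inside $G|_Y$: since $X$ is closed in $G^0$ it is closed in $(G|_Y)^0 = Y$, and for $g\in G|_Y$ with $r(g),s(g)\in X$ a neighbourhood $U$ of $g$ witnessing \ref{dfn:locinvar} for $X\subseteq G^0$ meets $G|_Y$ in a neighbourhood witnessing it for $X\subseteq Y$. This makes $\rho(Y)(X)$ and the vertical map $\eta_X^{G|_Y}\colon C^*_{\rho(Y)}(G|_Y)\to C^*_{\rho(Y)(X)}(G|_X)$ well defined. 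Denoting by $\eta_X^G,\eta_Y^G,\eta_X^{G|_Y}$ the three restriction maps (the diagram overloads the label $\eta_X$), on $\mathscr{C}_c(G)$ one has $\eta_X^{G|_Y}\circ\eta_Y^G = \eta_X^G$ simply because $(f|_{G|_Y})|_{G|_X} = f|_{G|_X}$.

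The norm equality $\rho(Y)(X)=\rho(X)$ is the crux, and I would derive it from the quotient-norm formula, Equation \ref{eq:norm2}. Each restriction map is compressible to a $*$-homomorphism whose kernel contains the approximate unit drawn from the functions vanishing on the set being reduced to (Theorem \ref{thm:locinvar&compress}), so Equation \ref{eq:norm2} applies to all three and expresses each quotient norm as an infimum over the whole top algebra. Using surjectivity of the restrictions (Theorem \ref{thm:locinvar&compressdense}), for $b\in\mathscr{C}_c(G|_X)$ this gives
\[
\|b\|_{\rho(X)} = \inf\{\|a\|_\rho : a\in\mathscr{C}_c(G),\ \eta_X^G(a)=b\},
\]
and, composing the formula for $\eta_X^{G|_Y}$ with that for $\eta_Y^G$,
\[
\|b\|_{\rho(Y)(X)} = \inf\{\|a\|_\rho : a\in\mathscr{C}_c(G),\ \eta_X^{G|_Y}(\eta_Y^G(a))=b\}.
\]
By the map identity of the previous paragraph the two defining conditions coincide, so the infima are taken over the same set and $\|b\|_{\rho(X)}=\|b\|_{\rho(Y)(X)}$. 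The one point to write out carefully is the collapse of the nested infimum hidden in the second line: every $a$ with $\eta_X^G(a)=b$ is admissible with $c:=\eta_Y^G(a)$, and conversely every admissible pair $(c,a)$ has $\eta_X^G(a)=\eta_X^{G|_Y}(c)=b$.

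Once the codomains are identified as the same C*-algebra, commutativity of the triangle follows by density: $\eta_X^G$ and $\eta_X^{G|_Y}\circ\eta_Y^G$ agree on $\mathscr{C}_c(G)$ by the map identity, and both are contractive (being completely positive contractive by Corollary \ref{cor:cpc}, respectively a composite of two such), hence they agree on all of $C^*_\rho(G)$.

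I expect the only real obstacle to be organizational rather than computational: the two legs of the triangle a priori land in $\mathscr{C}_c(G|_X)$ equipped with possibly different norms $\rho(X)$ and $\rho(Y)(X)$, so one cannot speak of the maps being equal until the norms are shown equal. Routing the argument through the infimum formula Equation \ref{eq:norm2} side-steps this, whereas an approach through the norm equation Equation \ref{eq:norm3} would require controlling an iterated limit of nested approximate units $(1-v_\mu)(1-\tilde{w}_\nu)$ for the complement of $Y$ and of $X$ in $Y$, which is messier.
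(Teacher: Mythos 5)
Your proposal is correct and follows essentially the same route as the paper: both establish $\rho(Y)(X)=\rho(X)$ by collapsing a nested infimum of quotient norms using the identity $\eta_X^{G|_Y}\circ\eta_Y^{G}=\eta_X^{G}$ on compactly supported functions, and then obtain commutativity of the triangle by density and contractivity. The only cosmetic differences are that the paper verifies local invariance of $X$ in $G|_Y$ and phrases the infima via nested local groupoids $K\subseteq H$ from Proposition \ref{prp:germexists}, whereas you check the definition of local invariance directly and take the infima over the full algebras via Equation \ref{eq:norm2}; both are valid.
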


\begin{proof}
    By the construction in Proposition \ref{prp:germexists}, we can choose a local groupoid $H$ about $G|_{Y}$ that contains a local groupoid  $K$ about $G|_{X}$ with $H^{0} = K^{0} = G^{0}$. Then, $K|_{Y}\subseteq G|_{Y}$ is a local groupoid about $G|_{X}$. Hence, $X$ is locally invariant in $G|_{Y}$ and for $c\in G|_{X}$, we have 
    \begin{align*}
        \|c\|_{\rho(X)} = \inf\{\|a\|_{\rho}:a\in\mathscr{C}_{c}(K),\text{ }\eta_{X}(a) = c\} =
        \\\inf\{\|a\|_{\rho}:a\in \mathscr{C}_{c}(K), b\in \mathscr{C}_{c}(K|_{Y}),\text{ }\eta_{Y}(a) = b,\text{ }\eta_{X}(b) = c\} = 
        \\\inf\{\|b\|_{\rho(Y)}:b\in \mathscr{C}_{c}(K|_{Y}),\text{ }\eta_{X}(b) = c\} = \|c\|_{\rho(Y)(X)}.
    \end{align*}

    Hence, $\rho(Y)(X) = \rho(X)$. Since the diagram commutes on the canonical pre-C*-algebras and $\rho(Y)(X) = \rho(X)$ it commutes on the C*-completions as claimed.
\end{proof}

\begin{cor}
\label{cor:commdiagrest}
    Let $G$ be an \'etale groupoid, $X\subseteq G^{0}$ a closed locally invariant set and $Y$ a closed invariant set. Then, $X\cap Y$ is a closed locally invariant set in $G|_{Y}$ and an invariant set in $G|_{X}$. The diagram

    $$
\begin{tikzcd}
C^{*}_{r}(G) \arrow[r, "q_{Y}"] \arrow[d, "\eta_{X}"] & C^{*}_{r}(G|_{Y}) \arrow[d, "\eta_{X\cap Y}"] \\
C_{r(X)}^{*}(G|_{X}) \arrow[r, "q_{X\cap Y}"]           & C_{r(X\cap Y)}^{*}(G|_{X\cap Y})          
\end{tikzcd}$$ commutes, where the horizontal mappings are *-homomorphisms. 

Moreover, we have $\eta_{X}(\text{ker}(q_{Y})) = \text{ker}(q_{X\cap Y}).$
\end{cor}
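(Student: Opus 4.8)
The plan is to establish the three assertions in order, deriving the kernel equality from \Cref{thm:kernelsurjects} once the first two are in place. I would begin with the set-theoretic claims. That $X\cap Y$ is closed is immediate. For invariance in $G|_X$, take $g\in G|_X$ with $s(g)\in X\cap Y$: already $s(g),r(g)\in X$, and since $Y$ is invariant in $G$ the condition $s(g)\in Y$ forces $r(g)\in Y$, so $r(g)\in X\cap Y$. For local invariance of $X\cap Y$ in $G|_Y$, take $g\in G|_Y$ with $r(g),s(g)\in X\cap Y\subseteq X$; local invariance of $X$ in $G$ supplies an open $V\ni g$ on which $s(\tilde g)\in X\Leftrightarrow r(\tilde g)\in X$, and on $U:=V\cap G|_Y$ every $\tilde g$ satisfies $r(\tilde g),s(\tilde g)\in Y$ automatically, whence $s(\tilde g)\in X\cap Y\Leftrightarrow s(\tilde g)\in X\Leftrightarrow r(\tilde g)\in X\Leftrightarrow r(\tilde g)\in X\cap Y$.

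Next, for the square: both horizontal maps are restrictions to invariant subsets ($Y$ invariant in $G$, and $X\cap Y$ invariant in $G|_X$ by the first step), hence genuine $*$-homomorphisms onto the associated reduced completions; in particular $q_Y$ is surjective onto $C^*_r(G|_Y)$, so the norm $r(Y)$ is the reduced norm of $G|_Y$. Commutativity I would get from two applications of the preceding proposition, which identifies iterated restriction norms. Applying it to $X\cap Y\subseteq X$ shows $q_{X\cap Y}\circ\eta_X$ equals the direct restriction $C^*_r(G)\to C^*_{r(X\cap Y)}(G|_{X\cap Y})$, and applying it to $X\cap Y\subseteq Y$ shows $\eta_{X\cap Y}\circ q_Y$ equals the same direct restriction; hence the two composites coincide. (Equivalently, all four maps are restriction on the dense $*$-subalgebra $\mathscr{C}_c(G)$ and agree there, so agree by continuity.) This also confirms that the four corners carry mutually consistent norms, since $r(X)(X\cap Y)=r(X\cap Y)=r(Y)(X\cap Y)$.

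For the final equality I would fit the (now verified) commuting square into \Cref{thm:kernelsurjects} with $A_1=C^*_r(G)$, $A_2=C^*_r(G|_Y)$, $B_1=C^*_{r(X)}(G|_X)$, $B_2=C^*_{r(X\cap Y)}(G|_{X\cap Y})$, $i=q_Y$, $j=q_{X\cap Y}$, $\eta_1=\eta_X$, $\eta_2=\eta_{X\cap Y}$. Here $\eta_X$ is surjective, its image being a $C^*$-subalgebra by \Cref{cor:imidealisideal} that contains the dense $\mathscr{C}_c(G|_X)$, and $\eta_X,\eta_{X\cap Y}$ are compressible to the $*$-homomorphisms $C^*_r(H)\to C^*_{r(X)}(G|_X)$ and $C^*_r(H')\to C^*_{r(X\cap Y)}(G|_{X\cap Y})$ by \Cref{thm:locinvar&compress}, where $H$ is a local groupoid about $G|_X$ in $G$ with $H^0=G^0$ and $H'$ one about $G|_{X\cap Y}$ in $G|_Y$ with $(H')^0=Y$. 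For the approximate-unit hypothesis I would take $(u_\lambda)\subseteq C_c(G^0\setminus X)$ an approximate unit for $C_0(G^0\setminus X)$; by \Cref{thm:locinvar&compress} it is an approximate unit for $\ker(\eta_X:C^*_r(H)\to C^*_{r(X)}(G|_X))$, while $q_Y(u_\lambda)=u_\lambda|_Y\in C_c(Y\setminus X)=C_c((G|_Y)^0\setminus(X\cap Y))$. Since the restriction $C_0(G^0\setminus X)\to C_0(Y\setminus X)$ is a surjective $*$-homomorphism, it carries $(u_\lambda)$ to an approximate unit for $C_0((G|_Y)^0\setminus(X\cap Y))$, and a second application of \Cref{thm:locinvar&compress}, now to $G|_Y$ and $X\cap Y$, shows $(q_Y(u_\lambda))$ is an approximate unit for $\ker(\eta_{X\cap Y}:C^*_r(H')\to C^*_{r(X\cap Y)}(G|_{X\cap Y}))$. \Cref{thm:kernelsurjects} then gives $\eta_X(\ker q_Y)=\ker q_{X\cap Y}$.

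The step I expect to be the crux is this transport of approximate units: one must recognize that $q_Y$ sends an approximate unit supported off $X$ to one supported off $X\cap Y$ inside $G|_Y$, which hinges on the identity $Y\setminus X=Y\setminus(X\cap Y)$ together with the fact that approximate units survive the restriction $*$-homomorphism $C_0(G^0\setminus X)\to C_0(Y\setminus X)$. The remaining points---surjectivity of $\eta_X$, the $*$-homomorphism property of the horizontal maps, and the matching of the corner norms---are routine consequences of the results already established.
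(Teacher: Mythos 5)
Your proof is correct and follows essentially the same route as the paper's: in both cases the heart of the argument is feeding the commuting square into Theorem \ref{thm:kernelsurjects} using an approximate unit $(u_{\lambda})\subseteq C_{c}(G^{0}\setminus X)$, whose restriction $u_{\lambda}|_{Y}$ is an approximate unit for $C_{c}(Y\setminus (X\cap Y))$, together with the compressibility statements of Theorem \ref{thm:locinvar&compress}. The only cosmetic difference is that you justify commutativity and boundedness of $q_{X\cap Y}$ by two applications of the preceding iterated-restriction proposition, whereas the paper argues via the containment $q_{Y}(\ker\eta_{X})\subseteq\ker\eta_{X\cap Y}$ and the quotient-norm descriptions of $r(X)$ and $r(X\cap Y)$; both justifications are valid.
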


\begin{proof}
    By Theorem \ref{thm:locinvar&compress}, the restrictions of $\mathcal{I} = \ker(\eta_{X}:\mathscr{C}_{c}(G)\to \mathscr{C}_{c}(G|_{X}))$ and $\mathcal{J} = \ker(\eta_{X\cap Y}:\mathscr{C}_{c}(G|_{Y})\to \mathscr{C}_{c}(G|_{X\cap Y}))$ to the appropriate compressible domains contain approximate units, and are therefore dense in $\ker(\eta_{X})$ and $\ker(\eta_{X\cap Y})$ by Corollary \ref{cor:prekerdense}. Therefore, $q_{Y}(\mathcal{I})\subseteq \mathcal{J}$ implies $q_{Y}(\ker(\eta_{X}))\subseteq \ker(\eta_{X\cap Y})$.
    
    By this containment and the fact that the norms $\|\cdot\|_{r(X)}$ and $\|\cdot\|_{r(X\cap Y)}$ are the Banach quotient norms by the closed subspaces $\ker(\eta_{X}))$ and $\ker(\eta_{X\cap Y})$ (Equation \ref{eq:norm2} and the approximate unit fact in Theorem \ref{thm:locinvar&compress}), it follows that $q_{X\cap Y}$ (defined on the dense sub-algebras) extends to a *-homomorphism $q_{X\cap Y}: C^{*}_{r(X)}(G|_{X})\to C^{*}_{r(X\cap Y)}(G|_{X\cap Y})$, and the proposed diagram commutes (it commutes on the canonical dense sub-algebras). If $(u_{\lambda})$ is an approximate unit for $C_{c}(G^{0}\setminus X)$, then it is easy to see $(q_{Y}(u_{\lambda}) = u_{\lambda}|_{Y})$ is an approximate unit for $C_{c}(Y\setminus X\cap Y)$. Hence, Theorem \ref{thm:kernelsurjects} shows $\eta_{X}(\ker(q_{Y})) = \ker(q_{X\cap Y})$.
\end{proof}

\begin{prp}
\label{prp:commdiagcover}
    Let $G$ be an \'etale groupoid and $X\subseteq G^{0}$ a closed locally invariant set. Let $\iota:C^{*}_{r}(G)\to C^{*}_{r}(\tilde{G})$ be the inclusion. Then, the diagram

    $$ \begin{tikzcd}
C^{*}_{r}(G) \arrow[r, "\iota"] \arrow[d, "\eta_{X}"] & C^{*}_{r}(\tilde{G}) \arrow[d, "\eta_{\mathcal{X}}"] \\
C_{r(X)}^{*}(G|_{X}) \arrow[r, "\iota_{X}"]           & C^{*}_{r(\mathcal{X})}(\tilde{G}|_{\mathcal{X}})    
\end{tikzcd}$$ commutes, where $\mathcal{X} = \pi^{-1}(X)$ and $\iota_{X}$ is the *-homomorphism defined, for $f\in \mathscr{C}_{c}(G|_{X})$, as  $\iota_{X}(f)(\underline{g}) = \sum_{g\in\underline{g}}f(g)$, for $\underline{g}\in \tilde{G}|_{\mathcal{X}} = (G|_{X})\mathcal{X}$.

Moreover, $\iota_{X}$ is injective.
    
\end{prp}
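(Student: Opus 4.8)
The plan is to establish commutativity on the canonical dense $*$-subalgebras, promote it to the completions once $\iota_{X}$ is shown to be bounded, and then deduce injectivity of $\iota_{X}$ by proving it is isometric. First I would check commutativity directly on $\mathscr{C}_{c}(G)$. The key observation is that if $\underline{g}\in\tilde{G}|_{\mathcal{X}}$ then every $g\in\underline{g}$ lies in $G|_{X}$: writing $\underline{g}=g\,s(\underline{g})$, each element has source $\pi(s(\underline{g}))\in X$ and range $\pi(r(\underline{g}))\in X$ because $r(\underline{g}),s(\underline{g})\in\mathcal{X}=\pi^{-1}(X)$. Hence for $f\in\mathscr{C}_{c}(G)$,
\[
\iota_{X}(\eta_{X}(f))(\underline{g}) = \sum_{g\in\underline{g}} (f|_{G|_{X}})(g) = \sum_{g\in\underline{g}} f(g) = \iota(f)(\underline{g}) = \eta_{\mathcal{X}}(\iota(f))(\underline{g}),
\]
so both composites coincide on the dense subalgebra.

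Next I would prove $\iota_{X}$ is a contraction, hence extends to the completions. Recalling that $\eta_{X}$ restricts to a surjective $*$-homomorphism $\mathscr{C}_{c}(H)\to\mathscr{C}_{c}(G|_{X})$ (Theorem \ref{thm:locinvar&compressdense}) and that $\|\cdot\|_{r(X)}$ is the corresponding quotient norm, for $b=\eta_{X}(a)$ with $a\in\mathscr{C}_{c}(H)$ the just-verified commutativity gives $\iota_{X}(b)=\eta_{\mathcal{X}}(\iota(a))$; since $\iota$ is isometric and $\eta_{\mathcal{X}}$ is contractive (Corollary \ref{cor:cpc}), $\|\iota_{X}(b)\|_{r(\mathcal{X})}\leq\|a\|_{r}$, and taking the infimum over such $a$ gives $\|\iota_{X}(b)\|_{r(\mathcal{X})}\leq\|b\|_{r(X)}$. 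Thus $\iota_{X}$ extends continuously and, by density, the whole diagram commutes on the C*-completions.

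For injectivity I would argue $\iota_{X}$ is isometric, the crucial input being a pair of compatible approximate units. Starting from an approximate unit $(u_{\lambda})\subseteq C_{c}(G^{0}\setminus X)$ with $0\leq u_{\lambda}\leq 1$ for $C_{0}(G^{0}\setminus X)$, I would show $\iota(u_{\lambda})=u_{\lambda}\circ\pi$: for $\underline{x}\in\tilde{G}^{0}$ the only unit in the subgroup $\underline{x}$ is $\pi(\underline{x})$, so $\iota(u_{\lambda})(\underline{x})=\sum_{h\in\underline{x}}u_{\lambda}(h)=u_{\lambda}(\pi(\underline{x}))$; as $\iota$ maps into $C_{c}(\tilde{G})$ and $\mathcal{X}=\pi^{-1}(X)$, this puts $\iota(u_{\lambda})$ in $C_{c}(\tilde{G}^{0}\setminus\mathcal{X})$. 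A short compactness argument — given $\tilde{h}\in C_{0}(\tilde{G}^{0}\setminus\mathcal{X})$, the set $\{|\tilde{h}|\geq\epsilon\}$ is compact and $\pi$ carries it into a compact subset of $G^{0}\setminus X$ on which $u_{\lambda}\to 1$ uniformly — shows $(\iota(u_{\lambda}))$ is an approximate unit for $C_{0}(\tilde{G}^{0}\setminus\mathcal{X})$. Now take any $b\in C^{*}_{r(X)}(G|_{X})$ and, using that $\eta_{X}$ is surjective (its image is a C*-subalgebra by Corollary \ref{cor:imidealisideal} containing the dense $\mathscr{C}_{c}(G|_{X})$), write $b=\eta_{X}(a)$ with $a\in C^{*}_{r}(G)$. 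Applying the norm equation of Corollary \ref{cor:normeqngpd} on $G$ and on $\tilde{G}$ (the latter using that $\mathcal{X}$ is locally invariant in $\tilde{G}$ by Proposition \ref{prp:locinvar&cover}), together with commutativity and the fact that $\iota$ is an isometric $*$-homomorphism, gives
\[
\|\iota_{X}(b)\|_{r(\mathcal{X})} = \lim_{\lambda}\|(1-\iota(u_{\lambda}))\iota(a)(1-\iota(u_{\lambda}))\|_{r} = \lim_{\lambda}\|(1-u_{\lambda})a(1-u_{\lambda})\|_{r} = \|b\|_{r(X)},
\]
so $\iota_{X}$ is isometric and therefore injective.

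I expect the injectivity step to be the main obstacle: one must produce approximate units on the two covers that make the two instances of the norm equation align. The decisive points are the identity $\iota(u_{\lambda})=u_{\lambda}\circ\pi$ and the verification that this pushed-forward net is still an approximate unit for the relevant ideal $C_{0}(\tilde{G}^{0}\setminus\mathcal{X})$ on the Hausdorff cover; everything else reduces to the commutative-diagram bookkeeping and the quotient-norm description already available from the compressibility machinery.
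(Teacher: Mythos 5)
Your proposal is correct and follows essentially the same route as the paper: commutativity is checked on the dense subalgebras, $\iota_{X}$ is extended via the quotient-norm description of $\|\cdot\|_{r(X)}$, and injectivity comes from the compatible approximate units $(u_{\lambda})$ and $(\iota(u_{\lambda})=u_{\lambda}\circ\pi)$ together with the norm equation. The only cosmetic difference is that where the paper cites Theorem \ref{thm:kernelsurjects} to get $\ker(\iota_{X})=\eta_{X}(\ker(\iota))=0$, you inline that theorem's proof by showing $\iota_{X}$ is isometric directly, which is the same chain of equalities.
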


\begin{proof}

By a similar argument as in Corollary \ref{cor:commdiagrest}, we have that $\iota_{X}:\mathscr{C}_{c}(G|_{X})\to C_{c}(\tilde{G}|_{\mathcal{X}})$ extends continuously to $\iota_{X}:C^{*}_{r(X)}(G)\to C^{*}_{r(\mathcal{X})}(\tilde{G}|_{\mathcal{X}})$. Therefore, the diagram in the proposition commutes.

If $(u_{\lambda})$ is an approximate unit for $C_{c}(G^{0}\setminus X)$, then $(\iota(u_{\lambda}) = u_{\lambda}\circ \pi)$ is an approximate unit for $C_{c}(\tilde{G}^{0}\setminus \pi^{-1}(X))$. By Theorem \ref{thm:locinvar&compress}, these are approximate units for $\ker(\eta_{X})$ and $
\ker(\eta_{\mathcal{X}})$, respectively, so Theorem \ref{thm:kernelsurjects} implies $ 0 = \eta_{X}(\ker(\iota)) = \ker(\iota_{X})$, proving that $\iota_{X}$ is injective.

\end{proof}

\end{section}

\section{Isotropy fibres of the singular ideal - characterization of vanishing and the intersection property}
\label{s:isofibre}
In this section we prove our characterization of vanishing singular ideal by computing its isotropy fibres in general. By Proposition \ref{prp:commdiagcover} we know that $C^{*}_{r(x)}(G^{x}_{x})$ embeds into $C^{*}_{r(\mathcal{X})}(\tilde{G}|_{\mathcal{X}})$, where $\mathcal{X} = \pi^{-1}(x)$ and it will be useful to describe $\tilde{G}|_{\mathcal{X}}$, which we do in Section \ref{ss:cosetgpds} below. Section \ref{ss:isofibres} contains our main theorem (the calculation of isotropy fibres) using the theory we have built in the previous sections. In Section \ref{ss:charvsing} we apply this calculation to obtain various characterizations of vanishing of the singular ideal and Section \ref{ss:isofibrealgsing} characterizes vanishing of $J\cap\mathscr{C}_{c}(G)$, and the singular ideal intersection property $J\neq\{0\}\implies J\cap\mathscr{C}_{c}(G)\neq \{0\}$ in terms of an ideal intersection property of the isotropy groups.
\subsection{Coset groupoids}
\label{ss:cosetgpds}
We introduce the construction of a \emph{coset groupoid}. They will be important later as they correspond to certain reductions of the Hausdorff cover groupoid as mentioned above. We will also use them in Section \ref{s:nonHDconstruct} to construct non-Hausdorff groupoids with prescribed singular ideals.

 Let $\Gamma$ be a discrete group and view a subset $X\subseteq \Gamma$ as a sequence $X\in \{0,1\}^{\Gamma}$ via its characteristic function $1_{X}:\Gamma\to \{0,1\}$. Let $\mathcal{X}\subseteq \{0,1\}^{\Gamma}$ be a closed set of subgroups invariant under conjugation. We can equip the set of cosets $\Gamma\mathcal{X} = \{\gamma X:\gamma\in \Gamma, X\in\mathcal{X}\}\subseteq \{0,1\}^{\Gamma}$ (with the subspace topology) an \'etale groupoid structure. 

First note that for any $Y\in \Gamma\mathcal{X}$, $s(Y):= y^{-1}Y$ and $r(Y):= Yy^{-1}$ are in $\mathcal{X}$ by conjugation invariance, are independent of the choice $y\in Y$ and define continuous maps $r,s:\Gamma\mathcal{X}\to \mathcal{X}$. Second,  we say $Y, Z\in \Gamma\mathcal{X}$ are composable if and only if $y^{-1}Y = Zz^{-1}$, in which case their product is defined as the pointwise product $YZ = yzs(Z)\in \Gamma\mathcal{X}$. The inverse of $Y\in G\mathcal{X}$ is defined as the pointwise inverse $Y^{-1} = yr(Y)\in \Gamma\mathcal{X}$. These operations are obviously continuous.

Also, it is easy to see from these operations that the groupoid range and source maps co-incide with $r,s$ defined above, and hence the unit space $(\Gamma\mathcal{X})^{0} = \mathcal{X}$.

We have $\mathcal{X}= \{X\in \Gamma\mathcal{X}: e\in X\}$ and therefore $\mathcal{X}$ is clopen in $\Gamma\mathcal{X}$. More generally, given a clopen set $\mathcal{U}\subseteq \mathcal{X}$, $\gamma\mathcal{U} = \{Y\in \Gamma\mathcal{X}: \gamma\in Y\text{ and } \gamma^{-1}Y\in \mathcal{U}\}$ is a clopen bisection. Hence, $\Gamma\mathcal{X}$ is a locally compact Hausdorff \'etale groupoid with totally disconnected unit space.

let $r\mathcal{X}$ denote the semi-C*-norm on $\mathbb{C}[\Gamma]$ which is the supremum of operator norms from the quasi-regular representations $\lambda_{\Gamma/X}:\mathbb{C}[\Gamma]\to B(\ell^{2}(\Gamma/X))$, for each $X\in\mathcal{X}$, defined by $\lambda_{\Gamma/X}(\delta_{\gamma}):\delta_{hX}\mapsto \delta_{\gamma hX}$, for $\gamma\in\Gamma$.

If $\mathcal{X}$ contains the identity, then $r\mathcal{X}\geq r$ obviously and so the semi-C*-norm is a C*-norm. Denote by $C^{*}_{r\mathcal{X}}(\Gamma)$ the C*-algebra induced from this semi-C*-norm. We will use the notation $\hat{\mathcal{X}} = \mathcal{X}\cup\{e\}$ throughout this paper, since the kernel of the quotient $q_{e}:C^{*}_{r\hat{\mathcal{X}}}(\Gamma)\to C^{*}_{r\mathcal{X}}(\Gamma)$ is related to the isotropy fibres of singular ideals.

Each $X\in \mathcal{X}$ defines a positive linear functional $a\in C^{*}_{r\hat{\mathcal{X}}}(\Gamma)\mapsto a(X) = \langle \delta_{X},\lambda_{X}(a)\delta_{X}\rangle$. More generally for $Y\in \Gamma\mathcal{X}$ and $y\in Y$, we can define $a\mapsto a(Y) = (\delta_{y}^{-1}*a)(y^{-1}Y)$, which is a continuous linear functional on $C^{*}_{r\hat{\mathcal{X}}}(\Gamma)$ independent of the representative $y\in Y$. For $a\in \mathbb{C}[\Gamma]$, we have $a(Y) = \sum_{y\in Y}a(y)$.

Clearly $q(a):Y\mapsto a(Y)$ is continuous for $a\in\mathbb{C}[\Gamma]$ (it is locally constant). Its support lies in $\bigcup_{\gamma:a(\gamma)\neq 0}\gamma\mathcal{X}$ and thus $q(a)\in C_{c}(\Gamma\mathcal{X})$. Moreover, it is an easy check that  $q: \mathbb{C}[\Gamma]\to C_{c}(\Gamma\mathcal{X})$ is a *-homomorphism. The left regular representation $\lambda_{X}$ of $C_{c}(\Gamma\mathcal{X})$ associated to a unit $X\in\mathcal{X}$ composed with $q$ is canonically unitarily equivalent to $\lambda_{\Gamma/X}$. Hence, there is an *-homomorphism $q:C^{*}_{r\hat{\mathcal{X}}}(\Gamma)\to C^{*}_{r}(\Gamma\mathcal{X})$ given by $q(a)(Y) = a(Y)$, for all $a\in C^{*}_{\mathcal{X}}(\Gamma)$ and $Y\in G\mathcal{X}$ and the kernel of $q$ is equal the kernel of $q_{e}: C^{*}_{r\hat{\mathcal{X}}}(\Gamma)\to C^{*}_{r\mathcal{X}}(\Gamma)$ mentioned above. We will denote this kernel in the following way.

\begin{dfn}
\label{dfn:gpideal}
    Let $\Gamma$ be a discrete group and $\mathcal{X}$ a closed set of subgroups invariant under conjugation. We denote by $J_{\Gamma,\mathcal{X}}$ the ideal of the quotient map $C^{*}_{r\hat{\mathcal{X}}}(\Gamma)\to C^{*}_{r\mathcal{X}}(\Gamma)$. Note that this is equal to $\bigcap_{X\in\mathcal{X}}\ker(\lambda_{\Gamma/X})$ inside $C^{*}_{r\hat{\mathcal{X}}}(\Gamma)$, which is equal to the kernel of $q:C^{*}_{r\hat{\mathcal{X}}}(\Gamma)\to C^{*}_{r}(\Gamma\mathcal{X})$.
\end{dfn}
Let us characterize precisely when this ideal is zero.

\begin{lem}
\label{lem:groupidealchar}
    Let $\Gamma$ be a discrete group and $\mathcal{X}$ a closed set of subgroups invariant under conjugation. Then, $J_{\Gamma,\mathcal{X}} = 0$ if and only if $\lambda_{\Gamma}$ is weakly contained in $\oplus_{X\in\mathcal{X}}\lambda_{\Gamma/X}$.
\end{lem}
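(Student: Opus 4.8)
The plan is to reduce the statement to the classical characterization of weak containment in terms of C*-seminorms on the group ring, after first identifying $J_{\Gamma,\mathcal{X}}$ with the failure of two norms on $\mathbb{C}[\Gamma]$ to coincide.

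First I would unwind the definitions of the two norms. Since $\hat{\mathcal{X}} = \mathcal{X}\cup\{e\}$ and the quasi-regular representation associated to the trivial subgroup $\{e\}$ is precisely the left regular representation $\lambda_{\Gamma}$ (acting on $\ell^{2}(\Gamma/\{e\}) = \ell^{2}(\Gamma)$), for every $a\in\mathbb{C}[\Gamma]$ we have
$$\|a\|_{r\hat{\mathcal{X}}} = \max\left(\|\lambda_{\Gamma}(a)\|,\ \sup_{X\in\mathcal{X}}\|\lambda_{\Gamma/X}(a)\|\right) = \max\left(\|\lambda_{\Gamma}(a)\|,\ \|a\|_{r\mathcal{X}}\right).$$

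Next I would use that the quotient map $q_{e}:C^{*}_{r\hat{\mathcal{X}}}(\Gamma)\to C^{*}_{r\mathcal{X}}(\Gamma)$ is the $*$-homomorphism induced by the identity on the common dense $*$-subalgebra $\mathbb{C}[\Gamma]$. Since a $*$-homomorphism between C*-algebras is injective if and only if it is isometric, and isometry may be checked on a dense subalgebra, we get that $J_{\Gamma,\mathcal{X}} = \ker(q_{e}) = 0$ if and only if $\|a\|_{r\hat{\mathcal{X}}} = \|a\|_{r\mathcal{X}}$ for all $a\in\mathbb{C}[\Gamma]$. By the displayed formula this equality holds for all $a$ precisely when $\|\lambda_{\Gamma}(a)\|\leq \|a\|_{r\mathcal{X}}$ for every $a\in\mathbb{C}[\Gamma]$.

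Finally I would invoke the standard norm characterization of weak containment (Fell): for unitary representations $\pi,\sigma$ of $\Gamma$, $\pi$ is weakly contained in $\sigma$ if and only if $\|\pi(a)\|\leq\|\sigma(a)\|$ for every $a\in\mathbb{C}[\Gamma]$. Applying this with $\pi = \lambda_{\Gamma}$ and $\sigma = \oplus_{X\in\mathcal{X}}\lambda_{\Gamma/X}$, and noting that $\|\sigma(a)\| = \sup_{X\in\mathcal{X}}\|\lambda_{\Gamma/X}(a)\| = \|a\|_{r\mathcal{X}}$, the inequality $\|\lambda_{\Gamma}(a)\|\leq\|a\|_{r\mathcal{X}}$ for all $a$ is exactly the assertion $\lambda_{\Gamma}\prec\oplus_{X\in\mathcal{X}}\lambda_{\Gamma/X}$, which closes the equivalence. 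The only point requiring care — rather than a genuine obstacle — is the reduction of injectivity of $q_{e}$ to an equality of norms on the dense subalgebra: one must use that injectivity is equivalent to isometry, so that a potentially \emph{singular} kernel element lying outside $\mathbb{C}[\Gamma]$ cannot survive once the norms already agree on $\mathbb{C}[\Gamma]$. The weak-containment characterization itself is classical and I would simply cite it.
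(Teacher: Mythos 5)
Your proof is correct and follows essentially the same route as the paper: both unwind $J_{\Gamma,\mathcal{X}}$ as the kernel of the quotient map between the two completions and then invoke the classical characterization of weak containment (the paper phrases it via kernel containments inside $C^{*}(\Gamma)$, you via the equivalent norm inequality $\|\lambda_{\Gamma}(a)\|\leq\|a\|_{r\mathcal{X}}$ on $\mathbb{C}[\Gamma]$; these are two of the equivalent conditions in the same cited theorem). Your care about checking injectivity via isometry on the dense subalgebra is a valid and correctly handled point.
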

\begin{proof}
    The quotient $C^{*}_{r\hat{\mathcal{X}}}(\Gamma)\to C^{*}_{r\mathcal{X}}(\Gamma)$ is injective if and only if $\ker(\lambda_{\Gamma})\cap \bigcap_{X\in\mathcal{X}} \ker(\lambda_{\Gamma/X})$ $ = \bigcap_{X\in\mathcal{X}} \ker(\lambda_{\Gamma/X})$ in the full group C*-algebra $C^{*}(\Gamma)$. This is true if and only if $\bigcap_{X\in\mathcal{X}} \ker(\lambda_{\Gamma/X})\subseteq \ker(\lambda_{\Gamma})$, which is true if and only if (by \cite[Theorem~F.4.4]{BHV07}) $\lambda_{\Gamma}$ is weakly contained in $\oplus_{X\in\mathcal{X}}\lambda_{\Gamma/X}$.
\end{proof}

Now, let's characterize when the ideal intersect its group ring vanishes.

\begin{lem}
\label{lem:alggroupidealchar}
Let $\Gamma$ be a discrete group and $\mathcal{X}$ a closed set of subgroups invariant under conjugation. Then, the following are equivalent.

\begin{enumerate}
    \item $J_{\Gamma,\mathcal{X}}\cap \mathbb{C}[\Gamma]\neq \{0\}$.
    \item There is a non-zero element $a\in \mathbb{C}[\Gamma]$ such that $\sum_{x\in X}a(\gamma x) = 0$ for all $\gamma\in \Gamma$ and $X\in\mathcal{X}$.
    \item There is a non-zero element $a\in \mathbb{Z}[\Gamma]$ such that $\sum_{x\in X}a(\gamma x) = 0$ for all $\gamma\in \Gamma$ and $X\in\mathcal{X}$.
    \item There is a finite set $F\subseteq \Gamma$ such that the vectors $\delta_{fX\cap F}:= \sum_{h\in fX\cap F}\delta_{h}$, for $f\in F$ and $X\in\mathcal{X}$, do not linearly span $\mathbb{C}[F]$.
\end{enumerate}
\end{lem}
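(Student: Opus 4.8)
The plan is to treat $(1)\iff(2)$ as essentially a restatement of the definitions, and then to close the cycle of implications $(3)\Rightarrow(2)\Rightarrow(4)\Rightarrow(3)$; the trivial inclusion $\mathbb{Z}[\Gamma]\subseteq\mathbb{C}[\Gamma]$ handles $(3)\Rightarrow(2)$ for free. For $(1)\iff(2)$, I would recall from Definition \ref{dfn:gpideal} that $J_{\Gamma,\mathcal{X}}$ is the kernel of $q\colon C^{*}_{r\hat{\mathcal{X}}}(\Gamma)\to C^{*}_{r}(\Gamma\mathcal{X})$, and that for $a\in\mathbb{C}[\Gamma]$ one has $q(a)(Y)=\sum_{y\in Y}a(y)$. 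Writing an arbitrary element of $\Gamma\mathcal{X}$ as $\gamma X$ with $\gamma\in\Gamma$, $X\in\mathcal{X}$, and noting $\sum_{y\in\gamma X}a(y)=\sum_{x\in X}a(\gamma x)$, the condition $a\in J_{\Gamma,\mathcal{X}}\cap\mathbb{C}[\Gamma]$ is literally the system of equations in $(2)$, so the existence of a nonzero such $a$ is the same statement in $(1)$ and $(2)$.

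For $(2)\Rightarrow(4)$ I would take a nonzero solution $a\in\mathbb{C}[\Gamma]$ and set $F=\mathrm{supp}(a)$, which is finite and nonempty. Since $a$ is supported on $F$, for every $f\in F$ and $X\in\mathcal{X}$ the truncated sum agrees with the full coset sum: $\sum_{h\in fX\cap F}a(h)=\sum_{h\in fX}a(h)=\sum_{x\in X}a(fx)=0$. Thus the linear functional $\delta_h\mapsto a(h)$ on $\mathbb{C}[F]$ is nonzero and annihilates every vector $\delta_{fX\cap F}$, so those vectors cannot span $\mathbb{C}[F]$, giving $(4)$.

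The key observation for $(4)\Rightarrow(3)$ is that the \emph{local} vanishing condition over $F$ is automatically \emph{global}. Given $F$ as in $(4)$, the vectors $\delta_{fX\cap F}$ are $\{0,1\}$-vectors in the finite-dimensional space $\mathbb{C}[F]$, so there are only finitely many of them and the assertion that they fail to span is equivalent to the integer matrix whose rows are these vectors having rank $<|F|$; this rank is unchanged over $\mathbb{Q}$, so a nonzero rational—hence, after clearing denominators, integer—vector $a=(a_h)_{h\in F}$ annihilates all of them. Extend $a$ by zero to an element of $\mathbb{Z}[\Gamma]$. To verify the equations in $(3)$ for arbitrary $\gamma\in\Gamma$ and $X\in\mathcal{X}$, note $\sum_{x\in X}a(\gamma x)=\sum_{h\in\gamma X\cap F}a(h)$; if $\gamma X\cap F=\emptyset$ this is $0$, and otherwise any $f\in\gamma X\cap F$ satisfies $fX=\gamma X$ (because $f\in\gamma X$ forces $f=\gamma x_0$ with $x_0\in X$), so the sum equals $\sum_{h\in fX\cap F}a(h)=0$ by the choice of $a$ and $f\in F$.

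The genuinely substantive points—and where I would be most careful—are the two passages between ``local'' data on $F$ and ``global'' data on $\Gamma$: the reduction of the full coset sum to its $F$-truncation in $(2)\Rightarrow(4)$ (valid precisely because $\mathrm{supp}(a)\subseteq F$), and the coset-rewriting identity $fX=\gamma X$ that upgrades $(4)$ to a global solution. The integrality asserted in $(3)$ is not a deep obstacle but must be invoked at the right place: since all defining equations have $\{0,1\}$-coefficients, any complex solution space is defined over $\mathbb{Q}$ and therefore contains a nonzero integer point. I expect no further difficulty here, as the representation-theoretic content has already been isolated in Definition \ref{dfn:gpideal} and the surrounding discussion of the coset groupoid $\Gamma\mathcal{X}$.
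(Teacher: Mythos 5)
Your proposal is correct and follows essentially the same route as the paper: the identification of $(1)$ with $(2)$ via the coset sums $\sum_{x\in X}a(\gamma x)$, the truncation to $F=\mathrm{supp}(a)$ with the annihilating functional for $(2)\Rightarrow(4)$, the coset-rewriting identity $fX=\gamma X$ to globalize, and the rationality of the kernel of an integer matrix to get integrality. The only difference is organizational — you close the cycle $(3)\Rightarrow(2)\Rightarrow(4)\Rightarrow(3)$ and fold the integrality step into $(4)\Rightarrow(3)$, whereas the paper proves $(2)\iff(3)$ and $(2)\iff(4)$ separately — which changes nothing of substance.
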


\begin{proof}
The condition in $(2)$ is equivalent (by conjugation invariance) to $\sum_{x\in X}a(\gamma_{2} x \gamma^{-1}_{1}) = 0$ for all $\gamma_{1},\gamma_{2}\in \Gamma$ and $X\in\mathcal{X}$, which is equivalent to $\langle \lambda_{\Gamma/X}(a)\delta_{\gamma_{1}X},\delta_{\gamma_{2} X}\rangle = 0 $ for all $\gamma\in\Gamma$ and $X\in\mathcal{X}$, so the equivalence between $(1)$ and $(2)$ is immediate.

Obviously $(3)$ implies $(2)$, so let's show the converse. Let $a\in\mathbb{C}[\Gamma]$ be as in $(2)$. Set $F = \{\gamma\in \Gamma: a(\gamma)\neq 0\}$, which is finite. Let $K$ be the collection of non-empty sets of the form $\gamma X\cap F$ for some $\gamma\in \Gamma$ and $X\in\mathcal{X}$. Then, each $\gamma X\cap F\in K$ can be written as $\gamma X\cap F = fX\cap F$ for some $f\in F$. Therefore, $K$ is finite. Let $A:\mathbb{C}^{F}\to \mathbb{C}^{K}$ be the matrix specified by $A_{\gamma, fX\cap F} = 1$ if $\gamma\in fX\cap F$, and zero otherwise. Then, $a' = (a(g))_{g\in F}\in\mathbb{C}^{F}$ satisfies $A(a') = 0$. Using Gaussian elimination, it is easy to see an integer valued matrix has a non-zero complex solution to $A(x) = 0$ if and only if it has a non-zero rational solution $x = b'$. By clearing denominators we may assume $b'\in \mathbb{Z}^{F}$. Setting $b = \sum_{g\in F}b'(g)\delta_{g}$, it is easy to see $b$ satisfies $(3)$. 

Now, we show $(2)$ implies $(4)$. Let $a\in\mathbb{C}[\Gamma]$ be as in $(2)$. Set $F = \{\gamma\in \Gamma: a(\gamma)\neq 0\}$ and let $\hat{a}:\mathbb{C}[F]\to\mathbb{C}$ be the linear functional defined, for $\delta_{f}\in \mathbb{C}[F]$ as $\hat{a}(\delta_{f}) = a(f)$ and extend this definition linearly. Then, for $f\in F$ and $X\in\mathcal{X}$, we have $\hat{a}(\delta_{fX\cap F}) = \sum_{h\in fX\cap F}a(h) = \sum_{x\in X}a(\gamma x) = 0$. We have produced a non-zero linearly functional on $\mathbb{C}[F]$ that annihilates the subspace $\text{span}\{\delta_{fX\cap F}: f\in F,X\in\mathcal{X}\}$. Therefore, $\text{span}\{\delta_{fX\cap F}: f\in F,X\in\mathcal{X}\}\neq \mathbb{C}[F]$.

    The proof of $(4)$ implies $(2)$ follows similarly; let $\hat{b}:\mathbb{C}[F]\to \mathbb{C}$ be a non-zero linear functional that annihilates the subspace $\text{span}\{\delta_{fX\cap F}: f\in F,X\in\mathcal{X}\}\neq \mathbb{C}[F]$. Then, define $b\in\mathbb{C}[\Gamma]$ as $b(g) = \hat{b}(\delta_{g})$ for $g\in F$ and zero otherwise. For $\gamma\in \Gamma$ and $X\in\mathcal{X}$, if $\gamma X\cap F = \emptyset$, then $\sum_{x\in X}b(\gamma x) = 0$. If $f\in \gamma X\cap F$, then $\gamma X = f X$ and $\sum_{x\in X}b(\gamma x) = \sum_{h\in f X\cap F}b(h) = \hat{b}(\delta_{fX\cap F}) = 0$.
\end{proof}

 Given an \'etale groupoid $G$ and $x\in G^{0}$, let's identify the groupoid $\tilde{G}|_{\pi^{-1}(x)}$. 

\begin{prp}
\label{prp:rest=cosetgpd}
    Let $G$ be an \'etale groupoid and $x\in G^{0}$. Then, $\tilde{G}|_{\pi^{-1}(x)} = G_{x}^{x}\cdot \pi^{-1}(x)$, with the subspace topology arising from $\tilde{G}$ is equal to the topology of the coset groupoid defined above.
\end{prp}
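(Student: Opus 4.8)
The plan is to prove the identification in three stages: first match the underlying sets, then match the groupoid operations, and finally match the topologies, the last being where the real content lies.

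For the set equality I would use the structural description of $\tilde{G}$ recorded above, namely that every $\underline{g}\in\tilde{G}$ factors as $\underline{g}=g\,s(\underline{g})$ for any $g\in\underline{g}$, with $\pi(s(\underline{g}))=s(g)$ and $\pi(r(\underline{g}))=r(g)$. If $\underline{g}\in\tilde{G}|_{\pi^{-1}(x)}$, then $s(g)=r(g)=x$ for every $g\in\underline{g}$, so $g\in G^{x}_{x}$ and $\underline{g}=g\,s(\underline{g})$ is a coset of the subgroup $s(\underline{g})\in\pi^{-1}(x)$; in particular $\underline{g}\subseteq G^{x}_{x}$. Conversely, given $g\in G^{x}_{x}$ and $X\in\pi^{-1}(x)$, since $\pi(X)=x=s(g)$ the set $gX$ lies in $\tilde{G}$, its source is $X\in\pi^{-1}(x)$, and its range $r(gX)=gXg^{-1}\in\tilde{G}^{0}$ satisfies $\pi(r(gX))=r(g)=x$, so $r(gX)\in\pi^{-1}(x)$ and $gX\in\tilde{G}|_{\pi^{-1}(x)}$. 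This yields $\tilde{G}|_{\pi^{-1}(x)}=\{gX:g\in G^{x}_{x},\ X\in\pi^{-1}(x)\}$. The same computation shows $\pi^{-1}(x)$ is a set of subgroups of $\Gamma:=G^{x}_{x}$ invariant under conjugation, so under the identity identification of subsets of $\Gamma$ this is exactly the underlying set of the coset groupoid $\Gamma\cdot\pi^{-1}(x)$.

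Matching the operations is then routine: I would simply unwind both definitions. In $\tilde{G}$ the source and range of $gX$ are $X$ and $gXg^{-1}$, the inverse is pointwise, and the product of $gX$ and $hZ$ (defined when $X=hZh^{-1}$) is $ghZ$; in the coset groupoid the source of $gX$ is $g^{-1}(gX)=X$, the range is $(gX)g^{-1}=gXg^{-1}$, the inverse is pointwise, and the product of composable $gX,hZ$ is $gh\,Z$. These agree term by term, so the two groupoid structures on the common set coincide.

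The topological claim is the heart of the matter, and the main obstacle. Since both are topologies on the same set of subsets of $\Gamma$, it suffices to show a net $(Y_{\lambda})$ converges to $Y$ in one iff it does in the other, with the same limit. The coset-groupoid topology is the subspace topology from $\{0,1\}^{\Gamma}$, i.e.\ pointwise convergence of characteristic functions, which is exactly the condition $\liminf Y_{\lambda}=\limsup Y_{\lambda}=Y$ (for each $\gamma\in\Gamma$, eventually $\gamma\in Y_{\lambda}$ when $\gamma\in Y$ and eventually $\gamma\notin Y_{\lambda}$ when $\gamma\notin Y$). The subspace topology from $\tilde{G}$ is the Fell topology, and by Proposition \ref{prp:fell} (applicable as $G$ is locally compact and locally Hausdorff) convergence there is equivalent to $\lim(Y_{\lambda})=\mathrm{Acc}(Y_{\lambda})=Y$, computed in $G$. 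The bridge is the discreteness of $G_{x}$ in $G$: since every $Y_{\lambda}\subseteq G^{x}_{x}\subseteq G_{x}$, any net $y_{\lambda}\in Y_{\lambda}$ converging in $G$ to a point $\gamma$ must be eventually equal to $\gamma$, and continuity of $r,s$ forces $\gamma\in G^{x}_{x}$. Hence $\lim(Y_{\lambda})=\{\gamma:\gamma\in Y_{\lambda}\text{ eventually}\}=\liminf Y_{\lambda}$ and $\mathrm{Acc}(Y_{\lambda})=\{\gamma:\gamma\in Y_{\lambda}\text{ frequently}\}=\limsup Y_{\lambda}$, so the Fell condition $\lim(Y_{\lambda})=\mathrm{Acc}(Y_{\lambda})=Y$ is precisely pointwise convergence. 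I expect the only delicate point to be this translation from convergence of nets in the ambient (non-Hausdorff) groupoid $G$ to pointwise convergence of indicators on the discrete group $\Gamma$; the key is that discreteness of the isotropy collapses convergence inside $G^{x}_{x}$ to eventual equality and confines all accumulation points to $G^{x}_{x}$, making the $\lim$ and $\mathrm{Acc}$ sets of Proposition \ref{prp:fell} coincide with $\liminf$ and $\limsup$. The two topologies then have identical convergent nets and therefore agree.
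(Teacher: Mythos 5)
Your proof is correct, and the set-theoretic and algebraic identifications match the paper's. On the topological comparison -- which is indeed where the content lies -- you take a genuinely different (though closely related) route. The paper only proves the easy direction: it shows $\mathrm{id}:\pi^{-1}(x)\subseteq\tilde{G}^{0}\to\pi^{-1}(x)\subseteq\{0,1\}^{G^{x}_{x}}$ is continuous (Fell convergence implies pointwise convergence of indicators, via \cref{prp:fell} and discreteness of $G_{x}$), and then gets the reverse direction for free from the fact that $\pi^{-1}(x)$ is compact in $\tilde{G}^{0}$ (properness of $\pi$, imported from \cite{BGHL25}) and $\{0,1\}^{G^{x}_{x}}$ is Hausdorff; it then transfers the identification from the unit space to the whole groupoid by observing that each $g\cdot\pi^{-1}(x)$ is a clopen bisection. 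You instead work on the whole groupoid at once and prove both implications directly, by identifying $\lim(Y_{\lambda})$ with $\liminf Y_{\lambda}$ and $\mathrm{Acc}(Y_{\lambda})$ with $\limsup Y_{\lambda}$: the key points you need -- that every accumulation point of a net $y_{\lambda}\in Y_{\lambda}\subseteq G^{x}_{x}$ lies in $G^{x}_{x}$ (continuity of $r,s$ plus Hausdorffness of $G^{0}$) and that discreteness of $G_{x}$ forces eventual equality -- are both available, so the equivalence of $\lim=\mathrm{Acc}=Y$ with pointwise convergence is legitimate, and equality of convergent nets does determine the topology. The trade-off is that your argument is self-contained (no appeal to properness of $\pi$ or to the compact-to-Hausdorff trick) at the cost of having to verify the converse implication by hand, whereas the paper's is shorter but leans on an external compactness fact.
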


\begin{proof}
    For $gX\in \tilde{G}|_{\pi^{-1}(x)}$, we have $s(g) = \pi(X) = x$ and $r(g) = \pi(gXg^{-1}) = x$, so as sets we have $\tilde{G}|_{\pi^{-1}(x)} = G^{x}_{x}\cdot \pi^{-1}(x)$. It is easy to see they are the same groupoid, so it remains to show that $\pi^{-1}(x)$ is closed in $\{0,1\}^{G^{x}_{x}}$ and the topologies of the groupoids are equal.
    
    Let's first show the subspace topology induced from $\tilde{G}^{0}$ is the same as the subspace topology induced from $\{0,1\}^{G^{x}_{x}}$. Since $\pi^{-1}(x)\subseteq \tilde{G}^{0}$ is compact ($\pi$ is proper, see \cite[Section~3]{BGHL25}) and $\pi^{-1}(x)\subseteq \{0,1\}^{\Gamma}$ is Hausdorff , it suffices to show $\text{id}:\pi^{-1}(x)\subseteq \tilde{G}^{0}\to \pi^{-1}(x)\subseteq \{0,1\}^{\Gamma}$ is continuous (this will also show $\pi^{-1}(x)$ is closed in $\{0,1\}^{\Gamma}$). We will use the description of the topology of $\tilde{G}$ as in Proposition \ref{prp:fell}. Suppose $(X_{\lambda})\subseteq \pi^{-1}(x)$ is a net converging to $X$ in the Fell topology. It follows that for every $x\in X$, there is a net $x_{\lambda}\in X_{\lambda}$ such that $(x_{\lambda})$ converges to $x$ in $G$. Since $(x_{\lambda})\subseteq G^{x}_{x}$ and $G^{x}_{x}$ is discrete, we have $x_{\lambda} = x$ eventually and therefore $1_{X_{\lambda}}(x) = 1_{X}(x) = 1$ eventually. Now, suppose $x\notin X$. Since $\text{Acc}(X_{\lambda}) = \lim(X_{\lambda}) = X$, it follows that there is no subnet $x_{\lambda_{\mu}}\in X_{\lambda_{\mu}}$ such that $x_{\lambda_{\mu}} = x$ eventually. Therefore, we must have $1_{X_{\lambda}}(x) = 0 = 1_{X}(x)$ eventually (otherwise we can extract such a subnet). Hence, $(1_{X_{\lambda}})$ converges to $1_{X}$.

    Now, In each groupoid $g\cdot \pi^{-1}(x)$ for $g\in G^{x}_{x}$ is a clopen bisection and since we know the topologies are equal on the unit space, it follows that the topologies are equal for the groupoid, proving the proposition.
\end{proof}

A similar description also holds when $x\in G^{0}$ is replaced with a finite set.

\subsection{Isotropy fibres of the Singular ideal}
\label{ss:isofibres}

Our main theorem of this section follows from the general theory of compressible maps we established previously.

\begin{thm}
\label{thm:isofibre}
    Let $G$ be an \'etale groupoid, $x\in G^{0}$, $\mathcal{X} = \pi^{-1}_{ess}(x)$ and $J$ its singular ideal. Let $H$ be a local groupoid about $G^{x}_{x}$. Then, 
    
    $$J_{x} := \eta_{x}(J) = \eta_{x}(J\cap C^{*}_{r}(H)) = \ker(q_{x}),$$ where $q_{x}:C^{*}_{r(x)}(G^{x}_{x})\to C^{*}_{r(\mathcal{X})}(G^{x}_{x}\cdot \mathcal{X})$ is the *-homomorphism defined, for $a\in \mathbb{C}[G_{x}^{x}]$ as $$q_{x}(a)(Y) = \sum_{y\in Y}a(y), \text{ for all }Y\in G^{x}_{x}\cdot\mathcal{X},$$ and extended to the completions. 
    
    Moreover, if $r(\mathcal{X})$ denotes the (semi-C*-norm) on $\mathbb{C}[G^{x}_{x}]$ determined by $q_{x}$ and $\widehat{r(\mathcal{X})}$ denotes the C*-norm $\max\{\|\cdot\|_{r(\mathcal{X})}, \|\cdot\|_{r}\}$, then 

    $$p(J_{x}) = \ker(C^{*}_{\widehat{r(\mathcal{X})}}(G^{x}_{x})\to C^{*}_{r(\mathcal{X})}(G^{x}_{x})),$$ where $p:C^{*}_{r(x)}(G^{x}_{x})\to C^{*}_{\widehat{r(\mathcal{X}})}(G^{x}_{x})$ is the projection.
\end{thm}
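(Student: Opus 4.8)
The plan is to realize $J_{x}=\eta_{x}(J)$ as the kernel of the lower edge of a single commuting square to which Theorem~\ref{thm:kernelsurjects} applies. Consider
$$
\begin{tikzcd}
C^{*}_{r}(G) \arrow[r, "q\circ\iota"] \arrow[d, "\eta_{x}"] & C^{*}_{r}(\tilde{G}_{ess}) \arrow[d, "\eta_{\mathcal{X}}"] \\
C^{*}_{r(x)}(G^{x}_{x}) \arrow[r, "q_{x}"] & C^{*}_{r(\mathcal{X})}(\tilde{G}_{ess}|_{\mathcal{X}})
\end{tikzcd}
$$
where $\iota:C^{*}_{r}(G)\hookrightarrow C^{*}_{r}(\tilde{G})$ is the canonical inclusion, $q:C^{*}_{r}(\tilde{G})\to C^{*}_{r}(\tilde{G}_{ess})$ is restriction to the essential cover (so $\ker q=\tilde{J}$), and $\eta_{\mathcal{X}}$ is restriction to $\mathcal{X}=\pi^{-1}_{ess}(x)$, which is locally invariant in $\tilde{G}_{ess}$ by Proposition~\ref{prp:locinvar&cover}. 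By Proposition~\ref{prp:rest=cosetgpd}, $\tilde{G}_{ess}|_{\mathcal{X}}=G^{x}_{x}\cdot\mathcal{X}$ is the (essential) coset groupoid and the lower map is exactly $q_{x}$. The crucial point is that $q\circ\iota$ is a $*$-homomorphism with $\ker(q\circ\iota)=\iota^{-1}(\ker q)=\iota^{-1}(\tilde{J})=J$, using $\iota^{-1}(\tilde{J})=J$ from Section~\ref{ss:cover}.

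To see the square commutes, I would factor it through the full cover: Proposition~\ref{prp:commdiagcover} with $X=\{x\}$ gives $\iota_{x}\circ\eta_{x}=\eta_{\pi^{-1}(x)}\circ\iota$, and Corollary~\ref{cor:commdiagrest} applied inside $\tilde{G}$ with locally invariant $\pi^{-1}(x)$ and invariant $\tilde{G}^{0}_{ess}$ (whose intersection is $\mathcal{X}$) gives $q_{\mathcal{X}}\circ\eta_{\pi^{-1}(x)}=\eta_{\mathcal{X}}\circ q$; since both $q_{x}$ and $q_{\mathcal{X}}\circ\iota_{x}$ send $a\mapsto\bigl(Y\mapsto\sum_{y\in Y}a(y)\bigr)$ they coincide, so the outer square commutes. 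For the hypotheses of Theorem~\ref{thm:kernelsurjects}: Theorem~\ref{thm:locinvar&compress} (with $X=\{x\}$) makes $\eta_{x}$ compressible to the surjective $*$-homomorphism $\eta_{x}:C^{*}_{r}(H)\to C^{*}_{r(x)}(G^{x}_{x})$, and makes $\eta_{\mathcal{X}}$ compressible to $\eta_{\mathcal{X}}:C^{*}_{r}(\hat{H}_{ess})\to C^{*}_{r(\mathcal{X})}(\tilde{G}_{ess}|_{\mathcal{X}})$ for the local groupoid $\hat{H}_{ess}=H\cdot\tilde{G}^{0}_{ess}$ provided by Proposition~\ref{prp:locinvar&cover}. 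For the approximate-unit condition I would take $(u_{\lambda})\subseteq C_{c}(G^{0}\setminus\{x\})$ an approximate unit, which serves for $\ker(\eta_{x}\!:\!C^{*}_{r}(H)\to\,\cdot\,)$ by Theorem~\ref{thm:locinvar&compress}; then $(q\circ\iota)(u_{\lambda})=u_{\lambda}\circ\pi_{ess}\in C_{c}(\tilde{G}^{0}_{ess}\setminus\mathcal{X})$, and since $\pi_{ess}$ is continuous and proper with $\pi^{-1}_{ess}(x)=\mathcal{X}$, this net is an approximate unit for $C_{c}(\tilde{G}^{0}_{ess}\setminus\mathcal{X})$, hence for $\ker(\eta_{\mathcal{X}}\!:\!C^{*}_{r}(\hat{H}_{ess})\to\,\cdot\,)$.

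Theorem~\ref{thm:kernelsurjects} now gives $\eta_{x}(\ker(q\circ\iota))=\ker q_{x}$, i.e.\ $\eta_{x}(J)=\ker q_{x}$; and since $(q\circ\iota)(C^{*}_{r}(H))\subseteq C^{*}_{r}(\hat{H}_{ess})$, its second clause also gives $\eta_{x}(J\cap C^{*}_{r}(H))=\ker q_{x}$, establishing the first assertion. For the ``Moreover'', the quotient $p$ exists once $\widehat{r(\mathcal{X})}=\max\{r(\mathcal{X}),r\}\le r(x)$ on $\mathbb{C}[G^{x}_{x}]$: indeed $r(\mathcal{X})\le r(x)$ because $q_{x}$ is a contractive $*$-homomorphism, and $r\le r(x)$ because the trivial subgroup $\{x\}$ lies in $\pi^{-1}(x)$, so through the isometric embedding $\iota_{x}$ of Proposition~\ref{prp:commdiagcover} the reduced representation $\lambda_{G^{x}_{x}}=\lambda_{G^{x}_{x}/\{x\}}$ is bounded by $r(x)$. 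Since $r(\mathcal{X})\le\widehat{r(\mathcal{X})}\le r(x)$, the map $q_{x}$ factors as $q_{x}=j\circ\bar{q}_{x}\circ p$, where $p:C^{*}_{r(x)}(G^{x}_{x})\to C^{*}_{\widehat{r(\mathcal{X})}}(G^{x}_{x})$ and $\bar{q}_{x}:C^{*}_{\widehat{r(\mathcal{X})}}(G^{x}_{x})\to C^{*}_{r(\mathcal{X})}(G^{x}_{x})$ are the canonical quotients and $j$ is the isometric inclusion $C^{*}_{r(\mathcal{X})}(G^{x}_{x})\hookrightarrow C^{*}_{r(\mathcal{X})}(G^{x}_{x}\cdot\mathcal{X})$. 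As $j$ is injective, $J_{x}=\ker q_{x}=p^{-1}(\ker\bar{q}_{x})$, and as $p$ is surjective, $p(J_{x})=\ker\bar{q}_{x}=\ker\bigl(C^{*}_{\widehat{r(\mathcal{X})}}(G^{x}_{x})\to C^{*}_{r(\mathcal{X})}(G^{x}_{x})\bigr)$, as required.

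The step I expect to be the main obstacle is the verification of the approximate-unit hypothesis of Theorem~\ref{thm:kernelsurjects} across the top edge --- precisely, confirming that $(q\circ\iota)(C^{*}_{r}(H))\subseteq C^{*}_{r}(\hat{H}_{ess})$ so that the sharper second clause applies, and that $u_{\lambda}\circ\pi_{ess}$ really does exhaust $\tilde{G}^{0}_{ess}\setminus\mathcal{X}$, which hinges on the properness of $\pi_{ess}$ guaranteeing that $\pi^{-1}_{ess}(K)$ is compact for compact $K\subseteq G^{0}\setminus\{x\}$. Everything else reduces to a diagram chase assembling the previously established squares of Proposition~\ref{prp:commdiagcover} and Corollary~\ref{cor:commdiagrest}.
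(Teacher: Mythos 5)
Your proof of the main identity $J_{x}=\eta_{x}(J)=\eta_{x}(J\cap C^{*}_{r}(H))=\ker(q_{x})$ is essentially the paper's: the same composite square through the essential Hausdorff cover obtained by pasting Proposition \ref{prp:commdiagcover} and Corollary \ref{cor:commdiagrest}, the same compressible domains $C^{*}_{r}(H)$ and $C^{*}_{r}(\hat{H}_{ess})$ supplied by Theorem \ref{thm:locinvar&compress} and Proposition \ref{prp:locinvar&cover}, the same approximate unit $u_{\lambda}\circ\pi_{ess}$, and the same appeal to Theorem \ref{thm:kernelsurjects} together with $\ker(q\circ\iota)=\iota^{-1}(\tilde{J})=J$. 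Where you genuinely differ is the ``moreover'' clause. The paper introduces the auxiliary coset groupoid $G^{x}_{x}\cdot\hat{\mathcal{X}}$, splits $C^{*}_{r(\hat{\mathcal{X}})}(G^{x}_{x}\cdot\hat{\mathcal{X}})$ as a direct sum according to whether $\{x\}\in\tilde{G}^{0}_{ess}$, and extracts $p(J_{x})$ by a diagram chase through that algebra; you instead verify directly that $\widehat{r(\mathcal{X})}\leq r(x)$ (contractivity of $q_{x}$ for the $r(\mathcal{X})$ part; the unit $\{x\}\in\pi^{-1}(x)$ of the full-cover coset groupoid plus isometry of the injective $\iota_{x}$ for the $r$ part) and then read off $p(J_{x})=\ker(\bar{q}_{x})$ from the factorization $q_{x}=j\circ\bar{q}_{x}\circ p$ with $j$ injective and $p$ surjective. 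This is shorter and sound; the one detail you leave implicit is that the quotient norm $r(\mathcal{Y})$ dominates the reduced norm of $G^{x}_{x}\cdot\pi^{-1}(x)$ (so that evaluation in the regular representation at the unit $\{x\}$ is $r(\mathcal{Y})$-contractive), which holds because $\pi^{-1}(x)$ is invariant in $\hat{H}$. What the paper's longer route buys is the explicit identification of $\widehat{r(\mathcal{X})}$ as the norm induced by $\iota'_{x}$ into $C^{*}_{r(\hat{\mathcal{X}})}(G^{x}_{x}\cdot\hat{\mathcal{X}})$, which is reused later (e.g.\ in Proposition \ref{prp:dependsongerm} and Lemma \ref{lem:groupidealcharlocwkcont}); for the statement at hand your argument suffices. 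A small loose end: the theorem permits any local groupoid $H$ about $G^{x}_{x}$, while your compressibility input requires $H^{0}=G^{0}$; as in the paper, one passes to $H\cup G^{0}$ and notes that $J\cap C^{*}_{r}(H)$ and $J\cap C^{*}_{r}(H\cup G^{0})$ have the same image under $\eta_{x}$.
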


\begin{proof}

Let $\mathcal{Y} = \pi^{-1}(x)$ and $\mathcal{X} = \pi^{-1}_{ess}(x)$. By Proposition \ref{prp:germexists},  we can choose a local groupoid $H$ about $G^{x}_{x}$ with $H^{0} = G^{0}$, and $\hat{H} = H\cdot \tilde{G}^{0}$, $\hat{H}_{ess} = H\cdot\tilde{G}_{ess}^{0}$ are local groupoids about $\pi^{-1}(x)$ and $\pi^{-1}_{ess}(x)$ by Proposition \ref{prp:locinvar&cover}. From Proposition \ref{prp:commdiagcover} and Corollary \ref{cor:commdiagrest}, the diagrams

$$
\begin{tikzcd}
C^{*}_{r}(G) \arrow[r, "\iota"] \arrow[d, "\eta_{x}"] & C^{*}_{r}(\tilde{G}) \arrow[d, "\eta_{\pi^{-1}(x)}"] & C^{*}_{r}(\tilde{G}) \arrow[d, "\eta_{\pi^{-1}(x)}"] \arrow[r, "q_{G^{0}_{ess}}"]    & C^{*}_{r}(\tilde{G}_{ess}) \arrow[d, "\eta_{\pi^{-1}_{ess}(x)}"] \\
C^{*}_{r(x)}(G^{x}_{x}) \arrow[r, "\iota_{x}"]        & C^{*}_{r(\mathcal{Y})}(G^{x}_{x}\cdot\pi^{-1}(x))    & C^{*}_{r(\mathcal{Y})}(G^{x}_{x}\cdot\pi^{-1}(x)) \arrow[r, "q_{\pi^{-1}_{ess}(x)}"] & C^{*}_{r(\mathcal{X})}(G^{x}_{x}\cdot\pi_{ess}^{-1}(x))         
\end{tikzcd}
$$ commute. Moreover, $\iota(C^{*}_{r}(H))\subseteq C^{*}_{r}(\hat{H})$ and $q_{G^{0}_{ess}}(C^{*}_{r}(\hat{H})) = C^{*}_{r}(\hat{H}_{ess})$.

Note that $q_{x} = q_{\pi^{-1}_{ess}(x)}\circ \iota_{x}$, so letting $\iota_{ess} = q_{G^{0}_{ess}}\circ \iota$, we see from the above diagrams that the diagram

    $$
\begin{tikzcd}
C^{*}_{r}(G) \arrow[r, "\iota_{ess}"] \arrow[d, "\eta_{x}"] & C^{*}_{r}(\tilde{G}_{ess}) \arrow[d, "\eta_{\pi^{-1}_{ess}(x)}"] \\
C^{*}_{r(x)}(G^{x}_{x}) \arrow[r, "q_{x}"]                        & C^{*}_{r(\mathcal{X})}(G^{x}_{x}\cdot\pi^{-1}_{ess}(x))         
\end{tikzcd}$$ commutes. By Theorem \ref{thm:locinvar&compress}, if $(u_{\lambda})\subseteq C_{c}(G^{0}\setminus \{x\})$ is an approximate unit for $\ker(\eta_{x}|_{C^{*}_{r}(H)})$, then $(\iota_{ess}(u_{\lambda}) = u_{\lambda}\circ \pi_{ess})\subseteq C_{c}(\tilde{G}^{0}_{ess}\setminus \pi^{-1}_{ess}(x))$ is an approximate unit for $\ker(\eta_{\pi^{-1}_{ess}(x)}|_{C^{*}_{r}(\hat{H}_{ess})})$. Therefore, by Theorem \ref{thm:kernelsurjects}, we have $\eta_{x}(\ker(\iota_{ess})) = \ker(q_{x})$. By \cite[Proposition~4.15]{BGHL25}, we know $\ker(\iota_{ess}) = J$, so we have proven $J_{x} = \ker(q_{x})$. Moreover, since $\iota_{ess}(C^{*}_{r}(H))\subseteq C^{*}_{r}(\hat{H}_{ess})$, we have $\eta_{x}(J\cap C^{*}_{r}(H)) = \ker(q_{x})$ by Theorem \ref{thm:kernelsurjects}. Note that if $K$ is a local groupoid about $G^{x}_{x}$, then so is $H = K\cup G^{0}$, and $\eta_{x}(J\cap C^{*}_{r}(K)) = \eta_{x}(J\cap C^{*}_{r}(H)) = \ker(q_{x})$. This finishes the proof of first part of the Theorem.

Now, we prove the ``moreover'' clause. Set $\hat{H}'_{ess} = \hat{H}_{ess}\cup G^{x}_{x} = H\cdot (\tilde{G}_{ess}^{0}\cup \{x\})$ and let $\iota_{ess}':C^{*}_{r}(H)\to C^{*}_{r}(\hat{H}'_{ess})$ be the *-homomorphism defined for $a\in \mathscr{C}_{c}(H)$ as $\iota_{ess}'(a)(\underline{h}) = \sum_{h\in \underline{h}}a(g)$, for all $\underline{h}\in \hat{H}'_{ess}$. Similarily, define $q'_{ess}:C^{*}_{r}(\hat{H}'_{ess})\to C^{*}(\hat{H}_{ess})$ as $q'_{ess}(b) = b|_{\hat{H}_{ess}}$ for $b\in C^{*}_{r}(\hat{H}'_{ess})$. Either $\{x\}\in \tilde{G}_{ess}^{0}$, in which case $\hat{H}'_{ess} = \hat{H}_{ess}$, or $\{x\}\notin \tilde{G}_{ess}^{0}$. In the latter case, we have $C^{*}_{r}(\hat{H}'_{ess}) = C^{*}_{r}(\hat{H}_{ess})\oplus C^{*}_{r}(G^{x}_{x})$, with $q'_{ess}$ the projection onto the first direct summand. In either case, we have $\iota_{ess} = q'_{ess}\circ \iota'_{ess}$ and the diagram

$$
\begin{tikzcd}
C^{*}_{r}(H) \arrow[d, "\eta_{x}"] \arrow[r, "\iota'_{ess}"] & C^{*}_{r}(\hat{H}'_{ess}) \arrow[d, "\eta_{\hat{\mathcal{X}}}"] \arrow[r, "q'_{ess}"]      & C^{*}_{r}(\hat{H}_{ess}) \arrow[d, "\eta_{\mathcal{X}}"] \\
C^{*}_{r(x)}(G^{x}_{x}) \arrow[r, "\iota'_{x}"]               & C_{r(\hat{\mathcal{X}})}^{*}(G^{x}_{x}\cdot\hat{\mathcal{X}}) \arrow[r, "q_{\mathcal{X}}"] & C^{*}_{r(\mathcal{X})}(G^{x}_{x}\cdot\mathcal{X})       
\end{tikzcd}$$ commutes, where the *-homomorphism $\iota'_{x}$ is defined for $a\in\mathbb{C}[G^{x}_{x}]$ and $Y\in G^{x}_{x}\cdot\hat{\mathcal{X}}$ as $\iota'_{x}(a)(Y) = \sum_{y\in Y}a(y)$. The map $\iota'_{x}$ extends to the C*-completion by a similar argument to that of Corollary \ref{cor:commdiagrest}. The *-homomorphism $q_{\mathcal{X}}$ is the reduction map and it is well defined by Corollary \ref{cor:commdiagrest}. Since $q_{\mathcal{X}}\circ \iota'_{x} = q_{x}$, we have $\iota'_{x}(\ker(q_{x})) = \iota'_{x}(\iota'^{-1}_{x}(\ker(q_{\mathcal{X}}))) = \ker(q_{\mathcal{X}})\cap \text{im}(\iota'_{x})$.

 In the case that $\{x\}\in \pi^{-1}_{ess}(x)$, since $\hat{H}'_{ess} = \hat{H}$, we have  $C^{*}_{r(\hat{\mathcal{X}})}(G^{x}_{x}\cdot\mathcal{X}) = C^{*}_{r(\mathcal{X})}(G^{x}_{x}\cdot\mathcal{X})$. Now, assume $\{x\}\notin \pi^{-1}_{ess}(x)$. We have $C^{*}_{r}(\hat{H}'_{ess}) = C^{*}_{r}(\hat{H}_{ess})\oplus C^{*}_{r}(G^{x}_{x})$ and $q'_{ess}$ is the projection on the first summand. By the same argument as in the proof of the first part of the theorem, we have $\eta_{\hat{\mathcal{X}}}(\ker(q'_{ess})) = \ker(q_{\mathcal{X}})$. Combining this with the description of $q'_{ess}$, we have that $C^{*}_{r(\hat{\mathcal{X}})}(G^{x}_{x}\cdot\hat{\mathcal{X}}) = C^{*}_{r(\mathcal{X})}(G^{x}_{x}\cdot\mathcal{X})\oplus C^{*}_{r}(G^{x}_{x})$, with $q_{\mathcal{X}}$ the projection onto the first summand. In either case, the diagram.

$$
\begin{tikzcd}
C^{*}_{r(x)}(G^{x}_{x}) \arrow[r, "\iota'_{x}"] \arrow[d, "p"]                        & C_{r(\hat{\mathcal{X}})}^{*}(G^{x}_{x}\cdot\hat{\mathcal{X}}) \arrow[r, "q_{\mathcal{X}}"] & C^{*}_{r(\mathcal{X})}(G^{x}_{x}\cdot\mathcal{X}) \\
C^{*}_{\widehat{r(\mathcal{X})}}(G^{x}_{x}) \arrow[ru, "i", hook] \arrow[r] & C^{*}_{r(\mathcal{X})}(G^{x}_{x}) \arrow[ru, "j", hook]                               &                                                  
\end{tikzcd}$$ commutes, where $i$ and $j$ are the natural embeddings. Therefore, $$i(p(J_{x})) = \iota'_{x}(\ker(q_{x})) = \ker(q_{\mathcal{X}})\cap \text{im}(\iota'_{x}).$$  A simple diagram chase shows

$$\ker(q_{\mathcal{X}})\cap \text{im}(\iota'_{x})  = i(\ker(C^{*}_{\widehat{r(\mathcal{X})}}(G^{x}_{x})\to C^{*}_{r(\mathcal{X})}(G^{x}_{x}))),$$ and therefore $i(p(J_{x})) = i(\ker(C^{*}_{\widehat{r(\mathcal{X})}}(G^{x}_{x})\to C^{*}_{r(\mathcal{X})}(G^{x}_{x})))$, proving the theorem.
\end{proof}

A similar result (and proof) holds for any closed locally invariant set $X$.

Now, we will begin to prove our characterization of the singular ideal vanishing in terms of a weak containment property. First, we identify when the isotropy fibre vanishes for trivial reasons.

Neshveyev and Schwartz in \cite[Proposition~1.12]{NS23} say $x\in G^{0}$ is \emph{extremely dangerous} if there exists $\{g_{i}\}^{n}_{i=1}\subseteq G^{x}_{x}\setminus \{x\}$ and bisections $U_{i}$ containing $g_{i}$, for $i\leq n$, such that $U\setminus \bigcup^{n}_{i=1}U_{i}$ has empty interior for some open set $U\subseteq G^{0}$ containing $x$. Denote the collection of extremely dangerous points as $D_{0}$. It turns out the isotropy fibres of the singular ideal vanish off the extremely dangerous points.

\begin{cor}
\label{cor:edangptchar}
    Let $G$ be an \'etale groupoid. Then, $x\in D_{0}$ if and only if $\{x\}\notin \pi^{-1}_{ess}(x)$. Consequently, $p(J_{x}) = 0$ for $x\notin D_{0}$. 
\end{cor}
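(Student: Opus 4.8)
The plan is to translate both conditions into statements about nets of Hausdorff units approaching $x$, using the net description of the Fell topology from Proposition \ref{prp:fell}. Since $\tilde{G}^{0}_{ess}$ is the Fell-closure of $\iota(C)$, we have $\{x\}\in\pi^{-1}_{ess}(x)$ exactly when the singleton $\{x\}$ is a Fell-limit of singletons $\{c_{\lambda}\}$ with $c_{\lambda}\in C$, which by Proposition \ref{prp:fell} means there is a net $(c_{\lambda})\subseteq C$ with $c_{\lambda}\to x$ in $G^{0}$ whose only accumulation point in $G$ is $x$ (a \emph{clean approach}). Equivalently, unwinding the basic Fell-neighbourhoods of $\{x\}$, the condition $\{x\}\notin\tilde{G}^{0}_{ess}$ holds precisely when there are a compact $K\subseteq G$ with $x\notin K$ and an open $W\ni x$ with $C\cap W\subseteq K$. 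The fact linking this to bisections is that if $U_{g}$ is a bisection containing $g\in G^{x}_{x}$ and $c\in U_{g}\cap G^{0}$ is a unit on the sheet $U_{g}$ with $c\to x$, then $c=(s|_{U_{g}})^{-1}(c)\to(s|_{U_{g}})^{-1}(x)=g$ in $G$; so units lying on the sheet through $g$ and converging to $x$ automatically converge to $g$, while a clean approach can follow no such sheet. Note also that $x\notin U_{i}$ whenever $g_{i}\in U_{i}$ and $g_{i}\neq x$, since $s(g_{i})=x=s(x)$ and $s$ is injective on the bisection $U_{i}$.

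First I would prove $\{x\}\notin\pi^{-1}_{ess}(x)\Rightarrow x\in D_{0}$. The space $\pi^{-1}_{ess}(x)$ is compact and, by hypothesis, does not contain the trivial subgroup $\{x\}$, so every $X\in\pi^{-1}_{ess}(x)$ contains some $g\neq x$; choosing such a $g_{X}$ and using that $\{Y:g_{X}\in Y\}$ is clopen in $\{0,1\}^{G^{x}_{x}}$, compactness yields finitely many $g_{1},\dots,g_{n}\in G^{x}_{x}\setminus\{x\}$ such that every $X\in\pi^{-1}_{ess}(x)$ contains some $g_{i}$. Fix bisections $U_{i}\ni g_{i}$. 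If no neighbourhood $W$ of $x$ had $W\setminus\bigcup_{i}U_{i}$ of empty interior, I could pick, for each $W$, a Hausdorff unit $c_{W}$ in the nonempty open set $\mathrm{int}(W\setminus\bigcup_{i}U_{i})$; then $c_{W}\to x$ and $c_{W}\notin\bigcup_{i}U_{i}$, and by Corollary \ref{cor:compactness} a subnet of $(\{c_{W}\})$ Fell-converges to some $X\in\tilde{G}^{0}_{ess}$ with $x\in X$, so $X\in\pi^{-1}_{ess}(x)$. Since $X$ contains some $g_{i}$, a further subnet converges to $g_{i}\in U_{i}$, forcing $c_{W}\in U_{i}$ eventually and contradicting $c_{W}\notin\bigcup_{i}U_{i}$. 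Hence some $W$ works, and $x\in D_{0}$.

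For the converse $x\in D_{0}\Rightarrow\{x\}\notin\pi^{-1}_{ess}(x)$ I would produce a compact set witnessing the failure of clean approach. Given $U\ni x$, elements $g_{i}\in G^{x}_{x}\setminus\{x\}$ and bisections $U_{i}\ni g_{i}$ with $\bigcup_{i}(U_{i}\cap G^{0})$ dense in $U$, first shrink $U$ so that $U\subseteq\bigcap_{i}s(U_{i})$ (density passes to open subsets, and each $s(U_{i})$ is an open neighbourhood of $x$). The technical heart is then a boundary-absorption step: every Hausdorff unit $c\in U$ lies on some sheet, i.e. $c\in U_{i}\cap G^{0}$ for some $i$. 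Indeed, density provides units $u_{m}\in U_{i}\cap G^{0}$ with $u_{m}\to c$ for a fixed $i$; since $c\in s(U_{i})$ the lift is defined, and $u_{m}=(s|_{U_{i}})^{-1}(u_{m})\to(s|_{U_{i}})^{-1}(c)$ while simultaneously $u_{m}\to c$, so uniqueness of limits at the Hausdorff point $c$ forces $(s|_{U_{i}})^{-1}(c)=c$, that is $c\in U_{i}\cap G^{0}$. Hence $C\cap U\subseteq\bigcup_{i}(U_{i}\cap G^{0})$. Now sheet-lift continuity $(s|_{U_{i}})^{-1}(x)=g_{i}$ lets me choose compact neighbourhoods $K_{i}\subseteq U_{i}$ of $g_{i}$ and an open $W\subseteq U$ containing $x$ so that every unit of $U_{i}$ lying in $W$ belongs to $K_{i}$; then $K:=\bigcup_{i}K_{i}$ is compact, $x\notin K$, and $C\cap W\subseteq K$, which by the Fell-neighbourhood reformulation gives $\{x\}\notin\tilde{G}^{0}_{ess}$, i.e. $\{x\}\notin\pi^{-1}_{ess}(x)$. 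I expect this boundary-absorption step, which is exactly where the Hausdorffness of the points of $C$ is essential, to be the main obstacle, since it is what reconciles the density (topological) condition defining $D_{0}$ with the limit-set condition defining $\pi^{-1}_{ess}(x)$.

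Finally, for the consequence, if $x\notin D_{0}$ then $\{x\}\in\pi^{-1}_{ess}(x)=\mathcal{X}$, so the trivial subgroup lies in $\mathcal{X}$ and its associated quasi-regular representation is the left regular representation of $G^{x}_{x}$. Therefore $r(\mathcal{X})\geq\|\cdot\|_{r}$, whence $\widehat{r(\mathcal{X})}=\max\{r(\mathcal{X}),r\}=r(\mathcal{X})$; the map $C^{*}_{\widehat{r(\mathcal{X})}}(G^{x}_{x})\to C^{*}_{r(\mathcal{X})}(G^{x}_{x})$ of Theorem \ref{thm:isofibre} is then the identity, its kernel is $\{0\}$, and so $p(J_{x})=0$.
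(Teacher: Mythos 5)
Your proof is correct, but it takes a genuinely different and more self-contained route than the paper. The paper settles the equivalence in two lines by citing the characterization from \cite{BGHL25} that $x\in D_{0}$ if and only if $\iota(x)\in\operatorname{int}(\pi^{-1}(D))$ (where $D=G^{0}\setminus C$ is the set of non-Hausdorff units), and then noting that $\operatorname{int}(\pi^{-1}(D))=\tilde{G}^{0}\setminus\tilde{G}^{0}_{ess}$ because $\tilde{G}^{0}_{ess}=\overline{\iota(C)}$. You instead argue directly from the Neshveyev--Schwartz definition of an extremely dangerous point and the net description of the Fell topology (Proposition \ref{prp:fell}, Corollary \ref{cor:compactness}): in one direction you use compactness of $\pi^{-1}_{ess}(x)$ in $\{0,1\}^{G^{x}_{x}}$ to extract finitely many $g_{i}$ meeting every $X\in\pi^{-1}_{ess}(x)$, and derive a contradiction from a net of Hausdorff units chosen in the (assumed nonempty) interiors of $W\setminus\bigcup_{i}U_{i}$; in the other you prove that every Hausdorff unit in $U$ is absorbed into one of the sheets $U_{i}\cap G^{0}$ (this is where Hausdorffness of points of $C$ is genuinely used, exactly as you flag), and then package the conclusion into an explicit Fell-basic neighbourhood $\mathcal{U}(K,\{W\})$ separating $\{x\}$ from $\iota(C)$. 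Your unwinding of that basic neighbourhood condition ($\{x\}\notin\tilde{G}^{0}_{ess}$ iff there exist compact $K\not\ni x$ and open $W\ni x$ with $C\cap W\subseteq K$; one uses that $G^{0}$ is open in $G$ to see the two formulations match) is accurate. What your argument buys is independence from the external citation and an explicit dictionary between the topological definition of $D_{0}$ and the limit-set definition of $\pi^{-1}_{ess}(x)$; what the paper's buys is brevity.

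For the ``consequently'' clause you argue exactly as the paper does. The one step both treatments leave implicit is why $r(\mathcal{X})\geq\|\cdot\|_{r}$ on $\mathbb{C}[G^{x}_{x}]$ when $\{x\}\in\mathcal{X}$: the quotient norm $r(\mathcal{X})$ on $C_{c}(G^{x}_{x}\cdot\mathcal{X})$ dominates each regular representation $\lambda_{Y}$, $Y\in\mathcal{X}$, since $\mathcal{X}$ is invariant in the local groupoid $\hat{H}_{ess}$ and so $\lambda_{Y}$ factors through the restriction map, and $\lambda_{\{x\}}\circ q_{x}$ is unitarily equivalent to $\lambda_{G^{x}_{x}}$. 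This is a remark, not a gap, as the paper asserts the same equality with no more justification.
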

\begin{proof}
    By the paragraph below \cite[Definition~4.12]{BGHL25}, $x\in D_{0}$ if and only if $\iota(x)\in \text{int}(\pi^{-1}(D))$. Since $\tilde{G}^{0}_{ess} = \overline{\iota(C)}$ it is easy to see that $\text{int}(\pi^{-1}(D)) = \tilde{G}^{0}\setminus \tilde{G}^{0}_{ess}$. Therefore, $x\in D_{0}$ if and only if $\iota(x) = \{x\}\notin \tilde{G}^{0}_{ess}$, if and only if $\{x\}\notin \pi^{-1}(x)\cap \tilde{G}^{0}_{ess} = \pi^{-1}_{ess}(x)$.

    So, if $x\notin D_{0}$, then $\{x\}\in \pi^{-1}_{ess}(x)$ and hence the norm $r(\mathcal{X}) = \widehat{r(\mathcal{X})}$ and $p(\eta_{x}(J)) = \ker(C^{*}_{\widehat{r(\mathcal{X})}}(G^{x}_{x})\to C^{*}_{r(\mathcal{X})}(G^{x}_{x})) = 0$.
\end{proof}

\begin{cor}
\label{cor:isofibrewhenregular}
Let $G$ be an \'etale groupoid, $x\in G^{0}$, $\mathcal{X} = \pi^{-1}_{ess}(x)$ and $J$ the singular ideal. If $r(\mathcal{X}) = r$ on $C_{c}(G^{x}_{x}\cdot \pi^{-1}_{ess}(x))$ (for instance if $G^{x}_{x}\cdot\pi^{-1}_{ess}(x)$ is amenable), then $\widehat{r(\mathcal{X})} = r\hat{\mathcal{X}}$ on $\mathbb{C}[G^{x}_{x}]$ and $p(J_{x}) = J_{G^{x}_{x}, \pi^{-1}_{ess}(x)}$.
\end{cor}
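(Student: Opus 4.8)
The plan is to read both assertions off the computation of $p(J_x)$ in Theorem~\ref{thm:isofibre}, after matching the two (semi-)norms $r(\mathcal{X})$ and $\widehat{r(\mathcal{X})}$ on $\mathbb{C}[G^x_x]$ appearing there with the group norms $r\mathcal{X}$ and $r\hat{\mathcal{X}}$ of Definition~\ref{dfn:gpideal}. Since every object in sight is a completion of $\mathbb{C}[G^x_x]$ or a canonical map between such completions, the whole argument collapses to a comparison of C*-seminorms on the group ring.

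The key step is to identify the seminorm $r(\mathcal{X})$ determined by $q_x$ with the quasi-regular seminorm $r\mathcal{X}$, and this is the one place the hypothesis enters. First I would note that $q_x$ restricted to $\mathbb{C}[G^x_x]$ is precisely the coset map $q$ of Section~\ref{ss:cosetgpds}, namely $q_x(a)(Y)=\sum_{y\in Y}a(y)$ for $a\in\mathbb{C}[G^x_x]$ and $Y\in K:=G^x_x\cdot\pi^{-1}_{ess}(x)$. By definition the seminorm $r(\mathcal{X})$ on $\mathbb{C}[G^x_x]$ is $\|q_x(\cdot)\|$ measured in the restriction norm $r(\mathcal{X})$ of the coset groupoid $K$. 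The hypothesis $r(\mathcal{X})=r$ on $C_c(K)$ lets me replace this restriction norm by the intrinsic reduced norm of $K$; and by the computation in Section~\ref{ss:cosetgpds}, the reduced representation $\lambda_X$ of $C^*_r(K)$ at a unit $X\in\pi^{-1}_{ess}(x)$ precomposed with $q$ is unitarily equivalent to $\lambda_{G^x_x/X}$. Taking the supremum over units then gives $\|q(a)\|_{r,K}=\sup_{X}\|\lambda_{G^x_x/X}(a)\|=\|a\|_{r\mathcal{X}}$, so that $r(\mathcal{X})=r\mathcal{X}$ as seminorms on $\mathbb{C}[G^x_x]$.

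From here both conclusions are formal. Since $\lambda_{G^x_x/\{e\}}$ is the left regular representation, whose norm is $r$, and $\hat{\mathcal{X}}=\mathcal{X}\cup\{e\}$, we have $r\hat{\mathcal{X}}=\max\{r\mathcal{X},r\}$; together with the definition $\widehat{r(\mathcal{X})}=\max\{r(\mathcal{X}),r\}$ and the identification above this yields $\widehat{r(\mathcal{X})}=r\hat{\mathcal{X}}$, the first claim. For the second, the equalities of seminorms force the completions $C^*_{r(\mathcal{X})}(G^x_x)$ and $C^*_{\widehat{r(\mathcal{X})}}(G^x_x)$ to coincide with $C^*_{r\mathcal{X}}(G^x_x)$ and $C^*_{r\hat{\mathcal{X}}}(G^x_x)$, and the canonical quotient maps to agree. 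Theorem~\ref{thm:isofibre} gives $p(J_x)=\ker(C^*_{\widehat{r(\mathcal{X})}}(G^x_x)\to C^*_{r(\mathcal{X})}(G^x_x))$, while Definition~\ref{dfn:gpideal} gives $J_{G^x_x,\pi^{-1}_{ess}(x)}=\ker(C^*_{r\hat{\mathcal{X}}}(G^x_x)\to C^*_{r\mathcal{X}}(G^x_x))$; these kernels therefore coincide, giving $p(J_x)=J_{G^x_x,\pi^{-1}_{ess}(x)}$.

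Finally I would justify the parenthetical sufficient condition. If $K$ is amenable, its full and reduced norms agree, so it suffices to see that the restriction norm $r(\mathcal{X})$ on $C_c(K)$ is a C*-norm lying between them: it dominates $r$ because each $\lambda_X$, $X\in\pi^{-1}_{ess}(x)$, factors through the restriction (by $\hat{H}_{ess}$-invariance of $\pi^{-1}_{ess}(x)$ in a local groupoid, exactly as in the proof of Theorem~\ref{thm:locinvar&compress}), and it is dominated by the full norm since the full norm dominates every C*-seminorm; amenability then squeezes $r(\mathcal{X})=r$. I expect the only real, and quite mild, obstacle to be bookkeeping between the three incarnations of the relevant norm — the restriction/quotient norm coming from $\tilde{G}_{ess}$, the intrinsic reduced norm of the coset groupoid $K$, and the supremum of quasi-regular norms — and verifying that $q_x$ intertwines them; once the hypothesis collapses the first two, the coset-groupoid computation of Section~\ref{ss:cosetgpds} supplies the third.
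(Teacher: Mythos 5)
Your argument is correct and is exactly the route the paper intends: the corollary is stated without proof as an immediate consequence of Theorem~\ref{thm:isofibre}, and your identification of the seminorm determined by $q_{x}$ with $r\mathcal{X}$ (via the unitary equivalence $\lambda_{X}\circ q\simeq\lambda_{G^{x}_{x}/X}$ from Section~\ref{ss:cosetgpds}, using the hypothesis to replace the restriction norm on the coset groupoid by its reduced norm) is the only nontrivial step. The deduction of $\widehat{r(\mathcal{X})}=r\hat{\mathcal{X}}$ and of $p(J_{x})=J_{G^{x}_{x},\pi^{-1}_{ess}(x)}$ from Theorem~\ref{thm:isofibre} and Definition~\ref{dfn:gpideal}, as well as the justification of the amenability parenthetical, are all as the paper intends.
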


We recall a simple, well-known fact we will use a few times in this section.

\begin{lem}
\label{lem:simplelem}
Let $G$ be an \'etale groupoid and $x\in G^{0}$. A function $f\in C^{*}_{r}(G)$ satisfies $f|_{G_{x}}\neq 0$ if and only if $\lambda_{G^{x}_{x}}(\eta_{x}(f^{*}f))\neq 0$.
\end{lem}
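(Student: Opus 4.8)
The plan is to realize $\lambda_{G^{x}_{x}}\circ\eta_{x}$ as a compression of the groupoid regular representation $\lambda_{x}$ to a corner, and then to read the statement off from matrix coefficients. Since $G^{x}_{x}\subseteq G_{x}$, the Hilbert space $\ell^{2}(G^{x}_{x})$ sits inside $\ell^{2}(G_{x})$ as the closed span of $\{\delta_{g}:g\in G^{x}_{x}\}$; let $P$ be the orthogonal projection of $\ell^{2}(G_{x})$ onto this subspace. First I would establish the identity
\[
\lambda_{G^{x}_{x}}(\eta_{x}(a)) = P\,\lambda_{x}(a)\,P\big|_{\ell^{2}(G^{x}_{x})}\qquad(a\in C^{*}_{r}(G)).
\]
For $a\in\mathscr{C}_{c}(G)$ this is a direct computation of matrix coefficients: for $g,h\in G^{x}_{x}$ one has $\langle\lambda_{x}(a)\delta_{h},\delta_{g}\rangle = (a*\delta_{h})(g) = a(gh^{-1})$ (using $s(h)=s(g)=x$), while $\eta_{x}(a)=a|_{G^{x}_{x}}$ and $gh^{-1}\in G^{x}_{x}$ give $\langle\lambda_{G^{x}_{x}}(\eta_{x}(a))\delta_{h},\delta_{g}\rangle = a(gh^{-1})$ as well. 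To pass to all $a\in C^{*}_{r}(G)$ I would invoke continuity: $\lambda_{x}$ is a bounded $*$-representation of $C^{*}_{r}(G)$, and $\lambda_{G^{x}_{x}}$ is bounded on $C^{*}_{r(x)}(G^{x}_{x})$ because $r(x)$ dominates the reduced group norm (this is built into the identification of $r(x)$ with the Christensen--Neshveyev norm in the Remark after Corollary \ref{cor:normeqngpd}, or follows at once from the corner estimate $\|\lambda_{G^{x}_{x}}(\eta_{x}(a))\| = \|P\lambda_{x}(a)P\|\le\|a\|_{r}$). Both sides are then norm-continuous in $a$ and agree on the dense subalgebra $\mathscr{C}_{c}(G)$, hence agree on $C^{*}_{r}(G)$.

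Specializing the corner identity to $a=f^{*}f$ and using that $\lambda_{x}$ respects the involution, I would compute, for $g,h\in G^{x}_{x}$,
\[
\langle\lambda_{G^{x}_{x}}(\eta_{x}(f^{*}f))\delta_{h},\delta_{g}\rangle
= \langle\lambda_{x}(f)\delta_{h},\lambda_{x}(f)\delta_{g}\rangle
= \langle f*\delta_{h},\,f*\delta_{g}\rangle
= \sum_{k\in G_{x}} f(kh^{-1})\overline{f(kg^{-1})},
\]
where I have used $(f*\delta_{h})(k)=f(kh^{-1})$ for $k\in G_{x}$. In particular the diagonal coefficient at $g=h=x$ equals $\sum_{k\in G_{x}}|f(k)|^{2}=\|f|_{G_{x}}\|_{2}^{2}$, which is finite since $f|_{G_{x}}=\lambda_{x}(f)\delta_{x}\in\ell^{2}(G_{x})$.

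The equivalence then falls out. The operator $\lambda_{G^{x}_{x}}(\eta_{x}(f^{*}f))=(\lambda_{x}(f)P)^{*}(\lambda_{x}(f)P)$ is positive, so it vanishes if and only if all its diagonal coefficients vanish. If $f|_{G_{x}}=0$, then $f(kg^{-1})=0$ for every $k\in G_{x}$ and $g\in G^{x}_{x}$ (right translation by $g^{-1}$ permutes $G_{x}$), so every matrix coefficient in the display is zero and the operator is zero; conversely, if $f|_{G_{x}}\neq 0$, the diagonal coefficient $\|f|_{G_{x}}\|_{2}^{2}$ is strictly positive, forcing the operator to be nonzero. Hence $\lambda_{G^{x}_{x}}(\eta_{x}(f^{*}f))=0$ if and only if $f|_{G_{x}}=0$, which is the claimed equivalence.

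I do not expect a genuine obstacle: the content is essentially that $\lambda_{G^{x}_{x}}\circ\eta_{x}$ is the compression of $\lambda_{x}$ to the corner $\ell^{2}(G^{x}_{x})$. The only point needing a moment's care is the passage from $\mathscr{C}_{c}(G)$ to $C^{*}_{r}(G)$ in the corner identity, which relies on $\lambda_{G^{x}_{x}}$ being continuous for the norm $r(x)$; this is immediate once one records that $r(x)$ dominates the reduced group norm.
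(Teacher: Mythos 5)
Your proposal is correct and rests on the same computation as the paper's one-line proof, namely that the diagonal coefficient $\langle \lambda_{G^{x}_{x}}(\eta_{x}(f^{*}f))\delta_{x},\delta_{x}\rangle = (f^{*}f)(x) = \sum_{g\in G_{x}}|f(g)|^{2} = \|f|_{G_{x}}\|_{2}^{2}$. You simply make explicit what the paper leaves implicit: the corner identity justifying this coefficient formula for general $f\in C^{*}_{r}(G)$, and the converse direction via the vanishing of all matrix coefficients (where the paper would implicitly invoke faithfulness of the canonical trace or positivity).
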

\begin{proof}
     We have $\lambda_{G^{x}_{x}}(\eta_{x}(f^{*}f))(\delta_{x}) = f^{*}f(x) = \sum_{g\in G_{x}}|f(g)|^{2}$. 
\end{proof}

\begin{cor}
\label{cor:nonzerofibre}
    Let $G$ be an \'etale groupoid. If $J\neq 0$, then there is $x\in D_{0}$ such that $J_{G_{x}^{x},\pi^{-1}_{ess}(x)}\neq 0$.
\end{cor}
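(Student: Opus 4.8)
The plan is to start from a nonzero singular function, read off a point $x$ where it is nonzero, pass to the isotropy fibre via $\eta_x$, and recognize the resulting data as a nonzero element of the group ideal $J_{G^{x}_{x},\pi^{-1}_{ess}(x)}$. Concretely, I would fix $f\in J$ with $f\neq 0$. Since every element of $C^{*}_{r}(G)$ is determined by its values as a function on $G$, there is $g\in G$ with $f(g)\neq 0$; setting $x:=s(g)$ gives $f|_{G_{x}}\neq 0$. As $J$ is a closed two-sided (hence self-adjoint) ideal, $f^{*}f\in J$ is positive, and Lemma \ref{lem:simplelem} yields $\lambda_{G^{x}_{x}}(\eta_{x}(f^{*}f))\neq 0$. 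On the other hand, writing $a:=\eta_{x}(f^{*}f)$ and $\mathcal{X}=\pi^{-1}_{ess}(x)$, Theorem \ref{thm:isofibre} gives $a\in\eta_{x}(J)=J_{x}=\ker(q_{x})$, where $q_{x}$ is the $*$-homomorphism into $C^{*}_{r(\mathcal{X})}(G^{x}_{x}\cdot\mathcal{X})$.

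At the level of representations of $G^{x}_{x}$, these two facts say that the regular representation $\lambda_{G^{x}_{x}}$ does not vanish on $a$, while every quasi-regular representation $\lambda_{G^{x}_{x}/X}$, $X\in\mathcal{X}$, does: indeed $\lambda_{X}\circ q_{x}=\lambda_{G^{x}_{x}/X}$ (the coset-groupoid regular representation at the unit $X$ composed with $q_{x}$), and $\lambda_{X}$ factors through $C^{*}_{r(\mathcal{X})}(G^{x}_{x}\cdot\mathcal{X})$, so $a\in\ker(q_{x})$ forces $\lambda_{G^{x}_{x}/X}(a)=0$ for all $X\in\mathcal{X}$. I would then transport $a$ into $C^{*}_{r\hat{\mathcal{X}}}(G^{x}_{x})$, the completion housing $J_{G^{x}_{x},\mathcal{X}}=\bigcap_{X\in\mathcal{X}}\ker(\lambda_{G^{x}_{x}/X})$ (Definition \ref{dfn:gpideal}). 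The key inequality is $r(\mathcal{X})\geq r\mathcal{X}$ on $\mathbb{C}[G^{x}_{x}]$, whence $r(x)\geq\widehat{r(\mathcal{X})}=\max\{r(\mathcal{X}),r\}\geq\max\{r\mathcal{X},r\}=r\hat{\mathcal{X}}$, so there is a quotient $*$-homomorphism $\theta:C^{*}_{r(x)}(G^{x}_{x})\to C^{*}_{r\hat{\mathcal{X}}}(G^{x}_{x})$ extending the identity on the group ring. Putting $a':=\theta(a)$, the representations $\lambda_{G^{x}_{x}}$ and $\lambda_{G^{x}_{x}/X}$ factor through $\theta$, so $\lambda_{G^{x}_{x}}(a')=\lambda_{G^{x}_{x}}(a)\neq 0$ gives $a'\neq 0$, while $\lambda_{G^{x}_{x}/X}(a')=\lambda_{G^{x}_{x}/X}(a)=0$ for all $X\in\mathcal{X}$ gives $a'\in J_{G^{x}_{x},\mathcal{X}}$. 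Hence $J_{G^{x}_{x},\pi^{-1}_{ess}(x)}\neq 0$.

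It remains to see $x\in D_{0}$. Since $\theta$ factors through the projection $p:C^{*}_{r(x)}(G^{x}_{x})\to C^{*}_{\widehat{r(\mathcal{X})}}(G^{x}_{x})$ and $a'\neq 0$, we get $p(a)\neq 0$, so $p(J_{x})\neq 0$; by Corollary \ref{cor:edangptchar} this is impossible for $x\notin D_{0}$, so $x\in D_{0}$, completing the argument.

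The main obstacle is the norm comparison $r(\mathcal{X})\geq r\mathcal{X}$, equivalently the assertion that each quasi-regular representation factors through the exotic restriction norm. I would prove it by choosing, via Proposition \ref{prp:locinvar&cover} and Corollary \ref{cor:restrictnorm}, a local groupoid $\hat{H}_{ess}$ about $G^{x}_{x}\cdot\mathcal{X}$ inside $\tilde{G}_{ess}$ with $\hat{H}_{ess}^{0}=\tilde{G}^{0}_{ess}$, in which $\mathcal{X}$ is genuinely invariant. For $X\in\mathcal{X}$, invariance forces the source fibre $(\hat{H}_{ess})_{X}$ to coincide with that of the coset groupoid, so the regular representation $\lambda_{X}^{\hat{H}_{ess}}(a)$ depends only on $\eta_{\mathcal{X}}(a)$ and equals $\lambda_{X}(\eta_{\mathcal{X}}(a))$. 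Using the quotient-norm description $\|b\|_{r(\mathcal{X})}=\inf\{\|a\|_{r}:\eta_{\mathcal{X}}(a)=b\}$, this gives $\|\lambda_{G^{x}_{x}/X}(b)\|=\|\lambda_{X}(\eta_{\mathcal{X}}(a))\|\leq\|a\|_{r}$ for every lift $a$, hence $\leq\|b\|_{r(\mathcal{X})}$, which is exactly the inequality needed.
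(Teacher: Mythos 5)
Your proof is correct and takes essentially the same route as the paper's: Lemma \ref{lem:simplelem} to locate the point $x$, Theorem \ref{thm:isofibre} to identify $J_{x}$ (and $p(J_{x})$) with the kernel of the map to $C^{*}_{r(\mathcal{X})}(G^{x}_{x})$, and Corollary \ref{cor:edangptchar} to conclude $x\in D_{0}$. The paper's proof is just a condensed form of your argument, leaving the norm comparison $r(\mathcal{X})\geq r\mathcal{X}$ implicit in the containment $\lambda_{G^{x}_{x}}(p(J_{x}))\subseteq \lambda_{G^{x}_{x}}(J_{G^{x}_{x},\pi^{-1}_{ess}(x)})$, which you verify explicitly.
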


\begin{proof}
From Lemma \ref{lem:simplelem}, $J\neq 0$ implies there is $x_{0}\in G^{0}$ such that $\lambda_{G^{x_{0}}_{x_{0}}}(J_{x_{0}})\neq 0$. By Theorem \ref{thm:isofibre}, we have $0\neq \lambda_{G^{x_{0}}_{x_{0}}}(J_{x_{0}}) = \lambda_{G^{x_{0}}_{x_{0}}}(p(J_{x_{0}})) \subseteq \lambda_{G^{x_{0}}_{x_{0}}}(J_{G^{x_{0}}_{x_{0}}, \pi^{-1}_{ess}(x_{0})})$. By Corollary \ref{cor:edangptchar}, we have $x_{0}\in D_{0}$.
\end{proof}

\subsection{Characterization of vanishing of the singular ideal}
\label{ss:charvsing}
Now, in the case when the isotropy fibres behave nicely, we are able to characterize vanishing of the singular ideal in terms of a weak containment property.

\begin{thm}
\label{thm:vsingidealcharwkcont}
    Let $G$ be an \'etale groupoid such that the norm $r(\pi^{-1}_{ess}(x)) = r$ on $C_{c}(G^{x}_{x}\cdot \pi^{-1}_{ess}(x))$ for every $x\in D_{0}$. 
    
    Then, $J = \{0\}$ if and only if for every $x\in D_{0}$, we have $J_{G^{x}_{x}, \pi^{-1}_{ess}(x)} = 0$, if and only if $\lambda_{G^{x}_{x}}$ is weakly contained in $\oplus_{X\in \pi^{-1}_{ess}(x)}\lambda_{G^{x}_{x}/X}$. 
    
    More generally, we have $J = \{0\}$ if and only if $\ker(C^{*}_{\widehat{r(\mathcal{X})}}(G^{x}_{x})\to C^{*}_{r(\mathcal{X})}(G^{x}_{x})) = 0$ for all $x\in D_{0}$, where $r(\mathcal{X})$ is as in Theorem \ref{thm:isofibre}.
\end{thm}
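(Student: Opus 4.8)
The plan is to deduce the whole statement from the general assertion in the last sentence, and to prove that one first using the fibre computation of Theorem \ref{thm:isofibre} together with the faithfulness of the family $\{\eta_x\}$ as packaged in Lemma \ref{lem:simplelem}. Fix $x\in G^{0}$, write $\mathcal{X}=\pi^{-1}_{ess}(x)$, and set $K_{x}:=\ker(C^{*}_{\widehat{r(\mathcal{X})}}(G^{x}_{x})\to C^{*}_{r(\mathcal{X})}(G^{x}_{x}))$. Theorem \ref{thm:isofibre} gives $p(J_{x})=K_{x}$ for the canonical projection $p:C^{*}_{r(x)}(G^{x}_{x})\to C^{*}_{\widehat{r(\mathcal{X})}}(G^{x}_{x})$. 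The two observations I want to record are: first, since $\widehat{r(\mathcal{X})}=\max\{\|\cdot\|_{r(\mathcal{X})},\|\cdot\|_{r}\}\geq\|\cdot\|_{r}$, the left regular representation factors as $\lambda_{G^{x}_{x}}=\tilde\lambda\circ p$, where $\tilde\lambda:C^{*}_{\widehat{r(\mathcal{X})}}(G^{x}_{x})\to C^{*}_{r}(G^{x}_{x})$ is the canonical surjection; second, by Corollary \ref{cor:edangptchar} we have $K_{x}=p(J_{x})=0$ for every $x\notin D_{0}$, so the condition ``$K_{x}=0$ for all $x\in D_{0}$'' is equivalent to ``$p(J_{x})=0$ for all $x\in G^{0}$''.

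With these reductions, the forward implication of the general statement is immediate: if $J=\{0\}$ then $J_{x}=\eta_{x}(J)=\{0\}$, hence $p(J_{x})=0$, for every $x$. For the converse I would argue by contradiction. Suppose $p(J_{x})=0$ for all $x$ but $J\neq\{0\}$; pick a nonzero $f\in J$. Since the function representation of $C^{*}_{r}(G)$ is faithful, $f$ is nonzero as a function on $G$, so $f(g)\neq0$ for some $g$, giving $f|_{G_{x_{0}}}\neq0$ with $x_{0}=s(g)$. As $J$ is an ideal, $f^{*}f\in J$, and Lemma \ref{lem:simplelem} yields $\lambda_{G^{x_{0}}_{x_{0}}}(\eta_{x_{0}}(f^{*}f))\neq0$ with $\eta_{x_{0}}(f^{*}f)\in J_{x_{0}}$. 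But the factorization $\lambda_{G^{x_{0}}_{x_{0}}}=\tilde\lambda\circ p$ forces $\lambda_{G^{x_{0}}_{x_{0}}}(J_{x_{0}})=\tilde\lambda(p(J_{x_{0}}))=\tilde\lambda(0)=0$, a contradiction. This is the heart of the argument (and is essentially Corollary \ref{cor:nonzerofibre}); I expect the main subtlety to lie precisely here, in using that $\lambda_{G^{x}_{x}}$ still detects nonzero elements of $J$ through the possibly lossy projection $p$, which is exactly why the factorization of $\lambda_{G^{x}_{x}}$ through $\widehat{r(\mathcal{X})}\geq r$ is needed rather than working with the finer norm $r(x)$.

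It then remains to upgrade the general statement to the threefold equivalence under the hypothesis that $r(\mathcal{X})=r$ on $C_{c}(G^{x}_{x}\cdot\pi^{-1}_{ess}(x))$ for every $x\in D_{0}$. Here I would invoke Corollary \ref{cor:isofibrewhenregular}, which under this hypothesis gives $\widehat{r(\mathcal{X})}=r\hat{\mathcal{X}}$ and $K_{x}=p(J_{x})=J_{G^{x}_{x},\pi^{-1}_{ess}(x)}$ for each $x\in D_{0}$. Consequently ``$K_{x}=0$ for all $x\in D_{0}$'' reads verbatim as ``$J_{G^{x}_{x},\pi^{-1}_{ess}(x)}=0$ for all $x\in D_{0}$'', which establishes the equivalence of the first two conditions. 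Finally, Lemma \ref{lem:groupidealchar} translates the vanishing of each $J_{G^{x}_{x},\pi^{-1}_{ess}(x)}$ into the weak containment of $\lambda_{G^{x}_{x}}$ in $\oplus_{X\in\pi^{-1}_{ess}(x)}\lambda_{G^{x}_{x}/X}$, completing the chain of equivalences and the proof.
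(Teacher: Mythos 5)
Your proof is correct and follows essentially the same route as the paper: both hinge on the fibre computation $p(J_x)=\ker(C^{*}_{\widehat{r(\mathcal{X})}}(G^{x}_{x})\to C^{*}_{r(\mathcal{X})}(G^{x}_{x}))$ from Theorem \ref{thm:isofibre}, detect nonvanishing of $J$ through Lemma \ref{lem:simplelem} and the factorization of $\lambda_{G^{x}_{x}}$ through $p$ (which is exactly the content of Corollary \ref{cor:nonzerofibre}), and then specialize via Corollaries \ref{cor:edangptchar}, \ref{cor:isofibrewhenregular} and Lemma \ref{lem:groupidealchar}. The only difference is organizational: you prove the general statement first and deduce the threefold equivalence from it, whereas the paper proves the special case first and remarks that the general one follows by the same argument.
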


\begin{proof}
Suppose $J\neq 0$. Then, Corollary \ref{cor:nonzerofibre} implies $J_{G^{x_{0}}_{x_{0}}, \pi^{-1}_{ess}(x_{0})}\neq 0$ for some $x_{0}\in D_{0}$. 

Conversely, if $J_{G^{x_{0}}_{x_{0}}, \pi^{-1}_{ess}(x_{0})}\neq 0$, then Corollary \ref{cor:isofibrewhenregular} implies $p(J_{x_{0}}) = J_{G^{x_{0}}_{x_{0}}, \pi^{-1}_{ess}(x_{0})}\neq 0$, and therefore $J\neq 0$.

By Lemma \ref{lem:groupidealchar}, $J_{G^{x_{0}}_{x_{0}}, \pi^{-1}_{ess}(x_{0})}\neq 0$ if and only if is $\lambda_{G_{x_{0}}^{x_{0}}}$ is not weakly contained in $\oplus_{X\in \pi^{-1}_{ess}(x_{0})}\lambda_{G^{x_{0}}_{x_{0}}/X}$.

The more general statement follows from a similar argument and $p(J_{x}) = \ker(C^{*}_{\widehat{r(\mathcal{X})}}(G^{x}_{x})\to C^{*}_{r(\mathcal{X})}(G^{x}_{x}))$ (Theorem \ref{thm:isofibre}).
\end{proof}

Now, let's characterize when $\ker(C^{*}_{\widehat{r(\mathcal{X})}}(G^{x}_{x})\to C^{*}_{r(\mathcal{X})}(G^{x}_{x})) = 0$ in terms of a ``weak containment'' property as above. First, let's see this property is invariant under germ isomorphism (Definition \ref{dfn:germiso})

\begin{prp}
\label{prp:dependsongerm}
    Let $G$, $H$ be \'etale groupoids such that $(G, \{x\})\simeq (H, \{y\})$ for some $x\in G^{0}$, $y\in H^{0}$. Then, $\ker(C^{*}_{\widehat{r(\mathcal{X})}}(G^{x}_{x})\to C^{*}_{r(\mathcal{X})}(G^{x}_{x})) = 0$ if and only if $\ker(C^{*}_{\widehat{r(\mathcal{Y})}}(H^{y}_{y})\to C^{*}_{r(\mathcal{Y})}(H^{y}_{y})) = 0$, where $\mathcal{X} = \pi^{-1}_{ess}(x)$ and $\mathcal{Y} = \pi^{-1}_{ess}(y)$.
\end{prp}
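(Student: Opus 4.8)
The plan is to show that the group isomorphism underlying the germ isomorphism transports the pair of norms $(r, r(\mathcal{X}))$ on $\mathbb{C}[G^{x}_{x}]$ to the pair $(r, r(\mathcal{Y}))$ on $\mathbb{C}[H^{y}_{y}]$, so that the two quotient maps in the statement are identified and their kernels correspond. First I would unwind Definition \ref{dfn:germiso}: the hypothesis $(G, \{x\}) \simeq_{\alpha} (H, \{y\})$ provides local groupoids $H_{1} \subseteq G$ about $G|_{\{x\}}$ and $H_{2} \subseteq H$ about $H|_{\{y\}}$ together with a groupoid isomorphism $\alpha: H_{1} \to H_{2}$ (a homeomorphism on units) with $\alpha(x) = y$. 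Since $H_{1}|_{\{x\}} = G|_{\{x\}} = G^{x}_{x}$ and $H_{2}|_{\{y\}} = H^{y}_{y}$, restriction of $\alpha$ is a group isomorphism $\alpha: G^{x}_{x} \to H^{y}_{y}$, which I extend to a $*$-isomorphism $\alpha: \mathbb{C}[G^{x}_{x}] \to \mathbb{C}[H^{y}_{y}]$, $\delta_{g} \mapsto \delta_{\alpha(g)}$. Because $\widehat{r(\mathcal{X})} = \max\{r, r(\mathcal{X})\} \geq r(\mathcal{X})$, the map $C^{*}_{\widehat{r(\mathcal{X})}}(G^{x}_{x}) \to C^{*}_{r(\mathcal{X})}(G^{x}_{x})$ is the completion of the identity on $\mathbb{C}[G^{x}_{x}]$, and it has zero kernel if and only if $\widehat{r(\mathcal{X})}$ and $r(\mathcal{X})$ agree on $\mathbb{C}[G^{x}_{x}]$; the same reformulation holds on the $H^{y}_{y}$ side. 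Hence it suffices to prove that $\alpha$ is isometric from $r$ to $r$ and from $r(\mathcal{X})$ to $r(\mathcal{Y})$.

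The norm $r$ (the reduced norm of the group) is preserved because a group isomorphism intertwines the left regular representations $\lambda_{G^{x}_{x}}$ and $\lambda_{H^{y}_{y}}$. For $r(\mathcal{X})$ versus $r(\mathcal{Y})$ I would pass to the coset groupoids and use germ invariance of the quotient norm. By Proposition \ref{prp:germ&cover}, the map $\tilde{\alpha}: Y \mapsto \alpha(Y)$ is a groupoid isomorphism and homeomorphism $\tilde{G}|_{\pi^{-1}(x)} \to \tilde{H}|_{\pi^{-1}(y)}$ carrying $\mathcal{X} = \pi^{-1}_{ess}(x)$ onto $\mathcal{Y} = \pi^{-1}_{ess}(y)$. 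Combining this with the open local groupoids $\hat{H}_{1,ess} = H_{1}\cdot \tilde{G}^{0}_{ess}$ and $\hat{H}_{2,ess} = H_{2}\cdot \tilde{H}^{0}_{ess}$ supplied by Proposition \ref{prp:locinvar&cover}, and with the identifications of Lemma \ref{lem:germ&cover}, I would check that $\tilde{\alpha}$ extends to a groupoid isomorphism $\hat{H}_{1,ess} \to \hat{H}_{2,ess}$ carrying $\mathcal{X}$ to $\mathcal{Y}$; this exhibits a germ isomorphism $(\tilde{G}_{ess}, \mathcal{X}) \simeq (\tilde{H}_{ess}, \mathcal{Y})$ of the essential Hausdorff covers. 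Corollary \ref{cor:germnormiso} then yields an isometric $*$-isomorphism $\tilde{\alpha}_{*}: C^{*}_{r(\mathcal{X})}(G^{x}_{x}\cdot \mathcal{X}) \to C^{*}_{r(\mathcal{Y})}(H^{y}_{y}\cdot \mathcal{Y})$, $f \mapsto f\circ \tilde{\alpha}^{-1}$, in which $r(\mathcal{X}), r(\mathcal{Y})$ are exactly the quotient norms of Theorem \ref{thm:isofibre} (the essential covers being the ambient groupoids). Finally, $\tilde{\alpha}_{*}$ intertwines the maps $q_{x}, q_{y}$ of Theorem \ref{thm:isofibre}: for $a \in \mathbb{C}[G^{x}_{x}]$ and $W \in H^{y}_{y}\cdot \mathcal{Y}$,
\begin{equation*}
q_{y}(\alpha(a))(W) = \sum_{w\in W} a(\alpha^{-1}(w)) = q_{x}(a)(\tilde{\alpha}^{-1}(W)) = \big(\tilde{\alpha}_{*}\, q_{x}(a)\big)(W),
\end{equation*}
so that $r(\mathcal{Y})(\alpha(a)) = \|q_{y}(\alpha(a))\| = \|\tilde{\alpha}_{*}q_{x}(a)\| = \|q_{x}(a)\| = r(\mathcal{X})(a)$, as required.

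With both norm identifications in hand, $\alpha$ is isometric from $\widehat{r(\mathcal{X})}$ to $\widehat{r(\mathcal{Y})}$ and intertwines the quotient maps $C^{*}_{\widehat{r(\mathcal{X})}}(G^{x}_{x}) \to C^{*}_{r(\mathcal{X})}(G^{x}_{x})$ and $C^{*}_{\widehat{r(\mathcal{Y})}}(H^{y}_{y}) \to C^{*}_{r(\mathcal{Y})}(H^{y}_{y})$, whence the induced isomorphisms carry one kernel onto the other and one vanishes exactly when the other does. The step I expect to be the main obstacle is the middle one: cleanly producing the germ isomorphism $(\tilde{G}_{ess}, \mathcal{X}) \simeq (\tilde{H}_{ess}, \mathcal{Y})$ of the \emph{essential} Hausdorff covers out of the isomorphism $\alpha$ of the base groupoids. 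This is precisely where Proposition \ref{prp:germ&cover}, Proposition \ref{prp:locinvar&cover} and Lemma \ref{lem:germ&cover} must be combined with care, both to extend $\tilde{\alpha}$ to genuine \emph{open} local groupoids inside the covers (the reductions $\tilde{G}|_{\pi^{-1}(x)}$ themselves being closed, not open, in $\tilde{G}$) and to guarantee that the ambient groupoid fed into Corollary \ref{cor:germnormiso} really computes the Theorem \ref{thm:isofibre} norm rather than some other completion of the coset-groupoid algebra.
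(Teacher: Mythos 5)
Your overall strategy (transport the pair of norms $(r,r(\mathcal{X}))$ to $(r,r(\mathcal{Y}))$ under $\alpha$) is the same as the paper's, and your endgame — the reformulation of kernel-vanishing as equality of norms, the preservation of $r$, and the intertwining of $q_{x}$ with $q_{y}$ — is fine. The genuine gap is exactly the step you flag: producing a germ isomorphism $(\tilde{G}_{ess},\mathcal{X})\simeq(\tilde{H}_{ess},\mathcal{Y})$ so that Corollary \ref{cor:germnormiso} applies upstairs. None of the tools you cite deliver this. Lemma \ref{lem:germ&cover} and Proposition \ref{prp:germ&cover} only identify the reductions to the fibres $\pi^{-1}(x)$ and $\pi^{-1}(y)$ (which are closed, not open), whereas a germ isomorphism requires isomorphic \emph{open} subgroupoids of the two essential covers. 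The natural candidates $\hat{H}_{1,ess}=H_{1}\cdot\tilde{G}^{0}_{ess}$ and $\hat{H}_{2,ess}=H_{2}\cdot\tilde{H}^{0}_{ess}$ are open, but there is no map between them extending $\tilde{\alpha}$: an element of $\hat{H}_{1,ess}$ is a closed subset $h\underline{x}'$ of $G$ with $\underline{x}'\in\pi^{-1}_{ess}(x')$ for $x'$ merely \emph{near} $x$, and such $\underline{x}'$ is a subgroup of $G^{x'}_{x'}$ that need not be contained in the local groupoid $H_{1}$ on which $\alpha$ is defined (only the fibre over $x$ itself is guaranteed to lie in $H_{1}$, since $\underline{x}\subseteq G^{x}_{x}=H_{1}|_{\{x\}}$). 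For the same reason the fibres $\pi^{-1}_{ess}(x')$ computed in $G$ and in $H_{1}$ can genuinely differ for $x'\neq x$, so the essential covers of $G$ and of $H$ need not be locally isomorphic near the fibres at all. Since $r(\mathcal{X})$ is a quotient norm depending on the germ of the \emph{ambient cover} about the reduction, the isomorphism $\tilde{G}|_{\pi^{-1}(x)}\cong\tilde{H}|_{\pi^{-1}(y)}$ of the reductions alone does not give the isometry you need.

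The paper circumvents this entirely by working downstairs: it uses that the singular ideal is germ-invariant (membership is characterized by density of the zero set, which $\alpha$ preserves, and $J\cap C^{*}_{r}(G')$ equals the singular ideal of the open subgroupoid $G'$), combines this with the commuting square of Corollary \ref{cor:germnormiso}, and then invokes Theorem \ref{thm:isofibre} to identify $J_{x}=\ker(q_{x})$ and $\|q_{x}(-)\|=\|-\|_{r(\mathcal{X})}$ on $\mathbb{C}[G^{x}_{x}]$; the equality $\|\alpha^{*}(-)\|_{r(\mathcal{X})}=\|-\|_{r(\mathcal{Y})}$ then falls out of $\alpha^{*}(J_{y})=J_{x}$ without ever comparing the two essential covers near the fibres. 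To repair your argument you would need to substitute this mechanism (or something equivalent, e.g.\ the characterization in Lemma \ref{lem:groupidealcharlocwkcont}) for the claimed isomorphism of covers.
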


\begin{proof}
    Let $G'$ be a local groupoid about $G^{x}_{x}$, $H'$ a local groupoid about $H^{y}_{y}$ and $\alpha:G'\to H'$ an isomorphism. This induces a $*$-isomorphism $\alpha^{*}:C_{r}^{*}(H')\to C^{*}_{r}(G')$ such that $\alpha^{*}(f) = f\circ \alpha$, for all $f\in C^{*}_{r}(H')$. Denote the singular ideals in $C^{*}_{r}(G')$ and $C^{*}_{r}(H')$ as $J'_{G}$ and $J'_{H}$, respectively. Since function belongs to a singular ideal if and only if it has dense zero set in the respective groupoid, and $\alpha$ is a homeomorphism, it follows that $\alpha^{*}(J'_{H}) = J'_{G}$.
    
     We show $C^{*}_{r}(G')\cap J_{G} = J_{G}'$. Suppose $f\in C^{*}_{r}(G')\cap J_{G}$. Then, $f^{-1}(0)$ is dense in $G$ and hence dense in $G'$ ($G'$ is open in $G$), so $f\in J_{G}'$. Conversely, if $f\in J_{G}'$, then $f^{-1}(0)$ is dense in $G'$ and $f = 0$ off $G'$, when viewed as in $C^{*}_{r}(G)$. Hence, $f\in C^{*}_{r}(G')\cap J_{G}$. We have shown $J_{G}' = J\cap C^{*}_{r}(G)$. Similarly, $C^{*}_{r}(H')\cap J_{H} = J'_{H}$.

    Therefore, $\alpha^{*}(C^{*}_{r}(H')\cap J_{H}) = C^{*}_{r}(G')\cap J_{G}$. By Corollary \ref{cor:germnormiso}, the diagram 

    $$
\begin{tikzcd}
C^{*}_{r}(H') \arrow[r, "\alpha^{*}"] \arrow[d, "\eta_{y}"] & C^{*}_{r}(G') \arrow[d, "\eta_{x}"] \\
C^{*}_{r(y)}(H^{y}_{y}) \arrow[r, "\alpha^{*}"]             & C^{*}_{r(x)}(G^{x}_{x})            
\end{tikzcd}$$ commutes, with the bottom map a *-isomorphism. Therefore, $$\alpha^{*}(\eta_{y}(C^{*}_{r}(H')\cap J_{H})) = \eta_{x}(\alpha^{*}(C^{*}_{r}(H')\cap J_{H})) = \eta_{x}(C^{*}_{r}(G')\cap J_{G}).$$

By Theorem \ref{thm:isofibre} we have $\eta_{y}(C^{*}_{r}(H')\cap J_{H}) = J_{y}$ and $\eta_{x}(C^{*}_{r}(G')\cap J_{G}) = J_{x}$, so there is an induced *-isomorphism $\alpha^{*}_{ess}C^{*}_{r(y)}(G^{y}_{y})/J_{y}\to C^{*}_{r(x)}(G^{x}_{x})/J_{x}$ making the diagram

$$
\begin{tikzcd}
{\mathbb{C}[H^{y}_{y}]} \arrow[d, "q_{y}"] \arrow[r, "\alpha^{*}"] & {\mathbb{C}[G^{x}_{x}]} \arrow[d, "q_{x}"] \\
C^{*}_{r(y)}(H^{y}_{y})/J_{y} \arrow[r, "\alpha^{*}_{ess}"]        & C^{*}_{r(x)}(G^{x}_{x})/J_{x}             
\end{tikzcd}$$ commute, where $q_{x}$, and $q_{y}$ are the quotient maps. By the proof of Theorem \ref{thm:isofibre} (in particular, by the properties proven of the third commutative diagram), the semi-C*-norms $\|q_{y}(-)\| = \|-\|_{r(\mathcal{Y})}$ and $\|q_{x}(-)\| = \|-\|_{r(\mathcal{X})}$. Hence, $\|\alpha^{*}(-)\|_{r(\mathcal{X})} = \|-\|_{r(\mathcal{Y})}$. Since $\alpha:G^{x}_{x}\to G^{y}_{y}$ is a group isomorphism, we also have $\|\alpha^{*}(-)\|_{r} = \|-\|_{r}$. Therefore, $\|\alpha^{*}(-)\|_{\widehat{r(\mathcal{X})}} = \|-\|_{\widehat{r(\mathcal{Y})}}$, proving that the diagram

$$
\begin{tikzcd}
C^{*}_{\widehat{r(\mathcal{Y})}}(H^{y}_{y}) \arrow[d] \arrow[r, "\alpha^{*}"] & C^{*}_{\widehat{r(\mathcal{X})}}(G^{x}_{x}) \arrow[d] \\
C^{*}_{r(\mathcal{Y})}(H^{y}_{y}) \arrow[r, "\alpha^{*}_{ess}"]               & C^{*}_{r(\mathcal{X})}(G^{x}_{x})                    
\end{tikzcd}$$ commutes, with the horizontal maps *-isomorphisms. This proves the proposition.

\end{proof}

Now, we prove a lemma which we can apply to obtain multiple ``weak containment'' type characterizations of when $\ker(C^{*}_{\widehat{r(\mathcal{X})}}(G^{x}_{x})\to C^{*}_{r(\mathcal{X})}(G^{x}_{x})) = 0$.

\begin{lem}
\label{lem:groupidealcharlocwkcont}
    Let $G$ be an \'etale groupoid, $x\in G^{0}$ and $\mathcal{X} = \pi^{-1}_{ess}(x)$. Let $H\subseteq G$ be a local groupoid about $G^{x}_{x}$ with $H^{0} = G^{0}$ and $\mathcal{U}$ a neighbourhood basis for $x$ in $G^{0}$. Choose a bisection $U_{g}$ for each $g\in G^{x}_{x}$.
    
    Suppose $S\subseteq \tilde{G}_{ess}^{0}$ is a subset such that the representation $\oplus_{X\in S\cap\pi_{ess}^{-1}(U)}\lambda_{\tilde{G}_{X}}$ is faithful for $C^{*}_{r}(H|_{U}\cdot \pi^{-1}(U))$ , for all $U\in\mathcal{U}$.

    Then, $\ker(C^{*}_{\widehat{r(\mathcal{X})}}(G^{x}_{x})\to C^{*}_{r(\mathcal{X})}(G^{x}_{x})) = 0$ if and only if for every $\epsilon > 0$, finite set $F\subseteq G^{x}_{x}\setminus \{x\}$ and $U\in\mathcal{U}$ with  $U\subseteq \bigcap_{g\in F} r(U_{g})\cap s(U_{g})$, there are vectors $\psi_{i}\in \ell^{2}(G_{x_{i}}^{U}/X_{i})$, where $X_{i}\in S\cap \pi^{-1}(U)$ and $\pi(X_{i}) = x_{i}$, for $i\leq n$ such that

    $$\sum^{n}_{i=1}\langle \psi_{i}, \psi_{i}\rangle = 1\text{ and }|\sum^{n}_{i=1}\langle 1_{U_{g}} * \psi_{i}, \psi_{i}\rangle |\leq \epsilon, \text{ for all }g\in F,$$
    where $1_{U_{g}}$ denotes the characteristic function on $U_{g}$, for $g\in F$. 
\end{lem}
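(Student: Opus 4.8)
The plan is to reduce the left-hand side to a weak-containment statement for the single positive-definite function $g\mapsto\delta_{g,x}$ on $G^{x}_{x}$, and then to move between the quasi-regular data over $x$ and the local groupoid data over $S$ near $x$: the forward passage is an exact computation combined with the faithfulness hypothesis, while the reverse passage is a limiting argument as the neighbourhood shrinks to $\{x\}$.

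First I would reformulate the condition $\ker(C^{*}_{\widehat{r(\mathcal{X})}}(G^{x}_{x})\to C^{*}_{r(\mathcal{X})}(G^{x}_{x}))=0$. By Theorem \ref{thm:isofibre} this kernel is $p(J_{x})$, and since the coset groupoid $G^{x}_{x}\cdot\mathcal{X}$ is Hausdorff and \'etale its reduced norm is the supremum of the regular representations at its units, so $\|a\|_{r(\mathcal{X})}=\sup_{X\in\mathcal{X}}\|\lambda_{G^{x}_{x}/X}(a)\|$ on $\mathbb{C}[G^{x}_{x}]$. As $\widehat{r(\mathcal{X})}=\max\{r(\mathcal{X}),r\}$, the kernel vanishes exactly when $\|a\|_{r}\leq\|a\|_{r(\mathcal{X})}$ for all $a$, i.e.\ $\lambda_{G^{x}_{x}}\prec\bigoplus_{X\in\mathcal{X}}\lambda_{G^{x}_{x}/X}$, precisely as in Lemma \ref{lem:groupidealchar}. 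Since $\delta_{x}$ is cyclic for $\lambda_{G^{x}_{x}}$ with matrix coefficient $g\mapsto\langle\lambda_{G^{x}_{x}}(g)\delta_{x},\delta_{x}\rangle=\delta_{g,x}$, Fell's criterion (\cite[Theorem~F.4.4]{BHV07}) rewrites this as: for every $\epsilon>0$ and finite $F\subseteq G^{x}_{x}\setminus\{x\}$ there are $X_{i}\in\mathcal{X}$ and $\psi_{i}\in\ell^{2}(G^{x}_{x}/X_{i})$ with $\sum_{i}\langle\psi_{i},\psi_{i}\rangle=1$ and $|\sum_{i}\langle g\cdot\psi_{i},\psi_{i}\rangle|\leq\epsilon$ for all $g\in F$. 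This is the ``global'' form of the asserted condition, so it remains to match it with the ``local'' form in the statement.

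For the forward direction I would use that on the fibre-over-$x$ cosets the bisection action degenerates to the group action: for $h\in G^{x}_{x}$ and $g\in G^{x}_{x}$ the unique $\tilde{g}\in U_{g}$ with $s(\tilde{g})=r(h)=x$ is $g$ itself, so $1_{U_{g}}\cdot\delta_{hX}=\delta_{ghX}=g\cdot\delta_{hX}$; hence $\ell^{2}(G^{x}_{x}/X)$ is a $1_{U_{g}}$-invariant subspace of $\ell^{2}(\tilde{G}_{X})$ on which $\lambda_{\tilde{G}_{X}}$ restricts to $\lambda_{G^{x}_{x}/X}$. Each $X\in\mathcal{X}$ is a unit of $H|_{U}\cdot\pi^{-1}(U)$ (as $\pi(X)=x\in U$), and its regular representation is weakly contained in the reduced norm, which by the faithfulness hypothesis equals $\|\bigoplus_{Y\in S\cap\pi^{-1}(U)}\lambda_{\tilde{G}_{Y}}(\cdot)\|$ (here $S\cap\pi^{-1}(U)=S\cap\pi^{-1}_{ess}(U)$ since $S\subseteq\tilde{G}^{0}_{ess}$); thus $\lambda_{\tilde{G}_{X}}\prec\bigoplus_{Y\in S\cap\pi^{-1}(U)}\lambda_{\tilde{G}_{Y}}$. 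Feeding the global vectors $\psi_{i}$ through this weak containment, applied to the finite set $F\cup\{x\}$ so as to preserve the normalization, produces local vectors $\phi_{j}\in\ell^{2}(G^{U}_{x_{j}}/X_{j})$ with $X_{j}\in S\cap\pi^{-1}(U)$ and the coefficient estimates holding up to a controlled error; renormalizing gives the local condition.

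The reverse direction is the hard one and is where I expect the main obstacle. Given, for a sequence $U_{n}\downarrow\{x\}$ and $\epsilon_{n}\to0$, the local vectors supplied by the hypothesis, I would pass to a subnet along which the units $X_{j}^{(n)}\in S\cap\pi^{-1}(U_{n})$ converge in the Fell topology and the sums $g\mapsto\sum_{j}\langle 1_{U_{g}}\phi_{j}^{(n)},\phi_{j}^{(n)}\rangle$ converge pointwise on $G^{x}_{x}$; by Proposition \ref{prp:fell} and the closedness of $\mathcal{X}=\pi^{-1}_{ess}(x)$, any Fell-limit of such $X_{j}^{(n)}$ lies in $\mathcal{X}$, and the degeneration of the $1_{U_{g}}$-action to the $g$-action (together with the constraint $U\subseteq\bigcap_{g\in F}r(U_{g})\cap s(U_{g})$) identifies the limit as a sum $g\mapsto\sum_{i}\langle g\cdot\psi_{i},\psi_{i}\rangle$ of genuine quasi-regular coefficients over $\mathcal{X}$ still approximating $\delta_{g,x}$, yielding the global condition and hence $\ker=0$. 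The obstacle is precisely this limiting step: the $\phi_{j}^{(n)}$ live in the varying Hilbert spaces $\ell^{2}(G^{U_{n}}_{x_{j}}/X_{j}^{(n)})$, and I must rule out escape of mass onto cosets $hX_{j}^{(n)}$ whose representatives fail to converge into $G^{x}_{x}$ (equivalently, onto contributions from units over points $\neq x$). I expect to control this using the norm identity obtained from Equation \ref{eq:norm3} applied to the fibre over $x$ inside the cover, together with the faithfulness of $S$, which forces the relevant mass to concentrate on the fibre over $x$ as $U\downarrow\{x\}$.
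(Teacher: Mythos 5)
There is a genuine gap, and it occurs at the very first step. You claim that $\ker(C^{*}_{\widehat{r(\mathcal{X})}}(G^{x}_{x})\to C^{*}_{r(\mathcal{X})}(G^{x}_{x}))=0$ is equivalent to $\lambda_{G^{x}_{x}}\prec\bigoplus_{X\in\mathcal{X}}\lambda_{G^{x}_{x}/X}$, on the grounds that $\|a\|_{r(\mathcal{X})}=\sup_{X\in\mathcal{X}}\|\lambda_{G^{x}_{x}/X}(a)\|$. This is not what $r(\mathcal{X})$ means: it is the quotient norm on $C_{c}(G^{x}_{x}\cdot\mathcal{X})$ induced from $C^{*}_{r}(\tilde{G}_{ess})$ via restriction to the \emph{locally} invariant set $\pi^{-1}_{ess}(x)$, and in general it strictly dominates the reduced norm of the coset groupoid. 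One always has $\|a\|_{r(\mathcal{X})}\geq\sup_{X}\|\lambda_{G^{x}_{x}/X}(a)\|$, so the global weak containment is \emph{sufficient} for the kernel to vanish but not necessary; the two are equivalent only under the extra hypothesis $r(\mathcal{X})=r$ (Corollary \ref{cor:isofibrewhenregular} and Theorem \ref{thm:vsingidealcharwkcont}, e.g.\ amenable coset groupoid). Indeed, the entire point of this lemma is to handle the case where that hypothesis fails, which is why the local condition uses vectors in $\ell^{2}(G^{U}_{x_{i}}/X_{i})$ with $x_{i}$ ranging over a neighbourhood of $x$ rather than only over the fibre at $x$. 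This misidentification defeats both directions of your argument: in the ``only if'' direction the global vectors $\psi_{i}\in\ell^{2}(G^{x}_{x}/X_{i})$ you want to feed through the faithfulness hypothesis need not exist; in the ``if'' direction you are trying to prove the global weak containment, which is strictly stronger than the assertion and is not implied by the local condition. The ``escape of mass'' obstacle you flag in the limiting argument is therefore not a technical nuisance to be controlled by Equation \ref{eq:norm3} --- it reflects the fact that the intermediate statement you are aiming for is false in general.

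For comparison, the paper avoids the global reformulation entirely. In the ``only if'' direction it expresses kernel vanishing as weak containment of $\lambda_{G^{x}_{x}}$ in the GNS representations of \emph{arbitrary states} of $C^{*}_{r(\mathcal{X})}(G^{x}_{x})$, extends those states to $C^{*}_{r(\mathcal{X})}(G^{x}_{x}\cdot\mathcal{X})$, pulls them back to $C^{*}_{r}(K_{1})$ where $K_{1}$ is a local groupoid about $\pi^{-1}_{ess}(x)$ over a small neighbourhood $V$, and only then invokes the faithfulness of $\bigoplus_{X\in S\cap\pi^{-1}_{ess}(V)}\lambda_{\tilde{G}_{X}}$ to approximate by vector states; the cut-off functions $\phi_{g}=(\phi\circ r)(\phi\circ s)|_{U_{g}}$ replace your partial-isometry bookkeeping. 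In the ``if'' direction it makes no limiting argument on the vectors at all: it builds the lift $b^{\phi}=a(x)\phi+\sum_{g}a(g)\phi_{g}\in\mathscr{C}_{c}(G)$, uses the local vectors to show $|a(x)|-\epsilon\leq\|\iota_{ess}(b^{\phi})\|$, and then applies the compressibility norm equation (Corollary \ref{cor:normeqngpd}) to conclude $|a(x)|\leq\|a\|_{r(\mathcal{X})}$, i.e.\ that $a\mapsto a(x)$ is $r(\mathcal{X})$-bounded, which is exactly kernel vanishing. If you want to salvage your outline, you should replace the target of both directions by the boundedness of the functional $a\mapsto a(x)$ with respect to $r(\mathcal{X})$ rather than by the global weak containment.
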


\begin{proof}
 For $f\in \mathscr{C}_{c}(G)$ and $\psi\in \ell^{2}(G_{x}/X)$, we note that $\lambda_{X}(\iota_{ess}(f))(\psi) = f*\psi$, where the right hand side for $\psi = \delta_{hX}$ satisfies $f*\delta_{hX} = \sum_{s(g) = r(h)}f(g)\delta_{ghX}$. Throughout the proof, we make this identification without comment.

    First, we prove the ``only if'' direction. Let $\oplus_{\tau}\pi_{\tau}$ be the GNS representation of $C^{*}_{r(\mathcal{X})}(G^{x}_{x})$, where $\tau$ denotes a state. Then, $\ker(C^{*}_{\widehat{r(\mathcal{X})}}(G^{x}_{x})\to C^{*}_{r(\mathcal{X})}(G^{x}_{x})) = 0$ if and only if $\lambda_{G^{x}_{x}}$ is weakly contained in $\oplus_{\tau}\pi_{\tau}$. Therefore, for every finite set $F\subseteq G^{x}_{x}\setminus \{x\}$ and $\epsilon > 0$, there are states $\tau_{i}$ on $C^{*}_{r(\mathcal{X})}(G^{x}_{x})$ and $a_{i}\in\mathbb{C}[G]$, for $i\leq n$, such that $$| 1 - \sum^{n}_{i=1}\tau_{i}(a^{*}_{i}a_{i})| = |\langle\lambda_{G^{x}_{x}}(\delta_{x})\delta_{x}, \delta_{x}\rangle - \sum^{n}_{i=1}\tau_{i}(a^{*}_{i} \delta^{*}_{x}*a_{i})| = 0$$ and

    \begin{equation}
    \label{eq:wkequivalence1}
        | \sum^{n}_{i = 1}\tau_{i}(a^{*}_{i}\delta^{*}_{g} * a_{i})| = |\langle \lambda_{G^{x}_{x}}(\delta_{g})\delta_{x},\delta_{x}\rangle - \sum^{n}_{i = 1}\tau_{i}(a^{*}_{i}\delta^{*}_{g}* a_{i})|\leq \frac{\epsilon}{2},
    \end{equation}
for all $g\in F$.

Let $\{U_{g}\}_{g\in F}$ be open bisections and $U\in\mathcal{U}$ such that $U\subseteq \bigcap_{g\in F} r(U_{g})\cap s(U_{g})$. Choose $V\in\mathcal{U}$ such that $\overline{V}^{G^{0}}$ is compact and $\overline{V}^{G^{0}}\subseteq U$. Choose $\phi\in C_{c}(U)$ such that $\phi|_{V} = 1$ and define, for $g\in F$, $\phi_{g} = (\phi\circ r)(\phi\circ s)|_{U_{g}}\in C_{c}(U_{g})$.

Let $H\subseteq G$ be the local groupoid as in the hypothesis. Then, $H|_{V} = H_{1}$ and $H|_{U} = H_{2}$ are local groupoids about $G^{x}_{x}$ and $K_{1} = H_{1}\cdot\tilde{G}_{ess}^{0}$, $K_{2} = H_{2}\cdot G^{0}_{ess}$ are local groupoids about $\pi_{ess}^{-1}(x)$. with $H^{0}_{1} = \pi_{ess}^{-1}(V)$, $H^{0}_{ess} = \pi^{-1}_{ess}(U)$. Then, the restriction maps $q_{1}:C^{*}_{r}(K_{1})\to C^{*}_{r(\mathcal{X})}(G^{x}_{x}\cdot \mathcal{X})$ and $q_{2}:C^{*}_{r}(K_{2})\to C^{*}_{r}(G^{x}_{x}\cdot\mathcal{X})$ are *-homomorphisms by Theorem \ref{thm:locinvar&compress}.

Since $C^{*}_{r(\mathcal{X})}(G^{x}_{x})\subseteq C^{*}_{r(\mathcal{X})}(G^{x}_{x}\cdot\mathcal{X})$, we can extend the states $\tau_{i}$, $i\leq n$, to states $\tilde{\tau}_{i}$ on  $C^{*}_{r(\mathcal{X})}(G^{x}_{x}\cdot\mathcal{X})$. Moreover, under this embedding we have $q_{2}(\varphi_{g}) = \delta_{g}$, where $\varphi_{g} = \iota_{ess}(\phi_{g})$, for all $g\in F$. Let $b_{i}\in \mathscr{C}_{c}(K_{1})$ for $i\leq n$ be such that $q_{1}(b_{i}) = a'_{i}$. Then, $b^{*}_{i}\varphi^{*}_{g}b_{i}\in C^{*}_{r}(K_{1})$ and $q_{1}(b^{*}_{i}\varphi^{*}_{g}b_{i}) = a^{*}_{i}\delta^{*}_{g}*a_{i}$ for all $g\in F$ and $i\leq n$. Since $K_{1} = (H|_{V})\cdot \pi_{ess}^{-1}(V)$, by the hypothesis, the representation $\oplus_{X\in S\cap\pi_{ess}^{-1}(V)}\lambda_{\tilde{G}_{X}}$ is faithful for $C^{*}_{r}(K_{1})$, so we can approximate (in the weak* topology) the states $\tilde{\tau}_{i}\circ q_{1}$ by vector states in $\ell^{2}(G^{V}_{\pi(X)}/X)$, $X\in S\cap \pi^{-1}(V)$. Therefore, for every $i\leq n$ there are vectors $\psi'_{ij}\in \ell^{2}(G^{V}_{x_{ij}}/X_{ij})\subseteq \ell^{2}(G^{U}_{x_{ij}}/X_{ij})$ for $j\leq m_{i}$, where $X_{ij}\in S\cap\pi^{-1}(V)\subseteq S\cap \pi^{-1}(U)$ and $\pi(X_{ij}) = x_{ij}$, such that 
\begin{equation}
\label{eq:wkequivalence2}
    \sum_{i,j}\langle \psi'_{ij}, b_{i}^{*}b_{i}\psi'_{ij}\rangle = 1 \text{ and } |\tau_{i}(a^{*}_{i}\delta^{*}_{g}a_{i}) - \sum^{m_{i}}_{j=1}\langle \psi'_{i,j}, (b_{i}^{*}\varphi^{*}_{g}b_{i})\psi'_{i,j}\rangle|\leq \frac{\epsilon}{2n}
\end{equation}
for all $i\leq n$ and $g\in F$.

So, if we re-index $ij$, $i\leq n$, $j\leq m_{i}$ by $k\leq l$, and write $\psi_{k} = b_{i}\psi'_{ij}$, using the inequalities \ref{eq:wkequivalence1} and \ref{eq:wkequivalence2}, we have 

$$\sum^{l}_{k=1}\langle \psi_{k},\psi_{k}\rangle = 1 \text{ and } |\sum^{l}_{k=1}\langle \varphi_{g}*\psi_{k}, \psi_{k}\rangle |\leq \epsilon,$$
for all $g\in F$.

Now, since $\psi_{k}\in \ell^{2}(G^{V}_{x_{k}}/X_{k})$ and $x_{k}\in V$ we have  $\langle \varphi_{g}*\psi_{k}, \psi_{k}\rangle = \langle (\phi_{g}|_{r^{-1}(V)\cap s^{-1}(V)})*\psi_{k}, \psi_{k}\rangle = \langle (1_{U_{g}}|_{r^{-1}(V)\cap s^{-1}(V)})*\psi_{k}, \psi_{k}\rangle = \langle 1_{U_{g}}*\psi_{k}, \psi_{k}\rangle $, for all $k\leq l$. This proves the ``only if'' direction and the ``moreover'' statement.

We now prove the ``if'' direction. For $a\in \mathbb{C}[G^{x}_{x}]$ with $F = \{g\in G:a(g)\neq 0\}\setminus \{x\}$ and bisections $\{V_{g}\}_{g\in F}$, let $V,U\in\mathcal{U}$ such that $V\subseteq \bigcap_{g\in F}r(V_{g})\cap s(V_{g})$, $\overline{U}^{G^{0}}$ is compact, and $\overline{U}^{G^{0}}\subseteq V$, let $\phi\in C_{c}(V)$ be such that $\phi|_{U} = 1$. Set $\phi_{g} = (\phi\circ r)(\phi\circ s)|_{V_{g}}\in C_{c}(V_{g})$, for $g\in F$ and $b^{\phi} = a(x)\phi + \sum_{g\in F}a(g)\phi_{g}$. By the hypothesis, there are vectors $\psi_{k}\in \ell^{2}(G^{U}_{x_{k}}/X_{k})$ for $k\leq l$ such that $\sum^{l}_{k=1} \langle\psi_{k}, \psi_{k}\rangle = 1$ and $|\sum^{l}_{k=1}\langle \psi_{k}, 1_{U_{g}}*\psi_{k}\rangle| \leq \frac{\epsilon}{M}$, for all $g\in F$, where $M = \sum_{g\in F} |a_{g}|$. Since $\langle \psi_{k}, 1_{U_{g}}*\psi_{k}\rangle = \langle \psi_{k}, (1_{U_{g}}|_{r^{-1}(U)\cap s^{-1}(U)})*\psi_{k}\rangle = \langle \psi_{k}, (\phi_{g}|_{r^{-1}(U)\cap s^{-1}(U)})*\psi_{k}\rangle = \langle \psi_{k}, \phi_{g}*\psi_{k}\rangle$ for all $g\in F$ and $k\leq l$ we have, by the triangle ineqality, $|a(x) - \tau(\iota_{ess}(b^{\phi}))|\leq \epsilon$, where $\tau(c) = \sum^{l}_{k=1}\langle \psi_{k}, c*\psi_{k}\rangle$ for $c\in C^{*}_{r}(\tilde{G}_{ess})$. Since $\tau$ is a state on $C^{*}_{r}(\tilde{G}_{ess})$, it follows that $|a(x)| - \epsilon \leq \|\iota_{ess}(b^{\phi})\|$. 

Choose an approximate unit $\phi_{\lambda}\subseteq C_{c}(G^{0}\setminus \{x\})$ such that $\phi_{\lambda}|_{U_{\lambda}} = 0$ for some $U_{\lambda}$ with $\overline{U_{\lambda}}^{G^{0}}$ compact and $\overline{U_{\lambda}}^{G^{0}}\subseteq V$. Then, with $\phi^{\lambda} = (1-u_{\lambda})\phi(1-u_{\lambda})$, we have $b^{\phi^{\lambda}} = (1-u_{\lambda})b^{\phi}(1-u_{\lambda})$. Therefore, we have $|a(x)| - \epsilon \leq \|\iota_{ess}((1-u_{\lambda})b^{\phi}(1-u_{\lambda}))\|$ for all $\lambda$. By Theorem \ref{thm:locinvar&compress}, $(\iota_{ess}(u_{\lambda}) = u_{\lambda}\circ \pi_{ess})$ is an approximate unit for the kernel of some compression of $\eta_{\mathcal{X}}$ to a *-homomorphism. Therefore, by Equation \ref{eq:norm3} in Corollary \ref{cor:normeqngpd}, we have $|a(x)| - \epsilon \leq \|\eta_{\mathcal{X}}(\iota_{ess}(b^{\phi}))\|_{r(\mathcal{X})} = \|a\|_{r(\mathcal{X})}$. Since $\epsilon > 0$ was arbitrary, it follows that $|a(x)|\leq \|a\|_{r(\mathcal{X})}$. Therefore, $a\mapsto a(x)$ defines a $\|\cdot\|_{r(\mathcal{X})}$ bounded linear functional on $\mathbb{C}[G^{x}_{x}]$. As $a\mapsto a(x)$ defines a positive linear functional on $C^{*}(G^{x}_{x})$ and its GNS representation is unitarily equivalent to the left regular representation, it follows that $\ker(C^{*}(G_{x}^{x})\to C_{r(\mathcal{X})}^{*}(G^{x}_{x})))\subseteq \ker(\lambda_{G^{x}_{x}})$ and therefore $\ker(C^{*}_{\widehat{r(\mathcal{X})}}(G^{x}_{x})\to C^{*}_{r(\mathcal{X})}(G^{x}_{x})) = 0$, proving the ``if'' direction and thus the lemma. 
\end{proof}

To simplify the statement of our main result, we will make a definition.

\begin{dfn}
\label{dfn:G-wkcomtain} Fix $x\in G^{0}$, write $\mathcal{X} = \pi^{-1}_{ess}(x)$ and choose a bisection $U_{g}$ for every $g\in G^{x}_{x}$. We say $\lambda_{G^{x}_{x}}$ is \emph{$G$-weakly contained} in $\lambda_{G^{x}_{x}/\mathcal{X}}$ if for every $\epsilon > 0$, finite set $F\subseteq G^{x}_{x}\setminus \{x\}$ and open neighbourhood $U$ of $x$ with $U\subseteq \bigcap_{g\in F}r(U_{g})\cap s(U_{g})$, there are vectors $\psi_{i}\in \ell^{2}(G^{U}_{x_{i}}/X_{i})$, where $X_{i}\in\mathcal{X}(x_{i})$, $x_{i}\in U$ for $i\leq n$ such that 

    $$\sum^{n}_{i=1}\langle \psi_{i}, \psi_{i}\rangle = 1\text{ and }|\sum^{n}_{i=1}\langle 1_{U_{g}}*\psi_{i},\psi_{i}\rangle|\leq\epsilon,$$ for all $g\in F$.

We write $\lambda_{G^{x}_{x}}\prec_{G}\lambda_{G^{x}_{x}/\mathcal{X}}$.
\end{dfn}

By Lemma \ref{lem:groupidealcharlocwkcont} applied to $S = \tilde{G}_{ess}^{0}$, $\lambda_{G^{x}_{x}}\prec_{G}\lambda_{G^{x}_{x}/\mathcal{X}}$ if and only if $\ker(C^{*}_{\widehat{r(\mathcal{X})}}(G^{x}_{x})\to C^{*}_{r(\mathcal{X})}(G^{x}_{x})) = 0$, so the definition is independent of the bisections $U_{g}$ chosen.

Moreover, we have the following corollary.

\begin{cor}
    Let $G$, $H$ be \'etale groupoids such that $(G, \{x\})\simeq (H, \{y\})$ for some $x\in G^{0}$, $y\in H^{0}$. Then, $\lambda_{G^{x}_{x}}\prec_{G}\lambda_{G^{x}_{x}/\mathcal{X}}$ if and only if $\lambda_{H^{y}_{y}}\prec_{H}\lambda_{H^{y}_{y}/\mathcal{Y}}$.
\end{cor}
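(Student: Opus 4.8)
The plan is to translate each $G$-weak containment condition into a C*-algebraic kernel condition and then invoke Proposition \ref{prp:dependsongerm} verbatim. All the analytic content has already been assembled, so the corollary should drop out by concatenating three biconditionals.

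First I would recall the equivalence recorded immediately after Definition \ref{dfn:G-wkcomtain}: by Lemma \ref{lem:groupidealcharlocwkcont} applied with the choice $S = \tilde{G}^{0}_{ess}$, one has
\[
\lambda_{G^{x}_{x}}\prec_{G}\lambda_{G^{x}_{x}/\mathcal{X}} \quad\Longleftrightarrow\quad \ker\big(C^{*}_{\widehat{r(\mathcal{X})}}(G^{x}_{x})\to C^{*}_{r(\mathcal{X})}(G^{x}_{x})\big) = 0,
\]
where $\mathcal{X} = \pi^{-1}_{ess}(x)$, and, running the identical reasoning for $H$ and $y$,
\[
\lambda_{H^{y}_{y}}\prec_{H}\lambda_{H^{y}_{y}/\mathcal{Y}} \quad\Longleftrightarrow\quad \ker\big(C^{*}_{\widehat{r(\mathcal{Y})}}(H^{y}_{y})\to C^{*}_{r(\mathcal{Y})}(H^{y}_{y})\big) = 0,
\]
with $\mathcal{Y} = \pi^{-1}_{ess}(y)$. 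Next, since $(G,\{x\})\simeq (H,\{y\})$, Proposition \ref{prp:dependsongerm} asserts precisely that the two kernels on the right-hand sides of these equivalences vanish simultaneously. Chaining the three biconditionals then gives $\lambda_{G^{x}_{x}}\prec_{G}\lambda_{G^{x}_{x}/\mathcal{X}}$ if and only if $\lambda_{H^{y}_{y}}\prec_{H}\lambda_{H^{y}_{y}/\mathcal{Y}}$, which is the assertion of the corollary.

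There is essentially no obstacle left to overcome: all the substance is already packaged into Proposition \ref{prp:dependsongerm} (whose proof rests on the germ-invariance of the restriction norms from Corollary \ref{cor:germnormiso} together with the isotropy fibre computation of Theorem \ref{thm:isofibre}) and into the representation-theoretic reformulation of $\prec_{G}$ supplied by Lemma \ref{lem:groupidealcharlocwkcont}. The only point deserving a moment's attention is to confirm that $S = \tilde{G}^{0}_{ess}$ (respectively $S = \tilde{H}^{0}_{ess}$) genuinely satisfies the faithfulness hypothesis of Lemma \ref{lem:groupidealcharlocwkcont}, so that the two displayed equivalences are legitimate; but this is exactly the remark following Definition \ref{dfn:G-wkcomtain}, so nothing new is required and the proof is complete.
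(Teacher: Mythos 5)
Your proposal is correct and follows exactly the paper's intended argument: reduce $G$-weak containment to vanishing of $\ker(C^{*}_{\widehat{r(\mathcal{X})}}(G^{x}_{x})\to C^{*}_{r(\mathcal{X})}(G^{x}_{x}))$ via Lemma \ref{lem:groupidealcharlocwkcont} with $S=\tilde{G}^{0}_{ess}$, then invoke Proposition \ref{prp:dependsongerm}. You have in fact identified the right lemma where the paper's one-line proof cites Lemma \ref{lem:groupidealchar} (evidently a slip for Lemma \ref{lem:groupidealcharlocwkcont}, since only the latter involves the set $S$), and your check of the faithfulness hypothesis for $S=\tilde{G}^{0}_{ess}$ is a reasonable extra precaution.
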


\begin{proof}
    This is an immediate application of Proposition \ref{prp:dependsongerm} and Lemma \ref{lem:groupidealchar} applied to $S = \tilde{G}^{0}_{ess}$.
\end{proof}

\begin{thm}
\label{thm:vsingidealcharlocwkcont_ess}
    Let $G$ be an \'etale groupoid. Then, $J = \{0\}$ if and only if $\lambda_{G^{x}_{x}}\prec_{G}\lambda_{G^{x}_{x}/\mathcal{X}}$ for all $x\in G^{0}$.
\end{thm}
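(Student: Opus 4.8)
The plan is to read the theorem off from the isotropy-fibre calculation in Theorem \ref{thm:isofibre} together with the representation-theoretic reformulation of $\prec_G$ recorded immediately after Definition \ref{dfn:G-wkcomtain}. Almost everything is citation; the only genuinely new content is to show that $J=\{0\}$ is equivalent to the fibrewise vanishing $p(J_x)=0$ for every $x\in G^{0}$, where $p\colon C^{*}_{r(x)}(G^{x}_{x})\to C^{*}_{\widehat{r(\mathcal{X})}}(G^{x}_{x})$ is the projection from Theorem \ref{thm:isofibre} and $\mathcal{X}=\pi^{-1}_{ess}(x)$. So I would organize the argument as a short chain of equivalences.

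First I would reduce the global statement to a fibrewise one, exactly as in the proof of Corollary \ref{cor:nonzerofibre}. Since $J$ is a closed two-sided ideal, $J\neq\{0\}$ means there is a nonzero $f\in J$, and then $f|_{G_x}\neq 0$ for some $x\in G^{0}$ (a function is zero precisely when its restriction to every $G_x$ vanishes). By Lemma \ref{lem:simplelem} this is equivalent to $\lambda_{G^{x}_{x}}(\eta_x(f^{*}f))\neq 0$, and $\eta_x(f^{*}f)\in J_x$ because $J$ is an ideal; hence $\lambda_{G^{x}_{x}}(J_x)\neq 0$. Conversely $\lambda_{G^{x}_{x}}(J_x)\neq 0$ forces $J_x=\eta_x(J)\neq\{0\}$ and so $J\neq\{0\}$. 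This gives the first equivalence: $J=\{0\}$ if and only if $\lambda_{G^{x}_{x}}(J_x)=0$ for every $x$.

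Next I would upgrade ``$\lambda_{G^{x}_{x}}(J_x)=0$'' to ``$p(J_x)=0$''. Because $\widehat{r(\mathcal{X})}=\max\{r(\mathcal{X}),r\}\ge r$, the left regular representation factors as $\lambda_{G^{x}_{x}}=\bar\lambda\circ p$ for some $\bar\lambda\colon C^{*}_{\widehat{r(\mathcal{X})}}(G^{x}_{x})\to B(\ell^{2}(G^{x}_{x}))$, so $\lambda_{G^{x}_{x}}(J_x)=\bar\lambda(p(J_x))$ and the forward implication is immediate. For the reverse implication I would invoke Theorem \ref{thm:isofibre}, which identifies $p(J_x)$ with the kernel of the natural map $C^{*}_{\widehat{r(\mathcal{X})}}(G^{x}_{x})\to C^{*}_{r(\mathcal{X})}(G^{x}_{x})$. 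Here the crucial point is that $\widehat{r(\mathcal{X})}$ is the \emph{maximum} of the two norms, so the representation $\sigma_{r(\mathcal{X})}\oplus\lambda_{G^{x}_{x}}$ is isometric on $C^{*}_{\widehat{r(\mathcal{X})}}(G^{x}_{x})$, whence $\ker\sigma_{r(\mathcal{X})}\cap\ker\bar\lambda=\{0\}$. Since $\ker\sigma_{r(\mathcal{X})}=p(J_x)$, any nonzero element of $p(J_x)$ escapes $\ker\bar\lambda$, giving $\bar\lambda(p(J_x))=\lambda_{G^{x}_{x}}(J_x)\neq 0$. This joint-faithfulness step, resting on the max-norm structure, is the one place I expect to have to be careful, and it is the heart of the equivalence.

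Finally I would combine these with the reformulation after Definition \ref{dfn:G-wkcomtain}: by Lemma \ref{lem:groupidealcharlocwkcont} applied to $S=\tilde{G}^{0}_{ess}$, the relation $\lambda_{G^{x}_{x}}\prec_{G}\lambda_{G^{x}_{x}/\mathcal{X}}$ holds if and only if $\ker(C^{*}_{\widehat{r(\mathcal{X})}}(G^{x}_{x})\to C^{*}_{r(\mathcal{X})}(G^{x}_{x}))=\{0\}$, i.e. if and only if $p(J_x)=0$. Chaining the three equivalences yields $J=\{0\}$ if and only if $\lambda_{G^{x}_{x}}\prec_{G}\lambda_{G^{x}_{x}/\mathcal{X}}$ for all $x\in G^{0}$, as claimed. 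The substantive work lives entirely in Theorem \ref{thm:isofibre} and Lemma \ref{lem:groupidealcharlocwkcont}; the present argument is a brief assembly whose only subtlety is the joint-faithfulness observation above.
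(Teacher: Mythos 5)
Your proposal is correct and follows essentially the same route as the paper: the paper's (very terse) proof just combines the last clause of Theorem \ref{thm:vsingidealcharwkcont} (namely $J=\{0\}$ iff $\ker(C^{*}_{\widehat{r(\mathcal{X})}}(G^{x}_{x})\to C^{*}_{r(\mathcal{X})}(G^{x}_{x}))=0$ for all dangerous $x$, which rests on Theorem \ref{thm:isofibre}, Lemma \ref{lem:simplelem}, and exactly the max-norm joint-faithfulness point you isolate, cf.\ Corollary \ref{cor:nonzerofibre} and Proposition \ref{prp:gideal=rgideal}) with Lemma \ref{lem:groupidealcharlocwkcont} applied to $S=\tilde{G}^{0}_{ess}$. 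You have simply unpacked the intermediate steps the paper leaves implicit, including the harmless passage from $x\in D_{0}$ to all $x\in G^{0}$.
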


\begin{proof}
    Again, this is an immediate application of Proposition \ref{prp:dependsongerm} and Lemma \ref{lem:groupidealchar} applied to $S = \tilde{G}^{0}_{ess}$.
\end{proof}

Here is another description of $G$-weak containment in terms of $G$ only.

\begin{thm}
\label{thm:vsingidealcharlocwkcont_cont}
    Let $G$ be an \'etale groupoid and $x\in G^{0}$. For each $g\in G^{x}_{x}$ choose a bisection $U_{g}$. We have $\lambda_{G^{x}_{x}}\prec_{G}\lambda_{G^{x}_{x}/\mathcal{X}}$ if and only if for every $\epsilon > 0$, finite set $F\subseteq G^{x}_{x}\setminus \{x\}$ and open neighbourhood $U\subseteq \bigcap_{g\in G}r(U_{g})\cap s(U_{g})$ of $x$, there are vectors $\psi_{i}\in \ell^{2}(G^{U}_{x_{i}})$, where $x_{i}\in C\cap U$ for all $i\leq n$ such that 

    $$\sum^{n}_{i=1}\langle \psi_{i}, \psi_{i}\rangle = 1 \text{ and } |\sum^{n}_{i=1}\langle 1_{U_{g}}*\psi_{i}, \psi_{i}\rangle |\leq \epsilon$$ for all $g\in F$.
\end{thm}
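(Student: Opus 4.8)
The plan is to apply Lemma \ref{lem:groupidealcharlocwkcont} with the choice $S = \iota(C)\subseteq \tilde{G}^{0}_{ess}$, the set of Hausdorff units embedded as singletons. Recall from the remark following Definition \ref{dfn:G-wkcomtain} that the same lemma applied to $S = \tilde{G}^{0}_{ess}$ already shows that $\lambda_{G^{x}_{x}}\prec_{G}\lambda_{G^{x}_{x}/\mathcal{X}}$ is equivalent to $\ker(C^{*}_{\widehat{r(\mathcal{X})}}(G^{x}_{x})\to C^{*}_{r(\mathcal{X})}(G^{x}_{x})) = 0$. Consequently, once I run the lemma with $S=\iota(C)$, the condition it produces will be a second characterization of the vanishing of this same kernel, hence equivalent to $\prec_{G}$; it then only remains to check that this second condition is literally the one in the statement.

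First I would verify the faithfulness hypothesis of Lemma \ref{lem:groupidealcharlocwkcont} for $S = \iota(C)$: for each $U$ in the neighbourhood basis of $x$, the representation $\bigoplus_{X\in \iota(C)\cap \pi^{-1}_{ess}(U)}\lambda_{\tilde{G}_{X}}$ must be faithful for the reduced $C^{*}$-algebra of the essential Hausdorff reduction $(H|_{U})\cdot \pi^{-1}_{ess}(U)$ (the groupoid $K_{1}$ appearing in the proof of that lemma). Since $\tilde{G}^{0}_{ess} = \overline{\iota(C)}$ and $\pi^{-1}_{ess}(U) = \tilde{G}^{0}_{ess}\cap \pi^{-1}(U)$ is open in $\tilde{G}^{0}_{ess}$, the units $\iota(C\cap U)$ are dense in $\pi^{-1}_{ess}(U)$. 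Because this reduction is a Hausdorff \'etale groupoid, the map $X\mapsto \|\lambda_{X}(f)\|$ is lower semicontinuous for $f\in C_{c}$, so the reduced norm equals the supremum of $\|\lambda_{X}(f)\|$ over any dense set of units; thus $\bigoplus_{X\in \iota(C\cap U)}\lambda_{X}$ is isometric on $C_{c}$ and extends to a faithful representation of the reduced $C^{*}$-algebra. This is exactly the hypothesis of the lemma for $S=\iota(C)$.

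Next I would translate the condition delivered by the lemma into the stated form. For $X = \iota(c) = \{c\}$ with $c\in C$, the subgroup is trivial, so the coset space $G^{U}_{c}/X$ is just $G^{U}_{c}$ and $\ell^{2}(G^{U}_{x_{i}}/X_{i})$ becomes $\ell^{2}(G^{U}_{x_{i}})$ with $x_{i} = \pi(X_{i})\in C\cap U$; moreover, as recorded in Section \ref{ss:cover}, $\lambda_{\{c\}}$ restricted to the image of $\mathscr{C}_{c}(G)$ is unitarily equivalent to $\lambda_{c}$, and the action of $1_{U_{g}}$ is the convolution $1_{U_{g}}*(\cdot)$ in $G$. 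Hence the condition produced by Lemma \ref{lem:groupidealcharlocwkcont} with $S=\iota(C)$ reads: for every $\epsilon>0$, finite $F\subseteq G^{x}_{x}\setminus\{x\}$ and $U\subseteq \bigcap_{g\in F} r(U_{g})\cap s(U_{g})$, there exist $\psi_{i}\in \ell^{2}(G^{U}_{x_{i}})$ with $x_{i}\in C\cap U$ satisfying $\sum_{i}\langle \psi_{i},\psi_{i}\rangle = 1$ and $|\sum_{i}\langle 1_{U_{g}}*\psi_{i},\psi_{i}\rangle|\leq \epsilon$ for all $g\in F$. Combined with the equivalence $\prec_{G}\Leftrightarrow \ker(\cdots)=0$ noted above, this is precisely the theorem.

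The main obstacle will be the faithfulness verification, and its two delicate points are worth flagging. The first is ensuring that the relevant reduced algebra is built from the \emph{essential} preimage $\pi^{-1}_{ess}(U)$ rather than $\pi^{-1}(U)$: only then is the unit space exactly $\pi^{-1}_{ess}(U) = \overline{\iota(C)}\cap \pi^{-1}(U)$, so that the dense family $\iota(C\cap U)$ genuinely exhausts the units (for the full preimage $\iota(C)$ would miss the non-essential units entirely and faithfulness would fail). The second is the standard Hausdorff-groupoid fact that lower semicontinuity of $X\mapsto \|\lambda_{X}(f)\|$ forces the reduced norm to be attained on any dense family of units. Once these are in place, the remaining identifications—trivial-subgroup cosets matching left regular representations at Hausdorff units, and $1_{U_{g}}$ acting by convolution—are routine bookkeeping.
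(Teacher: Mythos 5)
Your proposal is correct and follows the paper's own route exactly: the paper's proof is the one-line instruction ``apply Lemma \ref{lem:groupidealcharlocwkcont} to $S=\iota(C)$,'' and you supply precisely the details that instruction leaves implicit (density of $\iota(C\cap U)$ in $\pi^{-1}_{ess}(U)$, faithfulness of regular representations over a dense set of units in a Hausdorff \'etale groupoid, and the identification $\ell^{2}(G^{U}_{c}/\{c\})=\ell^{2}(G^{U}_{c})$). Your flag that the faithfulness hypothesis must be read against the essential reduction $(H|_{U})\cdot\pi^{-1}_{ess}(U)$, as in the groupoid $K_{1}$ of the lemma's proof, is the right reading of the lemma and matches how the paper uses it.
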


\begin{proof}
    Apply Lemma \ref{lem:groupidealcharlocwkcont} to $S = \iota(C)$.
\end{proof}

\begin{rmk}
\label{rmk:torfree}
    Using the characterization for $\lambda_{G^{x}_{x}}\prec_{G}\lambda_{G^{x}_{x}/\mathcal{X}}$ in Theorem \ref{thm:vsingidealcharlocwkcont_cont} and a similar application of \cite[Lemma~1.9]{NS23} as in \cite[Proposition~1.8]{NS23}, it is easy to see $\lambda_{G^{x}_{x}}$ is not $G$-weakly contained in $\lambda_{G^{x}_{x}/\mathcal{X}}$ when $G^{x}_{x}$ is torsion free and $\{x\}\notin \mathcal{X}$. In particular, $J_{x}\neq \{0\}$.
\end{rmk}

If $G$ is minimal, then the number of representations needed to check $G$-weak containment reduces considerably. 

\begin{thm}
\label{thm:vsingidealcharlocwkcont_min}
    Let $G$ be a minimal \'etale groupoid and $x\in G^{0}$. Choose $X\in \mathcal{X} = \pi^{-1}_{ess}(x)$ and for each $g\in G^{x}_{x}$, choose a bisection $U_{g}$.
    
    Then, $\lambda_{G^{x}_{x}}\prec_{G}\lambda_{G^{x}_{x}/\mathcal{X}}$ if and only if for every $\epsilon > 0$, finite set $F\subseteq G^{x}_{x}\setminus \{x\}$ and open neighbourhood $U\subseteq \bigcap_{g\in G}r(U_{g})\cap s(U_{g})$ of $x$, there are vectors $\psi_{i}\in \ell^{2}(G^{U}_{x}/X)$ such that 

    $$\sum^{n}_{i=1}\langle \psi_{i}, \psi_{i}\rangle = 1 \text{ and } |\sum^{n}_{i=1}\langle 1_{U_{g}}*\psi_{i}, \psi_{i}\rangle |\leq \epsilon$$ for all $g\in F$.
\end{thm}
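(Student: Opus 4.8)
The plan is to deduce this from Lemma \ref{lem:groupidealcharlocwkcont}, applied with $S$ equal to the orbit of $X$ inside $\tilde G_{ess}$, and then to collapse the resulting family of subgroups down to the single subgroup $X$ using the unitary equivalences between the associated quasi-regular representations. Two things must be checked: first, that minimality forces the orbit of $X$ to be dense in $\tilde G^{0}_{ess}$, which is precisely what makes this $S$ admissible in the lemma; and second, that the vectors the lemma produces, a priori lying in $\ell^{2}(G^{U}_{x_{i}}/X_{i})$ for various $X_{i}$ in the orbit of $X$, can be pulled back to $\ell^{2}(G^{U}_{x}/X)$ without disturbing any of the relevant inner products.

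For the density statement I would let $L = \overline{\tilde G_{ess}\cdot X}$ be the closure in $\tilde G^{0}_{ess}$ of the orbit of $X$; it is closed and invariant. Since $\pi_{ess}$ is proper (hence a closed map) and equivariant, $\pi_{ess}(L)$ is a closed invariant subset of $G^{0}$ containing $x=\pi(X)$, so by minimality $\pi_{ess}(L)=G^{0}$. The crucial point is that the fibre over a Hausdorff unit $c\in C$ is a single point: if $x_{\lambda}\to c$ then, $c$ being Hausdorff, the net has no other limit points in $G$, so any Fell limit of $(\{x_{\lambda}\})$ has accumulation set $\{c\}$, i.e. $\pi^{-1}(c)=\{\{c\}\}$. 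Hence for each $c\in C$ there is $Y\in L$ with $\pi(Y)=c$, forcing $Y=\{c\}$; thus $\iota(C)\subseteq L$ and therefore $L\supseteq \overline{\iota(C)}=\tilde G^{0}_{ess}$. Consequently $S\cap\pi^{-1}_{ess}(U)=(\tilde G_{ess}\cdot X)\cap\pi^{-1}_{ess}(U)$ is dense in the open set $\pi^{-1}_{ess}(U)$ for every $U\in\mathcal{U}$, and exactly as for $S=\iota(C)$ in the proof of Theorem \ref{thm:vsingidealcharlocwkcont_cont}, density of this set of units makes the representation $\oplus_{X_{i}}\lambda_{\tilde G_{X_{i}}}$ faithful on the relevant reduction. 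So $S=\tilde G_{ess}\cdot X$ satisfies the hypothesis of Lemma \ref{lem:groupidealcharlocwkcont}.

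Applying the lemma with this $S$ shows that $\lambda_{G^{x}_{x}}\prec_{G}\lambda_{G^{x}_{x}/\mathcal{X}}$ is equivalent to the weak-containment statement with vectors $\psi_{i}\in\ell^{2}(G^{U}_{x_{i}}/X_{i})$ where each $X_{i}$ lies in the orbit of $X$. To collapse to the single subgroup, I would fix $X_{i}=\underline{g_{i}}X\underline{g_{i}}^{-1}$ with $\underline{g_{i}}\in\tilde G_{ess}$, $s(\underline{g_{i}})=X$, $r(\underline{g_{i}})=X_{i}$, writing $\underline{g_{i}}=g_{i}X$ with $g_{i}\in G$, $s(g_{i})=x$, $r(g_{i})=x_{i}$. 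Right multiplication by $\underline{g_{i}}^{-1}$ gives the standard unitary $W_{i}\colon \ell^{2}((\tilde G_{ess})_{X})\to \ell^{2}((\tilde G_{ess})_{X_{i}})$ implementing $\lambda_{\tilde G_{X}}\cong\lambda_{\tilde G_{X_{i}}}$; since $r(\underline h\,\underline{g_{i}}^{-1})=r(\underline h)$, it restricts to a unitary $\ell^{2}(G^{U}_{x}/X)\to \ell^{2}(G^{U}_{x_{i}}/X_{i})$ that preserves the range constraint imposed by $U$. Putting $\phi_{i}=W_{i}^{-1}\psi_{i}\in\ell^{2}(G^{U}_{x}/X)$ and applying the intertwining identity $W_{i}\lambda_{\tilde G_{X}}(a)=\lambda_{\tilde G_{X_{i}}}(a)W_{i}$ to $a=\iota_{ess}(1_{U_{g}})$ yields $\langle 1_{U_{g}}*\psi_{i},\psi_{i}\rangle=\langle 1_{U_{g}}*\phi_{i},\phi_{i}\rangle$ and $\langle\psi_{i},\psi_{i}\rangle=\langle\phi_{i},\phi_{i}\rangle$, so both the normalization and the smallness conditions are preserved verbatim and $(\phi_{i})$ witnesses the single-subgroup statement. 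The converse is immediate, since $X$ itself belongs to its orbit, so the single-$X$ family is a special case of the orbit family; combining the two directions with the lemma gives the equivalence. The main obstacle here is the density of the orbit: once the singleton-fibre-over-Hausdorff-points argument is set up, the remainder is routine bookkeeping with the intertwining unitaries.
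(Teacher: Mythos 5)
Your proof is correct, and it rests on the same key lemma as the paper's (Lemma \ref{lem:groupidealcharlocwkcont}), but you instantiate it differently. The paper's proof is a one-liner: minimality of $G$ implies minimality of $\tilde{G}_{ess}$ (quoting \cite[Lemma~5.1]{BGHL25}), hence the single representation $\lambda_{\tilde{G}_{X}}$ is already faithful on the relevant reductions, and one applies the lemma directly with $S=\{X\}$; no collapsing step is needed because the lemma then produces vectors in $\ell^{2}(G^{U}_{x}/X)$ from the start. You instead take $S$ to be the orbit of $X$, prove its density by hand (your argument via properness of $\pi_{ess}$, invariance of $\pi_{ess}(L)$, and the singleton fibre over Hausdorff units is in effect a proof of the cited minimality lemma, so nothing is lost), and then transport the resulting vectors back to $\ell^{2}(G^{U}_{x}/X)$ with the right-translation unitaries $W_{i}$. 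That bookkeeping is sound: right multiplication by $\underline{g_{i}}^{-1}$ fixes the range of a coset, hence preserves the constraint $r(\cdot)\in\pi^{-1}(U)$, and commutes with left convolution by $\iota_{ess}(1_{U_{g}})$, so all the inner products are preserved, and the converse direction is indeed trivial since $X$ lies in its own orbit. The trade-off is that your route only needs the softer fact that a \emph{dense} family of units yields a faithful representation of the reductions appearing in the lemma, rather than faithfulness of a single $\lambda_{\tilde{G}_{X}}$ there, at the cost of the extra step with the $W_{i}$; the paper's route is shorter but leans on the external minimality result for $\tilde{G}_{ess}$.
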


\begin{proof}
    Since $G$ is minimal, so is $\tilde{G}_{ess}$ (\cite[Lemma~5.1]{BGHL25}) and therefore $\lambda_{G/X}$ for any $X\in \pi^{-1}_{ess}(x)$, $x\in G^{0}$ is a faithful representation for $C^{*}_{r}(\tilde{G}_{ess})$. So, apply Lemma \ref{lem:groupidealcharlocwkcont} to $S = \{X\}$.
\end{proof}

\subsection{The isotropy fibres of $J\cap\mathscr{C}_{c}(G)$ and a characterization of the singular ideal intersection property}
\label{ss:isofibrealgsing}

 Let's identify the isotropy fibres of $J\cap\mathscr{C}_{c}(G)$ using an explicit construction. We will use this later to characterize the largest class of \'etale groupoid C*-alebras (that can be defined in a certain way) for which the property $J\neq \{0\}\implies J\cap\mathscr{C}_{c}(G)\neq \{0\}$ holds in terms of a property of the isotropy group C*-algebras. To see that it is the largest class, we will need the construction in Section \ref{s:nonHDconstruct}, however.

\begin{prp}
\label{prp:algisofibre}
    Let $G$ be an \'etale groupoid, $x\in G^{0}$ and $J$ its singular ideal. Then, $\eta_{x}(J\cap\mathscr{C}_{c}(G)) = J_{G^{x}_{x}, \pi^{-1}_{ess}(x)}\cap \mathbb{C}[G^{x}_{x}]$.
\end{prp}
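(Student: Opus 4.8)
The plan is to prove the two inclusions separately, after first observing that the right-hand side is exactly $\{a\in\mathbb{C}[G^x_x] : q_x(a)=0\}$, where $q_x$ is the summation $*$-homomorphism of Theorem \ref{thm:isofibre}. Indeed, on $\mathbb{C}[G^x_x]$ the map $q_x$ sends $a$ to the function $Y\mapsto\sum_{y\in Y}a(y)$ on $G^x_x\cdot\pi^{-1}_{ess}(x)$, so by Definition \ref{dfn:gpideal} together with the equivalence $(1)\Leftrightarrow(2)$ of Lemma \ref{lem:alggroupidealchar}, its kernel inside $\mathbb{C}[G^x_x]$ is precisely $J_{G^x_x,\pi^{-1}_{ess}(x)}\cap\mathbb{C}[G^x_x]$. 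Write $\mathcal{X}=\pi^{-1}_{ess}(x)$ throughout.

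For the inclusion $\subseteq$, I would restrict the commuting square used in Theorem \ref{thm:isofibre} (coming from Proposition \ref{prp:commdiagcover}) to the dense $*$-subalgebras, yielding $q_x\circ\eta_x=\eta_{\mathcal{X}}\circ\iota_{ess}$ on $\mathscr{C}_c(G)$, where $\iota_{ess}(f)(\underline g)=\sum_{h\in\underline g}f(h)$ and $\eta_{\mathcal{X}}$ is restriction to the fibre. If $f\in J\cap\mathscr{C}_c(G)$ then $\iota_{ess}(f)=0$, whence $q_x(\eta_x(f))=0$; since $\eta_x(f)=f|_{G^x_x}\in\mathbb{C}[G^x_x]$ (finitely supported, as $G^x_x$ is discrete and $f$ is compactly supported), this is the required containment.

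The reverse inclusion is the heart of the matter, and I would prove it by an explicit construction. Given $a\in\mathbb{C}[G^x_x]$ with $q_x(a)=0$, set $F=\mathrm{supp}(a)$, fix a local groupoid $H$ about $G^x_x$ (Proposition \ref{prp:germexists}), and choose for each $g\in F$ an open bisection $U_g\subseteq H$ with $U_g\cap G^x_x=\{g\}$; for $\phi\in C_c(G^0)$ with $\phi(x)=1$ supported near $x$, put $f=\sum_{g\in F}a(g)\,(\phi\circ s)|_{U_g}\in\mathscr{C}_c(G)$, so that $\eta_x(f)=a$. To show $f\in J$ I would use that $f\in J$ iff $\iota_{ess}(f)=0$, that $\iota_{ess}(f)$ is continuous on the Hausdorff groupoid $\tilde G_{ess}$, and that the singletons $\{h\}$ with $s(h)\in C$ are dense there; thus it suffices to verify $f(h)=0$ on a dense set of Hausdorff points. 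A direct computation gives, for a coset $\underline g=g'X'$ over a point $z$, the formula $\iota_{ess}(f)(g'X')=\phi(z)\sum_{g\in F:\,k_g(z)\in g'X'}a(g)$, where $k_g(z)$ denotes the unique point of $U_g$ with source $z$. Over $z=x$ one has $k_g(x)=g$, so this sum equals $\sum_{h\in g'X'}a(h)=0$ for every $X'\in\mathcal{X}$ by hypothesis; hence $\iota_{ess}(f)$ vanishes on the entire fibre over $x$.

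The main obstacle is to upgrade this to vanishing at the nearby Hausdorff points, equivalently on all of $\tilde G_{ess}$. There the collision pattern $\{g:k_g(z)=g'\}$ that governs $f(g')$ depends on the chosen bisections, and an arbitrary choice only forces this partition to refine the coset partition of the relevant limit subgroup, which is insufficient. To control it I would choose the $U_g$ compatibly inside $H$ so that $g\mapsto k_g(z)$ is multiplicative where defined; the collision partition over each Hausdorff $z$ is then the partition into cosets of the subgroup $X_z=\{g\in G^x_x:k_g(z)=z\}$, and I would identify $X_z$, via the Fell-limit-set description of $\mathcal{X}=\pi^{-1}_{ess}(x)$ in Proposition \ref{prp:fell}, with a member of $\mathcal{X}$ for a dense set of Hausdorff $z$ approaching $x$. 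The hypothesis $q_x(a)=0$ then forces each coset-sum, hence $f(g')$, to vanish on this dense set, giving $\iota_{ess}(f)=0$ and $f\in J\cap\mathscr{C}_c(G)$ with $\eta_x(f)=a$. Making the identification of $X_z$ with an element of $\mathcal{X}$ precise, and establishing the required density, is the step I expect to demand the most care.
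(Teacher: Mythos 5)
Your identification of the right-hand side with $\ker(q_{x})\cap\mathbb{C}[G^{x}_{x}]$, your proof of the inclusion $\subseteq$ via the commuting square, and your candidate preimage $f=\sum_{g\in F}a(g)(\phi\circ s)|_{U_{g}}$ all match the paper's proof, as does the computation of $\iota_{ess}(f)$ on the fibre over $x$. The gap is exactly in the step you flag as needing care, and the mechanism you propose for it would not work. For a fixed Hausdorff point $z$ near $x$, the set $X_{z}=\{g:k_{g}(z)=z\}$ is an artifact of the chosen bisections: there is no reason it is a subgroup, no reason the collision relation $k_{g}(z)=k_{h}(z)$ is a coset equivalence even after a ``multiplicative'' choice (the groupoid near $G^{x}_{x}$ is not a transformation groupoid of $G^{x}_{x}$), and above all no reason $X_{z}$ belongs to $\pi^{-1}_{ess}(x)$ --- elements of $\pi^{-1}_{ess}(x)$ arise only as Fell limit sets of \emph{nets} in $C$, never from a single point. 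Your worry that an arbitrary choice ``only forces the partition to refine the coset partition'' is also misplaced: if $z_{\lambda}\to x$ is Fell-convergent to $X$, then $k_{g}(z_{\lambda})\to gX$ in $\tilde{G}$, and since each $U_{h}$ is an open bisection one checks that $k_{g}(z_{\lambda})=k_{h}(z_{\lambda})$ eventually if and only if $g^{-1}h\in X$; along such a net the pattern is eventually \emph{exactly} the coset pattern. The true difficulty is that an individual $z$ need not lie on a net converging to $x$ along which its collision pattern persists.

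What closes the argument (and what the paper does) is a global finiteness step that your sketch is missing. Take all $U_{i}$ with a common source set $W$, observe there are only finitely many ``Venn regions'' $\bigcap_{i\in I}U_{i}\setminus\bigcup_{j\notin I}U_{j}$, and shrink $W$ so that every region whose source set meets $C$ satisfies $x\in\overline{C_{I}}$, where $C_{I}$ is the Hausdorff part of its source set (if $x\notin\overline{C_{I}}$, replacing $W$ by $W\setminus\overline{C_{I}}$ empties that region's Hausdorff source set; finitely many iterations suffice). Then a net in $C_{I}$ converging to $x$ has, after passing to a subnet via Corollary \ref{cor:compactness}, a Fell limit $X\in\pi^{-1}_{ess}(x)$, and the Hausdorff-cover structure gives $\{g_{i}\}_{i\in I}=g_{i_{0}}X\cap F$, so the partial sum $\sum_{i\in I}a(g_{i})$ is a coset sum and vanishes on the whole region; regions whose source set misses $C$ contribute only a meagre set to $s(\{f\neq 0\})$. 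Without the shrinking and the compactness of the Fell topology, the hard inclusion is not established, so as written the proposal has a genuine gap at its central step.
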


\begin{proof}
The diagram 

$$
\begin{tikzcd}
\mathscr{C}_{c}(G) \arrow[r] \arrow[d] & C_{c}(\tilde{G}_{ess}) \arrow[d]        \\
{\mathbb{C}[G^{x}_{x}]} \arrow[r, "q"] & C_{c}(G^{x}_{x}\cdot \pi^{-1}_{ess}(x))
\end{tikzcd}$$ commutes and from Lemma \ref{lem:alggroupidealchar}, we have $\ker(q) = J_{G^{x}_{x}, \pi^{-1}_{ess}(x)}\cap\mathbb{C}[G^{x}_{x}]$. Therefore, $\eta_{x}(J\cap\mathscr{C}_{c}(G))\subseteq J_{G^{x}_{x}, \pi^{-1}_{ess}(x)}\cap\mathbb{C}[G^{x}_{x}]$

We prove the reverse inclusion. Let $b\in J_{G^{x}_{x}, \pi^{-1}_{ess}(x)}\cap\mathbb{C}[G^{x}_{x}]$, so that (by Lemma \ref{lem:alggroupidealchar}) $\sum_{h\in gX}b(h) = 0$ for all $X\in\pi_{ess}^{-1}(x)$ and $g\in G^{x}_{x}$. Let $\{g_{i}\}^{l}_{i=1} = \{g\in G^{x}_{x}: b(g)\neq 0\}$ and $\{U_{i}\}^{l}_{i=1}$ be open bisections such that $g_{i}\in U_{i}$ for all $i\leq l$ and $s(U_{i}) = s(U_{j}) =:W$ for all $i,j\leq l$. 

By choosing the $U_{i}$ small enough, we may assume that if $I\subseteq \{1,...,l\}$ satisfies $C_{I}:= s(\bigcap_{i\in I} U_{i}\setminus \bigcup_{j\notin I}U_{j})\cap C\neq \emptyset$, then $x\in \overline{s(\bigcap_{i\in I} U_{i}\setminus \bigcup_{j\notin I}U_{j})\cap C}^{G^{0}}$.

Let $\phi\in C_{c}(W)$ be non-zero at $x$, and define $f = \sum^{l}_{i=1}b(g_{i})(\phi\circ s)|_{U_{i}}.$ To show $f\in J$, it suffices to show $f(g) = 0$ if $g\in \bigcap_{i\in I} U_{i}\setminus \bigcup_{j\notin I}U_{j}$ and $C_{I}\neq\emptyset$. Let $(u_{\lambda})\subseteq C_{I}$ be a net converging to $x$. By taking a subnet if necessary, we can assume $u_{\lambda}$ converges to $X\in \pi_{ess}^{-1}(x)$. Then, for any $i_{0}\in I$, we have $\{g_{i}\}_{i\in I} = g_{i_{0}}X\cap \{g_{i}\}^{n}_{i=1}$, so that $f(g) = \sum_{i\in I}b(g_{i})\phi(s(g)) = \sum_{h\in g_{i_{0}}X}b(h)\phi(s(g)) = 0$. Hence, $f$ is in $J$.
\end{proof}

\begin{thm}
    \label{thm:valgidealfibres}
    Let $G$ be an \'etale groupoid. Then, $J\cap\mathscr{C}_{c}(G) = \{0\}$ if and only if $J_{G^{x}_{x}, \pi^{-1}_{ess}(x)}\cap \mathbb{C}[G^{x}_{x}] = 0$ for every $x\in D_{0}$.
\end{thm}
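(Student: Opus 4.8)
The plan is to derive the theorem almost entirely from Proposition \ref{prp:algisofibre}, which already identifies the isotropy fibres $\eta_{x}(J\cap\mathscr{C}_{c}(G)) = J_{G^{x}_{x},\pi^{-1}_{ess}(x)}\cap\mathbb{C}[G^{x}_{x}]$, combined with the pointwise faithfulness of the family $(\eta_{x})_{x\in G^{0}}$ recorded in Lemma \ref{lem:simplelem}. The restriction to $x\in D_{0}$ in the statement is only cosmetic, and the first task is to explain why.

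First I would observe that for $x\notin D_{0}$ the fibre intersection is automatically zero. By Corollary \ref{cor:edangptchar}, $x\notin D_{0}$ means exactly $\{x\}\in\pi^{-1}_{ess}(x)$, i.e. the trivial subgroup $\{e\}$ of $G^{x}_{x}$ lies in $\mathcal{X}:=\pi^{-1}_{ess}(x)$. For such $x$, the equivalence of $(1)$ and $(2)$ in Lemma \ref{lem:alggroupidealchar} shows that every $a\in J_{G^{x}_{x},\mathcal{X}}\cap\mathbb{C}[G^{x}_{x}]$ satisfies $\sum_{x'\in X}a(\gamma x')=0$ for all $\gamma$ and all $X\in\mathcal{X}$; choosing $X=\{e\}$ gives $a(\gamma)=0$ for every $\gamma$, hence $a=0$. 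Thus $J_{G^{x}_{x},\pi^{-1}_{ess}(x)}\cap\mathbb{C}[G^{x}_{x}]=0$ whenever $x\notin D_{0}$, so the condition ``$=0$ for all $x\in D_{0}$'' is equivalent to ``$=0$ for all $x\in G^{0}$''. It therefore suffices to prove the latter equivalence.

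The ``$\Rightarrow$'' direction is then immediate: if $J_{G^{x}_{x},\pi^{-1}_{ess}(x)}\cap\mathbb{C}[G^{x}_{x}]\neq 0$ for some $x$, then by Proposition \ref{prp:algisofibre} we have $\eta_{x}(J\cap\mathscr{C}_{c}(G))\neq 0$, so $J\cap\mathscr{C}_{c}(G)\neq\{0\}$. For ``$\Leftarrow$'' I argue contrapositively. Given a nonzero $f\in J\cap\mathscr{C}_{c}(G)$, viewing $f$ as a function on $G$ I select $g$ with $f(g)\neq 0$ and set $x = s(g)$, so that $f|_{G_{x}}\neq 0$. Since $J$ is an ideal and $\mathscr{C}_{c}(G)$ is a $*$-subalgebra, $f^{*}f\in J\cap\mathscr{C}_{c}(G)$, and Lemma \ref{lem:simplelem} gives $\lambda_{G^{x}_{x}}(\eta_{x}(f^{*}f))\neq 0$, whence $\eta_{x}(f^{*}f)\neq 0$. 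By Proposition \ref{prp:algisofibre} this nonzero element lies in $J_{G^{x}_{x},\pi^{-1}_{ess}(x)}\cap\mathbb{C}[G^{x}_{x}]$, which is therefore nonzero (and, by the previous paragraph, $x$ is forced into $D_{0}$).

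There is no genuinely hard step here: all the analytic content has already been absorbed into Proposition \ref{prp:algisofibre} (the fibre computation) and into the compressibility machinery that makes $\eta_{x}$ behave like a $*$-homomorphism. The only point demanding care is the reduction to $D_{0}$, where I must invoke the algebraic criterion of Lemma \ref{lem:alggroupidealchar} to see that membership of the trivial subgroup in $\pi^{-1}_{ess}(x)$ annihilates the group-algebra ideal intersection --- this is precisely what legitimises quantifying over $D_{0}$ rather than over all of $G^{0}$.
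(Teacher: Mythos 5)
Your proof is correct and follows essentially the same route as the paper: identify the fibres via Proposition \ref{prp:algisofibre} and use Lemma \ref{lem:simplelem} applied to $f^{*}f$ to detect a non-zero fibre. The only difference is that you spell out explicitly (via Corollary \ref{cor:edangptchar} and Lemma \ref{lem:alggroupidealchar}) why the fibre intersection vanishes automatically off $D_{0}$, a point the paper's two-line proof leaves implicit.
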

\begin{proof}
    By Proposition \ref{prp:algisofibre}, we have $\eta_{x}(J\cap\mathscr{C}_{c}(G)) =J_{G^{x}_{x}, \pi^{-1}_{ess}(x)}\cap \mathbb{C}[G^{x}_{x}]$.  Now, apply Lemma \ref{lem:simplelem} to some non-zero positive element in $J\cap\mathscr{C}_{c}(G)$ to show $J\cap\mathscr{C}_{c}(G)\neq 0$ if and only there is $x\in D_{0}$ such that $J_{G^{x}_{x}, \pi^{-1}_{ess}(x)}\cap \mathbb{C}[G^{x}_{x}] = \eta_{x}(J\cap\mathscr{C}_{c}(G)) \neq 0$, which proves the theorem.
\end{proof}

Now, we can apply our characterization of when $J_{\Gamma,\mathcal{X}}\cap\mathbb{C}[\Gamma] = \{0\}$ in Lemma \ref{lem:alggroupidealchar} $(3)$ to immediately obtain the following corollary of Theorem \ref{thm:valgidealfibres}.

\begin{cor}
\label{cor:valgidealcharspan}
    Let $G$ be an \'etale groupoid. Then, $J\cap\mathscr{C}_{c}(G) = \{0\}$ if and only if for every $x\in D_{0}$, there is no non-zero $a\in\mathbb{Z}[G^{x}_{x}]$ such that
    $$\sum_{h\in gX}a(h) = 0,\text{ for all }g\in G^{x}_{x},\text{ }X\in\pi^{-1}_{ess}(x).$$
\end{cor}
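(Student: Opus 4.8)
The corollary is an immediate combination of Theorem~\ref{thm:valgidealfibres} with the algebraic characterization in Lemma~\ref{lem:alggroupidealchar}. The plan is to take the equivalence already established in Theorem~\ref{thm:valgidealfibres}, namely that $J\cap\mathscr{C}_{c}(G)=\{0\}$ if and only if $J_{G^{x}_{x},\pi^{-1}_{ess}(x)}\cap\mathbb{C}[G^{x}_{x}]=\{0\}$ for every $x\in D_{0}$, and then rewrite the condition $J_{G^{x}_{x},\pi^{-1}_{ess}(x)}\cap\mathbb{C}[G^{x}_{x}]=\{0\}$ for a fixed $x$ into the stated linear-equation form.

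The key step is to apply Lemma~\ref{lem:alggroupidealchar} with $\Gamma=G^{x}_{x}$ and $\mathcal{X}=\pi^{-1}_{ess}(x)$. To do this I must first verify the hypotheses of that lemma are met: $G^{x}_{x}$ is a discrete group (the isotropy group is discrete by the \'etale assumption, since $G_{x}$ is discrete), and $\pi^{-1}_{ess}(x)$ is a closed set of subgroups of $G^{x}_{x}$ invariant under conjugation. The closedness in $\{0,1\}^{G^{x}_{x}}$ follows from Proposition~\ref{prp:rest=cosetgpd} (which identifies $\pi^{-1}(x)$ as a closed subset of $\{0,1\}^{G^{x}_{x}}$, with $\pi^{-1}_{ess}(x)$ a closed subset thereof), and conjugation-invariance was recorded in the discussion of $\mathcal{X}(x)$ in the introduction and follows from the openness of $r,s\colon\tilde{G}\to\tilde{G}^{0}$. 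With these hypotheses in place, the equivalence $(1)\Leftrightarrow(3)$ of Lemma~\ref{lem:alggroupidealchar} says precisely that $J_{G^{x}_{x},\pi^{-1}_{ess}(x)}\cap\mathbb{C}[G^{x}_{x}]\neq\{0\}$ if and only if there is a non-zero $a\in\mathbb{Z}[G^{x}_{x}]$ with $\sum_{h\in gX}a(h)=0$ for all $g\in G^{x}_{x}$ and all $X\in\pi^{-1}_{ess}(x)$.

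Negating and combining the two quantified equivalences then gives the result: $J\cap\mathscr{C}_{c}(G)=\{0\}$ if and only if for every $x\in D_{0}$ there is no such non-zero integer solution. I expect essentially no obstacle here, as this is a direct splicing of two already-proven results; the only thing requiring a moment's care is confirming that the indexing convention matches (the group-level statement in Lemma~\ref{lem:alggroupidealchar}(3) uses $\gamma\in\Gamma$ and $X\in\mathcal{X}$, which translates verbatim to $g\in G^{x}_{x}$ and $X\in\pi^{-1}_{ess}(x)$), and noting that $a(h)$ is the notation for the coefficient of $h$ in $a\in\mathbb{Z}[G^{x}_{x}]$, i.e.\ ``finitely supported integer solution $(a_{h})$'' in the introduction's phrasing. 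The proof is therefore a single invocation:

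\begin{proof}
    By Theorem~\ref{thm:valgidealfibres}, $J\cap\mathscr{C}_{c}(G)=\{0\}$ if and only if $J_{G^{x}_{x},\pi^{-1}_{ess}(x)}\cap\mathbb{C}[G^{x}_{x}]=\{0\}$ for every $x\in D_{0}$. Since $G^{x}_{x}$ is a discrete group and $\pi^{-1}_{ess}(x)$ is a closed (by Proposition~\ref{prp:rest=cosetgpd}) conjugation-invariant set of subgroups of $G^{x}_{x}$, we may apply the equivalence $(1)\Leftrightarrow(3)$ of Lemma~\ref{lem:alggroupidealchar} with $\Gamma=G^{x}_{x}$ and $\mathcal{X}=\pi^{-1}_{ess}(x)$. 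This gives that $J_{G^{x}_{x},\pi^{-1}_{ess}(x)}\cap\mathbb{C}[G^{x}_{x}]\neq\{0\}$ if and only if there is a non-zero $a\in\mathbb{Z}[G^{x}_{x}]$ with $\sum_{h\in gX}a(h)=0$ for all $g\in G^{x}_{x}$ and all $X\in\pi^{-1}_{ess}(x)$. Combining these two equivalences yields the corollary.
\end{proof}
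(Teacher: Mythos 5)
Your proof is correct and is exactly the route the paper takes: it combines Theorem~\ref{thm:valgidealfibres} with the equivalence $(1)\Leftrightarrow(3)$ of Lemma~\ref{lem:alggroupidealchar} applied to $\Gamma=G^{x}_{x}$ and $\mathcal{X}=\pi^{-1}_{ess}(x)$. The extra verification of the lemma's hypotheses (closedness and conjugation-invariance of $\pi^{-1}_{ess}(x)$) is a reasonable addition but not a departure from the paper's argument.
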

\begin{dfn}
\label{dfn:I&AI}
    Let $\Gamma$ be a discrete group and $\mathcal{X}$ a closed and invariant set of subgroups (as in Section \ref{ss:cosetgpds}). We will say $(\Gamma, \mathcal{X})\in \mathcal{I}$ if $J_{\Gamma,\mathcal{X}}\cap\mathbb{C}[\Gamma]\neq \{0\}$ or $J_{\Gamma,\mathcal{X}} = 0$.

    If $(\Gamma, \mathcal{X})\in \mathcal{I}$ for all closed and invariant sets of subgroups $\mathcal{X}$, we will say $\Gamma$ has \emph{Property $I$}, or the \emph{Intersection Property}.

    If for all closed invariant sets of subgroups $\mathcal{X}$, we have $J_{\Gamma,\mathcal{X}}\cap\mathbb{C}[\Gamma]\neq \{0\}$ if and only if $e\notin \mathcal{X}$, then we say $\Gamma$ has \emph{Property $AI$}, or the \emph{Automatic Intersection Property}.
\end{dfn}

Note that Property $AI$ implies Property $I$, but the converse does not hold. For instance, every discrete abelian group satisifies Property $I$ (Theorem \ref{thm:propertyAI&I}) and in Theorem \ref{thm:abeliancharAI}, we characterize when a discrete abelian group satisfies Property $AI$.

In \cite[Questions~4.11]{BGHL25} it was asked whether $J =0$ implies $J\cap\mathscr{C}_{c}(G) = 0$. We now prove this in the affirmitave whenever the isotropy and essential fibres belong to the class $\mathcal{I}$.

\begin{thm}
\label{thm:valgideal=videal}
Let $G$ be an \'etale groupoid, and assume $(G^{x}_{x}, \pi^{-1}_{ess}(x))\in\mathcal{I}$ for every $x\in D_{0}$. Then, $J = \{0\}$ if and only if $J\cap\mathscr{C}_{c}(G) = \{0\}$.

Suppose $G^{x}_{x}$ has Property $AI$ for some $x\in D_{0}$. Then, $J\cap\mathscr{C}_{c}(G)\neq\{0\}$.
\end{thm}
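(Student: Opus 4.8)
The plan is to reduce both assertions to the computation of the isotropy fibres of the algebraic part of the singular ideal. The key inputs are Proposition~\ref{prp:algisofibre}, which identifies $\eta_{x}(J\cap\mathscr{C}_{c}(G)) = J_{G^{x}_{x}, \pi^{-1}_{ess}(x)}\cap \mathbb{C}[G^{x}_{x}]$, together with its global consequence Theorem~\ref{thm:valgidealfibres}, and Corollary~\ref{cor:nonzerofibre}, which detects a nonzero singular ideal through the extremely dangerous points $D_{0}$. Throughout I will use that $\pi^{-1}_{ess}(x)$ is a closed conjugation-invariant set of subgroups of $G^{x}_{x}$, so that the group-theoretic notions of Definition~\ref{dfn:I&AI} apply, and that, by Corollary~\ref{cor:edangptchar}, membership $x\in D_{0}$ is precisely the statement that the trivial subgroup $\{x\}$ does not lie in $\pi^{-1}_{ess}(x)$.

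For the first assertion, the direction $J=\{0\}\Rightarrow J\cap\mathscr{C}_{c}(G)=\{0\}$ is immediate from $\mathscr{C}_{c}(G)\subseteq C^{*}_{r}(G)$. For the converse I would argue as follows. Assume $J\cap\mathscr{C}_{c}(G)=\{0\}$. By Theorem~\ref{thm:valgidealfibres} this gives $J_{G^{x}_{x}, \pi^{-1}_{ess}(x)}\cap \mathbb{C}[G^{x}_{x}]=0$ for every $x\in D_{0}$. For each such $x$ the hypothesis $(G^{x}_{x}, \pi^{-1}_{ess}(x))\in\mathcal{I}$ forces, by the definition of $\mathcal{I}$, either $J_{G^{x}_{x}, \pi^{-1}_{ess}(x)}\cap\mathbb{C}[G^{x}_{x}]\neq\{0\}$ or $J_{G^{x}_{x}, \pi^{-1}_{ess}(x)}=0$; since the former has just been excluded, I conclude $J_{G^{x}_{x}, \pi^{-1}_{ess}(x)}=0$ for all $x\in D_{0}$. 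Were $J$ nonzero, Corollary~\ref{cor:nonzerofibre} would produce some $x\in D_{0}$ with $J_{G^{x}_{x}, \pi^{-1}_{ess}(x)}\neq 0$, a contradiction. Hence $J=\{0\}$.

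For the second assertion, fix $x\in D_{0}$ with $G^{x}_{x}$ of Property~$AI$ and set $\mathcal{X}=\pi^{-1}_{ess}(x)$. By Corollary~\ref{cor:edangptchar}, $x\in D_{0}$ is equivalent to $\{x\}\notin\mathcal{X}$; under the identification of the singleton unit with the trivial subgroup of $G^{x}_{x}$, this reads $e\notin\mathcal{X}$ in the notation of Definition~\ref{dfn:I&AI}. Since $\mathcal{X}$ is a closed invariant set of subgroups, Property~$AI$ applies and yields $J_{G^{x}_{x},\mathcal{X}}\cap\mathbb{C}[G^{x}_{x}]\neq\{0\}$. Transporting this back via Proposition~\ref{prp:algisofibre} gives $\eta_{x}(J\cap\mathscr{C}_{c}(G)) = J_{G^{x}_{x},\mathcal{X}}\cap\mathbb{C}[G^{x}_{x}]\neq\{0\}$, so in particular $J\cap\mathscr{C}_{c}(G)\neq\{0\}$.

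I expect no genuinely hard step: the argument is an assembly of the fibre calculations already established in this section. The only point requiring care is the bookkeeping at the interface between the groupoid and group pictures, namely verifying that $\pi^{-1}_{ess}(x)$ genuinely qualifies as a closed invariant set of subgroups so that Definition~\ref{dfn:I&AI} is applicable, and correctly matching the singleton unit $\{x\}$ with the trivial subgroup when invoking Property~$AI$ and Corollary~\ref{cor:edangptchar}. Once these identifications are in place, both statements follow directly.
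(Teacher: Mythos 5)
Your proposal is correct and follows essentially the same route as the paper: both assertions are reduced to the fibre computations $\eta_{x}(J\cap\mathscr{C}_{c}(G)) = J_{G^{x}_{x},\pi^{-1}_{ess}(x)}\cap\mathbb{C}[G^{x}_{x}]$ (Proposition \ref{prp:algisofibre} / Theorem \ref{thm:valgidealfibres}), combined with Corollary \ref{cor:nonzerofibre}, the definition of $\mathcal{I}$, and Corollary \ref{cor:edangptchar} for the Property $AI$ case. Your argument for the nontrivial direction of the first assertion is simply the contrapositive of the paper's, and the identifications you flag (that $\pi^{-1}_{ess}(x)$ is a closed conjugation-invariant set of subgroups and that $\{x\}$ corresponds to the trivial subgroup) are exactly the ones the paper relies on.
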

\begin{proof}
    $J = \{0\}$ implies $J\cap\mathscr{C}_{c}(G) = \{0\}$ and so the ``only if'' direction follows from Proposition \ref{prp:algisofibre}.

    If $J\neq 0$, then Corollary \ref{cor:nonzerofibre} implies there is $x\in D_{0}$ such that $J_{G^{x}_{x}, \pi^{-1}_{ess}(x)} \neq 0$. Since $(G^{x}_{x}, \pi^{-1}_{ess}(x))\in \mathcal{I}$, we have $J_{G^{x}_{x}, \pi^{-1}_{ess}(x)}\cap \mathbb{C}[G^{x}_{x}]\neq \{0\}$. 
    
    Since the quotient map $C^{*}_{r(x)}(G^{x}_{x})\to C^{*}_{r\hat{\mathcal{X}}}(G^{x}_{x})$, where $\mathcal{X} = \pi^{-1}_{ess}(x)$, surjects $\text{ker}(q_{x})\cap\mathbb{C}[G^{x}_{x}]$ bijectively onto $J_{G^{x}_{x}, \pi^{-1}_{ess}(x)}\cap \mathbb{C}[G^{x}_{x}]\neq \{0\}$ (trivially), it follows from Proposition \ref{prp:algisofibre} that $\eta_{x}(J\cap\mathscr{C}_{c}(G))\neq \{0\}$. Hence, $J\cap\mathscr{C}_{c}(G)\neq\{0\}$, proving the ``if'' direction.

    If $x\in D_{0}$ is such that $G^{x}_{x}$ has Property $AI$, then by Corollary \ref{cor:edangptchar}, we have $\{x\}\notin \pi^{-1}_{ess}(x)$ and hence $J_{G^{x}_{x}, \pi^{-1}_{ess}(x)}\cap\mathbb{C}[G^{x}_{x}]\neq \{0\}$ by the definition of Property $AI$. Now the same argument in the last paragraph applies to show $J\cap\mathscr{C}_{c}(G)\neq\{0\}$.
    
\end{proof}

We will verify the hypothesis for the above theorem for a large class of groupoids in Section \ref{s:groupswint}. Theorem \ref{thm:valgideal=videal} combined with Corollary \ref{cor:valgidealcharspan} yields the following.

\begin{thm}
\label{thm:linearchar}
    Let $G$ be an \'etale groupoid and assume $(G^{x}_{x}, \pi^{-1}_{ess}(x))\in\mathcal{I}$ for every $x\in D_{0}$. Then, $J = \{0\}$ if and only if for every  $x\in D_{0}$, there is no non-zero $a\in\mathbb{Z}[G^{x}_{x}]$ such that
    $$\sum_{h\in gX}a(h) = 0,\text{ for all }g\in G^{x}_{x},\text{ }X\in\pi^{-1}_{ess}(x).$$
\end{thm}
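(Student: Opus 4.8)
The plan is to obtain the statement as a direct composition of two equivalences already established in this section, exactly as signposted in the sentence preceding the theorem. First I would invoke Theorem \ref{thm:valgideal=videal}: since the hypothesis $(G^{x}_{x}, \pi^{-1}_{ess}(x))\in\mathcal{I}$ holds for every $x\in D_{0}$, that theorem supplies the equivalence $J = \{0\} \iff J\cap\mathscr{C}_{c}(G) = \{0\}$. This is the only place the class $\mathcal{I}$ enters: it guarantees that whenever a fibre $J_{G^{x}_{x},\pi^{-1}_{ess}(x)}$ is non-zero its intersection with the group ring is already non-zero, so a non-zero singular ideal is detected purely at the algebraic level via Proposition \ref{prp:algisofibre} and Corollary \ref{cor:nonzerofibre}.

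Next I would apply Corollary \ref{cor:valgidealcharspan}, which holds for an arbitrary \'etale groupoid with no extra hypothesis and translates the vanishing of $J\cap\mathscr{C}_{c}(G)$ into the stated system of linear equations over $\mathbb{Z}[G^{x}_{x}]$: namely $J\cap\mathscr{C}_{c}(G) = \{0\}$ if and only if for every $x\in D_{0}$ the equations $\sum_{h\in gX}a(h) = 0$ (for all $g\in G^{x}_{x}$, $X\in\pi^{-1}_{ess}(x)$) admit no non-zero solution $a\in\mathbb{Z}[G^{x}_{x}]$. Chaining the two equivalences yields precisely the claim. (One may also replace $D_{0}$ by all of $G^{0}$ in the conclusion, since by Corollary \ref{cor:edangptchar} the fibre is trivial off $D_{0}$ and the linear system is automatically unsolvable there once $\{x\}\in\pi^{-1}_{ess}(x)$; but restricting to $D_{0}$ is the sharp form and matches Corollary \ref{cor:valgidealcharspan}.)

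There is no genuine obstacle at this stage, as both ingredients are in hand; the entire content of the theorem resides in Theorem \ref{thm:valgideal=videal} (resting on the fibre calculation of Theorem \ref{thm:isofibre} together with the definition of $\mathcal{I}$) and in Corollary \ref{cor:valgidealcharspan} (the algebraic characterization of Lemma \ref{lem:alggroupidealchar}$(3)$ applied fibrewise through Theorem \ref{thm:valgidealfibres}). The one point worth flagging is the asymmetric role of the hypothesis: the $\mathcal{I}$-assumption is needed only for the first equivalence, whereas the linear-equation characterization of $J\cap\mathscr{C}_{c}(G)=\{0\}$ is unconditional. Thus the hypothesis is used exactly once, to upgrade vanishing of the algebraic part $J\cap\mathscr{C}_{c}(G)$ to vanishing of the full singular ideal $J$.
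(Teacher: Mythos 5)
Your proposal is correct and is exactly the paper's own argument: the paper states Theorem \ref{thm:linearchar} immediately after the sentence ``Theorem \ref{thm:valgideal=videal} combined with Corollary \ref{cor:valgidealcharspan} yields the following,'' with no further proof given. Your additional remarks on where the $\mathcal{I}$-hypothesis enters and on the unconditional nature of the algebraic characterization are accurate glosses on that same composition.
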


\section{A non-Hausdorff groupoid construction with prescribed singular ideal isotropy fibre}
\label{s:nonHDconstruct}
In this section we prove a converse to Theorem \ref{thm:valgideal=videal}. More specifically we construct, for every discrete $\Gamma$ and closed invariant set of subgroups $\mathcal{X}$ not containing the identity, a non-Hausdorff groupoid $G$ with exactly one extremely dangerous point $x_{0}$, with $G^{x_{0}}_{x_{0}} = \Gamma$ and isotropy fibre $J_{x_{0}} = J_{\Gamma,\mathcal{X}}$.

Let $\Gamma\cdot\mathcal{X}$ be the coset groupoid introduced in Section \ref{ss:cosetgpds} and let $G$ be the groupoid $\Gamma\times\{\infty\}\sqcup (\Gamma\cdot \mathcal{X})\times \mathbb{N} = \Gamma\sqcup\bigsqcup_{n\in\mathbb{N}} \Gamma\cdot \mathcal{X} $ which we topologize using the basis consisting of open sets $U\subseteq (\Gamma\cdot\mathcal{X})\times \mathbb{N}$ in the product topology and sets $\{(\gamma,\infty)\}\sqcup \gamma\mathcal{X}\times \{k\in \mathbb{N}: k\geq n\}$ for $\gamma\in \Gamma$ and $n\in\mathbb{N}$.

Under this topology, $G^{0} = \{(e,\infty)\}\sqcup \mathcal{X}\times \mathbb{N}$ is the one point compactification of $\mathcal{X}\times \mathbb{N}$, where $\{(e,\infty)\}$ corresponds to the point at $\infty$. The sets $\{(\gamma,\infty)\}\sqcup \gamma\mathcal{X}\times \mathbb{N}$, for $\gamma\in \Gamma$, are open bisections. Therefore, $G$ is a locally compact \'etale groupoid with Hausdorff unit space. 

We calculate its Hausdorff cover $\tilde{G}$.
Let $\pi:\tilde{G}^{0}\to G^{0}$ be the natural projection. Since $\Gamma\cdot\mathcal{X}\times \mathbb{N}$ is Hausdorff and is the reduction of $G$ to $\mathcal{X}\times \mathbb{N}$, we have that $\pi^{-1}(u) = \{u\}$ for $u\in \mathcal{X}\times \mathbb{N}$. Suppose $(u_{\lambda})\in G^{0}\setminus \{(e,\infty)\}$ converges in $G^{0}$ to $\{(e,\infty)\}$ and converges in the Fell topology to $X'\times\{\infty\}\subseteq \Gamma\times\{\infty\}$. We can write $u_{\lambda} = (X_{\lambda}, n_{\lambda})$, where $n_{\lambda}\to \infty$ and $ X_{\lambda}\in \mathcal{X}$. By taking a subnet if necessary, we can assume that $X_{\lambda}$ converges in $\mathcal{X}$ to $X$. Since $u_{\lambda}$ converges to $X'\times\{\infty\}$ we must have, for every $x'\in X'$, that $X_{\lambda}\in x'\mathcal{X}$ eventually, which is equivalent to $x'\in X_{\lambda}$ eventually. It follows that $x'\in X$, so that $X'\subseteq X$. Moreover, $X_{\lambda}$ converging to $X$ implies, for all $x\in X$, that $x\in X_{\lambda}$ eventually, which is again equivalent to $X_{\lambda}\in x\mathcal{X}$, so that $x\in X'$. Hence, $X = X'$.

We have shown that $\pi^{-1}(\{e\}) = \mathcal{X}\times\{\infty\}\cup\{(e,\infty)\} = \hat{\mathcal{X}}\times\{\infty\}$, and it is easy to see (following the same argument above) that the topology on $\tilde{G}^{0}$ is homeomorphic to the disjoint union $\{(e,\infty)\}\sqcup\mathcal{X}\times (\infty\cup\mathbb{N})$. The action of $G|_{G^{0}\setminus \{(e,\infty)\}}$ on $\tilde{G}^{0}\setminus \pi^{-1}(\{(e,\infty)\}) = G^{0}\setminus \{(e,\infty)\}$ is the usual action of a groupoid on its unit space, while $G|_{(e,\infty)} = \Gamma\times\{\infty\}$ acts on $\pi^{-1}(\{(e,\infty)\}) = \{e\}\times\{\infty\}\cup \mathcal{X}\times \{\infty\}$ by conjugation. Hence, $\tilde{G}$ is isomorphic to $\Gamma\times \{\infty\}\sqcup \Gamma\cdot\mathcal{X}\times (\{\infty\}\cup\mathbb{N})$, with the essential Hausdorff cover $\tilde{G}_{ess}$ isomorphic to $\Gamma\cdot\mathcal{X}\times (\{\infty\}\cup\mathbb{N})$.

Since $(e,\infty)\notin \mathcal{X}\times\{\infty\} = \pi^{-1}_{ess}((e,\infty))$, Corollary \ref{cor:edangptchar} implies $x_{0} = (e,\infty)$ is extremely dangerous. The subgroupoid $G|_{G^{0}\setminus \{x_{0}\}}$ is Hausdorff, so this is the only extremely dangerous point. We have $G_{x_{0}}^{x_{0}} = \Gamma$ and $C^{*}_{r}(\tilde{G})\simeq C^{*}_{r}(\Gamma)\oplus C^{*}_{r}(\Gamma\cdot\mathcal{X})\otimes C(\{\infty\}\cup\mathbb{N})$. The summand on the right is $C^{*}_{r}(\tilde{G}_{ess})$, and so the norm $r(\pi^{-1}_{ess}(x_{0})) = r$ and $r(x_{0})$ is the norm determined by the homomorphisms $\mathbb{C}[\Gamma]\to C^{*}_{r}(\Gamma)$ and $\mathbb{C}[\Gamma]\to C^{*}_{r}(\Gamma\cdot\mathcal{X})$. Hence, $r(x_{0})$  is the supremum norm $r\hat{\mathcal{X}}$ of norms from the quasi-regular representations $\lambda_{\Gamma/X}$, $X\in\mathcal{X}\cup\{e\}=:\hat{\mathcal{X}}$. By Corollary \ref{cor:isofibrewhenregular}, $J_{x_{0}}$ is the kernel of $q:C^{*}_{r\hat{\mathcal{X}}}(\Gamma)\to C^{*}_{r}(\Gamma\cdot\mathcal{X})$ and thus $J_{x_{0}} = J_{\Gamma,\mathcal{X}}$.

Our construction proves the following.

\begin{thm}
\label{thm:nhdgpdconst}
If there is a discrete group $\Gamma$ with a closed set of subgroups $\mathcal{X}$ invariant under conjugation such that $(\Gamma, \mathcal{X})\notin \mathcal{I}$, then there is an \'etale groupoid $G$ such that $J\cap\mathscr{C}_{c}(G) = \{0\}$ and $J\neq \{0\}$. Moreover, $G$ has exactly one extremely dangerous point $x_{0}$, $G^{x_{0}}_{x_{0}} = \Gamma$ and $\pi_{ess}^{-1}(x_{0}) = \mathcal{X}$.
\end{thm}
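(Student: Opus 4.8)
The plan is to take $G$ to be exactly the groupoid $\Gamma\times\{\infty\}\sqcup(\Gamma\cdot\mathcal{X})\times\mathbb{N}$ constructed above and to read off the two required properties from the computations already performed. First I would unpack the hypothesis: by Definition \ref{dfn:I&AI}, $(\Gamma,\mathcal{X})\notin\mathcal{I}$ means precisely that $J_{\Gamma,\mathcal{X}}\neq\{0\}$ while $J_{\Gamma,\mathcal{X}}\cap\mathbb{C}[\Gamma]=\{0\}$. I would also record that this forces $\{e\}\notin\mathcal{X}$, so that the construction (which presumes $\{e\}\notin\mathcal{X}$) applies: if the trivial subgroup lay in $\mathcal{X}$, then $\lambda_{\Gamma}=\lambda_{\Gamma/\{e\}}$ would be a summand of $\oplus_{X\in\mathcal{X}}\lambda_{\Gamma/X}$, hence weakly contained in it, and Lemma \ref{lem:groupidealchar} would give $J_{\Gamma,\mathcal{X}}=\{0\}$, contradicting $J_{\Gamma,\mathcal{X}}\neq\{0\}$. (As usual I take $\Gamma$ countable, so that $G$ is covered by countably many open bisections.)

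Next I would collect the structural facts established for this $G$ in the construction: the point $x_0=(e,\infty)$ is the unique extremely dangerous point --- the reduction $G|_{G^0\setminus\{x_0\}}$ is Hausdorff, while $\{x_0\}\notin\pi^{-1}_{ess}(x_0)$ makes $x_0$ extremely dangerous by Corollary \ref{cor:edangptchar} --- together with $G^{x_0}_{x_0}=\Gamma$ and $\pi^{-1}_{ess}(x_0)=\mathcal{X}\times\{\infty\}\cong\mathcal{X}$. These give the three ``moreover'' clauses directly. The essential input, which I regard as the only genuine work, is the identification $J_{x_0}=J_{\Gamma,\mathcal{X}}$; this comes from the Fell-topology computation of the Hausdorff cover ($\pi^{-1}(\{e\})=\hat{\mathcal{X}}\times\{\infty\}$ and $\tilde{G}_{ess}\cong(\Gamma\cdot\mathcal{X})\times(\{\infty\}\cup\mathbb{N})$), from which the essential norm equals the reduced norm, so that Corollary \ref{cor:isofibrewhenregular} identifies $J_{x_0}$ with $\ker(q\colon C^{*}_{r\hat{\mathcal{X}}}(\Gamma)\to C^{*}_{r}(\Gamma\cdot\mathcal{X}))=J_{\Gamma,\mathcal{X}}$.

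With these facts in hand the two conclusions are immediate assemblies. For $J\neq\{0\}$ I would invoke the faithfulness of the isotropy fibre maps: a positive element of $C^{*}_{r}(G)$ is zero precisely when all of its fibres $\eta_x(\cdot)$ vanish, so $J=\{0\}$ would force $J_{x_0}=\eta_{x_0}(J)=\{0\}$; since $J_{x_0}=J_{\Gamma,\mathcal{X}}\neq\{0\}$ we conclude $J\neq\{0\}$. For $J\cap\mathscr{C}_{c}(G)=\{0\}$ I would apply Theorem \ref{thm:valgidealfibres}, which reduces this to checking $J_{G^{x}_{x},\pi^{-1}_{ess}(x)}\cap\mathbb{C}[G^{x}_{x}]=\{0\}$ for each $x\in D_0$; here $D_0=\{x_0\}$ and the single fibre is $J_{\Gamma,\mathcal{X}}\cap\mathbb{C}[\Gamma]=\{0\}$ by hypothesis (equivalently, $\eta_{x_0}(J\cap\mathscr{C}_{c}(G))=J_{\Gamma,\mathcal{X}}\cap\mathbb{C}[\Gamma]=\{0\}$ by Proposition \ref{prp:algisofibre}). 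Thus the whole argument is bookkeeping on top of the Hausdorff-cover computation already carried out, combined with Theorem \ref{thm:valgidealfibres} and the faithfulness of the fibre maps.
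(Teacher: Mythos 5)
Your proposal is correct and follows essentially the same route as the paper: Section \ref{s:nonHDconstruct} is itself the proof, constructing $G=\Gamma\times\{\infty\}\sqcup(\Gamma\cdot\mathcal{X})\times\mathbb{N}$, computing its Hausdorff cover to identify $J_{x_0}=J_{\Gamma,\mathcal{X}}$ via Corollary \ref{cor:isofibrewhenregular}, and reading off $J\neq\{0\}$ from the nonzero fibre and $J\cap\mathscr{C}_c(G)=\{0\}$ from Proposition \ref{prp:algisofibre}/Theorem \ref{thm:valgidealfibres}. Your explicit check that $(\Gamma,\mathcal{X})\notin\mathcal{I}$ forces $\{e\}\notin\mathcal{X}$ (so the construction's standing assumption is satisfied) is a small point the paper leaves implicit, but it is not a different argument.
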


Hence, we have proven a converse of sorts to Theorem \ref{thm:valgideal=videal}.

\begin{cor}
\label{cor:equalqs}
    A positive answer to Question \ref{qtn1} (about non-Hausdorff groupoid C*-algebras) is equivalent to a positive answer to Question \ref{qtn2} (about group C*-algebras). Similarly, for the appropriate restricted cases of these questions (e.x. amenable isotropy).
\end{cor}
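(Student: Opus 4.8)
The plan is to read off both implications directly from the two main results already established, after first reformulating Question \ref{qtn2} in terms of the class $\mathcal{I}$. I would begin by observing that a positive answer to Question \ref{qtn2} is precisely the statement that $(\Gamma,\mathcal{X})\in\mathcal{I}$ for every discrete group $\Gamma$ and every closed conjugation-invariant set of subgroups $\mathcal{X}$: by Definition \ref{dfn:I&AI}, the failure of $(\Gamma,\mathcal{X})\in\mathcal{I}$ means exactly that $J_{\Gamma,\mathcal{X}}\neq\{0\}$ while $J_{\Gamma,\mathcal{X}}\cap\mathbb{C}[\Gamma]=\{0\}$, and this is the same thing as a counterexample to Question \ref{qtn2} (and conversely).

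For the direction ``Question \ref{qtn2} positive $\Rightarrow$ Question \ref{qtn1} positive'' I would argue as follows. Let $G$ be any \'etale groupoid and $x\in D_{0}$. As recorded in the introduction, $\pi^{-1}_{ess}(x)=\mathcal{X}(x)$ is a closed conjugation-invariant set of subgroups of $G^{x}_{x}$, so the pair $(G^{x}_{x},\pi^{-1}_{ess}(x))$ is of the form to which Question \ref{qtn2} applies. Granting a positive answer to Question \ref{qtn2}, every such pair lies in $\mathcal{I}$, so the hypothesis of Theorem \ref{thm:valgideal=videal} holds for $G$. That theorem then gives $J=\{0\}$ if and only if $J\cap\mathscr{C}_{c}(G)=\{0\}$; in particular $J\cap\mathscr{C}_{c}(G)=\{0\}$ forces $J=\{0\}$, which is exactly the assertion of Question \ref{qtn1}.

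The converse is the contrapositive of Theorem \ref{thm:nhdgpdconst}. If Question \ref{qtn2} fails, there is a pair $(\Gamma,\mathcal{X})\notin\mathcal{I}$. Here I would note the small point that $\{e\}\in\mathcal{X}$ would force $r\mathcal{X}\geq r$, hence $J_{\Gamma,\mathcal{X}}=\{0\}$ and $(\Gamma,\mathcal{X})\in\mathcal{I}$; so automatically $\{e\}\notin\mathcal{X}$, and the construction of Section \ref{s:nonHDconstruct} applies verbatim. That construction (Theorem \ref{thm:nhdgpdconst}) produces an \'etale groupoid $G$ with $J\cap\mathscr{C}_{c}(G)=\{0\}$ but $J\neq\{0\}$, a counterexample to Question \ref{qtn1}. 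Taking contrapositives establishes the equivalence of the two questions.

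For the restricted cases I would simply check that both directions respect the extra hypothesis, taking amenable isotropy as the model. In the forward direction the isotropy groups $G^{x}_{x}$ are amenable, so only the amenable instances of Question \ref{qtn2} are invoked; in the reverse direction the groupoid built from an amenable $\Gamma$ has isotropy consisting only of subquotients of $\Gamma$ (the group $\Gamma$ itself at $x_{0}$, and coset-groupoid isotropy elsewhere, where $G$ is Hausdorff), all of which remain amenable, so the constructed counterexample stays within the amenable class. I expect no real obstacle in this corollary, since the substantive work lives in Theorems \ref{thm:valgideal=videal} and \ref{thm:nhdgpdconst}; the only step demanding any care is this amenability (or other class-permanence) bookkeeping and the verification that $\{e\}\notin\mathcal{X}$ is automatic, after which the equivalence is immediate.
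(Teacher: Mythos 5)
Your proposal is correct and follows the same route the paper intends: a positive answer to Question \ref{qtn2} is by definition the statement that every pair $(\Gamma,\mathcal{X})$ lies in $\mathcal{I}$, so the forward implication is exactly Theorem \ref{thm:valgideal=videal}, and the converse is the contrapositive of Theorem \ref{thm:nhdgpdconst}. Your two additional checks --- that $(\Gamma,\mathcal{X})\notin\mathcal{I}$ automatically forces $\{e\}\notin\mathcal{X}$ (so the Section \ref{s:nonHDconstruct} construction applies), and that the constructed groupoid's isotropy consists of $\Gamma$ and subquotients $N_{\Gamma}(X)/X$ of $\Gamma$ (so restricted classes such as amenable isotropy are preserved) --- are exactly the bookkeeping the paper leaves implicit.
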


\section{Groups satisfying the intersection properties}
\label{s:groupswint}
Now, we prove a large class of groups satisfies the intersection properties. We do so by establishing a variety of permanence properties. We note that many results similar to those for Property $AI$ groups here hold for group rings over rings more general than $\mathbb{C}$.

The first result we establish is, for torsion free groups, Property $I$ and $AI$ coincide.

\begin{prp}
    Let $\Gamma$ be a torsion free discrete group. Then, $\Gamma$ satisfies Property $I$ if and only if $\Gamma$ satisfies Property $AI$
\end{prp}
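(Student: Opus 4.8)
The plan is to prove the two implications separately, observing that the implication ``Property $AI\Rightarrow$ Property $I$'' is already recorded just after Definition \ref{dfn:I&AI} and holds for every group; so the real content is the converse for torsion free $\Gamma$. Before anything else I would unwind what Property $AI$ demands beyond Property $I$. Property $AI$ asserts that for every closed invariant set of subgroups $\mathcal{X}$ one has $J_{\Gamma,\mathcal{X}}\cap\mathbb{C}[\Gamma]\neq\{0\}$ exactly when $e\notin\mathcal{X}$. The direction ``$J_{\Gamma,\mathcal{X}}\cap\mathbb{C}[\Gamma]\neq\{0\}\Rightarrow e\notin\mathcal{X}$'' is automatic and uses neither Property $I$ nor torsion freeness: if $e\in\mathcal{X}$ then $\{e\}\in\mathcal{X}$, so $\lambda_{\Gamma}=\lambda_{\Gamma/\{e\}}$ is a direct summand of $\bigoplus_{X\in\mathcal{X}}\lambda_{\Gamma/X}$ and in particular weakly contained in it, whence $J_{\Gamma,\mathcal{X}}=\{0\}$ by Lemma \ref{lem:groupidealchar} and $J_{\Gamma,\mathcal{X}}\cap\mathbb{C}[\Gamma]=\{0\}$. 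Thus, assuming $\Gamma$ is torsion free and has Property $I$, it remains only to prove the converse implication $e\notin\mathcal{X}\Rightarrow J_{\Gamma,\mathcal{X}}\cap\mathbb{C}[\Gamma]\neq\{0\}$.

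The crux is to show that torsion freeness together with $e\notin\mathcal{X}$ already forces $J_{\Gamma,\mathcal{X}}\neq\{0\}$; Property $I$ then promotes this to a nonzero algebraic element. By Lemma \ref{lem:groupidealchar}, $J_{\Gamma,\mathcal{X}}=\{0\}$ is equivalent to $\lambda_{\Gamma}\prec\bigoplus_{X\in\mathcal{X}}\lambda_{\Gamma/X}$, so I would show that this weak containment fails. The cleanest route entirely within this paper is to feed $(\Gamma,\mathcal{X})$ into the construction of Section \ref{s:nonHDconstruct}: the groupoid $G=G_{(\Gamma,\mathcal{X})}$ of Theorem \ref{thm:nhdgpdconst} has a single extremely dangerous point $x_{0}$ with $G^{x_{0}}_{x_{0}}=\Gamma$ torsion free, $\pi^{-1}_{ess}(x_{0})=\mathcal{X}$ (so $\{x_{0}\}\notin\pi^{-1}_{ess}(x_{0})$ precisely because $e\notin\mathcal{X}$), and $J_{x_{0}}=J_{\Gamma,\mathcal{X}}$. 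Remark \ref{rmk:torfree} then applies and yields $J_{x_{0}}\neq\{0\}$, i.e. $J_{\Gamma,\mathcal{X}}\neq\{0\}$. Equivalently, one may argue at the group level directly from \cite[Lemma~1.9]{NS23} as in Remark \ref{rmk:torfree}, the point being that for torsion free $\Gamma$ with $e\notin\mathcal{X}$ the regular representation cannot be weakly contained in the direct sum of the quasi-regular representations $\lambda_{\Gamma/X}$.

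With $J_{\Gamma,\mathcal{X}}\neq\{0\}$ in hand, Property $I$ finishes the argument formally: by definition $(\Gamma,\mathcal{X})\in\mathcal{I}$, and since $J_{\Gamma,\mathcal{X}}\neq\{0\}$ the disjunction in Definition \ref{dfn:I&AI} forces $J_{\Gamma,\mathcal{X}}\cap\mathbb{C}[\Gamma]\neq\{0\}$. This is exactly the remaining direction of the Property $AI$ equivalence, so $\Gamma$ has Property $AI$; together with the already-known implication $AI\Rightarrow I$ this shows the two properties coincide for torsion free $\Gamma$.

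I expect the single genuine obstacle to be the torsion-free weak-containment failure of the second paragraph; everything else is bookkeeping with the definitions and Lemma \ref{lem:groupidealchar}. The care needed there is to confirm that $e\notin\mathcal{X}$ for a \emph{closed} $\mathcal{X}$ really separates $\{e\}$ from $\mathcal{X}$ in $\{0,1\}^{\Gamma}$: concretely, there is a finite set $F_{0}\subseteq\Gamma\setminus\{e\}$ such that every $X\in\mathcal{X}$ meets $F_{0}$ (so no sequence of nontrivial subgroups in $\mathcal{X}$ degenerates to the trivial one). Since $\Gamma$ is torsion free every element of $F_{0}$ has infinite order, and it is this uniform nontriviality that makes the \cite{NS23} estimate applicable across all of $\mathcal{X}$ at once; the groupoid route of the previous paragraph simply packages this verification into the already-established Remark \ref{rmk:torfree}.
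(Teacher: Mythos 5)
Your proposal is correct and follows essentially the same route as the paper: both reduce to showing $\{e\}\notin\mathcal{X}$ forces $J_{\Gamma,\mathcal{X}}\neq\{0\}$, and both do so by feeding $(\Gamma,\mathcal{X})$ into the groupoid construction of Section \ref{s:nonHDconstruct} and invoking the torsion-free non-containment result from \cite{NS23} (the paper cites \cite[Proposition~1.8]{NS23} for $J\neq\{0\}$ and then localizes to the unique extremely dangerous point, while you invoke Remark \ref{rmk:torfree} directly at the fibre — an immaterial difference). The paper itself notes, as you do, that one could instead argue purely at the group level.
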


\begin{proof}
    It suffices to show that if $\mathcal{X}$ is a closed and invariant set of subgroups such that $\{e\}\notin \mathcal{X}$, then $J_{\Gamma,\mathcal{X}}\neq \{0\}$.  Let $G$ be the groupoid constructed in Section \ref{s:nonHDconstruct} from $\Gamma,\mathcal{X}$. Since the isotropy of $G$ is torsion free and $G$ has an extremely dangerous point, \cite[Proposition~1.8]{NS23} implies $J\neq \{0\}$. Since $G$ has exactly one extremely dangerous point $x_{0}$, we must have $J_{x_{0}}\neq \{0\}$. By construction, $J_{x_{0}} = J_{\Gamma,\mathcal{X}}$.
\end{proof}

We note that the above proposition can be proven directly by working with the group $\Gamma$, but the above argument displays an interesting application of groupoid theory to group theory.

Let's observe that the ideals we consider are isomorphic to ideals in reduced group C*-algebras.
\begin{prp}
\label{prp:gideal=rgideal}
    Let $\Gamma$ be a discrete group and $\mathcal{X}$ a closed set of subgroups invariant under conjugation. Then, $J_{\Gamma,\mathcal{X}}\neq\{0\}$ if and only if $\lambda_{\Gamma}(J_{\Gamma,\mathcal{X}})\neq\{0\}$.
\end{prp}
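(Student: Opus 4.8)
The plan is to exhibit $J_{\Gamma,\mathcal{X}}$ isometrically inside $C^{*}_{r}(\Gamma)$ via the left regular representation, which will make the equivalence transparent (and in fact yield the isometric isomorphism advertised in the paragraph preceding the statement). The forward implication is immediate: if $\lambda_{\Gamma}(J_{\Gamma,\mathcal{X}})\neq\{0\}$ then certainly $J_{\Gamma,\mathcal{X}}\neq\{0\}$. The content is the reverse implication, and I would obtain it by showing that $\lambda_{\Gamma}$ is \emph{isometric} on $J_{\Gamma,\mathcal{X}}$, whence a nonzero element of $J_{\Gamma,\mathcal{X}}$ has nonzero image.

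First I would record that, because $\hat{\mathcal{X}} = \mathcal{X}\cup\{e\}$ and $\lambda_{\Gamma/\{e\}} = \lambda_{\Gamma}$, the defining norm of $C^{*}_{r\hat{\mathcal{X}}}(\Gamma)$ is, for $a\in\mathbb{C}[\Gamma]$,
\begin{equation*}
\|a\|_{r\hat{\mathcal{X}}} = \max\{\|\lambda_{\Gamma}(a)\|,\ \textstyle\sup_{X\in\mathcal{X}}\|\lambda_{\Gamma/X}(a)\|\} = \max\{\|\lambda_{\Gamma}(a)\|,\ \|a\|_{r\mathcal{X}}\}.
\end{equation*}
In particular $\|\lambda_{\Gamma}(a)\|\leq \|a\|_{r\hat{\mathcal{X}}}$, so $\lambda_{\Gamma}$ extends to a (surjective) $*$-homomorphism $\lambda_{\Gamma}\colon C^{*}_{r\hat{\mathcal{X}}}(\Gamma)\to C^{*}_{r}(\Gamma)$. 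Next I would assemble the two quotients into one map: define $\Psi\colon C^{*}_{r\hat{\mathcal{X}}}(\Gamma)\to C^{*}_{r}(\Gamma)\oplus C^{*}_{r\mathcal{X}}(\Gamma)$ by $\Psi(a) = (\lambda_{\Gamma}(a),\, q_{e}(a))$, where $q_{e}$ is the quotient map whose kernel is $J_{\Gamma,\mathcal{X}}$. On the dense subalgebra $\mathbb{C}[\Gamma]$, equipping the direct sum with its max norm, the displayed identity gives $\|\Psi(a)\| = \max\{\|\lambda_{\Gamma}(a)\|,\|a\|_{r\mathcal{X}}\} = \|a\|_{r\hat{\mathcal{X}}}$, so $\Psi$ is isometric on $\mathbb{C}[\Gamma]$ and therefore extends to an isometric $*$-homomorphism on the completion.

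Finally, for $a\in J_{\Gamma,\mathcal{X}} = \ker(q_{e})$ the second coordinate vanishes, so $\Psi(a) = (\lambda_{\Gamma}(a),0)$ and hence $\|a\| = \|\Psi(a)\| = \|\lambda_{\Gamma}(a)\|$. Thus $\lambda_{\Gamma}$ restricts to an isometry on $J_{\Gamma,\mathcal{X}}$, which forces $\lambda_{\Gamma}(J_{\Gamma,\mathcal{X}})\neq\{0\}$ whenever $J_{\Gamma,\mathcal{X}}\neq\{0\}$, completing the equivalence; moreover $\lambda_{\Gamma}(J_{\Gamma,\mathcal{X}})$ is a closed two-sided ideal of $C^{*}_{r}(\Gamma)$ (image of an ideal under a surjective $*$-homomorphism) that is isometrically isomorphic to $J_{\Gamma,\mathcal{X}}$. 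There is no genuine obstacle here; the only point requiring care is the bookkeeping around the max-norm description of $\|\cdot\|_{r\hat{\mathcal{X}}}$ and the fact that $r\mathcal{X}$ is a priori only a semi-norm, so that $C^{*}_{r\mathcal{X}}(\Gamma)$ is the separated completion and $q_{e}$ is the corresponding quotient — once $\Psi$ is seen to be isometric on $\mathbb{C}[\Gamma]$ everything follows formally.
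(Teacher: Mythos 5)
Your proof is correct and is essentially the paper's argument: the paper likewise observes that $\|\cdot\|_{r\hat{\mathcal{X}}}$ is the supremum of $\|\cdot\|_{r}$ and the semi-norm $\|\cdot\|_{r\mathcal{X}}$, and that elements of $J_{\Gamma,\mathcal{X}}$ are exactly those with $\|\cdot\|_{r\mathcal{X}} = 0$, so $\lambda_{\Gamma}$ is isometric (hence injective) on the ideal. Your map $\Psi$ into the direct sum is just a careful way of passing this max-norm identity from $\mathbb{C}[\Gamma]$ to the completion, which the paper leaves implicit.
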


\begin{proof}
    Since $\|\cdot\|_{r\hat{\mathcal{X}}} = \sup\{\|\cdot\|_{r\mathcal{X}}, \|\cdot\|_{r}\}$ and $a\in J_{\Gamma,\mathcal{X}}$ if and only if $\|a\|_{r\mathcal{X}} = 0$, it follows that the map $\lambda_{\Gamma}:J_{\Gamma,\mathcal{X}}\to C^{*}_{r}(\Gamma)$ is injective.
\end{proof}
As a corollary, these ideals behave well under finite intersections.

\begin{cor}
\label{cor:gidealunion=int}
Let $\Gamma$ be a discrete group and suppose $\mathcal{X}_{1}$ and $\mathcal{X}_{2}$ are closed sets of subgroups invariant under conjugation. If $J_{\Gamma, \mathcal{X}_{1}\cup\mathcal{X}_{2}}\neq \{0\}$, then $J_{\Gamma, \mathcal{X}_{1}}\neq \{0\}$ and $J_{\Gamma, \mathcal{X}_{2}}\neq \{0\}$. Moreover, $J_{\Gamma,\mathcal{X}_{1}\cup\mathcal{X}_{2}}\cap\mathbb{C}[\Gamma] = J_{\Gamma, \mathcal{X}_{1}}\cap J_{\Gamma,\mathcal{X}_{2}}\cap \mathbb{C}[\Gamma]$.
\end{cor}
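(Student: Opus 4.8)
The plan is to exploit the fact that enlarging the set of subgroups only enlarges the defining norm, so that there are canonical quotient maps between the ambient C*-algebras which interact well with the singular ideals $J_{\Gamma,\mathcal{X}}$. The one genuine subtlety to keep in mind is that $J_{\Gamma,\mathcal{X}_1}$, $J_{\Gamma,\mathcal{X}_2}$ and $J_{\Gamma,\mathcal{X}_1\cup\mathcal{X}_2}$ a priori live in three different C*-algebras, namely $C^{*}_{r\hat{\mathcal{X}_1}}(\Gamma)$, $C^{*}_{r\hat{\mathcal{X}_2}}(\Gamma)$ and $C^{*}_{r\widehat{\mathcal{X}_1\cup\mathcal{X}_2}}(\Gamma)$; the intersection in the ``Moreover'' clause must therefore be read as taking place inside the common dense subalgebra $\mathbb{C}[\Gamma]$, i.e. as $(J_{\Gamma,\mathcal{X}_1}\cap\mathbb{C}[\Gamma])\cap(J_{\Gamma,\mathcal{X}_2}\cap\mathbb{C}[\Gamma])$.

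For the first assertion I would argue by symmetry and prove $J_{\Gamma,\mathcal{X}_1}\neq\{0\}$. Since $\hat{\mathcal{X}_1}=\mathcal{X}_1\cup\{e\}\subseteq\widehat{\mathcal{X}_1\cup\mathcal{X}_2}$, we have $\|\cdot\|_{r\widehat{\mathcal{X}_1\cup\mathcal{X}_2}}\geq\|\cdot\|_{r\hat{\mathcal{X}_1}}$ on $\mathbb{C}[\Gamma]$, so there is a surjective $*$-homomorphism $\theta\colon C^{*}_{r\widehat{\mathcal{X}_1\cup\mathcal{X}_2}}(\Gamma)\to C^{*}_{r\hat{\mathcal{X}_1}}(\Gamma)$ that is the identity on $\mathbb{C}[\Gamma]$, and for each $X\in\mathcal{X}_1$ both $\lambda_{\Gamma/X}$ and $\lambda_{\Gamma}$ factor through $\theta$ (the two relevant norms dominate $\|\lambda_{\Gamma/X}(\cdot)\|$ and $\|\lambda_{\Gamma}(\cdot)\|$, and the maps agree on $\mathbb{C}[\Gamma]$). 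I would then check two things. First, $\theta(J_{\Gamma,\mathcal{X}_1\cup\mathcal{X}_2})\subseteq J_{\Gamma,\mathcal{X}_1}$: for $a\in J_{\Gamma,\mathcal{X}_1\cup\mathcal{X}_2}$ and $X\in\mathcal{X}_1\subseteq\mathcal{X}_1\cup\mathcal{X}_2$ one has $\lambda_{\Gamma/X}(\theta(a))=\lambda_{\Gamma/X}(a)=0$, so $\theta(a)\in\bigcap_{X\in\mathcal{X}_1}\ker(\lambda_{\Gamma/X})=J_{\Gamma,\mathcal{X}_1}$ (using the description of $J_{\Gamma,\mathcal{X}_1}$ from Definition~\ref{dfn:gpideal}). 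Second, $\theta$ is isometric on $J_{\Gamma,\mathcal{X}_1\cup\mathcal{X}_2}$: since $\|\cdot\|_{r\hat{\mathcal{Y}}}=\max\{\|\cdot\|_{r\mathcal{Y}},\|\cdot\|_r\}$ and the $r(\mathcal{X}_1\cup\mathcal{X}_2)$-part vanishes on $J_{\Gamma,\mathcal{X}_1\cup\mathcal{X}_2}$ (all $\lambda_{\Gamma/X}$, $X\in\mathcal{X}_1\cup\mathcal{X}_2$, annihilate it), we get $\|\theta(a)\|_{r\hat{\mathcal{X}_1}}=\|\lambda_{\Gamma}(a)\|=\|a\|_{r\widehat{\mathcal{X}_1\cup\mathcal{X}_2}}$, exactly as in the proof of Proposition~\ref{prp:gideal=rgideal}. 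Hence $\theta$ restricts to an isometric (in particular injective) map $J_{\Gamma,\mathcal{X}_1\cup\mathcal{X}_2}\hookrightarrow J_{\Gamma,\mathcal{X}_1}$, so $J_{\Gamma,\mathcal{X}_1\cup\mathcal{X}_2}\neq\{0\}$ forces $J_{\Gamma,\mathcal{X}_1}\neq\{0\}$, and symmetrically for $\mathcal{X}_2$.

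For the ``Moreover'' clause I would simply unwind the algebraic description from Lemma~\ref{lem:alggroupidealchar}. Read elementwise, its equivalence $(1)\Leftrightarrow(2)$ says that $a\in\mathbb{C}[\Gamma]$ lies in $J_{\Gamma,\mathcal{Y}}\cap\mathbb{C}[\Gamma]$ exactly when $\sum_{x\in X}a(\gamma x)=0$ for all $\gamma\in\Gamma$ and all $X\in\mathcal{Y}$. Taking $\mathcal{Y}=\mathcal{X}_1\cup\mathcal{X}_2$, the system of equations indexed by $X\in\mathcal{X}_1\cup\mathcal{X}_2$ is the union of those indexed by $\mathcal{X}_1$ and by $\mathcal{X}_2$, so $a$ satisfies all of them iff it satisfies both subsystems; that is, $J_{\Gamma,\mathcal{X}_1\cup\mathcal{X}_2}\cap\mathbb{C}[\Gamma]=(J_{\Gamma,\mathcal{X}_1}\cap\mathbb{C}[\Gamma])\cap(J_{\Gamma,\mathcal{X}_2}\cap\mathbb{C}[\Gamma])$, which is the asserted identity.

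I do not expect a deep obstacle here: the real content is bookkeeping across the three ambient algebras, and the only point requiring care is the isometry of $\theta$ on the ideal. That rests entirely on the norm decomposition $\|\cdot\|_{r\hat{\mathcal{X}}}=\max\{\|\cdot\|_{r\mathcal{X}},\|\cdot\|_r\}$ together with the vanishing of the quasi-regular part on $J_{\Gamma,\mathcal{X}_1\cup\mathcal{X}_2}$, so I would make sure to verify cleanly that $\lambda_{\Gamma/X}$ and $\lambda_{\Gamma}$ genuinely factor through $\theta$ before invoking that decomposition on the completion.
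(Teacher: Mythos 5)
Your proof is correct and follows essentially the same route as the paper: the paper applies Proposition \ref{prp:gideal=rgideal} twice (i.e.\ the injectivity of $\lambda_{\Gamma}$ on each singular ideal, which rests on exactly the norm decomposition $\|\cdot\|_{r\hat{\mathcal{X}}}=\max\{\|\cdot\|_{r\mathcal{X}},\|\cdot\|_{r}\}$ you invoke) together with the containment $\lambda_{\Gamma}(J_{\Gamma,\mathcal{X}_{1}\cup\mathcal{X}_{2}})\subseteq\lambda_{\Gamma}(J_{\Gamma,\mathcal{X}_{1}})\cap\lambda_{\Gamma}(J_{\Gamma,\mathcal{X}_{2}})$, and your explicit quotient map $\theta$ with its isometry on the ideal is just a more spelled-out version of that same mechanism. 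The ``moreover'' clause is likewise handled by unwinding the definitions, as in the paper.
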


\begin{proof}
    By Proposition \ref{prp:gideal=rgideal}, we have $0\neq \lambda_{\Gamma}(J_{\Gamma, \mathcal{X}_{1}\cup\mathcal{X}_{2}})\subseteq \lambda_{\Gamma}(J_{\Gamma,\mathcal{X}_{1}})\cap \lambda_{\Gamma}(J_{\Gamma,\mathcal{X}_{2}})$, and therefore (by Proposition \ref{prp:gideal=rgideal}) $J_{\Gamma,\mathcal{X}_{1}}\neq 0$ and $J_{\Gamma,\mathcal{X}_{2}}\neq 0$. The ``moreover'' statement is immediate from the definitions of the ideals.
\end{proof}

They behave well under restriction to subgroup C*-algebras.

\begin{prp}
\label{prp:resttosg}
    Let $\Gamma$ be a discrete group and $\mathcal{X}$ a closed set of subgroups invariant under conjugation. Let $\Lambda\subseteq \Gamma$ be a subgroup. Then, $\Lambda\cap\mathcal{X} = \{\Lambda\cap X:X\in\mathcal{X}\}$ is a closed set of subgroups of $\Lambda$ invariant under conjugation by elements in $\Lambda$. Moreover, $C^{*}_{r\Lambda\cap\hat{\mathcal{X}}}(\Lambda)\subseteq C^{*}_{r\hat{\mathcal{X}}}(\Gamma)$, $J_{\Gamma,\mathcal{X}}\cap C^{*}_{r\Lambda\cap\hat{\mathcal{X}}}(\Lambda) = J_{\Lambda, \Lambda\cap\mathcal{X}}$ and $J_{\Gamma,\mathcal{X}}\cap\mathbb{C}[\Lambda] = J_{\Lambda, \Lambda\cap \mathcal{X}}\cap\mathbb{C}[\Lambda]$.
\end{prp}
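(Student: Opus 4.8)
The plan is to reduce everything to a single representation-theoretic computation, namely that restricting a quasi-regular representation of $\Gamma$ to $\Lambda$ produces a direct sum of quasi-regular representations of $\Lambda$ attached to subgroups of the form $\Lambda\cap X'$ with $X'\in\mathcal{X}$. First, for the preliminary claim, observe that restriction of characteristic functions $Y\mapsto Y\cap\Lambda$ is exactly the coordinate projection $\{0,1\}^{\Gamma}\to\{0,1\}^{\Lambda}$, hence continuous; since $\mathcal{X}$ is closed in the compact space $\{0,1\}^{\Gamma}$ it is compact, so its image $\Lambda\cap\mathcal{X}$ is compact and therefore closed in $\{0,1\}^{\Lambda}$. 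Each $\Lambda\cap X$ is a subgroup, and for $\lambda\in\Lambda$ one has $\lambda(\Lambda\cap X)\lambda^{-1}=\Lambda\cap\lambda X\lambda^{-1}$ with $\lambda X\lambda^{-1}\in\mathcal{X}$ by conjugation invariance, which gives invariance of $\Lambda\cap\mathcal{X}$ under conjugation by $\Lambda$.

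The key step is the orbit decomposition. Viewing $\Gamma/X$ as a left $\Lambda$-set, the orbits are indexed by the double cosets $\Lambda\backslash\Gamma/X$, and the stabilizer of $gX$ is $\{\lambda\in\Lambda:\lambda gX=gX\}=\Lambda\cap gXg^{-1}$. Hence, as $\Lambda$-representations,
\[
\lambda_{\Gamma/X}|_{\Lambda}\;\cong\;\bigoplus_{\Lambda gX\in\Lambda\backslash\Gamma/X}\lambda_{\Lambda/(\Lambda\cap gXg^{-1})}.
\]
Because $\mathcal{X}$ is invariant under conjugation by all of $\Gamma$, each $gXg^{-1}$ lies in $\mathcal{X}$, so every summand is $\lambda_{\Lambda/Y}$ with $Y\in\Lambda\cap\mathcal{X}$; conversely, taking $g=e$ realizes $\lambda_{\Lambda/(\Lambda\cap X)}$ as a subrepresentation. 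Comparing the resulting operator norms, using that the norm on a direct sum is the supremum of the norms on the summands, yields for every $a\in\mathbb{C}[\Lambda]$ the equality $\|a\|_{r\mathcal{X}}=\|a\|_{r(\Lambda\cap\mathcal{X})}$. Applying the same decomposition with $X=\{e\}$ recovers the classical fact that $\lambda_{\Gamma}|_{\Lambda}$ is a multiple of $\lambda_{\Lambda}$, so the reduced norms of $\Gamma$ and $\Lambda$ also agree on $\mathbb{C}[\Lambda]$. Taking maxima gives $\|a\|_{r\hat{\mathcal{X}}}=\|a\|_{r(\Lambda\cap\hat{\mathcal{X}})}$ for $a\in\mathbb{C}[\Lambda]$ (recall $\Lambda\cap\hat{\mathcal{X}}=(\Lambda\cap\mathcal{X})\cup\{e\}$), which is precisely the isometry implementing the inclusion $C^{*}_{r\Lambda\cap\hat{\mathcal{X}}}(\Lambda)\subseteq C^{*}_{r\hat{\mathcal{X}}}(\Gamma)$.

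With the isometric inclusion in hand, the ideal identities are formal. The seminorms $\|\cdot\|_{r\mathcal{X}}$ and $\|\cdot\|_{r(\Lambda\cap\mathcal{X})}$ are each dominated by $\|\cdot\|_{r\hat{\mathcal{X}}}=\|\cdot\|_{r(\Lambda\cap\hat{\mathcal{X}})}$ on $\mathbb{C}[\Lambda]$, so they extend continuously to $C^{*}_{r\Lambda\cap\hat{\mathcal{X}}}(\Lambda)$ and, agreeing on the dense subalgebra $\mathbb{C}[\Lambda]$, agree throughout. Since $J_{\Gamma,\mathcal{X}}$ is exactly the set of elements with $\|\cdot\|_{r\mathcal{X}}=0$ and $J_{\Lambda,\Lambda\cap\mathcal{X}}$ the set with $\|\cdot\|_{r(\Lambda\cap\mathcal{X})}=0$, an element of $C^{*}_{r\Lambda\cap\hat{\mathcal{X}}}(\Lambda)$ lies in one precisely when it lies in the other, giving $J_{\Gamma,\mathcal{X}}\cap C^{*}_{r\Lambda\cap\hat{\mathcal{X}}}(\Lambda)=J_{\Lambda,\Lambda\cap\mathcal{X}}$. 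Intersecting both sides with $\mathbb{C}[\Lambda]\subseteq C^{*}_{r\Lambda\cap\hat{\mathcal{X}}}(\Lambda)$ then yields the final identity $J_{\Gamma,\mathcal{X}}\cap\mathbb{C}[\Lambda]=J_{\Lambda,\Lambda\cap\mathcal{X}}\cap\mathbb{C}[\Lambda]$.

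I expect the only real content, and the one place to be careful, to be the orbit decomposition, and in particular the verification that the conjugates $gXg^{-1}$, with $g$ ranging over all of $\Gamma$ rather than merely over $\Lambda$, remain in $\mathcal{X}$. This is where the full $\Gamma$-conjugation invariance of $\mathcal{X}$ is essential: it is exactly what guarantees that the restricted representation is assembled only from subgroups belonging to $\Lambda\cap\mathcal{X}$, and not from some strictly larger family. Everything else is routine functional analysis (extension of seminorms by continuity) together with elementary point-set topology.
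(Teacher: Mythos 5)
Your proof is correct, and it follows the same basic strategy as the paper: compare the norms $r\hat{\mathcal{X}}$ and $r(\Lambda\cap\hat{\mathcal{X}})$ on $\mathbb{C}[\Lambda]$ by restricting the quasi-regular representations of $\Gamma$ to $\Lambda$. The one substantive difference is that the paper's proof is a single sentence asserting that the restriction of $\lambda_{\Gamma}\oplus\bigoplus_{X}\lambda_{\Gamma/X}$ to $\mathbb{C}[\Lambda]$ \emph{and to the invariant subspace} $\ell^{2}(\Lambda)\oplus\bigoplus_{X}\ell^{2}(\Lambda/X)$ is unitarily equivalent to $\lambda_{\Lambda}\oplus\bigoplus_{X}\lambda_{\Lambda/\Lambda\cap X}$; taken literally, compressing to an invariant subspace only yields the inequality $\|a\|_{r(\Lambda\cap\hat{\mathcal{X}})}\le\|a\|_{r\hat{\mathcal{X}}}$. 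Your full double-coset decomposition $\lambda_{\Gamma/X}|_{\Lambda}\cong\bigoplus_{\Lambda gX}\lambda_{\Lambda/(\Lambda\cap gXg^{-1})}$, together with the observation that $\Gamma$-conjugation invariance of $\mathcal{X}$ keeps every summand inside the family $\Lambda\cap\mathcal{X}$, is exactly what supplies the reverse inequality and hence the isometry of the inclusion, so your version makes explicit a step the paper leaves implicit. You are also right to flag that point as the place where full $\Gamma$-invariance (not just $\Lambda$-invariance) is used. The remaining deductions about the ideals via continuous extension of the seminorms are sound and match the paper's.
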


\begin{proof}
    The representation $\lambda_{\Gamma}\oplus\bigoplus_{X\in\mathcal{X}}\lambda_{\Gamma/X}$ restricted to $\mathbb{C}[\Lambda]$ and $\ell^{2}(\Lambda)\oplus\bigoplus_{X\in\mathcal{X}}\ell^{2}(\Lambda/X)$ is unitarily equivalent to $\lambda_{\Lambda}\oplus\bigoplus_{X\in\mathcal{X}}\lambda_{\Lambda/\Lambda\cap X}$, so $C^{*}_{r\Lambda\cap\hat{\mathcal{X}}}(\Lambda)\subseteq C^{*}_{r\hat{\mathcal{X}}}(\Gamma)$. Moreover, since $a\in J_{\Gamma,\mathcal{X}}$ if and only if $\langle a\delta_{X}, a\delta_{X}\rangle = 0 $ for all $X\in\mathcal{X}$, the above fact implies $C^{*}_{r\Lambda\cap \hat{\mathcal{X}}}(\Lambda)\cap J_{\Gamma,\mathcal{X}} = J_{\Lambda,\Lambda\cap \mathcal{X}}$.
\end{proof}

\begin{cor}
\label{cor:ctbleunion}

    If $\Gamma = \bigcup_{n}\Gamma_{n}$, where $\Gamma_{n}$ are subgroups such that $\Gamma_{n}\subseteq \Gamma_{n+1}$ for all $n\in\mathbb{N}$, then $J_{\Gamma, \mathcal{X}}\neq 0$ if and only if $J_{\Gamma_{n}, \Gamma_{n}\cap\mathcal{X}}\neq 0$ for some $n\in\mathbb{N}$.

    Moreover, $J_{\Gamma, \mathcal{X}}\cap \mathbb{C}[\Gamma]\neq 0$ if and only if $J_{\Gamma_{n}, \Gamma_{n}\cap \mathcal{X}}\cap\mathbb{C}[\Gamma_{n}]\neq 0$ for some $n\in\mathbb{N}$.
\end{cor}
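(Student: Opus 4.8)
The plan is to exhibit $A := C^{*}_{r\hat{\mathcal{X}}}(\Gamma)$ as the norm-closure of the increasing union of the subalgebras $A_{n} := C^{*}_{r\,\widehat{\Gamma_{n}\cap\mathcal{X}}}(\Gamma_{n})$ and to identify the ideal $J := J_{\Gamma,\mathcal{X}} = \ker(q\colon A \to C^{*}_{r\mathcal{X}}(\Gamma))$ with the closure of the increasing union of the ideals $J_{n} := J \cap A_{n}$. By Proposition~\ref{prp:resttosg} applied to $\Lambda = \Gamma_{n}$ (using $\Gamma_{n}\cap\hat{\mathcal{X}} = \widehat{\Gamma_{n}\cap\mathcal{X}}$, since intersecting the trivial subgroup with $\Gamma_{n}$ returns the trivial subgroup) each inclusion $A_{n}\hookrightarrow A$ is isometric and $J_{n} = J_{\Gamma_{n},\Gamma_{n}\cap\mathcal{X}}$. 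As $\bigcup_{n}\mathbb{C}[\Gamma_{n}] = \mathbb{C}[\Gamma]$ is dense in $A$, the union $\bigcup_{n}A_{n}$ is dense in $A$. Granting the identity $J = \overline{\bigcup_{n}J_{n}}$, the first assertion is immediate: one direction follows from $J_{n}\subseteq J$, while if $J\neq\{0\}$ then $\overline{\bigcup_{n}J_{n}}\neq\{0\}$ forces some $J_{n}\neq\{0\}$.

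The algebraic ``moreover'' statement requires no completion and follows directly from Proposition~\ref{prp:resttosg}. Indeed, a non-zero $a\in J_{\Gamma,\mathcal{X}}\cap\mathbb{C}[\Gamma]$ is finitely supported, so its support is contained in some $\Gamma_{n}$; hence $a\in J\cap\mathbb{C}[\Gamma_{n}] = J_{\Gamma_{n},\Gamma_{n}\cap\mathcal{X}}\cap\mathbb{C}[\Gamma_{n}]$, which is therefore non-zero. Conversely, any non-zero element of $J_{\Gamma_{n},\Gamma_{n}\cap\mathcal{X}}\cap\mathbb{C}[\Gamma_{n}] = J\cap\mathbb{C}[\Gamma_{n}]$ already lies in $J\cap\mathbb{C}[\Gamma]$, which is thus non-zero.

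The substantive step is the inductive-limit identity $J = \overline{\bigcup_{n}J_{n}}$, and this is where the care is needed. I would establish it through the second isomorphism theorem for C*-algebras: for each $n$ the natural map $A_{n}/(A_{n}\cap J)\to A/J$ is an injective $*$-homomorphism, hence isometric, so that $\|a + (A_{n}\cap J)\| = \|a + J\|$ for every $a\in A_{n}$. Given $a\in J$ and $\epsilon > 0$, density lets me choose $b\in A_{n}$ for some $n$ with $\|a - b\| < \epsilon$; then $\|b + J\| \leq \|a - b\| < \epsilon$, whence $\|b + (A_{n}\cap J)\| < \epsilon$ by the isometry, so there is $c\in A_{n}\cap J = J_{n}$ with $\|b - c\| < \epsilon$ and therefore $\|a - c\| < 2\epsilon$. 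This gives $J\subseteq\overline{\bigcup_{n}J_{n}}$, the reverse inclusion being clear. The only genuine obstacle is verifying this isometry of quotient norms, equivalently that $A_{n} + J$ is closed and the induced map injective, which is precisely the point at which the C*-algebraic structure (rather than a bare inductive limit of vector spaces) is essential; everything else is routine approximation.
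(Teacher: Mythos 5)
Your proof is correct and follows essentially the same route as the paper: both reduce everything to Proposition \ref{prp:resttosg} (which gives the isometric inclusions $C^{*}_{r\,\Gamma_{n}\cap\hat{\mathcal{X}}}(\Gamma_{n})\subseteq C^{*}_{r\hat{\mathcal{X}}}(\Gamma)$ and the identifications of the intersected ideals) together with the fact that a non-zero closed ideal in an inductive limit C*-algebra must meet some algebra in the limiting sequence, plus the trivial finite-support observation for the algebraic statement. The only difference is that the paper cites this inductive-limit fact from Davidson (Lemma III.4.1) while you prove it inline via the standard quotient-isometry argument, which is a correct proof of that lemma.
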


\begin{proof}
    The first ``if and only if'' follows from Proposition \ref{prp:resttosg} and the fact that $C_{r\hat{\mathcal{X}}}^{*}(\Gamma)$ is the inductive limit of $C^{*}_{r\Gamma_{n}\cap\hat{\mathcal{X}} }(\Gamma_{n})$ and every ideal in an inductive limit algebra must intersect an algebra in its limiting sequence (\cite[Lemma~III.4.1]{Davidson}).

    The second ``if and only if'' follows from Proposition \ref{prp:resttosg} and $\mathbb{C}[\Gamma] = \bigcup_{n}\mathbb{C}[\Gamma_{n}]$.
\end{proof}

It follows that Property $I$ and $AI$ are preserved under countable increasing sequences of groups with the same properties.

\begin{cor}
\label{cor:pAI&ctbleunions}
    Suppose $\Gamma$ is a discrete group and $\Gamma = \bigcup_{n}\Gamma_{n}$, where $\Gamma_{n}$ are subgroups such that $\Gamma_{n}\subseteq \Gamma_{n+1}$ for all $n\in\mathbb{N}$. If $\Gamma_{n}$ has Property $I$ (or $AI$) for all $n\in\mathbb{N}$, then $\Gamma$ has Property $I$ (or $AI$).
\end{cor}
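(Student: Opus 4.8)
The plan is to reduce both claims to Corollary \ref{cor:ctbleunion}, which translates nonvanishing of $J_{\Gamma,\mathcal{X}}$ and of $J_{\Gamma,\mathcal{X}}\cap\mathbb{C}[\Gamma]$ into the analogous statements for the subgroups $\Gamma_n$, together with Proposition \ref{prp:resttosg}, which guarantees that each $\Gamma_n\cap\mathcal{X}$ is again a closed conjugation-invariant set of subgroups of $\Gamma_n$, so that Property $I$ (resp. $AI$) of $\Gamma_n$ can be applied to it.

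First I would dispatch Property $I$, which is the routine case. Fix a closed invariant set of subgroups $\mathcal{X}$ and suppose $J_{\Gamma,\mathcal{X}}\neq\{0\}$; the goal is $J_{\Gamma,\mathcal{X}}\cap\mathbb{C}[\Gamma]\neq\{0\}$. The first equivalence in Corollary \ref{cor:ctbleunion} produces an $n$ with $J_{\Gamma_n,\Gamma_n\cap\mathcal{X}}\neq\{0\}$. Since $\Gamma_n$ has Property $I$ and $\Gamma_n\cap\mathcal{X}$ is admissible by Proposition \ref{prp:resttosg}, we have $(\Gamma_n,\Gamma_n\cap\mathcal{X})\in\mathcal{I}$, whence $J_{\Gamma_n,\Gamma_n\cap\mathcal{X}}\cap\mathbb{C}[\Gamma_n]\neq\{0\}$. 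The second equivalence in Corollary \ref{cor:ctbleunion} then upgrades this to $J_{\Gamma,\mathcal{X}}\cap\mathbb{C}[\Gamma]\neq\{0\}$, establishing Property $I$ for $\Gamma$.

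For Property $AI$ the ``only if'' direction is free and independent of the union structure: if $\{e\}\in\mathcal{X}$ then $\hat{\mathcal{X}}=\mathcal{X}$ and $r\hat{\mathcal{X}}=r\mathcal{X}$, so $J_{\Gamma,\mathcal{X}}=\{0\}$ for any group. The content is the ``if'' direction: assuming $\{e\}\notin\mathcal{X}$, deduce $J_{\Gamma,\mathcal{X}}\cap\mathbb{C}[\Gamma]\neq\{0\}$. The main obstacle is transporting the hypothesis $\{e\}\notin\mathcal{X}$ down to some $\Gamma_n$, i.e. finding $n$ with $\{e\}\notin\Gamma_n\cap\mathcal{X}$, since a priori an intersection $\Gamma_n\cap X$ could collapse to the trivial subgroup even for nontrivial $X$. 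I would resolve this using that $\mathcal{X}$ is closed, hence compact, in $\{0,1\}^{\Gamma}$: because $\{e\}\notin\mathcal{X}$, there is a basic open neighborhood of the trivial subgroup disjoint from $\mathcal{X}$, which unwinds to a finite set $F\subseteq\Gamma\setminus\{e\}$ such that every $X\in\mathcal{X}$ meets $F$. Choosing $N$ with $F\subseteq\Gamma_N$ (possible as the union is increasing), for every $n\geq N$ and every $X\in\mathcal{X}$ the intersection $\Gamma_n\cap X$ contains a point of $F$ and is therefore nontrivial, so $\{e\}\notin\Gamma_n\cap\mathcal{X}$. Property $AI$ of $\Gamma_n$ now yields $J_{\Gamma_n,\Gamma_n\cap\mathcal{X}}\cap\mathbb{C}[\Gamma_n]\neq\{0\}$, and Corollary \ref{cor:ctbleunion} promotes this to $J_{\Gamma,\mathcal{X}}\cap\mathbb{C}[\Gamma]\neq\{0\}$, completing the verification of Property $AI$.

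I expect the compactness step extracting the finite ``hitting'' set $F$ to be the only genuinely nontrivial point; every other step is a direct invocation of the permanence results already proved.
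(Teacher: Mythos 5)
Your proposal is correct and follows essentially the same route as the paper: Property $I$ is dispatched directly from Corollary \ref{cor:ctbleunion} and Proposition \ref{prp:resttosg}, and for Property $AI$ the key step in both arguments is the compactness of $\mathcal{X}$ in $\{0,1\}^{\Gamma}$ producing a finite set $F\subseteq\Gamma\setminus\{e\}$ meeting every $X\in\mathcal{X}$, so that $\{e\}\notin\Gamma_{n}\cap\mathcal{X}$ once $F\subseteq\Gamma_{n}$. No gaps.
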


\begin{proof}
    The fact that Property $I$ is invariant under increasing unions is immediate from Corollary \ref{cor:ctbleunion} and Proposition \ref{prp:resttosg}. As for Property $AI$, by compactness of $\mathcal{X}$ in $\{0,1\}^{\Gamma}$, if $\{e\}\notin \mathcal{X}$, there is a finite set $F\subseteq \Gamma\setminus \{e\}$ such that $X\cap F\neq \emptyset$, for all $X\in\mathcal{X}$. Therefore, if we choose $\Gamma_{n}$ such that $F\subseteq \Gamma_{n}$, then $\{e\}\notin \Gamma_{n}\cap\mathcal{X}$. We can then apply Property $AI$, Proposition \ref{prp:resttosg} and Corollary \ref{cor:ctbleunion} to conclude $\Gamma$ has Property $AI$.
\end{proof}

\begin{prp}
\label{prp:videalredtonsg}
    Let $\Gamma$ be a discrete group and $\mathcal{X}$ a closed set of subgroups invariant under conjugation. Let $N$ be the normal subgroup generated by $X\in\mathcal{X}$. Then, $J_{\Gamma,\mathcal{X}}\neq \{0\}$ if and only if $J_{N,\mathcal{X}}\neq \{0\}$. Moreover, $J_{\Gamma,\mathcal{X}}\cap \mathbb{C}[\Gamma]\neq\{0\}$ if and only if $J_{N,\mathcal{X}}\cap\mathbb{C}[N] \neq \{0\}$
\end{prp}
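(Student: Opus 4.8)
The plan is to treat the two halves of each ``if and only if'' separately, with one direction coming essentially for free from \Cref{prp:resttosg} and the other requiring real work. First I would record the basic setup: since $\mathcal{X}$ is invariant under conjugation, the subgroup $N = \langle\bigcup_{X\in\mathcal{X}}X\rangle$ generated by all the $X\in\mathcal{X}$ is normal, and every $X\in\mathcal{X}$ is a subgroup of $N$, so that $N\cap\mathcal{X} = \{N\cap X : X\in\mathcal{X}\} = \mathcal{X}$ and $\mathcal{X}$ is a closed, conjugation-invariant set of subgroups of $N$. Applying \Cref{prp:resttosg} with $\Lambda = N$ then gives $J_{N,\mathcal{X}} = J_{\Gamma,\mathcal{X}}\cap C^{*}_{r\hat{\mathcal{X}}}(N)$ and $J_{\Gamma,\mathcal{X}}\cap\mathbb{C}[N] = J_{N,\mathcal{X}}\cap\mathbb{C}[N]$. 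In particular $J_{N,\mathcal{X}}\subseteq J_{\Gamma,\mathcal{X}}$, which immediately yields the implications ``$J_{N,\mathcal{X}}\neq\{0\}\Rightarrow J_{\Gamma,\mathcal{X}}\neq\{0\}$'' and ``$J_{N,\mathcal{X}}\cap\mathbb{C}[N]\neq\{0\}\Rightarrow J_{\Gamma,\mathcal{X}}\cap\mathbb{C}[\Gamma]\neq\{0\}$''.

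For the algebraic converse I would use the concrete description in \Cref{lem:alggroupidealchar}(2): an element $c$ of $\mathbb{C}[\Gamma]$ (resp. $\mathbb{C}[N]$) lies in the corresponding ideal precisely when $\sum_{h\in gX}c(h) = 0$ for every group element $g$ and every $X\in\mathcal{X}$. The key geometric observation is that each coset $gX$ with $X\in\mathcal{X}\subseteq N$ is contained in a single coset $gN$ of $N$. Thus, given a nonzero $a\in J_{\Gamma,\mathcal{X}}\cap\mathbb{C}[\Gamma]$, I would choose a coset $\gamma_{0}N$ meeting the support of $a$ and set $b(n) = a(\gamma_{0}n)$ for $n\in N$; this $b\in\mathbb{C}[N]$ is nonzero, and for $m\in N$, $X\in\mathcal{X}$ the substitution $h = \gamma_{0}n$ gives $\sum_{n\in mX}b(n) = \sum_{h\in\gamma_{0}mX}a(h) = 0$, so $b\in J_{N,\mathcal{X}}\cap\mathbb{C}[N]$. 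This completes the ``moreover'' statement.

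The C*-algebraic converse ``$J_{\Gamma,\mathcal{X}}\neq\{0\}\Rightarrow J_{N,\mathcal{X}}\neq\{0\}$'' is the main obstacle, and here I would pass to reduced group C*-algebras. By \Cref{prp:gideal=rgideal} it is equivalent to show that the ideals $I_{\Gamma} = \lambda_{\Gamma}(J_{\Gamma,\mathcal{X}})\trianglelefteq C^{*}_{r}(\Gamma)$ and $I_{N} = \lambda_{N}(J_{N,\mathcal{X}})\trianglelefteq C^{*}_{r}(N)$ are simultaneously (non)zero. Using the canonical faithful conditional expectation $E_{N}\colon C^{*}_{r}(\Gamma)\to C^{*}_{r}(N)$, which on coefficient functions is restriction $a\mapsto a|_{N}$, it suffices to prove $E_{N}(I_{\Gamma})\subseteq I_{N}$: then a nonzero positive $b\in I_{\Gamma}$ has $E_{N}(b)\neq 0$ by faithfulness and $E_{N}(b)\in I_{N}$, forcing $I_{N}\neq\{0\}$.

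To prove the inclusion $E_{N}(I_{\Gamma})\subseteq I_{N}$ I would use that, because $\|\cdot\|_{r\hat{\mathcal{X}}} = \max\{\|\cdot\|_{r},\|\cdot\|_{r\mathcal{X}}\}$, an element $b$ lies in $I_{\Gamma}$ exactly when there is a net $(a_{n})\subseteq\mathbb{C}[\Gamma]$ with $\|\lambda_{\Gamma}(a_{n}) - b\|\to 0$ and $\|a_{n}\|_{r\mathcal{X}}\to 0$, and similarly for $I_{N}$. Applying $E_{N}$ gives $\lambda_{N}(a_{n}|_{N}) = E_{N}(\lambda_{\Gamma}(a_{n}))\to E_{N}(b)$. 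The crucial point is the compression identity: identifying $\ell^{2}(N/Y)$ with the subspace of $\ell^{2}(\Gamma/Y)$ spanned by the cosets lying inside $N$ (legitimate since $Y\subseteq N$), a direct computation of matrix coefficients gives $P_{N/Y}\lambda_{\Gamma/Y}(a)P_{N/Y} = \lambda_{N/Y}(a|_{N})$ for every $a\in\mathbb{C}[\Gamma]$ and $Y\in\mathcal{X}$, whence $\|\lambda_{N/Y}(a_{n}|_{N})\|\leq\|\lambda_{\Gamma/Y}(a_{n})\|$ and therefore $\|a_{n}|_{N}\|_{r\mathcal{X}}\leq\|a_{n}\|_{r\mathcal{X}}\to 0$. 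Hence $E_{N}(b)$ is a limit of the $\lambda_{N}(a_{n}|_{N})$ with the quasi-regular part going to zero, i.e. $E_{N}(b)\in I_{N}$, as required. The subtlety to watch is exactly that the compression identity and the ensuing norm bound are what let me control the exotic $r\mathcal{X}$-seminorm of the restricted coefficients by that of the originals; this is the heart of the argument, and it is the place where normality of $N$ together with conjugation-invariance of $\mathcal{X}$ (ensuring each $gX$ stays inside a single $N$-coset and each $Y\in\mathcal{X}$ sits inside $N$) is used essentially.
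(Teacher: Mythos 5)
Your proposal is correct and follows essentially the same route as the paper: the one free direction via \cref{prp:resttosg}, and for the converse the restriction map $a\mapsto a|_{N}$ (the conditional expectation onto $C^{*}_{r}(N)$) together with the compression inequality $\|\lambda_{N/Y}(a|_{N})\|\le\|\lambda_{\Gamma/Y}(a)\|$ controlling the quasi-regular seminorms, and faithfulness on positive elements to see the image is nonzero. The paper packages this as a c.p.c.\ map on the exotic completion $C^{*}_{r\hat{\mathcal{X}}}(\Gamma)$ via Rieffel's induced representations and proves the matrix-coefficient inequality directly, while you work in $C^{*}_{r}(\Gamma)$ through \cref{prp:gideal=rgideal} and an approximating net, but the underlying argument is the same.
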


There is a special subgroup C*-algebra we can always restrict the ideal to which never affects its non-triviality.

\begin{proof}
By Proposition \ref{prp:resttosg} and the fact that $N\cap\mathcal{X} = \mathcal{X}$, we have $C^{*}_{r\hat{\mathcal{X}}}(N)\cap J_{\Gamma,\mathcal{X}} = J_{N,\mathcal{X}}$. It follows that $J_{N,\mathcal{X}}\neq 0$ implies $J_{\Gamma,\mathcal{X}}\neq 0$.

Let $\Phi:\mathbb{C}[\Gamma]\to \mathbb{C}[N]$ be the map $a\mapsto a|_{N}$, $a\in \mathbb{C}[\Gamma]$. Then, $\Phi$ extends to a c.p.c. (completely positive and completely contracting) map $\Phi:C^{*}(\Gamma)\to C^{*}(N)$ \cite{R74}. Since the induced representation on $\mathbb{C}[\Gamma]$ from $\lambda_{N/X}$ is unitarily equivalent to $\lambda_{\Gamma/X}$, for $X\in\hat{\mathcal{X}}$, it follows that $\Phi$ descends to a c.p.c. map $\Phi:C^{*}_{r\hat{\mathcal{X}}}(\Gamma)\to C^{*}_{r\mathcal{X}}(N)$. 

Let's show $\langle \Phi(a)\delta_{X}, \Phi(a)\delta_{X}\rangle \leq \langle a\delta_{X}, a\delta_{X}\rangle$, for all $a\in C^{*}_{r\hat{\mathcal{X}}}(\Gamma)$ and $X\in\mathcal{X}$. It suffices to check this for $a\in\mathbb{C}[\Gamma]$. Write $a = \sum^{n}_{i=1}a_{i}\delta_{g_{i}}$ with $g_{0} = \delta_{e}$ and let $R$ be the equivalence relation on $\{0,...,n\}$ defined by $\{(i,j): g_{i}^{-1}g_{j}\in N\}$. Let $[R]$ be the equivalence classes and for $q\in [R]$, define $a_{q} = \sum_{i\in q} a_{i}\delta_{g_{i}}$. For every $X\in\mathcal{X}$, we have $\langle  a\delta_{X},a\delta_{X}\rangle  = \sum_{(i,j):g^{-1}_{i}g_{j}\in X}\overline{a_{i}}a_{j}$. Since $X\subseteq N$, we have $g^{-1}_{i}g_{j}\in X$ implies $g_{i}, g_{j}\in q$ for some $q\in [R]$. Hence, $\sum_{(i,j):g^{-1}_{i}g_{j}\in X}\overline{a_{i}}a_{j} = \sum_{q\in [R]}\langle a^{*}_{q}a_{q}\delta_{X}, \delta_{X}\rangle
= \sum_{q\in [R]}a^{*}_{q}a_{q}(X)$. Since $\Phi(a) = a_{[0]}$, it follows that $\langle \Phi(a)\delta_{X}, \Phi(a)\delta_{X}\rangle \leq \sum_{q\in [R]}\langle a^{*}_{q}a_{q}\delta_{X}, \delta_{X}\rangle = \langle a\delta_{X}, a\delta_{X}\rangle.$

Therefore, if $a\in J_{\Gamma,\mathcal{X}}$ ($\langle a\delta_{X}, a\delta_{X}\rangle = 0$ for all $X\in\mathcal{X}$), then $\Phi(a)\in J_{N, \mathcal{X}}$ and hence $\Phi(J_{\Gamma,\mathcal{X}}) = J_{N,\mathcal{X}}$. Now, suppose $a$ is a positive and non-zero element in $J_{\Gamma,\mathcal{X}}$. From Proposition \ref{prp:gideal=rgideal}, we have $\lambda_{r}(a)\neq 0$, so that $\langle \lambda_{r}(a)\delta_{e}, \delta_{e}\rangle\neq 0$. Therefore, $\langle \lambda_{r}(\Phi(a))\delta_{e}, \delta_{e}\rangle = \langle \Phi(\lambda_{r}(a))\delta_{e}, \delta_{e}\rangle = \langle \lambda_{r}(a)\delta_{e}, \delta_{e}\rangle\neq 0$. Hence, $\Phi(a)$ is a non-zero element in $J_{N,\mathcal{X}}$.

Proposition \ref{prp:resttosg} implies $J_{\Gamma,\mathcal{X}}\cap \mathbb{C}[N] = J_{N,\mathcal{X}}\cap\mathbb{C}[N]$ and this proves the ``if'' direction of the second half of proposition.

The ``only if'' direction follows from $\Phi(J_{\Gamma,\mathcal{X}}\cap\mathbb{C}[\Gamma]) = J_{N,\mathcal{X}}\cap \mathbb{C}[N]$ and the fact (proven above) that if $a$ is positive and non-zero in $J_{\Gamma,\mathcal{X}}\cap\mathbb{C}[\Gamma]$, then $\Phi(a)\neq 0$.
\end{proof}

\begin{cor}
\label{cor:redtonsg}
    Let $\Gamma$ be a discrete group and $\mathcal{X}$ a closed set of subgroups invariant under conjugation. Then, $(\Gamma, \mathcal{X})\in \mathcal{I}$ if and only if $(N,\mathcal{X})\in\mathcal{I}$, where $N$ is the normal subgroup generated from $X\in\mathcal{X}$.
\end{cor}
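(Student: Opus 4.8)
The plan is to unpack the definition of $\mathcal{I}$ and reduce the statement directly to Proposition \ref{prp:videalredtonsg}. Recall that $(\Gamma,\mathcal{X})\in\mathcal{I}$ means $J_{\Gamma,\mathcal{X}}=\{0\}$ or $J_{\Gamma,\mathcal{X}}\cap\mathbb{C}[\Gamma]\neq\{0\}$; negating this disjunction, $(\Gamma,\mathcal{X})\notin\mathcal{I}$ holds precisely when $J_{\Gamma,\mathcal{X}}\neq\{0\}$ \emph{and} $J_{\Gamma,\mathcal{X}}\cap\mathbb{C}[\Gamma]=\{0\}$. Hence it suffices to prove the equivalence of non-membership, and the whole corollary becomes a matter of matching this conjunction on the $\Gamma$-side with the corresponding conjunction on the $N$-side.

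First I would check that $(N,\mathcal{X})$ is a legitimate input to the definition of $\mathcal{I}$, i.e.\ that $\mathcal{X}$ is a closed conjugation-invariant set of subgroups of $N$. Each $X\in\mathcal{X}$ is contained in $N$ by the very definition of $N$ as the normal subgroup generated by the members of $\mathcal{X}$, so the elements of $\mathcal{X}$ are genuinely subgroups of $N$; invariance under conjugation by elements of $N$ follows from invariance under conjugation by elements of $\Gamma$ since $N\subseteq\Gamma$; and closedness in $\{0,1\}^{N}$ follows because the restriction map $\{0,1\}^{\Gamma}\to\{0,1\}^{N}$ is continuous and injective on $\mathcal{X}$ (whose members already lie in $N$), carrying the compact set $\mathcal{X}$ to a compact, hence closed, subset of the Hausdorff space $\{0,1\}^{N}$.

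The heart of the argument is then simply the two equivalences furnished by Proposition \ref{prp:videalredtonsg}: namely $J_{\Gamma,\mathcal{X}}\neq\{0\}$ if and only if $J_{N,\mathcal{X}}\neq\{0\}$, and $J_{\Gamma,\mathcal{X}}\cap\mathbb{C}[\Gamma]\neq\{0\}$ if and only if $J_{N,\mathcal{X}}\cap\mathbb{C}[N]\neq\{0\}$. Taking the contrapositive of the second gives $J_{\Gamma,\mathcal{X}}\cap\mathbb{C}[\Gamma]=\{0\}$ if and only if $J_{N,\mathcal{X}}\cap\mathbb{C}[N]=\{0\}$. Combining the two, the conjunction characterizing $(\Gamma,\mathcal{X})\notin\mathcal{I}$ holds if and only if $J_{N,\mathcal{X}}\neq\{0\}$ and $J_{N,\mathcal{X}}\cap\mathbb{C}[N]=\{0\}$, which is exactly the condition $(N,\mathcal{X})\notin\mathcal{I}$. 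This yields the desired equivalence $(\Gamma,\mathcal{X})\in\mathcal{I}\iff(N,\mathcal{X})\in\mathcal{I}$.

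Since the substantive analytic content is already carried out in Proposition \ref{prp:videalredtonsg} (the construction of the conditional expectation $\Phi$ and the inequality $\langle\Phi(a)\delta_{X},\Phi(a)\delta_{X}\rangle\leq\langle a\delta_{X},a\delta_{X}\rangle$), I expect no genuine obstacle here. The only points demanding a moment's care are purely formal: keeping straight the disjunction in the definition of $\mathcal{I}$ when passing to contrapositives, and confirming that $(N,\mathcal{X})$ meets the standing hypotheses, both of which are routine.
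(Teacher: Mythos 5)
Your proposal is correct and is essentially the paper's intended argument: the corollary is stated as an immediate consequence of Proposition \ref{prp:videalredtonsg}, and your unpacking of the definition of $\mathcal{I}$ into the conjunction $J_{\Gamma,\mathcal{X}}\neq\{0\}$ and $J_{\Gamma,\mathcal{X}}\cap\mathbb{C}[\Gamma]=\{0\}$, transferred to $N$ via the two equivalences of that proposition, is exactly the deduction the paper relies on. Your additional verification that $(N,\mathcal{X})$ satisfies the standing hypotheses (subgroups of $N$, conjugation invariance, closedness) is a sensible formality that the paper leaves implicit.
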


The first part of the following result is a lemma which is likely well known in group theory (and is proven also in \cite[Theorem~4.7]{BGHL25}), but we prove it for completeness (the proof follows that in the aforementioned citation).

\begin{lem}
\label{lem:finite}
Let $\Gamma$ be a discrete group and $\mathcal{X}$ a finite set of finite subgroups invariant under conjugation. Then the normal subgroup $N$ generated by $X\in\mathcal{X}$ is finite. Therefore, $(\Gamma,\mathcal{X})\in\mathcal{I}$.
\end{lem}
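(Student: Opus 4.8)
The plan is to establish the structural claim that $N$ is finite first, and then read off membership in $\mathcal{I}$ as a soft consequence. Write $S = \bigcup_{X\in\mathcal{X}} X$. Since $\mathcal{X}$ is finite and each $X\in\mathcal{X}$ is finite, $S$ is a finite subset of $\Gamma$, and since $\mathcal{X}$ is invariant under conjugation so is $S$; consequently the normal subgroup generated by the $X\in\mathcal{X}$ is exactly $N=\langle S\rangle$. Every $s\in S$ has finite order, and because its whole $\Gamma$-conjugacy class lies in the finite set $S$, it also has finite conjugacy class.

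First I would show $N/Z(N)$ is finite. Conjugation gives an action of $N$ on the finite set $S$, i.e.\ a homomorphism $N\to\operatorname{Sym}(S)$; its kernel is $C_N(S)$, and since $N=\langle S\rangle$ an element centralizes $S$ if and only if it centralizes all of $N$, so the kernel is precisely $Z(N)$. Hence $N/Z(N)$ embeds into the finite group $\operatorname{Sym}(S)$ and is finite. Now \emph{Schur's theorem} (if $Z(G)$ has finite index in $G$ then $[G,G]$ is finite) yields that $[N,N]$ is finite. Finally the abelianization $N/[N,N]$ is generated by the images of the finite set $S$, all of finite order, hence is a finitely generated torsion abelian group and therefore finite; combining $|[N,N]|<\infty$ with $|N/[N,N]|<\infty$ gives $|N|<\infty$. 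The one genuine obstacle here is that $Z(N)$ may a priori be infinite, so the $\operatorname{Sym}(S)$ argument alone only bounds the center's index; Schur's theorem is exactly what converts ``finite-index center'' into finiteness of the commutator subgroup, and the torsion hypothesis on the generators is what kills the free part of the abelianization to finish.

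For the ``therefore'' clause I would invoke Corollary \ref{cor:redtonsg}, which gives $(\Gamma,\mathcal{X})\in\mathcal{I}$ if and only if $(N,\mathcal{X})\in\mathcal{I}$, reducing everything to the finite group $N$ (note each $X\in\mathcal{X}$ lies in $N$, the finite set $\mathcal{X}$ is closed in $\{0,1\}^{N}$, and $\mathcal{X}$ is invariant under conjugation by $N$). Since $N$ is finite, $\mathbb{C}[N]$ is finite-dimensional, so the C*-norm $r\hat{\mathcal{X}}$ completes to $\mathbb{C}[N]$ itself; in particular $J_{N,\mathcal{X}}\subseteq C^{*}_{r\hat{\mathcal{X}}}(N)=\mathbb{C}[N]$, whence $J_{N,\mathcal{X}}=J_{N,\mathcal{X}}\cap\mathbb{C}[N]$. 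Thus either $J_{N,\mathcal{X}}=0$ or $J_{N,\mathcal{X}}\cap\mathbb{C}[N]=J_{N,\mathcal{X}}\neq\{0\}$, so $(N,\mathcal{X})\in\mathcal{I}$ by definition, and therefore $(\Gamma,\mathcal{X})\in\mathcal{I}$.
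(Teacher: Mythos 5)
Your proof is correct, but the route to the finiteness of $N$ is genuinely different from the paper's. The paper argues combinatorially (following \cite[Theorem~4.7]{BGHL25}): writing $\mathcal{X}=\{X_1,\dots,X_m\}$, it shows every $g\in N$ admits a factorization $g=g_1\cdots g_k$ with the $g_i$ drawn from \emph{distinct} subgroups $X_{n_i}$, by taking a minimal-length factorization and collapsing any two factors from the same $X_l$ via a conjugation trick; this forces $k\le m$ and hence $|N|<\infty$ with an explicit bound. You instead prove what is essentially Dietzmann's lemma: the generating set $S=\bigcup_{X\in\mathcal{X}}X$ is a finite, conjugation-invariant set of torsion elements, the conjugation action $N\to\operatorname{Sym}(S)$ has kernel $Z(N)$ (since $N=\langle S\rangle$), Schur's theorem then makes $[N,N]$ finite, and the abelianization is a finitely generated torsion abelian group, hence finite. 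Both arguments are valid; the paper's is elementary and self-contained and yields an explicit size bound, while yours is shorter and more conceptual but leans on Schur's theorem as an external input. For the ``therefore'' clause, the paper leaves the deduction implicit, and your explicit argument --- reduce to $(N,\mathcal{X})$ via Corollary \ref{cor:redtonsg}, then note that finite-dimensionality of $\mathbb{C}[N]$ forces $J_{N,\mathcal{X}}=J_{N,\mathcal{X}}\cap\mathbb{C}[N]$, so membership in $\mathcal{I}$ is automatic --- is exactly the intended one and is correctly executed.
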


\begin{proof}
    Let $N$ be the subgroup generated by $X\in\mathcal{X}$. The proof that $N$ is finite is the same as in the first paragraph of \cite[Theorem~4.7]{BGHL25}, but we recall it for completeness. Write $\mathcal{X} = \{X_{1},...X_{m}\}$. We claim every element $g\in N$ can be written as $g = g_{1}...g_{k}$, where $g_{i}\in X_{n_{i}}\setminus\bigcup_{j\neq i}X_{n_{j}}$. This forces $k\leq m$ so that $N$ is finite.
    
    To prove this, for $g\in N$, let $g = g_{1}...g_{k}$ be a minimal factorization of $g$, where $g_{i}\in \bigcup^{m}_{i=1}X_{i}$. Suppose for the sake of contradiction that there are $g_{j}, g_{j'}\in X_{l}$ for some $j < j'$ and $l\leq m$. For $i < j$, let $g'_{i} := g_{i}$, and for $i = j$, let $g'_{j}:= g_{j}g_{j'}$. For $j <i< j'$, let $g'_{i}:= g^{-1}_{j'}g_{i}g_{j'}$, and for $j'\leq i\leq k-1$, let $g'_{i} = g_{i+1}$. Then, we have $g = g'_{1}...g'_{k-1}$. Note $g'_{j}\in X_{l}$ since $X_{l}$ is closed under multiplication, and $g'_{i}\in \bigcup^{m}_{i=1}X_{i}$ since this set is closed under conjugation. Therefore, this factorization contradicts minimality of $k$, thus proving the claim that $N$ is finite.
\end{proof}

Now we show that finite invariant collections of subgroups belong in $\mathcal{I}$. First, we will prove a lemma which will be useful to us in multiple arguments.

\begin{lem}
\label{lem:prodelementconstr}
    Let $\Gamma$ be a discrete group and $\mathcal{X}$ a closed set of subgroups such that either

    \begin{enumerate}
        \item $\mathcal{X}$ is finite and every $X\in\mathcal{X}$ is infinite or
        \item every $X\in\mathcal{X}$ is normal and torsion free.
    \end{enumerate}
    Then, for every non-zero $a\in\mathbb{C}[\Gamma]$, there is $b\in J_{\Gamma,\mathcal{X}}\cap\mathbb{C}[\Gamma]$ such that $ab\neq 0$.
\end{lem}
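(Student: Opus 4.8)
The plan is to produce $b$ as a product of ``difference'' elements $\delta_e-\delta_x$ and to exploit the description of the ideal furnished by Lemma \ref{lem:alggroupidealchar}. Write $E_X\colon\mathbb{C}[\Gamma]\to\mathbb{C}[\Gamma/X]$ for the coset-sum map $E_X(c)(\gamma)=\sum_{x\in X}c(\gamma x)$; by Lemma \ref{lem:alggroupidealchar} one has $J_{\Gamma,\mathcal{X}}\cap\mathbb{C}[\Gamma]=\bigcap_{X\in\mathcal{X}}\ker E_X$. The elementary observation driving the construction is that for $x\in X$ the element $\delta_e-\delta_x$ lies in $\ker E_X$ (its mass on each left coset of $X$ cancels), that $\ker E_X$ is a left ideal, and that when $X$ is normal $\delta_e-\delta_x\in\ker\lambda_{\Gamma/X}$, a two-sided ideal, for every $x\in X$. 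Thus I would look for a finite family of nontrivial elements $x_1,\dots,x_k$, drawn from infinite sets of candidates, such that $b=\prod_{i=1}^{k}(\delta_e-\delta_{x_i})$ lies in $\bigcap_{X\in\mathcal{X}}\ker E_X$, and then tune the choice so that $ab\neq 0$. (We assume $\{e\}\notin\mathcal{X}$ throughout, since otherwise $J_{\Gamma,\mathcal{X}}=\{0\}$; in case (1) this is automatic.)

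For case (1) the obstacle is that the $X_j$ need not be normal, so a single factor $\delta_e-\delta_{x_j}$ need not sit in the two-sided ideal $\ker\lambda_{\Gamma/X_j}$. I would circumvent this by passing to the finite-index subgroup $H:=\bigcap_{j=1}^{m}N_\Gamma(X_j)$; each normalizer has finite index because the conjugation orbit of $X_j$ is a finite subset of $\mathcal{X}$, and a finite intersection of finite-index subgroups has finite index. For each $j$ the subgroup $X_j\cap H$ is then \emph{normal in $H$} (as $H$ normalizes both $X_j$ and $H$) and is infinite (since $[X_j:X_j\cap H]\le[\Gamma:H]<\infty$). Choosing $x_j\in(X_j\cap H)\setminus\{e\}$ and setting $b=\prod_{j=1}^{m}(\delta_e-\delta_{x_j})\in\mathbb{C}[H]$, normality of $X_j\cap H$ in $H$ gives $b\in\ker E^{H}_{X_j\cap H}$ for every $j$. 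A short coset computation then lifts this to $\Gamma$: since $\operatorname{supp}(b)\subseteq H$, for any $\gamma\in\Gamma$ the set $\gamma X_j\cap H$ is either empty or a single left coset $h_0(X_j\cap H)$ of $X_j\cap H$ in $H$, whence $E^{\Gamma}_{X_j}(b)(\gamma)=\sum_{h\in\gamma X_j\cap H}b(h)=E^{H}_{X_j\cap H}(b)(h_0)=0$. Thus $b\in\bigcap_j\ker E^{\Gamma}_{X_j}=J_{\Gamma,\mathcal{X}}\cap\mathbb{C}[\Gamma]$, and this holds for \emph{every} choice of $x_j\in(X_j\cap H)\setminus\{e\}$, giving an infinite admissible set in each slot.

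For case (2) all $X$ are normal, so the construction is more direct but must cope with $\mathcal{X}$ possibly infinite. Since $\mathcal{X}$ is closed in $\{0,1\}^{\Gamma}$ and $\{e\}\notin\mathcal{X}$, compactness yields a finite set $\{f_1,\dots,f_r\}\subseteq\Gamma\setminus\{e\}$ meeting every $X\in\mathcal{X}$ (the clopen sets $\{X:f\in X\}$ cover $\mathcal{X}$). Each $f_k$ has infinite order, as $\mathcal{X}$ is torsion free and $\langle f_k\rangle\subseteq X$ for some $X$. Taking $x_k=f_k^{\,n_k}$ for any exponents $n_k\ge 1$ and $b=\prod_{k=1}^{r}(\delta_e-\delta_{x_k})$, each $X\in\mathcal{X}$ contains some $f_k$ and hence $x_k$; normality makes the corresponding factor lie in the two-sided ideal $\ker\lambda_{\Gamma/X}$, so $b\in\ker\lambda_{\Gamma/X}$. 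Therefore $b\in\bigcap_{X\in\mathcal{X}}\ker\lambda_{\Gamma/X}\cap\mathbb{C}[\Gamma]=J_{\Gamma,\mathcal{X}}\cap\mathbb{C}[\Gamma]$ for every exponent vector, again with an infinite candidate set $\{f_k^{\,n}:n\ge1\}$ in each slot.

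It remains to arrange $ab\neq 0$, uniformly in both cases: there are $k$ slots (with $k=m$ or $k=r$) and an infinite candidate set $Y_i\subseteq\Gamma\setminus\{e\}$ in each, such that every choice $x_i\in Y_i$ yields $b=\prod_i(\delta_e-\delta_{x_i})\in J_{\Gamma,\mathcal{X}}\cap\mathbb{C}[\Gamma]$. Fix $g_0\in\operatorname{supp}(a)$; I would choose the $x_i$ so that the coefficient of $w_0:=g_0x_1\cdots x_k$ in $ab$ equals $(-1)^{k}a(g_0)\neq 0$. Writing $y_T=\prod_{i\in T}x_i$ (in increasing order) for $T\subseteq\{1,\dots,k\}$, one has $(ab)(w_0)=\sum_{g\in\operatorname{supp}(a),\,T}a(g)(-1)^{|T|}[\,gy_T=w_0\,]$, so it suffices to force $gy_T=g_0x_1\cdots x_k$ to have the unique solution $(g,T)=(g_0,\{1,\dots,k\})$. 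This is obtained by picking $x_1,\dots,x_k$ in turn: having fixed $x_1,\dots,x_{k-1}$, the case $k\in T$ reduces, after cancelling $x_k$ on the right, to the analogous (already controlled) identity for $k-1$ slots, while the case $k\notin T$ forbids only finitely many values of $x_k$, which we avoid using that $Y_k$ is infinite. A straightforward induction then completes the proof. The main obstacle is the membership verification in case (1), resolved by the passage to the finite-index group $H$; the genericity step is routine.
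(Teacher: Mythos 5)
Your proof is correct and follows essentially the same strategy as the paper's: $b$ is built as a product of difference elements $\delta_e-\delta_{x_i}$ with the $x_i$ drawn from infinite subgroups that become normal after passing to the finite-index common normalizer (case 1) or from the cyclic subgroups produced by compactness (case 2), and infiniteness of the candidate sets is used to force $ab\neq 0$. The only cosmetic differences are that you verify the descent from $H$ to $\Gamma$ by a direct coset computation rather than invoking Proposition~\ref{prp:resttosg}, and you arrange $ab\neq 0$ by targeting the coefficient of $g_0x_1\cdots x_k$ instead of iterating the paper's observation that a nonzero element of $\mathbb{C}[\Gamma]$ cannot be annihilated by right multiplication by $\delta_e-\delta_{h}$ for every $h$ in an infinite subgroup.
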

\begin{proof}
    In both cases, the normalizer $N = \{g\in\Gamma: gXg^{-1} = X\text{ }\forall X\in\mathcal{X}\}$ has the property that $N\cap X$ is infinite, for all $X\in\mathcal{X}$. In case (1), write $N\cap\mathcal{X} = \{Y_{1},...,Y_{n}\}$. For case (2), $N = \Gamma$ and we know (by compactness of $\mathcal{X})$ there is a finite set of torsion free elements $\{f_{1},...,f_{n}\} = F\subseteq \Gamma\setminus\{e\}$ such that $F\cap X\neq \emptyset$ for all $X\in\mathcal{X}$. Let $Y_{i}$ be the subgroup generated by $f_{i}$, for $i\leq n$.

    For a non-zero $a\in\mathbb{C}[\Gamma]$, we claim there is $h_{i}\in Y_{i}$ such that $a\Pi^{n}_{i=1}(\delta_{e} - \delta_{h_{i}})\neq 0$. To prove this, it suffices by an induction argument to show for any non-zero element $c\in\mathbb{C}[\Gamma]$ and $i\leq n$, there is $h_{i}\in Y_{i}$ such that $c(\delta_{e} - \delta_{h_{i}})\neq 0$.

    Suppose this was not true, and let $F\subseteq G$ be the finite set consisting of $g\in G$ with $c(g)\neq 0$. By assumption, $c = c\delta_{h}$ for all $h\in Y_{i}$, which implies $Fh = F$ and hence $Y_{i}\subseteq F^{-1}F$, contradicting the fact that $Y_{i}$ is infinite. 

    So, let $b := \Pi^{n}_{i=1}(\delta_{e} - \delta_{h_{i}})$ such that $ab\neq 0$. By Proposition \ref{prp:resttosg}, to prove the lemma, it suffices to show $b\in J_{N, N\cap\mathcal{X}}\cap \mathbb{C}[N]$.

    For $g\in N$ and $Y\in N\cap\mathcal{X}$, we have (by normality of $Y$ in $N$)
     $$b(gY) = \sum_{g_{1}\cdot\cdot\cdot g_{n} = g}(\delta_{e}-\delta_{h_{1}})(g_{1}Y)\cdot...\cdot (\delta_{e}-\delta_{h_{n}})(g_{n}Y).$$

    We consider the summands above. By construction, there is $i\leq n$ such that $Y_{i}\subseteq Y$ and note that $(\delta_{e} - \delta_{h_{i}})(kY_{i}) = 0$, for all $k\in N$. Hence, $(\delta_{e} - \delta_{h_{i}})(g_{i}Y) = 0$.  It follows that $b(gY) = 0$, proving the lemma. 
\end{proof}

Now, we apply this lemma to get one of our main results of this section.

\begin{thm}
\label{thm:finitenumberofsgs}
    Let $\Gamma$ be a discrete group and $\mathcal{X}$ a finite set of subgroups invariant under conjugation. Then, $(\Gamma, \mathcal{X})\in \mathcal{I}$.
\end{thm}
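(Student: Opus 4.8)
The plan is to reduce the theorem to the two special cases already disposed of in Lemma \ref{lem:finite} and Lemma \ref{lem:prodelementconstr}(1), by splitting $\mathcal{X}$ according to the cardinality of its members. Write $\mathcal{X} = \mathcal{X}_{\mathrm{fin}} \sqcup \mathcal{X}_{\mathrm{inf}}$, where $\mathcal{X}_{\mathrm{fin}}$ collects the finite subgroups in $\mathcal{X}$ and $\mathcal{X}_{\mathrm{inf}}$ the infinite ones. Since conjugation preserves cardinality, both pieces are invariant under conjugation, and both are finite, hence closed in $\{0,1\}^{\Gamma}$. I would first dispose of the degenerate cases: if $\mathcal{X}_{\mathrm{inf}} = \emptyset$ then $\mathcal{X} = \mathcal{X}_{\mathrm{fin}}$ is a finite set of finite subgroups and Lemma \ref{lem:finite} gives $(\Gamma,\mathcal{X}) \in \mathcal{I}$ outright; if $\mathcal{X}_{\mathrm{fin}} = \emptyset$ then Lemma \ref{lem:prodelementconstr}(1) applied with $a = \delta_e$ produces a nonzero element of $J_{\Gamma,\mathcal{X}} \cap \mathbb{C}[\Gamma]$, so again $(\Gamma,\mathcal{X}) \in \mathcal{I}$.

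For the general case I would assume $J_{\Gamma,\mathcal{X}} \neq \{0\}$ (otherwise there is nothing to prove) and aim to produce a nonzero element of $J_{\Gamma,\mathcal{X}} \cap \mathbb{C}[\Gamma]$. Throughout I would use the identification $J_{\Gamma,\mathcal{Y}} \cap \mathbb{C}[\Gamma] = \{c \in \mathbb{C}[\Gamma] : \lambda_{\Gamma/X}(c) = 0 \text{ for all } X \in \mathcal{Y}\}$, immediate from Definition \ref{dfn:gpideal}; in particular each such intersection is a two-sided ideal of $\mathbb{C}[\Gamma]$, being the kernel of $\bigoplus_{X \in \mathcal{Y}}\lambda_{\Gamma/X}$ restricted to $\mathbb{C}[\Gamma]$. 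By Corollary \ref{cor:gidealunion=int}, $J_{\Gamma,\mathcal{X}} \neq \{0\}$ forces $J_{\Gamma,\mathcal{X}_{\mathrm{fin}}} \neq \{0\}$. Since $\mathcal{X}_{\mathrm{fin}}$ is a finite set of finite subgroups, Lemma \ref{lem:finite} gives $(\Gamma,\mathcal{X}_{\mathrm{fin}}) \in \mathcal{I}$, so that $J_{\Gamma,\mathcal{X}_{\mathrm{fin}}} \cap \mathbb{C}[\Gamma] \neq \{0\}$; fix a nonzero $a$ in it. I would then apply Lemma \ref{lem:prodelementconstr}(1) to the finite all-infinite family $\mathcal{X}_{\mathrm{inf}}$ and to this $a$, obtaining $b \in J_{\Gamma,\mathcal{X}_{\mathrm{inf}}} \cap \mathbb{C}[\Gamma]$ with $ab \neq 0$.

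It then remains to check $ab \in J_{\Gamma,\mathcal{X}} \cap \mathbb{C}[\Gamma]$, which is where multiplicativity of the quasi-regular representations does the work: for $X \in \mathcal{X}_{\mathrm{fin}}$ we have $\lambda_{\Gamma/X}(ab) = \lambda_{\Gamma/X}(a)\lambda_{\Gamma/X}(b) = 0$ because $\lambda_{\Gamma/X}(a) = 0$, while for $X \in \mathcal{X}_{\mathrm{inf}}$ the product vanishes because $\lambda_{\Gamma/X}(b) = 0$; hence $\lambda_{\Gamma/X}(ab) = 0$ for every $X \in \mathcal{X}$. Thus $ab$ is a nonzero element of $J_{\Gamma,\mathcal{X}} \cap \mathbb{C}[\Gamma]$ and $(\Gamma,\mathcal{X}) \in \mathcal{I}$. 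The one point requiring care — and the reason Lemma \ref{lem:prodelementconstr} is phrased as a non-annihilation statement rather than a mere non-vanishing one — is guaranteeing that $ab$ itself is nonzero: knowing only that $J_{\Gamma,\mathcal{X}_{\mathrm{fin}}} \cap \mathbb{C}[\Gamma]$ and $J_{\Gamma,\mathcal{X}_{\mathrm{inf}}} \cap \mathbb{C}[\Gamma]$ are individually nonzero would not suffice, since a product of two nonzero elements of $\mathbb{C}[\Gamma]$ can vanish. The strength of Lemma \ref{lem:prodelementconstr} is precisely what lets one pick the witness $b$ for the infinite part so that it survives multiplication by the prescribed witness $a$ for the finite part, and securing this compatibility is the only genuine obstacle in the argument.
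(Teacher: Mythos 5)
Your proof is correct and follows essentially the same route as the paper: split $\mathcal{X}$ into its finite and infinite members, use Corollary \ref{cor:gidealunion=int} to see $J_{\Gamma,\mathcal{X}_{<\infty}}\neq\{0\}$, extract a nonzero algebraic witness $a$ for the finite part via Lemma \ref{lem:finite}, and then use the non-annihilation form of Lemma \ref{lem:prodelementconstr}(1) to find $b$ for the infinite part with $ab\neq 0$, so that $ab$ lies in the intersection of the two ideals. Your closing remark correctly identifies the key point — that Lemma \ref{lem:prodelementconstr} must be a statement about non-annihilation of a prescribed $a$ rather than mere nonvanishing of the ideal — which is exactly how the paper's proof is organized.
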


\begin{proof}
    If $\Gamma$ is finite, then $(\Gamma,\mathcal{X})\in\mathcal{I}$ follows from Lemma \ref{lem:finite}.
    Assume $J_{\Gamma,\mathcal{X}}\neq \{0\}$. By Proposition \ref{prp:gideal=rgideal}, it follows that  $J_{\Gamma, \mathcal{X}_{<\infty}}\neq 0$ and $ J_{\Gamma,\mathcal{X}_{\infty}}\neq 0$ where $\mathcal{X}_{<\infty}, \mathcal{X}_{\infty}$ consist of the subgroups in $\mathcal{X}$ which are finite, infinite, respectively. It follows by Lemma \ref{lem:finite} that there is a non-zero $a\in \mathbb{C}[N]\cap J_{\Gamma,\mathcal{X}_{<\infty}}$, where $N$ is the finite subgroup generated by $X\in \mathcal{X}_{<\infty}$.

    By Lemma \ref{lem:prodelementconstr}, there is $b\in J_{\Gamma, \mathcal{X}_{\infty}}\cap\mathbb{C}[\Gamma]$ such that $0\neq ab\in (J_{\Gamma,\mathcal{X}_{<\infty}}\cap\mathbb{C}[\Gamma])(J_{\Gamma,\mathcal{X}_{\infty}}\cap\mathbb{C}[\Gamma]\subseteq J_{\Gamma, \mathcal{X}_{<\infty}}\cap J_{\Gamma,\mathcal{X}_{\infty}}\cap\mathbb{C}[\Gamma]= J_{\Gamma,\mathcal{X}}\cap\mathbb{C}[\Gamma]$, proving the theorem.
\end{proof}

\begin{cor}
\label{cor:finitetooneproj}
    If $G$ is an \'etale groupoid with $\pi_{ess}:\tilde{G}^{0}_{ess}\to G^{0}$ finite to one, then $J\cap\mathscr{C}_{c}(G)\neq \{0\}$ if and only if $J\neq \{0\}$.
\end{cor}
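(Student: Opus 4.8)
The plan is to obtain this as an immediate consequence of Theorem \ref{thm:valgideal=videal}, whose single hypothesis is that $(G^{x}_{x}, \pi^{-1}_{ess}(x))\in\mathcal{I}$ for every $x\in D_{0}$. All the analytic content (the isotropy fibre computation, compressibility of the restriction maps, and the short exact sequence for the essential cover) has already been absorbed into that theorem, so the entire task here reduces to verifying its hypothesis from the finite-to-one assumption on $\pi_{ess}$. In particular I do not expect to manipulate any groupoid C*-algebras directly.

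First I would unwind the finite-to-one condition. Saying $\pi_{ess}\colon\tilde{G}^{0}_{ess}\to G^{0}$ is finite-to-one is exactly the statement that each fibre $\pi^{-1}_{ess}(x)$ is a finite set. Now recall from the introduction that $\pi^{-1}_{ess}(x)=\mathcal{X}(x)$, and that this is a set of subgroups of the isotropy group $G^{x}_{x}$ which is invariant under conjugation by elements of $G^{x}_{x}$ (the subgroup property and the conjugation invariance are part of the established structure of the Hausdorff cover discussed in Section \ref{ss:cover}, and do not depend on the finiteness hypothesis). Thus for every $x\in G^{0}$ the pair $(G^{x}_{x},\pi^{-1}_{ess}(x))$ consists of a discrete group together with a \emph{finite} set of subgroups invariant under conjugation, which is precisely the hypothesis of Theorem \ref{thm:finitenumberofsgs}. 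Applying that theorem gives $(G^{x}_{x},\pi^{-1}_{ess}(x))\in\mathcal{I}$ for every $x\in G^{0}$, and in particular for every $x\in D_{0}$.

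With the hypothesis verified, Theorem \ref{thm:valgideal=videal} yields $J=\{0\}$ if and only if $J\cap\mathscr{C}_{c}(G)=\{0\}$, and taking the contrapositive of this equivalence gives exactly the claimed statement that $J\cap\mathscr{C}_{c}(G)\neq\{0\}$ if and only if $J\neq\{0\}$. I do not anticipate a genuine obstacle: the only point requiring care is the bookkeeping that translates ``$\pi_{ess}$ finite-to-one'' into ``$\pi^{-1}_{ess}(x)$ is a finite conjugation-invariant family of subgroups,'' so that Theorem \ref{thm:finitenumberofsgs} can be invoked. It is worth remarking, as the paper already notes after Theorem \ref{thmH}, that this recovers and strengthens the finiteness case \cite[Theorem~4.7]{BGHL25}, where the relevant fibres were additionally assumed to consist of finite subgroups; here no such extra assumption is needed because Theorem \ref{thm:finitenumberofsgs} handles arbitrary finite families of (possibly infinite) subgroups.
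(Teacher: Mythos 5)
Your proposal is correct and matches the paper's own proof, which likewise derives the corollary as an immediate application of Theorem \ref{thm:valgideal=videal} together with Theorem \ref{thm:finitenumberofsgs}. The translation of the finite-to-one hypothesis into finiteness of each conjugation-invariant family $\pi^{-1}_{ess}(x)$ of subgroups of $G^{x}_{x}$ is exactly the intended bookkeeping step.
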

\begin{proof}
    This is an immediate application of Theorem \ref{thm:valgideal=videal} and Theorem \ref{thm:finitenumberofsgs}.
\end{proof}

Here is another application of Lemma \ref{lem:prodelementconstr}.

\begin{prp}
\label{prp:freeabelianAI}
    Every countable torsion free abelian group $\Gamma$ satisfies Property $AI$.
\end{prp}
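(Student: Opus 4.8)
The plan is to verify the two halves of Property $AI$ (Definition \ref{dfn:I&AI}) separately, i.e.\ that $J_{\Gamma,\mathcal{X}}\cap\mathbb{C}[\Gamma]\neq\{0\}$ holds exactly when $\{e\}\notin\mathcal{X}$. One half requires no hypothesis on $\Gamma$ whatsoever: if $\{e\}\in\mathcal{X}$, then $\hat{\mathcal{X}}=\mathcal{X}$ and the quasi-regular representation $\lambda_{\Gamma/\{e\}}=\lambda_{\Gamma}$ already appears among the representations defining the norm $r\mathcal{X}$. Hence the quotient map $C^{*}_{r\hat{\mathcal{X}}}(\Gamma)\to C^{*}_{r\mathcal{X}}(\Gamma)$ of Definition \ref{dfn:gpideal} is the identity, so $J_{\Gamma,\mathcal{X}}=\{0\}$ and in particular $J_{\Gamma,\mathcal{X}}\cap\mathbb{C}[\Gamma]=\{0\}$. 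This gives the ``only if'' direction (in fact for arbitrary $\Gamma$).

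For the substantive direction I would assume $\{e\}\notin\mathcal{X}$ and merely check that the hypotheses of Lemma \ref{lem:prodelementconstr}(2) are met. Since $\Gamma$ is abelian every subgroup is normal; since $\Gamma$ is torsion free every subgroup is torsion free; and since $\{e\}\notin\mathcal{X}$ every $X\in\mathcal{X}$ is a nontrivial subgroup. Thus Lemma \ref{lem:prodelementconstr} applies, and taking $a=\delta_{e}$ (which is nonzero) produces an element $b\in J_{\Gamma,\mathcal{X}}\cap\mathbb{C}[\Gamma]$ with $\delta_{e}b=b\neq 0$. Therefore $J_{\Gamma,\mathcal{X}}\cap\mathbb{C}[\Gamma]\neq\{0\}$, completing the ``if'' direction and hence the proposition.

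There is essentially no obstacle left once Lemma \ref{lem:prodelementconstr} is in hand; the genuine content lives there, in the construction of $b=\prod_{i=1}^{n}(\delta_{e}-\delta_{h_{i}})$ from a finite ``hitting set'' $F\subseteq\Gamma\setminus\{e\}$ meeting every $X\in\mathcal{X}$, extracted via compactness of $\mathcal{X}$ in $\{0,1\}^{\Gamma}$ together with torsion-freeness. The one point I would be careful to articulate is that the exclusion $\{e\}\notin\mathcal{X}$ is precisely what makes such a hitting set exist, since a trivial subgroup could never be met by a set contained in $\Gamma\setminus\{e\}$; so torsion-freeness and $\{e\}\notin\mathcal{X}$ are exactly the two features the lemma consumes. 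I would also note that the countability hypothesis is not actually used in this argument---it is inherited from the ambient conventions of the paper---so the same proof in fact shows that \emph{every} torsion free abelian group has Property $AI$.
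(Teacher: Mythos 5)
Your proposal is correct and takes essentially the same route as the paper, which proves the proposition by directly invoking case (2) of Lemma \ref{lem:prodelementconstr} (abelian gives normality, torsion-freeness passes to subgroups, and $\{e\}\notin\mathcal{X}$ guarantees the hitting set); your application with $a=\delta_{e}$ and your handling of the trivial direction $\{e\}\in\mathcal{X}\Rightarrow J_{\Gamma,\mathcal{X}}=\{0\}$ are both sound. The only difference is that you spell out details the paper leaves implicit.
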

\begin{proof}
This is an immediate application of case (2) in Lemma \ref{lem:prodelementconstr}.
\end{proof}

Actually, the above result is a special case of a more general permanence result for the intersection properties, stated and proven below.

\begin{prp}
\label{prp:freeabelianextAI}
    Let $\Gamma$ be a discrete group with a normal subgroup $N$ that satisfies Property $AI$ and suppose $\Gamma/N$ is torsion free and abelian. Then, $\Gamma$ satisfies Property $AI$.
\end{prp}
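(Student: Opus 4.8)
The plan is to prove the nontrivial implication of Property $AI$: for every closed conjugation-invariant set of subgroups $\mathcal{X}$ with $\{e\}\notin\mathcal{X}$, one must produce a nonzero element of $J_{\Gamma,\mathcal{X}}\cap\mathbb{C}[\Gamma]$. (The reverse implication, that $\{e\}\in\mathcal{X}$ forces $J_{\Gamma,\mathcal{X}}=\{0\}$, is immediate, since then $\hat{\mathcal{X}}=\mathcal{X}$ and the defining quotient map is an isomorphism.) Write $\bar\Gamma=\Gamma/N$ and $q\colon\Gamma\to\bar\Gamma$ for the quotient; by hypothesis $\bar\Gamma$ is torsion free abelian, so $[\Gamma,\Gamma]\subseteq N$, i.e. every element of $\Gamma$ is central modulo $N$. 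Since $\{e\}\notin\mathcal{X}$ and $\mathcal{X}$ is compact in $\{0,1\}^{\Gamma}$, there is a finite set $F\subseteq\Gamma\setminus\{e\}$ meeting every $X\in\mathcal{X}$; this `hitting set' will organize the construction (and it is the same compactness device used in \Cref{cor:pAI&ctbleunions}). I may also first reduce, via \Cref{prp:videalredtonsg} and \Cref{cor:redtonsg}, to the case where $\Gamma$ is the normal subgroup generated by $\bigcup_{X\in\mathcal{X}}X$.

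The solid core of the argument is a restriction lemma for the normal subgroup $N$. For $X\in\mathcal{X}$, decomposing $\ell^{2}(\Gamma/X)$ into $N$-orbits and using that the $N$-stabilizer of $gX$ is $N\cap gXg^{-1}=g(X\cap N)g^{-1}$ gives a unitary equivalence of $N$-representations
\[
\lambda_{\Gamma/X}\big|_{N}\;\cong\;\bigoplus_{gX\in N\backslash\Gamma/X}\lambda_{N/\,g(X\cap N)g^{-1}}.
\]
Every subgroup occurring on the right equals $(gXg^{-1})\cap N$ and hence lies in $N\cap\mathcal{X}=\{Y\cap N:Y\in\mathcal{X}\}$, which by \Cref{prp:resttosg} is a closed, conjugation-invariant set of subgroups of $N$. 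Therefore, if $a\in J_{N,\,N\cap\mathcal{X}}\cap\mathbb{C}[N]$, then $\lambda_{\Gamma/X}(a)=0$ for all $X\in\mathcal{X}$, so that $J_{N,\,N\cap\mathcal{X}}\cap\mathbb{C}[N]\subseteq J_{\Gamma,\mathcal{X}}\cap\mathbb{C}[\Gamma]$. Now if \emph{every} $X\in\mathcal{X}$ meets $N$ nontrivially, then $\{e\}\notin N\cap\mathcal{X}$, and Property $AI$ of $N$ yields a nonzero such $a$ (nonzero in $C^{*}_{r\hat{\mathcal{X}}}(\Gamma)$ because $\|a\|_{r\hat{\mathcal{X}}}\geq\|a\|_{r}>0$). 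This completely settles the case in which no $X\in\mathcal{X}$ is transverse to $N$.

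It remains to treat subgroups that are (nearly) transverse to $N$, i.e. with $X\cap N$ small, where the $N$-direction above is blind: a transverse $X$ produces only copies of the regular representation $\lambda_N$, which no nonzero $a\in\mathbb{C}[N]$ annihilates. Such $X$ are governed instead by their image $\bar X=q(X)$, a nontrivial subgroup of the torsion free abelian group $\bar\Gamma$. The plan is to split $\mathcal{X}$ by the location of an $F$-witness of $X$: those hit by $F\cap N$ contribute a controlled nontrivial element to $X\cap N$ and are handled by an element $a\in\mathbb{C}[N]$ exactly as above; those hit by $F\setminus N$ have $\bar X$ containing a controlled nontrivial element of $q(F\setminus N)\subseteq\bar\Gamma\setminus\{\bar e\}$, and I propose to handle them by running the torsion free abelian construction of \Cref{lem:prodelementconstr}(2) in $\bar\Gamma$ and transporting it upward: a product $b=\prod_{j}(\delta_{e}-\delta_{h_{j}})$ with each $h_{j}$ central modulo $N$ and with $XN$ normal in $\Gamma$. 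The final witness would be the product $c=ab$, with $c\in J_{\Gamma,\mathcal{X}}\cap\mathbb{C}[\Gamma]$ checked on the algebraic side through \Cref{lem:alggroupidealchar}, and $c\neq 0$ verified via faithfulness of $\lambda_\Gamma$ on these ideals (\Cref{prp:gideal=rgideal}) and the restriction map $\Phi$ of \Cref{prp:videalredtonsg}, just as $ab\neq 0$ is arranged in \Cref{lem:prodelementconstr} and \Cref{thm:finitenumberofsgs}.

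The main obstacle is precisely this combination. The factors $\delta_{e}-\delta_{h_{j}}$ inherited from $\bar\Gamma$ only detect the normal overgroup $XN$, so on their own they annihilate $\lambda_{\Gamma/XN}$ rather than the finer $\lambda_{\Gamma/X}$; bridging this gap forces one to absorb the fibre $N/(N\cap X)$ using Property $AI$ of $N$ and, crucially, to use $[\Gamma,\Gamma]\subseteq N$ so that conjugating an $h_{j}$ past a group element introduces only an error lying in $N$, which the $N$-factor $a$ is built to kill — it is the interaction of the $N$-direction and the $\bar\Gamma$-direction in the \emph{non-split} extension that must be controlled. A secondary but essential point, which the hypothesis $\{e\}\notin\mathcal{X}$ exists to guarantee, is that the conjugates of a fixed $X$ cannot accumulate at $\{e\}$ (the closed invariant orbit closure stays inside $\mathcal{X}$); this uniform nontriviality is exactly what rescues the transverse subgroups from being genuinely unkillable, as the discrete Heisenberg group illustrates once the hypothesis is dropped, and it is what makes the finite hitting set $F$ interact correctly with the non-invariant split. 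Verifying that a single $c=ab$ annihilates $\lambda_{\Gamma/X}$ for \emph{all} $X\in\mathcal{X}$ at once — not just for those in one half of the split — is the technical heart that the proof must supply.
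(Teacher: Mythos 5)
Your first half is sound and matches the paper: for the subgroups $X$ with $X\cap N\neq\{e\}$, restricting $\lambda_{\Gamma/X}$ to $N$ and invoking Property $AI$ of $N$ via \cref{prp:resttosg} produces the element $a\in\mathbb{C}[N]$, exactly as in the paper's proof. The genuine gap is in your treatment of the transverse subgroups ($X\cap N=\{e\}$), and it is not merely a deferred technicality --- the route you propose cannot be made to work. You plan to lift a product $b=\prod_j(\delta_e-\delta_{h_j})$ from the torsion free abelian quotient $\bar\Gamma$, concede that such factors only annihilate cosets of the overgroup $XN$ rather than of $X$, and then propose to ``absorb the fibre $N/(N\cap X)$ using Property $AI$ of $N$.'' But for a transverse $X$ that fibre is $N/\{e\}=N$, i.e.\ the regular representation of $N$, which (as you yourself note two paragraphs earlier) no nonzero element of $\mathbb{C}[N]$ annihilates. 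So the $N$-factor $a$ has nothing to contribute on the transverse subgroups, and the lifted factors $\delta_e-\delta_{h_j}$ have nothing to contribute unless $h_j$ actually lies in $X$, not merely in $XN$. The product $ab$ you describe does not annihilate $\lambda_{\Gamma/X}$ for transverse $X$, and no amount of control on commutators fixes this.

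The paper's resolution is structurally different: it never descends to $\bar\Gamma$ for the transverse subgroups. It first reduces (by \cref{cor:pAI&ctbleunions} and an induction on rank) to the case $\Gamma/N=\mathbb{Z}$, whence every transverse $X$ injects into $\mathbb{Z}$ and is therefore infinite cyclic, $X=\langle x\rangle$. Using the finite hitting set $F$ it extracts witnesses $f(X)\in F\cap X$ with uniformly bounded exponent $x^{n(X)}=f(X)$, forms the cores $F(X)=\bigcap_{g\in\Gamma}\langle g f(g^{-1}Xg)g^{-1}\rangle$, and shows these constitute a \emph{finite, conjugation-invariant} family $\mathcal{F}$ of \emph{infinite} cyclic subgroups with $F(X)\subseteq X$, while the set of $X\in\mathcal{X}$ containing a member of the associated finite invariant set $K$ of generators is clopen. \Cref{lem:prodelementconstr}(1) --- the finite-family-of-infinite-subgroups case, applied directly in $\Gamma$ with the $h_i$ chosen inside the $F(X)$ themselves --- then supplies a $b$ with $ab\neq 0$, and \cref{cor:gidealunion=int} glues the two clopen pieces. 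The rank-one reduction and the intersection-of-conjugates construction are exactly the ingredients your sketch is missing, and they are what replaces (rather than repairs) the push-down to $\bar\Gamma$.
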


\begin{proof}
    Let $q:\Gamma\to \Gamma/N = \Lambda$ be the quotient map. Since $\Lambda$ is an increasing union $\bigcup_{n}\Lambda_{n}$ of finitely generated torsion free and abelian subgroups, $\Gamma$ is an increasing union of subgroups $\Gamma_{n} = q^{-1}(\Lambda_{n})$ with $N\subseteq \Gamma_{n}$ and $\Gamma_{n}/N \simeq \mathbb{Z}^{m_{n}}$. Therefore, by Corollary \ref{cor:pAI&ctbleunions}, it suffices to prove the proposition in the case that $\Gamma/N = \mathbb{Z}^{n}$. Moreover, it suffice to prove the case where $n=1$. This follows from the fact that the subgroup $N_{k} = q^{-1}(\mathbb{Z}^{k-1}\oplus 0_{n - k +1})$ for $k\leq n$ is normal in $N_{k+1}$ and $N_{k+1}/N_{k} = \mathbb{Z}$, so a simple induction argument once the $n=1$ case is established would prove for the case of general $n\in\mathbb{N}$.

    So, assume $\Gamma/N = \mathbb{Z}$ and $\mathcal{X}$ is a closed set of subgroups invariant under conjugation with $\{e\}\notin \mathcal{X}$. Consider $\mathcal{X}' = \{X\in\mathcal{X}:X\cap N = \{e\}\}$. If $\mathcal{X}' = \emptyset$, then $\{e\}\notin N\cap\mathcal{X}$. Since $N$ is assumed to have Property $AI$, it follows that $0\neq J_{N, N\cap\mathcal{X}}\cap\mathbb{C}[N]\subseteq J_{\Gamma, \mathcal{X}}\cap\mathbb{C}[\Gamma]$, which proves the proposition in this case.

    Now, assume $\mathcal{X}'\neq \emptyset$. It is easy to see $\mathcal{X}'$ is closed and conjugate invariant (since $N$ is normal).
    
    For each $X\in\mathcal{X}'$, the map $x\in X\to q(x)\in q(X)$ is injective, so it follows from $\Gamma/N = \mathbb{Z}$ that $X$ is singly generated; write $X = \langle x\rangle$ for some $x\in X$. Since $\{e\}\notin\mathcal{X}'$, by compactness, there is a finite set $F\subseteq\Gamma\setminus \{e\}$ such that $F\cap X\neq \emptyset$ for all $X\in\mathcal{X}$. Therefore, for every $X$, there is $f(X)\in F$ such that $\langle f(X)\rangle \subseteq \langle x\rangle$. Moreover, for every $X\in\mathcal{X}'$, there is $n(X)\in\mathbb{Z}$ such that $x^{n(X)} = f(X)$. Therefore $q(f(X)) = q(x)^{n(X)}$ and since $q(x)\neq 0$ for all $X\in\mathcal{X}'$ and $F$ finite, it follows that there is $n\in\mathbb{N}$ such that $n(X)\leq n$ for all $X\in\mathcal{X}'$. 
    
    Now, for $g\in \Gamma$, we have either  $gf(g^{-1}Xg)g^{-1} = x^{n(g^{-1}Xg)}$ or $gf(g^{-1}Xg)g^{-1} = x^{ - n(g^{-1}Xg)}$. Therefore, $F(X) = \bigcap_{g\in \Gamma}\langle gf(g^{-1}Xg)g^{-1}\rangle = \langle x^{m(X)}\rangle $ for some $0 \neq m(X)\leq n^{n} $. Moreover, since the group $F(X)$ has uniformly (in $X$) bounded index in $\langle f(X)\rangle $, the set $\mathcal{F} = \{F(X):X\in \mathcal{X}'\}$ is finite. By construction, $hF(X)h^{-1} = F(hXh^{-1})$ for all $h\in \Gamma$ and $X\in\mathcal{X}$. Therefore, $\mathcal{F}$ is conjugate invariant.

    Write $\mathcal{F} = \{\langle y_{1}\rangle,..., \langle y_{l}\rangle \}$. By conjugate invariance, the finite set $K = \bigcup^{l}_{i=1}\{y_{i},y_{i}^{-1}\}$ satisfies  $gKg^{-1} = K$ for all $g\in \Gamma$. For $k\in K$, let $Z(k) = \{X\in \{0,1\}^{\Gamma}: k\in X\}$. Then, the set $Z(K) = \bigcup_{k\in K} Z(k)$ is clopen, conjugation invariant and $\mathcal{X}'\subseteq Z(K)$, so $\mathcal{Y} = \mathcal{X}\setminus Z(K) \subseteq\{X\in\mathcal{X}: X\cap N\neq \{e\}\}$ is closed and conjugation invariant, and is of the type considered in the first case. Therefore, there is a non-zero element $a\in J_{N, N\cap\mathcal{Y}}\cap\mathbb{C}[N]\subseteq J_{\Gamma, \mathcal{Y}}\cap\mathbb{C}[\Gamma]$.

    The closed and conjugate invariant set $\mathcal{Z} = \mathcal{X}\cap Z(K)$ has the property that for every $Z\in\mathcal{Z}$, there is $Y\in\mathcal{F}$ such that $Y\subseteq Z$. Therefore, $J_{\Gamma, \mathcal{F}}\cap\mathbb{C}[\Gamma]\subseteq J_{\Gamma, \mathcal{Z}}\cap\mathbb{C}[\Gamma]$. By Lemma \ref{lem:prodelementconstr} case (1), there is $b\in J_{\Gamma, \mathcal{F}}\cap\mathbb{C}[\Gamma]$ such that $ab\neq 0$. It follows from Corollary \ref{cor:gidealunion=int} that $ab\in J_{\Gamma, \mathcal{Y}\cup\mathcal{Z}}\cap\mathbb{C}[\Gamma]=J_{\Gamma, \mathcal{X}}\cap\mathbb{C}[\Gamma]$. This proves $\Gamma$ satisfies Property $AI$.
\end{proof}

Now we prove another large class of closed invariant sets of subgroups satisfying a finiteness condition that is, in some sense, perpendicular to that in Theorem \ref{thm:finitenumberofsgs} belongs to $\mathcal{I}$. It will also be useful to us as a lemma to prove the next permanence result for the intersection properties.
\begin{prp}
\label{prp:finitesgs}
    Let $\Gamma$ be a discrete group and $\mathcal{X}$ a closed invariant set of subgroups such that every $X\in\mathcal{X}$ is finite. Then, there is a finite conjugate invariant subset $\mathcal{F}\subseteq \mathcal{X}$ such that for every $X\in\mathcal{X}$, there is $Y\in\mathcal{F}$ with $Y\subseteq X$.
    
    Consequently, $(\Gamma, \mathcal{X})\in\mathcal{I}$.
\end{prp}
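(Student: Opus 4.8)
The plan is to take $\mathcal{F}$ to be the set of minimal elements of $(\mathcal{X},\subseteq)$ and to verify that it has the three required properties, with finiteness the only substantial point. Conjugation by any $g\in\Gamma$ is an order-automorphism of the lattice of subgroups preserving $\mathcal{X}$, so it permutes the minimal elements and $\mathcal{F}$ is conjugate invariant. For the covering property, given $X\in\mathcal{X}$ the collection $\{Z\in\mathcal{X}:Z\subseteq X\}$ is a nonempty \emph{finite} poset (as $X$ is finite), hence has a minimal element $Y$; any $W\in\mathcal{X}$ with $W\subsetneq Y$ would also lie in this collection, so $Y$ is minimal in all of $\mathcal{X}$, i.e. $Y\in\mathcal{F}$ with $Y\subseteq X$.

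The main obstacle is showing $\mathcal{F}$ is finite, and here I would use compactness of $\mathcal{X}$ in $\{0,1\}^{\Gamma}$. Suppose $\mathcal{F}$ were infinite. Since $\mathcal{X}$ is compact Hausdorff, the infinite set $\mathcal{F}$ has an accumulation point $Y\in\mathcal{X}$, which is finite because it lies in $\mathcal{X}$. The set $\mathcal{U}=\{Z\in\{0,1\}^{\Gamma}: Y\subseteq Z\}$ depends only on the finitely many coordinates indexed by $Y$, hence is open and contains $Y$; as $Y$ is an accumulation point there is $Y_{0}\in\mathcal{F}\cap\mathcal{U}$ with $Y_{0}\neq Y$. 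Then $Y\subsetneq Y_{0}$ with $Y\in\mathcal{X}$, contradicting minimality of $Y_{0}$. Therefore $\mathcal{F}$ is finite, completing the first assertion.

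For the consequence $(\Gamma,\mathcal{X})\in\mathcal{I}$, the key point is that $Y\subseteq X$ with $X$ finite forces $\ker(\lambda_{\Gamma/Y})\subseteq\ker(\lambda_{\Gamma/X})$: the map $V:\ell^{2}(\Gamma/X)\to\ell^{2}(\Gamma/Y)$ sending $\delta_{gX}$ to $[X:Y]^{-1/2}\sum_{hY\subseteq gX}\delta_{hY}$ is an isometry intertwining $\lambda_{\Gamma/X}$ and $\lambda_{\Gamma/Y}$, so $\lambda_{\Gamma/X}(a)=V^{*}\lambda_{\Gamma/Y}(a)V$ and $\lambda_{\Gamma/Y}(a)=0$ implies $\lambda_{\Gamma/X}(a)=0$. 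Combined with the covering property and $\mathcal{F}\subseteq\mathcal{X}$, this shows the families $\{\lambda_{\Gamma/X}\}_{X\in\mathcal{X}}$ and $\{\lambda_{\Gamma/Y}\}_{Y\in\mathcal{F}}$ are weakly equivalent, so by Lemma \ref{lem:groupidealchar} we get $J_{\Gamma,\mathcal{X}}=0$ if and only if $J_{\Gamma,\mathcal{F}}=0$. Now I would invoke Theorem \ref{thm:finitenumberofsgs}: since $\mathcal{F}$ is a finite (hence closed) conjugation-invariant set of subgroups, $(\Gamma,\mathcal{F})\in\mathcal{I}$. If $J_{\Gamma,\mathcal{X}}\neq\{0\}$ then $J_{\Gamma,\mathcal{F}}\neq\{0\}$, so $J_{\Gamma,\mathcal{F}}\cap\mathbb{C}[\Gamma]\neq\{0\}$; choosing a nonzero $a$ there, the conditions $\lambda_{\Gamma/Y}(a)=0$ for all $Y\in\mathcal{F}$ upgrade via the kernel inclusion and the covering property to $\lambda_{\Gamma/X}(a)=0$ for all $X\in\mathcal{X}$, that is $a\in J_{\Gamma,\mathcal{X}}\cap\mathbb{C}[\Gamma]$ by Lemma \ref{lem:alggroupidealchar}. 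Hence $(\Gamma,\mathcal{X})\in\mathcal{I}$.
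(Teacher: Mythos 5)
Your proof is correct, but the core of it --- the finiteness of $\mathcal{F}$ --- is argued quite differently from the paper. The paper does not take $\mathcal{F}$ to be the set of minimal elements of $(\mathcal{X},\subseteq)$; instead it stratifies $\mathcal{X}$ by cardinality, setting $\mathcal{X}_{n}=\{X\in\mathcal{X}:|X|\leq n\}$, shows $\mathcal{X}_{2}$ is finite, and then inductively peels off finite conjugate-invariant layers $\mathcal{Y}_{n}=\mathcal{X}_{n}\setminus Z_{n-1}$ (where $Z_{n-1}$ is the clopen set of subgroups containing some member of an earlier layer), finally invoking compactness to show the clopen sets $Z_{n}$ exhaust $\mathcal{X}$ at a finite stage. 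Your argument is more direct and arguably cleaner: you observe that the covering property is automatic for the minimal elements (each finite $X$ dominates a minimal element of $\mathcal{X}$), and you get finiteness in one stroke from limit-point compactness of $\mathcal{X}\subseteq\{0,1\}^{\Gamma}$ together with the observation that $\{Z:Y\subseteq Z\}$ is a neighbourhood of $Y$ determined by finitely many coordinates --- an accumulation point of the minimal elements would be properly contained in one of them, a contradiction. Both proofs use compactness in an essential way; yours avoids the induction entirely. For the ``consequently'' clause the two arguments have the same skeleton (reduce to the finite family $\mathcal{F}$ and apply Theorem \ref{thm:finitenumberofsgs}); the paper is terser, asserting the containment $J_{\Gamma,\mathcal{F}}\cap\mathbb{C}[\Gamma]\subseteq J_{\Gamma,\mathcal{X}}\cap\mathbb{C}[\Gamma]$ via the remark that coset-sums over $X$-cosets are sums of coset-sums over $Y$-cosets, whereas you make the kernel inclusion $\ker(\lambda_{\Gamma/Y})\subseteq\ker(\lambda_{\Gamma/X})$ explicit with the intertwining isometry $V$ (which also cleanly justifies the non-obvious implication $J_{\Gamma,\mathcal{X}}\neq\{0\}\Rightarrow J_{\Gamma,\mathcal{F}}\neq\{0\}$, for which the paper implicitly relies on $\mathcal{F}\subseteq\mathcal{X}$ and Proposition \ref{prp:gideal=rgideal}). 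Your extra detail here is a genuine improvement in rigour at no cost.
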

\begin{proof}
    If $\{e\}\in \mathcal{X}$, then we can take $\mathcal{F} = \{\{e\}\}$. Let's assume $\{e\}\notin \mathcal{X}$.
     For each $n\in\mathbb{N}$, let $\mathcal{X}_{n} = \{X\in\mathcal{X}: |X|\leq n\}$. Then, $\mathcal{X}_{n}$ is closed and conjugate invariant. Moreover, $\mathcal{X}_{2}$ is finite; if $(X_{n} = \{e, a_{n}\})$ is an infinite and pairwise distinct collection in $\mathcal{X}_{2}$, then we can extract a subsequence such that for every finite set $F\subseteq \Gamma$ $a_{n}\notin F$ eventually. Then, $X_{n}$ converges to $\{e\}$, which would be a contradiction.

    Set $Z_{2} = \{X\in \mathcal{X}: Y\subseteq X \text{ for some } Y\in\mathcal{X}_{2}\}$. Since $\mathcal{X}_{2}$ is a finite collection of finite subgroups, $Z_{2}$ is clopen in $\mathcal{X}$.

    Now, set $\mathcal{Y}_{2} = \mathcal{X}_{2}$ and define inductively for $n> 2$ $$Z_{n-1} = \{X\in \mathcal{X}: Y\subseteq X\text{ for some }Y\in\bigcup_{k\leq n-1}\mathcal{Y}_{k}\}\text{ and } \mathcal{Y}_{n} = \mathcal{X}_{n}\setminus Z_{n-1}.$$ We claim for every $n\in\mathbb{N}$, $\mathcal{Y}_{n}$ is finite (therefore $Z_{n}$ is clopen in $\mathcal{X}$), conjugate invariant and $\mathcal{X}_{n}\subseteq Z_{n}$.

    Suppose we know this is true for $k\leq n-1$, and let's prove it for $n$. Conjugate invariance of $Z_{n-1}$ follows from conjugate invariance of $\bigcup_{k\leq n-1}\mathcal{Y}_{k}$. Therefore, $\mathcal{Y}_{n} = \mathcal{X}_{n}\setminus Z_{n-1}$ is conjugate invariant. Suppose $(Y_{n})\subseteq \mathcal{Y}_{n}$ is an infinite sequence with pairwise distinct elements. Since $Z_{n-1}$ is clopen, and $\mathcal{X}_{n}$ is closed, we know that $\mathcal{Y}_{n}$ is closed, so we can extract a sub-sequence $(Y_{n_{k}})$ converging to $Y\in\mathcal{Y}_{n}$. Since $Y$ is finite, we have $Y\subseteq Y_{n_{k}}$ eventually. Therefore $|Y| < |Y_{n_{k}}|\leq n$ eventually. By the inductive hypothesis, we have $Y\in Z_{n-1}$. But $Y\in\mathcal{Y}_{n}\subseteq \mathcal{X}\setminus Z_{n-1}$, a contradiction. Therefore $\mathcal{Y}_{n}$ is finite. Since $Z_{n-1}\subseteq Z_{n}$ and $\mathcal{X}_{n}\setminus Z_{n-1}\subseteq Z_{n}$, it follows that $\mathcal{X}_{n}\subseteq Z_{n}$, proving the claim by induction.

    Now, since $Z_{n}\subseteq Z_{n+1}$ for all $n\in\mathbb{N}$ and $\mathcal{X}\subseteq \bigcup_{n}Z_{n}$, compactness implies there is $n\in\mathbb{N}$ such that $\mathcal{X}\subseteq Z_{n}$. Therefore, there is a conjugate invariant finite set $\mathcal{F} = \bigcup^{n}_{k=1}\mathcal{Y}_{k}$ such that for every $X\in\mathcal{X}$, there is $Y\in\mathcal{F}$ such that $Y\subseteq X$. Therefore, $J_{\Gamma,\mathcal{F}}\cap \mathbb{C}[\Gamma]\subseteq J_{\Gamma, \mathcal{X}}\cap\mathbb{C}[\Gamma]$. This containment, together with $J_{\Gamma, \mathcal{X}}\subseteq J_{\Gamma,\mathcal{F}}$ and Theorem \ref{thm:finitenumberofsgs} implies $(\Gamma,\mathcal{X})\in\mathcal{I}$.
\end{proof}

Now, we prove our second major permanence result for the intersection properties.

\begin{prp}
\label{prp:finiteindexext}
    If $\Gamma$ is a discrete group containing a finite index Property $AI$ group, then $\Gamma$ has Property $I$. If $\Gamma$ is, additionally, torsion free, then $\Gamma$ has Property $AI$.
\end{prp}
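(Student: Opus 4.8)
The plan is to fix a closed, conjugation-invariant set of subgroups $\mathcal{X}$ with $J_{\Gamma,\mathcal{X}}\neq\{0\}$ and to produce a nonzero element of $J_{\Gamma,\mathcal{X}}\cap\mathbb{C}[\Gamma]$ (if $\{e\}\in\mathcal{X}$ then $J_{\Gamma,\mathcal{X}}=\{0\}$, so assume $\{e\}\notin\mathcal{X}$). Write $\Lambda$ for the finite-index Property $AI$ subgroup and $n=[\Gamma:\Lambda]$. First note, via Proposition \ref{prp:resttosg}, that $\Lambda\cap\mathcal{X}=\{\Lambda\cap X:X\in\mathcal{X}\}$ is closed and $\Lambda$-conjugation invariant, and that if $X\cap\Lambda=\{e\}$ the coset map $X\to\Gamma/\Lambda$ is injective, so $|X|\le n$. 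Hence, if \emph{every} $X\in\mathcal{X}$ meets $\Lambda$ nontrivially, then $\{e\}\notin\Lambda\cap\mathcal{X}$, and Property $AI$ of $\Lambda$ with Proposition \ref{prp:resttosg} gives $\{0\}\neq J_{\Lambda,\Lambda\cap\mathcal{X}}\cap\mathbb{C}[\Lambda]=J_{\Gamma,\mathcal{X}}\cap\mathbb{C}[\Lambda]\subseteq J_{\Gamma,\mathcal{X}}\cap\mathbb{C}[\Gamma]$. This already settles the torsion-free claim: if $\Gamma$ is torsion free then any $X$ with $X\cap\Lambda=\{e\}$ is finite, hence trivial, hence excluded, so every $X$ meets $\Lambda$; thus $\{e\}\notin\mathcal{X}$ forces $J_{\Gamma,\mathcal{X}}\cap\mathbb{C}[\Gamma]\neq\{0\}$, which is exactly Property $AI$ (the hypothesis $J_{\Gamma,\mathcal{X}}\neq\{0\}$ is not used here).

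The remaining case is when some $X\in\mathcal{X}$ meets $\Lambda$ trivially; these are finite of order $\le n$. I would isolate them topologically. The set $\mathcal{X}_{\le n}=\{X\in\mathcal{X}:|X|\le n\}$ is closed (a limit of subgroups of order $\le n$ again has order $\le n$), conjugation invariant, and all its members are finite, so Proposition \ref{prp:finitesgs} gives a finite conjugation-invariant $\mathcal{F}\subseteq\mathcal{X}_{\le n}$ with every $X\in\mathcal{X}_{\le n}$ containing some $Y\in\mathcal{F}$. Since $\mathcal{F}$ is a finite family of finite subgroups, $W=\{X\in\{0,1\}^{\Gamma}:Y\subseteq X\text{ for some }Y\in\mathcal{F}\}$ is clopen and conjugation invariant, and $\mathcal{X}_{\le n}\subseteq W$. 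Setting $\mathcal{Z}=\mathcal{X}\cap W$ and $\mathcal{Y}=\mathcal{X}\setminus W$ partitions $\mathcal{X}$ into two closed conjugation-invariant sets such that every $Z\in\mathcal{Z}$ contains a member of $\mathcal{F}$, while every $X\in\mathcal{Y}$ has $|X|>n$ and hence $X\cap\Lambda\neq\{e\}$.

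Now $\mathcal{X}=\mathcal{Y}\cup\mathcal{Z}$, so by Corollary \ref{cor:gidealunion=int} we have $J_{\Gamma,\mathcal{Y}},J_{\Gamma,\mathcal{Z}}\neq\{0\}$ and $J_{\Gamma,\mathcal{X}}\cap\mathbb{C}[\Gamma]=J_{\Gamma,\mathcal{Y}}\cap J_{\Gamma,\mathcal{Z}}\cap\mathbb{C}[\Gamma]$. Since every member of $\mathcal{Y}$ meets $\Lambda$, the first paragraph yields a nonzero ideal $I_{\mathcal{Y}}:=J_{\Gamma,\mathcal{Y}}\cap\mathbb{C}[\Gamma]$ of $\mathbb{C}[\Gamma]$. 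For $\mathcal{Z}$, domination by $\mathcal{F}$ gives $J_{\Gamma,\mathcal{Z}}=J_{\Gamma,\mathcal{F}}$ (inclusions of the relevant quasi-regular representations both ways), and the normal subgroup $N_{0}$ generated by $\mathcal{F}$ is finite by Lemma \ref{lem:finite}, with $J_{\Gamma,\mathcal{F}}\cap\mathbb{C}[N_{0}]=J_{N_{0},\mathcal{F}}$ by Proposition \ref{prp:resttosg}. As $\mathbb{C}[N_{0}]$ is a finite-dimensional C*-algebra, $J_{N_{0},\mathcal{F}}=z\,\mathbb{C}[N_{0}]$ for a central idempotent $z$, and conjugation invariance of $\mathcal{F}$ forces $gzg^{-1}=z$ for all $g$, so $z$ is central in $\mathbb{C}[\Gamma]$; the hypothesis $J_{\Gamma,\mathcal{X}}\neq\{0\}$ makes $z\neq0$. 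Because $z\in J_{\Gamma,\mathcal{Z}}\cap\mathbb{C}[\Gamma]$ is central, $z\,I_{\mathcal{Y}}\subseteq J_{\Gamma,\mathcal{Z}}\cap J_{\Gamma,\mathcal{Y}}\cap\mathbb{C}[\Gamma]=J_{\Gamma,\mathcal{X}}\cap\mathbb{C}[\Gamma]$, and it remains to show $z\,I_{\mathcal{Y}}\neq\{0\}$.

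Showing $z\,I_{\mathcal{Y}}\neq\{0\}$ is the main obstacle, and it is precisely where the robustness lemma \ref{lem:prodelementconstr} used in Theorem \ref{thm:finitenumberofsgs} and Proposition \ref{prp:freeabelianextAI} is unavailable, because here the exceptional subgroups are finite rather than infinite. I would attack it in two steps. First, I would show that $z$ cuts out the whole ideal, $J_{\Gamma,\mathcal{Z}}=J_{\Gamma,\mathcal{F}}=z\,C^{*}_{r\hat{\mathcal{X}}}(\Gamma)$: since $\mathcal{F}$ is conjugation invariant and $\mathcal{F}\subseteq N_{0}$, each $\lambda_{\Gamma/Y}$ restricts on $N_{0}$ to quasi-regular representations $\lambda_{N_{0}/Y'}$ with $Y'\in\mathcal{F}$, so $\lambda_{\Gamma/Y}(z)=0$ and $\lambda_{\Gamma/Y}(1-z)=1$; as $\lambda_{\Gamma}=\mathrm{Ind}_{N_{0}}^{\Gamma}\lambda_{N_{0}}$ and $\mathrm{Ind}_{N_{0}}^{\Gamma}\lambda_{N_{0}/Y}=\lambda_{\Gamma/Y}$, inducing the weak containment $\lambda_{N_{0}}(1-z)\prec\bigoplus_{Y\in\mathcal{F}}\lambda_{N_{0}/Y}$ (immediate from the definition of $z$) gives $\lambda_{\Gamma}(1-z)\prec\bigoplus_{Y\in\mathcal{F}}\lambda_{\Gamma/Y}$, whence $\|\cdot\|_{r}=\|\cdot\|_{r\mathcal{F}}$ on the corner $1-z$ and $J_{\Gamma,\mathcal{F}}=z\,C^{*}_{r\hat{\mathcal{X}}}(\Gamma)$. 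It follows that $J_{\Gamma,\mathcal{X}}=z\,J_{\Gamma,\mathcal{Y}}$, which is nonzero, so $z$ does not annihilate $J_{\Gamma,\mathcal{Y}}$. Second, I would upgrade this to $z\,I_{\mathcal{Y}}\neq\{0\}$: working inside the reduced algebra via Proposition \ref{prp:gideal=rgideal}, where $\lambda_{\Gamma}(z)$ is a central projection, I would approximate an element of $J_{\Gamma,\mathcal{Y}}$ not killed by $z$ by group-ring elements of $I_{\mathcal{Y}}$, so that a sufficiently close $b\in I_{\mathcal{Y}}$ still satisfies $zb\neq0$. This last approximation — that the group-ring part $I_{\mathcal{Y}}$ is dense enough in $J_{\Gamma,\mathcal{Y}}$ to meet the central support of $z$ — is the delicate ingredient; establishing it completes the proof of Property $I$, and the first paragraph then upgrades it to Property $AI$ in the torsion-free case.
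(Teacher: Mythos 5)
Your first paragraph is correct and coincides with the paper's handling of the easy case (in particular the torsion-free claim), and your decomposition of $\mathcal{X}$ into a finite-subgroup part and a part whose members all meet $\Lambda$ is the same strategy the paper uses. The proof is nevertheless incomplete at exactly the point you flag: $z\,I_{\mathcal{Y}}\neq\{0\}$ is the whole difficulty, and the approximation you propose for it is not available. Your first step does yield $J_{\Gamma,\mathcal{X}}=z\,J_{\Gamma,\mathcal{Y}}\neq\{0\}$ (although the intermediate assertion $J_{\Gamma,\mathcal{Z}}=J_{\Gamma,\mathcal{F}}$ is false at the C*-level: for $Y\subseteq Z$ with $Y$ finite, $\lambda_{\Gamma/Z}\prec\lambda_{\Gamma/Y}$ forces $1_{Z}\prec\lambda_{Z/Y}$, i.e.\ amenability of $Z$; only the intersections with $\mathbb{C}[\Gamma]$ agree in general, which is all you actually need). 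But knowing that $z$ does not kill the C*-ideal $J_{\Gamma,\mathcal{Y}}$ says nothing about whether it kills $I_{\mathcal{Y}}=J_{\Gamma,\mathcal{Y}}\cap\mathbb{C}[\Gamma]$. There is no density of $I_{\mathcal{Y}}$ in $J_{\Gamma,\mathcal{Y}}$ to appeal to -- the mere nonvanishing of $I_{\mathcal{Y}}$ is already the full strength of Property $AI$ for $\Lambda$ -- so a priori every element of $I_{\mathcal{Y}}$ could be supported in the corner $(1-z)$. This is a genuine gap, not a routine verification.

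The paper closes it by controlling supports rather than by approximation. It passes to the normal core $K$ of the finite-index subgroup, defines the exceptional family as $\mathcal{X}'=\{X\in\mathcal{X}:X\cap K=\{e\}\}$ (these are automatically finite) rather than your $\mathcal{X}_{\leq n}$, extracts $\mathcal{F}\subseteq\mathcal{X}'$ by Proposition \ref{prp:finitesgs}, and produces $a\in J_{\Gamma,\mathcal{Z}}\cap\mathbb{C}[F]$ with $F$ the finite group generated by $\mathcal{F}$ (Theorem \ref{thm:finitenumberofsgs} and Corollary \ref{cor:redtonsg}) together with $b\in J_{\Gamma,\mathcal{Y}}\cap\mathbb{C}[K]$ (Property $AI$ of the core plus Corollary \ref{cor:redtonsg}); the conditional expectation $\Phi:\mathbb{C}[\Gamma]\to\mathbb{C}[K]$ then gives $\Phi(ab)=a(e)b\neq 0$. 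Your choice of $\mathcal{X}_{\leq n}$ forecloses this route: the finite normal subgroup $N_{0}$ carrying your central projection $z$ is generated by all small subgroups in $\mathcal{X}$, which may meet $\Lambda$ and its core nontrivially, so no disjointness of supports between $z$ and $b$ can be arranged. To repair your argument, replace $\mathcal{X}_{\leq n}$ by $\{X\in\mathcal{X}:X\cap K=\{e\}\}$ for the normal core $K$, take $b$ supported in $K$, and replace the approximation step by the conditional-expectation computation.
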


\begin{proof}
    Let $N$ be a torsion free Property $AI$ subgroup of finite index in $\Gamma$ and let $K = \{g\in \Gamma: ghN = hN,\text{ for all }h\in \Gamma\}$. Then, $K\subseteq N$ is a normal subgroup of $\Gamma$ such that $|\Gamma/K| <\infty$. Let $\mathcal{X}$ be a closed set of subgroups invariant under conjugation by $\Gamma$ such that $\{e\}\notin\mathcal{X}$. Consider $\mathcal{X}' = \{X\in\mathcal{X}: X\cap K = \{e\}\}$. Note that each $X\in\mathcal{X}'$ is finite.

    If $\mathcal{X}' = \emptyset$ (for instance if $\Gamma$ is torsion free), then $\{e\}\notin N\cap\mathcal{X}$, so we can apply Property $AI$ of $N$ and Proposition \ref{prp:resttosg} to show $J_{\Gamma,\mathcal{X}}\cap\mathbb{C}[\Gamma]\neq \{0\}$.

    So, assume $\mathcal{X}'\neq \emptyset$. Then, $\mathcal{X}'$ is closed and each $X\in \mathcal{X}'$ is finite. Proposition \ref{prp:finitesgs} implies there is a conjugate invariant finite set $\mathcal{F}\subseteq \mathcal{X}'$ such that for every $X\in\mathcal{X}$, there is $Y\in\mathcal{F}$ such that $Y\subseteq X$. Therefore, $J_{\Gamma, \mathcal{F}}\cap\mathbb{C}[\Gamma] = J_{\Gamma,\mathcal{X}'}\cap\mathbb{C}[\Gamma]$. Similarly, if we let $Z(Y)$, for $Y\in \mathcal{F}$, be the clopen set  $\{X\in\mathcal{X}: Y\subseteq X\}$ and $\mathcal{Z} = \bigcup_{Y\in\mathcal{F}}Z(Y)$, then $\mathcal{Z}\subseteq \mathcal{X}$ is a clopen and conjugate invariant subset which satisfies $J_{\Gamma, \mathcal{F}}\cap\mathbb{C}[\Gamma] = J_{\Gamma, \mathcal{Z}}\cap\mathbb{C}[\Gamma]$.
    
    By assumption, we know that $\{0\}\neq J_{\Gamma,\mathcal{X}}\subseteq J_{\Gamma, \mathcal{F}}$, so Theorem \ref{thm:finitenumberofsgs} and Corollary \ref{cor:redtonsg} combined implies there is a non-zero element in $a\in J_{\Gamma, \mathcal{F}}\cap\mathbb{C}[F] = J_{\Gamma, \mathcal{Z}}\cap\mathbb{C}[F]$, where $F$ is the finite subgroup generated by $Y\in\mathcal{F}$.

    Now, $\mathcal{Y} = \mathcal{X}\setminus \mathcal{Z}$ is a closed and conjugate invariant set of subgroups such that $\{e\}\notin K\cap \mathcal{Y}$. Note that $K\cap \mathcal{Y}$ is a closed set of subgroups of $N$ that is invariant under conjugation by elements in $N$. It follows from Property $AI$ for $N$ and Proposition \ref{prp:resttosg} that there is a non-zero $b\in J_{N, K\cap \mathcal{Y}}\cap\mathbb{C}[N]\subseteq J_{N, N\cap \mathcal{Y}}\cap\mathbb{C}[N]\subseteq J_{\Gamma, \mathcal{Y}}\cap\mathbb{C}[\Gamma]$. Moreover, from Corollary \ref{cor:redtonsg} and the fact that the normal subgroup generated by elements in $K\cap\mathcal{Y}$ is inside $K$, we can choose $b\in \mathbb{C}[K]$.

    Now, let's show $ab\neq 0$. by applying $\delta^{-1}_{g}$ for some $g\in F$ such that $a(g)\neq 0$, we can assume $a(e)\neq 0$. Let $\Phi:\mathbb{C}[\Gamma]\to \mathbb{C}[K]$ be the restriction map. Since $F\cap K = \{e\}$ we must have $\Phi(ab) = a(e)b\neq 0$. Therefore, $ab\neq 0$. Since $\mathcal{Y}\cup \mathcal{Z} = \mathcal{X}$, it follows that $ab\in J_{\Gamma,\mathcal{X}}\cap\mathbb{C}[\Gamma]$.
\end{proof}

\begin{cor}
\label{cor:abelianAI}
    Let $\Gamma$ be a torsion free discrete group with a normal subgroup $N$ that satisfies Property $AI$ and suppose $\Gamma/N$ is abelian. Then, $\Gamma$ satisfies Property $AI$.
\end{cor}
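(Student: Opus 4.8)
The plan is to interpolate a normal subgroup $M$ with $N\subseteq M\subseteq\Gamma$ that splits the hypothesis into the two situations already resolved by Propositions \ref{prp:freeabelianextAI} and \ref{prp:finiteindexext}. Write $q\colon\Gamma\to\Lambda:=\Gamma/N$ for the quotient map, and let $T\subseteq\Lambda$ be the torsion subgroup of the abelian group $\Lambda$, a subgroup with $\Lambda/T$ torsion free. Set $M:=q^{-1}(T)$. Because $\Lambda$ is abelian, for $m\in M$ and $g\in\Gamma$ we have $q(gmg^{-1})=q(m)\in T$, so $M$ is normal in $\Gamma$; moreover $M$ is torsion free (being a subgroup of the torsion free group $\Gamma$), $M/N\cong T$ is torsion abelian, and $\Gamma/M\cong\Lambda/T$ is torsion free abelian.

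First I would establish that $M$ itself has Property $AI$. Since $T$ is torsion abelian, every finitely generated subgroup of $T$ is finite, so I can write $T=\bigcup_n T_n$ as an increasing union of finite subgroups $T_n$ (using that the groups in play are countable, as the isotropy groups to which this is applied are). Setting $M_n:=(q|_M)^{-1}(T_n)$ yields an increasing chain $N\subseteq M_n\subseteq M_{n+1}$ with $\bigcup_n M_n=M$ and $[M_n:N]=|T_n|<\infty$. Each $M_n$ is torsion free and contains the finite index Property $AI$ subgroup $N$, so the torsion free clause of Proposition \ref{prp:finiteindexext} gives that $M_n$ has Property $AI$. Corollary \ref{cor:pAI&ctbleunions} then promotes this through the countable increasing union to conclude that $M$ has Property $AI$.

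Finally I would apply Proposition \ref{prp:freeabelianextAI} to $\Gamma$ with the normal subgroup $M$: we have just shown $M$ has Property $AI$, and $\Gamma/M$ is torsion free abelian, whence $\Gamma$ has Property $AI$, as claimed. The one genuinely delicate point is the choice of $M$: one must verify that pulling back the torsion subgroup of $\Gamma/N$ produces something simultaneously usable by both propositions --- namely a torsion free group with torsion abelian quotient over $N$ on one side, and with torsion free abelian quotient of $\Gamma$ on the other --- and that the decomposition of $T$ into finite subgroups is legitimate. This last step is where countability enters, so that $T$ is a countable increasing union and Corollary \ref{cor:pAI&ctbleunions} applies directly; in the absence of countability one would instead invoke the directed-union analogue, which follows from the same inductive-limit argument used to prove that corollary.
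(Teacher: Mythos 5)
Your proof is correct, but it takes a genuinely different route from the paper's. The paper first reduces to a finitely generated quotient via Corollary \ref{cor:pAI&ctbleunions}, writes $\Gamma/N\cong\mathbb{Z}^{n}\oplus F$ with $F$ finite using the structure theorem, pulls back $\mathbb{Z}^{n}$ to $N'$, applies the torsion-free-abelian extension result (Proposition \ref{prp:freeabelianextAI}) to get that $N'$ has Property $AI$, and only then invokes the finite-index extension result (Proposition \ref{prp:finiteindexext}) to pass from $N'$ to $\Gamma$. You instead perform the decomposition globally and in the opposite order: you pull back the full torsion subgroup $T\leq\Gamma/N$ to an intermediate normal subgroup $M$, establish that $M$ has Property $AI$ by exhausting $T$ by finite subgroups and combining Proposition \ref{prp:finiteindexext} with Corollary \ref{cor:pAI&ctbleunions}, and finish with Proposition \ref{prp:freeabelianextAI} applied to $\Gamma/M\cong(\Gamma/N)/T$. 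Both arguments rest on exactly the same two extension propositions plus the increasing-union permanence; yours avoids the classification of finitely generated abelian groups at the cost of an extra increasing-union step inside $M$, and the countability you flag is no additional restriction, since the paper's own argument (and Proposition \ref{prp:freeabelianAI}, stated for countable groups) already relies on writing $\Gamma/N$ as a countable increasing union. All the individual steps check out: $M$ is normal because every subgroup of the abelian quotient is normal, $M$ and each $M_{n}$ are torsion free as subgroups of $\Gamma$, and $[M_{n}:N]=|T_{n}|<\infty$ puts you squarely in the torsion-free clause of Proposition \ref{prp:finiteindexext}.
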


\begin{proof}
    Write $q:\Gamma\to \Gamma/N = \bigcup_{n}\Lambda_{n}$, where $(\Lambda_{n})$ is an increasing sequence of finitely generated abelian groups. By Corollary \ref{cor:ctbleunion}, it suffices to show $q^{-1}(\Lambda_{n})$ has Property $AI$. Therefore, we can assume, without loss of generality, that $\Gamma/N = \mathbb{Z}^{n}\oplus F$, where $n\in\mathbb{N}$ and $F$ is a finite abelian group. By Proposition \ref{prp:freeabelianAI}, $q^{-1}(\mathbb{Z}^{n}) = N'$ satisfies Property $AI$. Since $\Gamma$ is torsion free, $N'$ is normal and $\Gamma/N'$ is finite, it follows from Proposition \ref{prp:finiteindexext} that $\Gamma$ satisfies Property $AI$. 
\end{proof}

As a simple corollary to the permanence properties we have established for Property $I$ and $AI$ groups, we have the following.

\begin{thm}
\label{thm:propertyAI&I}
    Direct limits of torsion free virtually solvable groups satisfy Property $AI$. Direct limits of virtually torsion free solvable groups satisfy Property $I$. 
    
    In particular, every group of polynomial growth or amenable matrix group (over a characteristic zero field) satisfies Property $I$ and moreover Property $AI$ if the group is torsion free.
\end{thm}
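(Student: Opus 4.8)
The plan is to assemble the theorem from the permanence results already established, the only genuinely new ingredient being an induction on derived length. Since both Property $I$ and Property $AI$ pass to countable increasing unions (Corollary \ref{cor:pAI&ctbleunions}) and a direct limit is such a union, it suffices to prove the two non-limit assertions: that a torsion free virtually solvable group has Property $AI$, and that a group containing a finite index torsion free solvable subgroup has Property $I$. Writing each group as the increasing union of its finitely generated subgroups and invoking Corollary \ref{cor:pAI&ctbleunions} once more, I would moreover reduce to the finitely generated (hence countable) case, which is what makes Proposition \ref{prp:freeabelianAI} and Corollary \ref{cor:abelianAI} applicable.

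The heart of the argument is the claim that every torsion free solvable group $S$ has Property $AI$, which I would prove by induction on the derived length $n$ (the least $n$ with $S^{(n)}=\{e\}$). For $n\le 1$ the group $S$ is torsion free abelian and countable, so Proposition \ref{prp:freeabelianAI} applies. For the inductive step I would take $N=[S,S]=S^{(1)}$: it is normal in $S$, torsion free as a subgroup of $S$, and of derived length $n-1$, so it has Property $AI$ by the inductive hypothesis; as $S/N$ is abelian and $S$ is torsion free, Corollary \ref{cor:abelianAI} upgrades this to Property $AI$ for $S$. Granting the claim, a torsion free virtually solvable $\Gamma$ contains a finite index solvable subgroup $S_0$, which is torsion free (being inside $\Gamma$) and hence has Property $AI$; the torsion free conclusion of Proposition \ref{prp:finiteindexext} then gives Property $AI$ for $\Gamma$, proving the first statement. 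For the second statement, a group $\Gamma$ containing a finite index torsion free solvable subgroup has that subgroup of Property $AI$, and the first conclusion of Proposition \ref{prp:finiteindexext} yields Property $I$.

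For the concrete classes I would again reduce to finitely generated subgroups via Corollary \ref{cor:pAI&ctbleunions} (amenability, polynomial growth, and linearity all pass to finitely generated subgroups) and then apply standard structure theory. A finitely generated group of polynomial growth is virtually nilpotent by Gromov's theorem, hence virtually polycyclic; since polycyclic groups contain a finite index poly-(infinite cyclic), hence torsion free, subgroup, such a group is virtually-(torsion free solvable) and therefore has Property $I$, and if it is itself torsion free its finite index nilpotent subgroup is torsion free solvable, so it has Property $AI$. For an amenable matrix group over a characteristic zero field, the Tits alternative combined with amenability forces virtual solvability, while Selberg's lemma gives virtual torsion-freeness; intersecting a finite index solvable subgroup with a finite index torsion free one produces a finite index torsion free solvable subgroup, placing the group in the class covered by Proposition \ref{prp:finiteindexext}, so it has Property $I$ (and Property $AI$ when torsion free, by the first statement).

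I expect the main obstacle to be organizational rather than analytic: keeping the derived-series induction honest, so that at each stage the commutator subgroup is simultaneously torsion free and of strictly smaller derived length, and threading the repeated passages to the finitely generated countable setting needed for Proposition \ref{prp:freeabelianAI}. On the classical side the one point requiring care is to combine virtual solvability with virtual torsion-freeness by an intersection of finite index subgroups, since it is this combination --- not either property alone --- that lands a group in the hypothesis of Proposition \ref{prp:finiteindexext}.
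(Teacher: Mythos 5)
Your proposal is correct and follows essentially the same route as the paper: reduce to countable increasing unions via Corollary \ref{cor:pAI&ctbleunions}, establish Property $AI$ for torsion free solvable groups by induction on the solvable structure using Proposition \ref{prp:freeabelianAI} and Corollary \ref{cor:abelianAI}, pass to the virtual cases with Proposition \ref{prp:finiteindexext}, and invoke Gromov, Tits, and Selberg for the concrete classes. The only differences are immaterial: you induct on derived length via $[S,S]$ where the paper inducts up a subnormal series with abelian quotients, and you reach torsion-freeness of a finite index subgroup of a nilpotent group via polycyclicity where the paper cites Baumslag.
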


\begin{proof}
    Since the class of virtually torsion free solvable groups is preserved by quotients and subgroups, we can replace a direct limit of such groups with a countable increasing union. Similarly for torsion free virtually solvable groups. Therefore, by Corollary \ref{cor:pAI&ctbleunions}, it suffices to prove virtually torsion free solvable groups satisfy Property $I$ and torsion free virtually solvable groups satisfy Property $AI$.
    
    Suppose $\Gamma$ is solvable and torsion free. By definition, there exists a sequence $\{e\} = N_{0}\subseteq N_{1}\subseteq ...\subseteq N_{k} = \Gamma$ of subgroups such that $N_{i-1}$ is normal in $N_{i}$ for all $1\leq i\leq k$ and $N_{i}/N_{i-1}$ is abelian. Therefore, $N_{1} = N_{1}/\{e\}$ is torsion free and abelian, and satisfies Property $AI$ by
    Proposition \ref{prp:freeabelianAI}. Suppose, for the sake of induction that $N_{i}$ satisfies Property $AI$ for all $i\leq j < k$. Since $N_{j+1}/N_{j}$ is abelian and $N_{j+1}$ is torsion free, Corollary \ref{cor:abelianAI} implies $N_{j+1}$ has Property $AI$. Therefore, by induction, $\Gamma$ has Property $AI$.

    The fact that $\Gamma$ satisfies Property $I$ if it is virtually torsion free and solvable, and additionally Property $AI$ if it is torsion free, follows immediately from what we have just proven and Proposition \ref{prp:finiteindexext}. The fact that countable increasing unions of these groups satisfies the respective conditions follows immediately from Corollary \ref{cor:pAI&ctbleunions}.

    By Gromov's theorem \cite{G81}, every finitely generated group of polynomial growth is virtually nilpotent, and hence virtually solvable. Moreover, by \cite[Theorem~2.1]{B71}, every finitely generated nilpotent group has a torsion free subgroup of finite index. Since nilpotency passes to subgroups, it follows from what we have proven above that every finitely generated group of polynomial growth satisfies Property $I$ and Property $AI$ if it is torsion free.

    By Tits alternative \cite{T71}, every amenable matrix group is virtually solvable. By Selberg's lemma, every matrix group over a characteristic zero field is virtually torsion free \cite{S60}.
\end{proof}

Now, let's characterize when an abelian group satisfies Property $AI$. We first note the following characterization of Property $AI$ for finite groups.

\begin{prp}
\label{prp:finitechar}
    A finite group $\Gamma$ has Property $AI$ if and only if the collection $\mathcal{X}_{min}$ of non-trivial minimal subgroups satisfies $J_{\Gamma, \mathcal{X}_{min}}\neq \{0\}$ if and only if the $\Gamma$ invariant subspace $\text{span}_{\mathbb{C}}\{\delta_{gX}: g\in \Gamma, X\in\mathcal{X}_{min}\}\neq \mathbb{C}[\Gamma]$.
\end{prp}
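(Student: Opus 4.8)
The plan is to treat the two stated equivalences in turn, exploiting throughout that $\Gamma$ is finite, so that $\mathbb{C}[\Gamma] = C^{*}_{r\hat{\mathcal{X}}}(\Gamma)$ is finite-dimensional and, by Theorem \ref{thm:finitenumberofsgs}, $(\Gamma,\mathcal{X})\in\mathcal{I}$ for every invariant $\mathcal{X}$; the latter means $J_{\Gamma,\mathcal{X}}\neq\{0\}$ is equivalent to $J_{\Gamma,\mathcal{X}}\cap\mathbb{C}[\Gamma]\neq\{0\}$. Note also that $\mathcal{X}_{min}$ is a finite (hence closed) conjugation-invariant family with $\{e\}\notin\mathcal{X}_{min}$, and recall that a minimal non-trivial subgroup of a finite group is exactly a subgroup of prime order.

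The key structural input I would isolate first is a monotonicity statement: if $\mathcal{X}$ is invariant with $\{e\}\notin\mathcal{X}$, then $J_{\Gamma,\mathcal{X}_{min}}\cap\mathbb{C}[\Gamma]\subseteq J_{\Gamma,\mathcal{X}}\cap\mathbb{C}[\Gamma]$. Indeed, every non-trivial $X\in\mathcal{X}$ contains a subgroup $Y$ of prime order, and such $Y$ lies in $\mathcal{X}_{min}$. Given $a$ satisfying the equations of Lemma \ref{lem:alggroupidealchar}$(2)$ for $\mathcal{X}_{min}$, and writing $X=\bigsqcup_{j} x_{j}Y$ as a disjoint union of left $Y$-cosets, one computes $\sum_{x\in X}a(\gamma x)=\sum_{j}\sum_{y\in Y}a(\gamma x_{j}y)=0$ for every $\gamma\in\Gamma$, using the hypothesis with the translates $\gamma' = \gamma x_{j}$; thus $a$ also satisfies the defining equations for $\mathcal{X}$, and Lemma \ref{lem:alggroupidealchar} gives the claimed inclusion.

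With this in hand the equivalence $(1)\Leftrightarrow(2)$ is short. For $(1)\Rightarrow(2)$, apply Property $AI$ (Definition \ref{dfn:I&AI}) to $\mathcal{X}_{min}$: since $\{e\}\notin\mathcal{X}_{min}$, this gives $J_{\Gamma,\mathcal{X}_{min}}\cap\mathbb{C}[\Gamma]\neq\{0\}$, whence $J_{\Gamma,\mathcal{X}_{min}}\neq\{0\}$. For $(2)\Rightarrow(1)$, I would first record the direction of Property $AI$ that is automatic for any group: if $\{e\}\in\mathcal{X}$ then $\lambda_{\Gamma}=\lambda_{\Gamma/\{e\}}$ is a subrepresentation of $\bigoplus_{X\in\mathcal{X}}\lambda_{\Gamma/X}$, hence weakly contained, so $J_{\Gamma,\mathcal{X}}=\{0\}$ by Lemma \ref{lem:groupidealchar}. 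Conversely, for $\{e\}\notin\mathcal{X}$, the monotonicity statement together with $(\Gamma,\mathcal{X}_{min})\in\mathcal{I}$ turns the hypothesis $J_{\Gamma,\mathcal{X}_{min}}\neq\{0\}$ into $J_{\Gamma,\mathcal{X}_{min}}\cap\mathbb{C}[\Gamma]\neq\{0\}$, and therefore $J_{\Gamma,\mathcal{X}}\cap\mathbb{C}[\Gamma]\neq\{0\}$; this is precisely Property $AI$.

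For the equivalence $(2)\Leftrightarrow(3)$, I would identify $J_{\Gamma,\mathcal{X}_{min}}\cap\mathbb{C}[\Gamma]$ as the annihilator of $V:=\operatorname{span}_{\mathbb{C}}\{\delta_{gX}:g\in\Gamma,\ X\in\mathcal{X}_{min}\}$ under the non-degenerate bilinear pairing $\langle a,\xi\rangle=\sum_{g\in\Gamma}a(g)\xi(g)$ on the finite-dimensional space $\mathbb{C}[\Gamma]$. Since $\langle a,\delta_{gX}\rangle=\sum_{h\in gX}a(h)=\sum_{x\in X}a(gx)$, an element $a$ annihilates $V$ exactly when it solves the system in Lemma \ref{lem:alggroupidealchar}$(2)$ for $\mathcal{X}_{min}$, i.e.\ when $a\in J_{\Gamma,\mathcal{X}_{min}}\cap\mathbb{C}[\Gamma]$; and by non-degeneracy this annihilator is non-zero iff $V\neq\mathbb{C}[\Gamma]$. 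Combined with $J_{\Gamma,\mathcal{X}_{min}}\neq\{0\}\Leftrightarrow J_{\Gamma,\mathcal{X}_{min}}\cap\mathbb{C}[\Gamma]\neq\{0\}$ (from $\mathcal{I}$-membership), this yields $(2)\Leftrightarrow(3)$. The only step requiring genuine idea is the monotonicity inclusion of the second paragraph—showing that shrinking $\mathcal{X}$ to the minimal subgroups preserves solvability of the defining linear system—while the remainder is the finite-dimensional annihilator computation and direct appeals to Theorem \ref{thm:finitenumberofsgs} and Lemma \ref{lem:alggroupidealchar}.
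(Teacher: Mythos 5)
Your proof is correct and follows essentially the same route as the paper: the key monotonicity $J_{\Gamma,\mathcal{X}_{min}}\cap\mathbb{C}[\Gamma]\subseteq J_{\Gamma,\mathcal{X}}\cap\mathbb{C}[\Gamma]$ via decomposing each $X\in\mathcal{X}$ into cosets of a prime-order subgroup is exactly the paper's parenthetical observation that characteristic functions of cosets of $\mathcal{X}$ are sums of those of minimal subgroups, and your annihilator computation is the finite-group specialization of Lemma \ref{lem:alggroupidealchar}. You merely spell out details (the $\{e\}\in\mathcal{X}$ case, the non-degenerate pairing) that the paper's terse proof leaves implicit.
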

\begin{proof}
    The first ``if and only if'' follows from the fact that, for an arbitrary conjugate invariant set of subgroups $\mathcal{X}$ of $\Gamma$ not containing $\{e\}$, the collection of non-trivial minimal subgroups $\mathcal{X}'$ that are contained in elements of $\mathcal{X}$ satisfies $J_{\Gamma, \mathcal{X}_{min}}\subseteq J_{\Gamma,\mathcal{X}'}\subseteq J_{\Gamma,\mathcal{X}}$ (the sums of characteristic functions on co-sets of $\mathcal{X}'$ contain the characteristic functions on co-sets of $\mathcal{X}$). The second ``if and only if'' follows from the first and Lemma \ref{lem:alggroupidealchar}.
\end{proof}

Recall that the minimal subgroups in a finite group are the cyclic subgroups with prime order. We now prove our characterization of Property $AI$ for abelian groups.

\begin{thm}
\label{thm:abeliancharAI}
    A discrete abelian group $\Gamma$ satisfies Property $AI$ if and only if for every prime $p$, there is at most one element $g\in \Gamma$ with cyclic order $p$.
\end{thm}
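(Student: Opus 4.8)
The stated condition is best reformulated group-theoretically: for a fixed prime $p$, having at most one cyclic subgroup of order $p$ is equivalent to requiring that $\Gamma$ contain no subgroup isomorphic to $\ZZ/p\oplus\ZZ/p$ (two distinct subgroups of order $p$ meet trivially and generate such a subgroup, and conversely), i.e.\ $\dim_{\FF_p}\Gamma[p]\le 1$ where $\Gamma[p]$ is the $p$-torsion. I would prove the two implications separately, treating the ``no $\ZZ/p\oplus\ZZ/p$'' formulation throughout, and (following Theorem~K) take $\Gamma$ countable so that Corollary~\ref{cor:pAI&ctbleunions} applies.

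For the forward implication I argue by contraposition. Suppose some prime $p$ admits two distinct order-$p$ subgroups; they generate a copy $\Lambda\cong\ZZ/p\oplus\ZZ/p$ inside $\Gamma$. Let $\mathcal{X}_\Lambda$ be the finite (hence closed) family of the $p+1$ subgroups of order $p$ of $\Lambda$, regarded as subgroups of $\Gamma$; then $\{e\}\notin\mathcal{X}_\Lambda$. I claim $J_{\Gamma,\mathcal{X}_\Lambda}\cap\CC[\Gamma]=\{0\}$, contradicting Property $AI$. First, the coset indicators $1_{\gamma X}$ ($\gamma\in\Lambda$, $X\in\mathcal{X}_\Lambda$) span $\CC[\Lambda]$: summing $1_X$ over all $p+1$ lines yields $1_\Lambda+p\,\delta_e$, while $1_\Lambda$ is the sum of the coset indicators of a single line, so $\delta_e$, and by translation all of $\CC[\Lambda]$, lie in the span. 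By the description in Lemma~\ref{lem:alggroupidealchar}(2), an element of $J_{\Lambda,\mathcal{X}_\Lambda}\cap\CC[\Lambda]$ is precisely a function annihilating every $1_{\gamma X}$, whence $J_{\Lambda,\mathcal{X}_\Lambda}\cap\CC[\Lambda]=\{0\}$. Finally, decomposing any $a\in J_{\Gamma,\mathcal{X}_\Lambda}\cap\CC[\Gamma]$ along the cosets of $\Lambda$ shows that the restriction of $a$ to each coset is, after translation, an element of $J_{\Lambda,\mathcal{X}_\Lambda}\cap\CC[\Lambda]=\{0\}$, so $a=0$.

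For the converse, assume $\Gamma$ has no $\ZZ/p\oplus\ZZ/p$ subgroup. Writing $\Gamma$ as an increasing union of finitely generated subgroups, each of which inherits the hypothesis, Corollary~\ref{cor:pAI&ctbleunions} reduces me to the finitely generated case; the structure theorem then forces $\Gamma\cong\ZZ^r\oplus C$ with $C=\ZZ/m$ (a single cyclic factor per prime, so cyclic torsion). The crucial input is that a finite cyclic group $F$ has Property $AI$: it carries a faithful character $\psi$, and the function whose Fourier transform is $\delta_\psi$ is a nonzero element of $J_{F,\mathcal{X}_{min}}$ (its transform avoids $\bigcup_{p}F_p^\perp$, the set of non-faithful characters), so $F$ has Property $AI$ by Proposition~\ref{prp:finitechar}.

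It remains to produce, for each closed $\mathcal{X}$ with $\{e\}\notin\mathcal{X}$, a nonzero element of $J_{\Gamma,\mathcal{X}}\cap\CC[\Gamma]$, and here I would follow the splitting strategy of Proposition~\ref{prp:finiteindexext}. The finite subgroups occurring in $\mathcal{X}$ all lie in $C$ and form a finite conjugation-invariant family $\mathcal{F}$ generating a finite cyclic $F\le C$ with $F\cap\ZZ^r=\{e\}$; the set $\mathcal{Z}=\{X\in\mathcal{X}: Y\subseteq X\text{ for some }Y\in\mathcal{F}\}$ is clopen and contains every finite member, so $\mathcal{Y}=\mathcal{X}\setminus\mathcal{Z}$ consists of subgroups meeting $\ZZ^r$ nontrivially. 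Property $AI$ of $F$ supplies a nonzero $a\in J_{F,\mathcal{F}}\cap\CC[F]$, which by Proposition~\ref{prp:resttosg} together with the inclusion $J_{\Gamma,\mathcal{F}}\subseteq J_{\Gamma,\mathcal{Z}}$ lies in $J_{\Gamma,\mathcal{Z}}\cap\CC[\Gamma]$, and after translation I may assume $a(e)\ne 0$; Property $AI$ of $\ZZ^r$ (Proposition~\ref{prp:freeabelianAI}) applied to the closed family $\ZZ^r\cap\mathcal{Y}$ of nontrivial subgroups gives a nonzero $b\in J_{\Gamma,\mathcal{Y}}\cap\CC[\ZZ^r]$. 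Since $F\cap\ZZ^r=\{e\}$, the restriction homomorphism $\CC[\Gamma]\to\CC[\ZZ^r]$ sends $ab$ to $a(e)b\ne 0$, so $ab\ne 0$, and $ab\in J_{\Gamma,\mathcal{Y}}\cap J_{\Gamma,\mathcal{Z}}\cap\CC[\Gamma]=J_{\Gamma,\mathcal{X}}\cap\CC[\Gamma]$ by Corollary~\ref{cor:gidealunion=int}. I expect this last construction to be the main obstacle, and it is exactly where cyclicity of the torsion is indispensable: it is what makes $F$ cyclic and hence Property $AI$, so that a suitable $a$ exists; for non-cyclic torsion the finite part already fails $AI$ (as the forward direction shows) and no such $a$ is available.
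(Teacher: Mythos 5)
Your proof is correct and follows essentially the same route as the paper's: reduce to the finitely generated case via Corollary \ref{cor:pAI&ctbleunions}; for the forward direction, detect the failure of Property $AI$ on a copy of $(\ZZ/p\ZZ)^{2}$ by showing the coset indicators of its $p+1$ order-$p$ subgroups span the group algebra (your count $\sum_{X}1_{X}=1_{\Lambda}+p\delta_{e}$ and the paper's affine-line count both just extract $\delta_{e}$); and for the converse, split $\mathcal{X}$ into the part seen by the cyclic torsion and the part seen by $\ZZ^{r}$ and multiply the two witnesses. The two local variations are harmless or to your credit: you produce the witness in $J_{F,\mathcal{X}_{min}}$ for cyclic $F$ as the inverse Fourier transform of $\delta_{\psi}$ for a faithful character $\psi$, which is just the dual description of the paper's explicit product $\prod_{i}(\delta_{e}-\delta_{p_{i}^{n_{i}-1}1_{i}})$; and your description of $\mathcal{Y}=\mathcal{X}\setminus\mathcal{Z}$ as the subgroups meeting $\ZZ^{r}$ nontrivially, handled through $\ZZ^{r}\cap\mathcal{Y}$ and Proposition \ref{prp:resttosg}, is in fact more accurate than the paper's assertion that these subgroups are contained in $\ZZ^{m}$ (e.g.\ $\langle(1,1)\rangle\le\ZZ\oplus\ZZ/2\ZZ$ is torsion free but not contained in $\ZZ\oplus 0$), while still feeding into the same product argument.
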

\begin{proof}
    By Corollary \ref{cor:pAI&ctbleunions}, it suffices to prove the characterization for finitely generated abelian groups. We prove the `if'' direction first. By the fundamental theorem for finitely generated abelian groups, we can write $\Gamma\simeq \mathbb{Z}^{m}\oplus \mathbb{Z}/p^{n_{1}}_{1}\mathbb{Z}\oplus....\oplus\mathbb{Z}/p_{k}^{n_{k}}\mathbb{Z} = \mathbb{Z}^{m}\oplus G$. Suppose $\mathcal{X}$ is a closed set of subgroups with $\{e\}\notin \mathcal{X}$. Let $\mathcal{Z}$ be the collection of $X\in\mathcal{X}$ which contain a finite subgroup. Since there are only finitely many finite subgroups in $\mathbb{Z}^{m}\oplus \mathbb{Z}/p^{n_{1}}_{1}\mathbb{Z}\oplus....\oplus\mathbb{Z}/p_{k}^{n_{k}}\mathbb{Z}$, we have that $\mathcal{Z}$ is clopen in $\mathcal{X}$, so that $\mathcal{Y} := \mathcal{X}\setminus \mathcal{Z} = \{X\in \mathcal{X}: X\subseteq \mathbb{Z}^{m}\}$ is also clopen in $\mathcal{X}$. By Proposition \ref{prp:freeabelianAI} and Corollary \ref{cor:redtonsg}, if $\mathcal{Y}\neq \emptyset$, then there is $0\neq a\in J_{\Gamma, \mathcal{Y}}\cap\mathbb{C}[\mathbb{Z}^{m}]$. This also  proves the ``if'' direction in the case that $\mathcal{Z} = \emptyset$.

    Suppose $\mathcal{Z}\neq\emptyset$. Let's show $J_{\Gamma,\mathcal{Z}}\cap\mathbb{C}[G]\neq \{0\}$. If $x$ has prime cyclic order, then since $|G| = \Pi^{k}_{i=1}p^{n_{i}}$ and $|\langle x\rangle|$ divides $|G|$, we have $p = p_{i}$, for some $i\leq n$. It follows by the hypothesis that $\langle x\rangle  = \langle p^{n_{i}-1}1_{i}\rangle $, where $1_{i}$ is the canonical cyclic generator for the factor $\mathbb{Z}/p_{i}^{n_{i}}\mathbb{Z}$. Let $b = \Pi^{k}_{i=1}(\delta_{e} - \delta_{p^{n_{i}-1}1_{i}})$. Under the identification $\mathbb{C}[G]\simeq \mathbb{C}[\mathbb{Z}/p^{n_{1}}_{1}]\otimes....\otimes \mathbb{C}[\mathbb{Z}/p^{n_{k}}_{k}] $, the element $b$ corresponds to the basic tensor $(\delta_{e} - \delta_{p^{n_{1}-1}1_{1}})\otimes...\otimes (\delta_{e} - \delta_{p^{n_{k}-1}1_{k}})$ and therefore $b\neq 0$. Letting $\mathcal{X}_{min}$ be the minimal subgroups of $G$, it is easy to see $b\in J_{G,\mathcal{X}_{min}}$, following the same argument as in Lemma \ref{lem:prodelementconstr} and using the fact that $(\langle p^{n_{i}-1}1_{i}\rangle)^{k}_{i=1}$ are all the minimal subgroups of $G$. Since every $Z\in\mathcal{Z}$ has $X\subseteq Z$ for some $X\in\mathcal{X}_{min}$, it follows that $\{0\}\neq J_{G,\mathcal{X}_{min}}\subseteq J_{\Gamma, \mathcal{Z}}\cap\mathbb{C}[G]$. This also proves the ``if'' direction in the case $\mathcal{Y} = \emptyset$.

    Now, suppose that $\mathcal{Y}\neq\emptyset$ and $\mathcal{Z}\neq\emptyset$. We have $a\in \mathbb{C}[\mathbb{Z}^{m}]$ and $b\in \mathbb{C}[G]$. Since $\Gamma = \mathbb{Z}^{m}\oplus G$, we have $\mathbb{C}[\Gamma]\simeq \mathbb{C}[\mathbb{Z}^{m}]\otimes \mathbb{C}[G]$. Under this identification, $ab$ is the basic tensor $a\otimes b\neq 0$. Therefore, $0\neq ab\in J_{\Gamma, \mathcal{Y}}\cap J_{\Gamma, \mathcal{Z}}\cap\mathbb{C}[\Gamma] = J_{\Gamma, \mathcal{X}}\cap\mathbb{C}[\Gamma]$. This proves the ``if'' direction of the theorem.

    Now, we prove the converse. Suppose there are two distinct elements $x_{1},x_{2}$ each with prime cyclic order $p$. Then, $(\mathbb{Z}/p\mathbb{Z})^{2}\simeq \langle x_{1},x_{2}\rangle \subseteq \Gamma$. To prove $\Gamma$ does not satisfy Property $AI$ it suffices, by Proposition \ref{prp:videalredtonsg} and Proposition \ref{prp:finitechar}, to show the minimal subgroups $\mathcal{X}_{min}$ of $(\mathbb{Z}/p\mathbb{Z})^{2}$ satisfy $V:=  \text{span}_{\mathbb{C}}\{\delta_{g + X}: g\in (\mathbb{Z}/p\mathbb{Z})^{2}, X\in\mathcal{X}_{min}\} =  \mathbb{C}[(\mathbb{Z}/p\mathbb{Z})^{2}]$.

    Note that $(\mathbb{Z}/p\mathbb{Z})^{2} = \mathbb{F}^{2}_{p}$, where $\mathbb{F}^{2}_{p}$ is the two-dimensional vector space over the finite field $\mathbb{Z}/p\mathbb{Z} = \mathbb{F}_{p}$. Under this identification, the group structure is the additive structure of the vector space and so $\{e\}$ corresponds to the origin $0$. Moreover, the minimal subgroups $\mathcal{X}_{min}$ correspond to the set of one-dimensional subspaces (lines) of $\mathbb{F}^{2}_{p}$, while the co-sets of the subgroups $\mathcal{X}_{min}$ correspond to the collection of affine lines $\mathcal{L}$ (i.e. lines not necessarily centered at the origin). For $0\neq x\in \mathbb{F}^{2}_{p}$, let $n(x)$ be the number of affine lines $L$ containing $x$ but not $0$. Since the group of invertible linear operators on a finite dimensional vector (over an arbitrary field) acts transitively on non-zero vectors, we have that $n(x) = n(y)$ for all $x,y\in \mathbb{F}^{2}_{p}\setminus \{0\}$. Therefore,
    $$\sum_{L\in \mathcal{L}:0\notin L}\delta_{L} = \sum_{x\in \mathbb{F}^{2}_{p}\setminus \{0\}}n(x)\delta_{x} = n\delta_{\mathbb{F}^{2}_{p}\setminus \{0\}}.$$
    Since $\delta_{\mathbb{F}^{2}_{p}}\in V$ (it is the sum of the characteristic functions of the cosets of any fixed line), we have $\delta_{0} = \delta_{\mathbb{F}^{2}_{p}} - \frac{1}{n}\sum_{L\in \mathcal{L}:0\notin L}\delta_{L}\in V$. From invariance ($\delta_{x}V = V$ for all $x\in \mathbb{F}^{2}_{p}$), it follows that $\delta_{x}\delta_{0} = \delta_{x}\in V$, for all $x\in\mathbb{F}^{2}_{p}$. Hence, $V = \mathbb{C}[\mathbb{F}^{2}_{p}]$.
\end{proof}
We leave the reader with a question.

\begin{qtn}
    Does every amenable group satisfy Property $I$, and moreover Property $AI$ if the group is torsion free?
\end{qtn}


\begin{thebibliography}{40}

    \bibitem{ACJM25} R. Abdellatif, L. O. Clark, R. Jansen, S. Marsland, \emph{Singular Ideals over arbitrary fields for the two- and three-headed snakes}, preprint, arXiv:2506.19254, (2025).

   \bibitem{D97} C. Anantharaman-Delaroche, \emph{Purely infinite C*-algebras arising from dynamical systems}, Bull. Soc. Math. France \emph{125} (1997), 199–225.

    \bibitem{DR00} C. Anantharaman-Delaroche and J. Renault, \emph{Amenable groupoids}, With a foreword by Georges Skandalis and Appendix B by E. Germain, Monogr. Enseign. Math., \emph{36}, L’Enseignement Math´ematique, Geneva, 2000.

    \bibitem{B71} G. Baumslag, \emph{Lectures on nilpotent groups}, Regional Conference Series in Mathematics, \emph{2} Amer. Math. Soc., Providence, R.I., 1971.

    \bibitem{BGHL25} K. A. Brix, J. Gonzales, J. B. Hume and X. Li, \emph{On Hausdorff covers for non-Hausdorff groupoids}, preprint, arXiv:2503.23203, (2025).
    
    \bibitem{BHV07} B. Bekka, P. de la Harpe and A. Valette, \emph{Kazhdan’s Property (T)}, 
    New Math. Monogr., \emph{11}  Cambridge Univ. Press, 2008.
    
    \bibitem{BHL24} K. A. Brix, J. B. Hume and X. Li, \emph{Minimal covers with continuity-preserving transfer operators for topological dynamical systems}, preprint, arXiv:2408.11917, (2024).

   \bibitem{BCFS14} J. Brown, L. O. Clark, C. Farthing and A. Sims, \emph{Simplicity of algebras associated to \'etale groupoids}, Semigroup Forum \emph{88} (2014), no. 2, 433–452.
    
    \bibitem{BO08} N. P. Brown and N. Ozawa, \emph{C*-algebras and finite-dimensional approximations}, Grad. Stud. Math., \emph{88}, Amer. Math. Soc., Providence, RI, 2008.

    \bibitem{CEPSS19} L. O. Clark, R. Exel, E. Pardo, A. Sims and C. Starling, \emph{Simplicity of algebras associated to non-Hausdorff groupoids}, Trans. Amer. Math. Soc. \emph{372} (2019), no.5, 3669–3712.
    
    \bibitem{CN22} J. Christensen and S. Neshveyev, \emph{(Non)exotic completions of the group algebras of isotropy Groups} IMRN, \emph{19} (2022), 15155–15186
     
    \bibitem{CN24} J. Christensen and S. Neshveyev, \emph{Isotropy fibers of ideals in groupoid C*-algebras}, Adv. Math., \emph{447} (2024), 1-32.
    
    \bibitem{Davidson} K. R. Davidson, \emph{C*-algebras by example}, Fields Inst. Monogr.,  Amer. Math. Soc., Providence, RI, 1996.

    \bibitem{E11} R. Exel, \emph{Non-Hausdorff \'etale groupoids}, Proc. Amer. Math. Soc. \emph{139} (2011), no.3, 897–907.

    \bibitem{EP22} R. Exel and D. R. Pitts, \emph{Characterizing groupoid C*-algebras of non-Hausdorff \'etale groupoids}, Lec. Notes in Math., \emph{2306}, Springer, Cham, 2022.
    
    \bibitem{F62} J. M. G. Fell, \emph{A Hausdorff topology for the closed subsets of a locally compact non-Hausdorff space}, Proc. Amer. Math. Soc., \emph{13} (1962), 472–476.

    \bibitem{GNSV25} E. Gardella, V. Nekrashevych, B. Steinberg, A. Vdovina, \emph{Simplicity of    C*-algebras of contracting self-similar groups}, preprint, arXiv:2501.11482, (2025).
    
    \bibitem{GH25} J. Gonzales and J. B. Hume, \emph{On dense subalgebras of the singular ideal in non-Hausdorff \'etale groupoid C*-algebras}, in preparation.

    \bibitem{G81} M. Gromov, \emph{Groups of polynomial growth and expanding maps}, Inst. Hautes \'Etudes Sci. Publ. Math. (1981), No. 53, 53–73.

    \bibitem{KKLRU21} M. Kennedy, S. J. Kim, X. Li, S. Raum and D. Ursu, \emph{The ideal intersection property for essential groupoid C*-algebras}, preprint, arXiv:2107.03980, (2021).
    
    \bibitem{KM21} B. K. Kwa\'sniewski and R. Meyer, \emph{Essential crossed products for inverse semigroup actions: simplicity and pure infiniteness}, Doc. Math. \emph{26} (2021), 271–335.
    
    \bibitem{Lance} E. C. Lance, \emph{Hilbert C*-modules, a toolkit for operator algebraists}, London Math. Soc. Lecture Note Ser. \emph{210}, Cambridge Univ. Press, 1995.

    \bibitem{N09} V. Nekrashevych, \emph{C*-algebras and self-similar groups}, J. Reine
Angew. Math., \emph{630} (2009), 59–123.
    
    \bibitem{NS23} S. Neshveyev and G. Schwartz, \emph{Non-Hausdorff \'etale groupoids and C*-algebras of left cancellative monoids}, Münster J. Math. \emph{16} (2023), no.1, 147–175.
    
    \bibitem{P03} V. Paulsen, \emph{Completely bounded maps and operator algebras}, Cambridge Stud. Adv. Math. \emph{78}, Cambridge Univ. Press, 2003.
    
    \bibitem{R74} M. A. Rieffel, \emph{Induced representations of C*-algebras}, Adv. Math. \emph{13} (1974), 176-257.

    \bibitem{S60} A. Selberg, \emph{On discontinuous groups in higher-dimensional symmetric spaces}, in “Contributions to
Function Theory”, Tata Institute of Fundamental Research, Bombay (1960), 147–164.

    \bibitem{SS20} B. Steinberg and N. Szak\'acs, \emph{Simplicity of inverse semigroup and \'etale groupoid algebras}, Adv. Math. \emph{380} (2021), 1-55.

    \bibitem{SS24} B. Steinberg and N. Szak\'acs, \emph{On the simplicity of Nekrashevych algebras of contracting self-similar groups}, Math. Ann. \emph{386} (2023), no. 3-4, 1391–1428.
    
    \bibitem{T11} T. Timmermann, \emph{The Fell compactification and non-Hausdorff groupoids}, Math. Z. \emph{269} (2011), no.3-4, 1105–1111.

    \bibitem{T71} J. Tits, \emph{Free subgroups in linear groups}, J. Algebra \emph{20} (1972), No. 2, 250-270

    \bibitem{T99}  J-L. Tu, \emph{La conjecture de Baum–Connes pour les feuilletages moyennables}, K-Theory \emph{17} (1999), no. 3, 215–264.
    
    \bibitem{W15} R. Willet, \emph{A non-amenable groupoid whose maximal and reduced C*-algebras are the same}, Münster J. of Math. \emph{8} (2015), 241–252.
\end{thebibliography}
\end{document}